\documentclass[11pt,letterpaper]{amsart}
\usepackage[utf8]{inputenc}
\usepackage[titletoc,toc,title]{appendix}
\usepackage[left=2.4cm,right=2.4cm,top=2.5cm,bottom=2.5cm]{geometry}
\usepackage{amssymb,latexsym,amsmath,amsthm,amscd}
\usepackage{graphicx}
\usepackage{dsfont}
\usepackage{marginnote}
\usepackage[all]{xy}
\usepackage{mathrsfs}
\usepackage{color}
\definecolor{Blue}{rgb}{0.3,0.3,0.9}

\usepackage{tikz-cd}
\usepackage{hyperref}
\hypersetup{colorlinks, linkcolor=blue, citecolor=black, urlcolor=black}

\usepackage[T2A,OT1]{fontenc}
\DeclareSymbolFont{cyrillic}{T2A}{cmr}{m}{n}
\DeclareMathSymbol{\Sha}{\mathalpha}{cyrillic}{216}
\newtheorem{thm}{Theorem}[section]
\newtheorem{thmintro}{Theorem}

\newtheorem{corintro}[thmintro]{Corollary}
\newtheorem{def-thm}[thm]{Definition-Theorem}
\newtheorem{cor}[thm]{Corollary}
\newtheorem{lem}[thm]{Lemma}
\newtheorem{def-lem}[thm]{Definition-Lemma}
\newtheorem{prop}[thm]{Proposition}
\newtheorem{conj}[thm]{Conjecture}

\theoremstyle{definition}
\newtheorem{defn}[thm]{Definition}
\theoremstyle{remark}
\newtheorem{rem}[thm]{Remark}
\numberwithin{thm}{section}
\numberwithin{equation}{section}

\newcommand{\cO}{\mathcal{O}}

\newcommand{\rH}{{\mathrm{H}}}

\newcommand{\bQ}{\mathbf{Q}}
\newcommand{\bR}{\mathbf{R}}
\newcommand{\bZ}{\mathbf{Z}}

\newcommand{\bC}{\mathbf{C}}

\newcommand{\scO}{{\mathscr{O}}}

\newcommand{\hooklongrightarrow}{\lhook\joinrel\longrightarrow}

\begin{document}

%\title[Kato's main conjecture for nonordinary modular forms]{Kato's main conjecture for nonordinary modular forms and Rankin--Eisenstein classes}

\title{Kato's main conjecture for 
% modular forms at supersingular primes}
nonordinary modular forms} %in the Fontaine--Laffaille range}

\author{Francesc Castella}
\address{F. C.: University of California, Santa Barbara, CA, United States}
\email{\href{mailto:castella@math.ucsb.edu}{castella@math.ucsb.edu}}

\author{Zheng Liu}
\address{Z. L.:University of California, Santa Barbara, CA, United States}
\email{\href{mailto:zliu@math.ucsb.edu}{zliu@math.ucsb.edu}}

\author{Xin Wan}
\address{X. W.: Academy of Mathematics and Systems Science, Chinese Academy of Sciences and University of Chinese Academy of Sciences, Haidian District, Beijing, China}

\email{\href{mailto:xwan@math.ac.cn}{xwan@math.ac.cn}}

%\thanks{This material is based upon work supported by the National Science Foundation under grant agreements DMS-1801385 (F.C.), DMS-1128155 and  DMS-1352598 (M.C.), and DMS-1301842 and DMS-1501064 and by the Simons Investigator Grant \#376203 (C.S.).  
%This project has received funding from the European Research Council under the European Union's Horizon 2020 research and innovation programme under grant agreement \#682152 (F.S.).}

\subjclass[2010]{11R23 (primary); 11G05, 11G40 (secondary)}
%\keywords{}

\date{\today}

%\dedicatory{}
%\commby{}

% ----------------------------------------------------------------

\begin{abstract}
%In this paper, 
We prove Kato's main conjecture for modular forms at nonordinary primes %$p>2$ 
for weights in the Fontaine--Laffaille range. %In particular, as a consequence of our main result in the case of weight $2$, we prove the $p$-part of the Birch--Swinnerton-Dyer formula in analytic rank zero for abelian varieties $A/\bQ$ of ${\rm GL}_2$-type.  
%Our proof is based on a reformulation of the conjecture in terms of signed Selmer groups due to Lei--Loeffler--Zerbes, and follows a strategy introduced by the third author %(and recently carried out by Burungale--Skinner--Tian--Wan \cite{BSTW})
%(not intended for publication) 
%in the case of rational elliptic curves with $a_p=0$. 
%
The key ingredients in the proof are a reformulation of the  conjecture in terms of signed Selmer groups due to Lei--Loeffler--Zerbes, certain $p$-adic families of 
Rankin--Eisenstein classes arising from the work of Lei--Loeffler--Zerbes and Kings--Loeffler--Zerbes,  %and their explicit reciprocity laws,  
%\cite{LZ-Coleman}, 
and the lower bound divisibility in an Iwasawa--Greenberg main conjecture for Rankin--Selberg $p$-adic $L$-functions  obtained in our earlier work \cite{CLW}. %for weight $2$ and squarefree level, 
%and significally generalized in this paper.
%
%Our proofs are based on an approach outlined by the third-named author in \cite{wan-ss} for elliptic curves $E/\bQ$ with $a_p=0$ and developed in full detail in recent work of Burungale--Skinner--Tian--Wan \cite{BSTW}, which we extend to non-semistable $E/\bQ$ (allowing in particular $a_3\neq 0$, which is already new in the semistable case), rational abelian varieties of ${\rm GL}_2$-type, and higher weight modular forms.
\end{abstract}

\maketitle

\setcounter{tocdepth}{2}
\tableofcontents

%{\bf Points that are still unclear / not well-written / require attention}
%\begin{enumerate}
%    \item Taking $\eta\in\hat{\Delta}$-isotypic components in some of the 2-variable constructions (BF classes, Coleman maps).
%    \item Relations between $\mathscr{F}^\pm\mathbf{D}_{\rm rig}(T_{\mathbf{h}_v})$ and $\mathscr{F}^\pm T_{\mathbf{h}_v}$.
%    \item Relation between $\eta_{\mathbf{h}_v}/c\cdot\omega_{\mathbf{h}_v}$ and $h_K\cdot\mathscr{L}_v^{{\rm Katz},-}$ (\cite[Lem.~4.36]{fouquet-wan} claims that a divisibility in $\bZ_p^{\rm nr}[[T_v]]$ (up to powers of $T_v$) follows from \cite[Prop.~8.3]{wanIMC}.
%    \item Address the unexpected irregularity $\alpha=\beta$ (see \cite[\S{4.7}]{fouquet-wan} using Betina--Williams).
%    \item Clean up/complete application to $p$-part of BSD formula. 
%    \item ...
%    \item Others that I'm forgetting.
%\end{enumerate}
%\newpage

\section{Introduction}

%\subsection{Iwasawa main conjecture for non-ordinary modular forms}

In %\cite[\S{3}]{Kato1553} (see also 
\cite[\S{4}]{kato-kodai},  
Kato formulated a vast generalisation of Iwasawa's main conjecture, conditional on a strong form of Beilinson's conjectures \cite{beilinson-higher}. For the motive attached to a newform $f\in S_k(\Gamma_0(N))$ of weight $k\geq 2$ and the cyclotomic extension  $\bQ(\mu_{p^\infty})/\bQ$, the conjecture is unconditional, see \cite[Conj.~12.10]{Kato295}.

The main result of this paper is a proof of Kato's main conjecture for newforms of weight $2\leq k< p$ at nonordinary primes $p>2$ under mild hypotheses. 

\subsection{Main results}

Let us prepare some notation for the precise statement of our main results. Let $f=\sum_{n=1}^\infty a_nq^n\in S_k(\Gamma_0(N))$ be a newform of (even) weight $k\geq 2$. %For simplicity, we assume that $f$ has trivial nebentypus. 
Fix an odd prime $p$, and put
\[
\widetilde{\Gamma}={\rm Gal}(\bQ(\mu_{p^\infty})/\bQ)\cong\Delta\times\Gamma,
\]
where $\Gamma$ %={\rm Gal}(\bQ_\infty/\bQ)$ 
is the Galois group of the cyclotomic $\bZ_p$-extension $\bQ_\infty/\bQ$, and $\Delta$ %={\rm Gal}(\bQ(\mu_p)/\bQ)$ 
is cyclic of order $p-1$. Let $F=\bQ(\{a_n\}_{n})\subset\bC$ be the number field generated by the Fourier coefficients of $f$. Fix an embedding %$\iota_\infty:\overline{\bQ}\hookrightarrow\bC$,
$\iota_p:\overline{\bQ}\hookrightarrow\overline{\bQ}_p$, where $\overline{\bQ}$ denotes the algebraic closure of $\bQ$ inside $\bC$, and let $L=F_\mathfrak{P}$ be the completion of $F$ at the prime $\mathfrak{P}$ above $p$ determined by $\iota_p$. Denote by $\scO$ the integer ring of $F$. Let $V_f\cong L^2$ be the $p$-adic Galois representation attached to $f$ by Deligne \cite{deligne-ell-adic}, and for any $G_{\bQ}$-stable $\scO$-lattice $T_f\subset V_f$, let
\begin{align*}
{\rm H}^1_{\rm Iw}(\bQ(\mu_{p^\infty}),T_f)&:=\varprojlim_r\rH^1_{}(\bQ(\mu_{p^r}),T_f),\\%\quad\quad
{\rm H}^1_{\rm Iw}(\bQ(\mu_{p^\infty}),V_f)&:={\rm H}_{\rm Iw}^1(\bQ(\mu_{p^\infty}),T_f)\otimes_{\bZ_p}\bQ_p
\end{align*}
be the Iwasawa cohomology groups for  $\bQ(\mu_{p^\infty})/\bQ$. %(as is well-known, the latter is independent of the choice of $T_f$). 
Let 
\[
\Lambda_{\scO}(\widetilde{\Gamma})=\scO[\![\widetilde{\Gamma}]\!]\cong\scO[\Delta][\![\Gamma]\!]
\]
be the cyclotomic Iwasawa algebra. In his influential work  \cite{Kato295}, Kato constructed a collection of cohomology classes $\mathbf{z}_\gamma^{(p)}\in{\rm H}^1_{\rm Iw}(\bQ(\mu_{p^\infty}),V_f)$, depending $L$-linearly on $\gamma\in V_f$, whose image under the Bloch--Kato dual exponential map is related to the special values $L(f,\chi,r)$, for $1\leq r\leq k-1$, of the complex $L$-function of $f$ twisted by finite order characters  $\chi:\widetilde{\Gamma}\rightarrow\bC^\times$ (see [\emph{op.\,cit.}, Thm.~12.5]). 

%***** MAYBE DON'T USE THIS VIEWPOINT

For $\gamma\in T_f\subset V_f$, Kato showed that $\mathbf{z}_\gamma^{(p)}$ lands in ${\rm H}_{\rm Iw}^1(\bQ(\mu_{p^\infty}),T_f)$ provided  $\rho_f:G_\bQ\rightarrow{\rm Aut}_L(V_f)\cong{\rm GL}_2(L)$ 
has ``large image''; %and is integral relative to the geometric lattice $T_f$ defined in \cite[\S{8.3}]{Kato295} in general; 
assume from now on that this is the case, let $\mathbf{z}_f^{\rm int}\in\rH^1_{\rm Iw}(\bQ(\mu_{p^\infty}),T_f)$ be the resulting integral normalisation of $\mathbf{z}_\gamma^{(p)}$ as in $\S\ref{subsec:Kato-ES}$, and denote by $(\mathbf{z}_f^{\rm int})$ the $\Lambda_{\scO}(\widetilde{\Gamma})$-span of $\mathbf{z}_f^{\rm int}$. 

Let $T_f^\vee(1)={\rm Hom}(T_f,\mu_{p^\infty})$ be the Cartier dual of $T_f$, and
\[
%{\rm H}_{{\rm Iw},{\rm str}}^2(\bQ(\mu_{p^\infty}),T_f):={\rm ker}\biggl\{{\rm H}_{\rm Iw}^2(\bQ(\mu_{p^\infty}),T_f)\longrightarrow\prod_{\eta\mid N}\rH_{\rm Iw}^2(\bQ(\mu_{p^\infty})_\eta,T_f)\biggr\},
{\rm Sel}_{{\rm str}}(f/\bQ(\mu_{p^\infty})):={\rm ker}\biggl\{{\rm H}^1(\bQ(\mu_{p^\infty}),T_f^\vee(1))\longrightarrow\prod_{\eta}\rH^1(\bQ(\mu_{p^\infty})_\eta,T_f^\vee(1))\biggr\}
\]
be the $p$-primary \emph{strict Selmer group}, %of $f$ over $\bQ(\mu_{p^\infty})$, 
where $\eta$ runs over the places of $\bQ(\mu_{p^\infty})$. In this setting,  
Kato formulated the following generalised Iwasawa main conjecture. %for $f$ (cf. \cite[Conj.~12.10]{Kato295}). 

\begin{conj}[Kato's main conjecture]\label{introconj:kato}
Let $p>2$ be a prime. For every character $\eta:\Delta\rightarrow\bZ_p^\times$, we have the equality
\[
{\rm char}_{\Lambda_{\scO}({\Gamma})}\biggl(\frac{{\rm H}^1_{\rm Iw}(\bQ(\mu_{p^\infty}),T_f)^\eta}{(\mathbf{z}_f^{\rm int})^\eta}\biggr)={\rm char}_{\Lambda_{\scO}({\Gamma})}\bigl({\rm Sel}_{{\rm str}}(f/\bQ(\mu_{p^\infty}))^{\eta,\vee}\bigr)
\]
as characteristic ideals of torsion $\Lambda_{\scO}(\Gamma)$-modules, where $M^{\eta,\vee}={\rm Hom}(M^\eta,\bQ_p/\bZ_p)$ denotes the Pontryagin dual of the the $\eta$-isotypic component of $M$.
\end{conj}

%By Kolyvagin's method applied to an Euler system extending the classes $\mathbf{z}_\gamma^{(p)}$, 

When $p$ is an odd prime of good ordinary reduction for $f$, i.e. $p\nmid N$ and $a_p\notin\mathfrak{P}$,  Conjecture~\ref{introconj:kato} is known to be equivalent to Greenberg's main conjecture \cite{greenberg-iwasawa} for the $p$-adic $L$-function of Mazur--Tate--Teitelbaum associated to the $p$-adic unit root of 
\begin{equation}\label{eq:Hecke-p}
x^2-a_px+p^{k-1}=(x-\alpha)(x-\beta).
\end{equation}
%with $\epsilon$ the nebentypus character of $f$. 
In this case, the conjectures are known under mild hypotheses by the work of several mathematicians, including Kato, Skinner--Urban, and Wan (see \cite{Kato295,SU,wan-HMF,BCS}). 

For the statement of our main result on Conjecture~\ref{introconj:kato}, let $\pi_f=\otimes_{\ell}\pi_{f,\ell}$ be the cuspidal automorphic representation of ${\rm GL}_2(\mathbb{A})$ generated by $f$, and consider the following  \emph{root number condition}:
\begin{equation}\label{eq:rtn}
%\textrm{either }
%\begin{cases}
%\textrm{$N$ is divisible by at least two primes; or}\\[0.2em]
\textrm{there exists a prime $\ell\Vert N$ such that $\pi_\ell\cong\sigma_\ell\otimes\xi_\ell$},\tag{rtn}
%\end{cases}\tag{rtn}
\end{equation}
where $\sigma_\ell$ denotes the special representation of ${\rm GL}_2(\bQ_\ell)$, and $\xi_\ell$ is the unique unramified character of $\bQ_\ell^\times$ sending $\ell$ to $-\ell^{k/2-1}$. Let $\omega:\Delta\rightarrow\bZ_p^\times$ be the Teichm\"uller character. 
 %, which gives a generator of the character groups of $\Delta$. 
%We say that $f$ is \emph{$p$-regular} if the roots $\alpha,\beta\in\overline{\bQ}_p$ are distinct. %Let $\alpha,\beta$ denote the roots of \eqref{eq:Hecke-p}. 

%\begin{thmintro}\label{thmintro:kato}
%Let $f\in S_k(\Gamma_0(N))$ be a newform of squarefree conductor $N$ and trivial character, and let $p$ be an odd prime of good non-ordinary reduction for $f$. Assume:
%\begin{itemize}
%\item $2\leq k\leq p$; 
%\item $\bar\rho_f\vert_{G_{\bQ_p}}$ is absolutely irreducible;
%\item Condition~\eqref{eq:rtn}.
%\end{itemize}
%Then 
%$X_{\rm str}(f)$ is $\Lambda$-torsion, with
%\begin{equation}\label{eq:Kato-IMC}
%{\rm char}_\Lambda\bigl(X_{\rm str}(f)\bigr)={\rm char}_\Lambda\biggl(\frac{\mathbb{H}_{\rm Iw}^1(T_f)}{\Lambda\cdot\mathbf{z}_f}\biggr)\nonumber
%\end{equation}
%as ideals in $\Lambda$, 
%\otimes\bQ_p$. If in addition $\bar{\rho}_f$ is ramified at some prime $\ell\Vert N$, then  equality \eqref{eq:Kato-IMC} holds in $\Lambda$, 
%and hence Conjecture~\ref{conj:kato} holds true.
%Conjecture~\ref{introconj:kato} holds.
%\end{thmintro}

\begin{thmintro}\label{thmintro:kato}
Let $f\in S_k(\Gamma_0(N))$ be a newform and $p>2$ be a prime  of good nonordinary reduction. %for $f$. 
Suppose
\begin{itemize}
%\item[(i)] $f$ has good nonordinary reduction at $p$;
\item $2\leq k< p$; 
\item $N$ is squarefree;
\item $f$ is \emph{$p$-regular}, i.e. $\alpha\neq\beta$;
%the roots $\alpha,\beta\in\overline{\bQ}_p$ of \eqref{eq:Hecke-p} are distinct. 
%i.e., $p$-$\alpha_p\neq\beta_p$.
%\eqref{eq:Hecke-p} has no multiple roots.
%\item $f$ has trivial nebentypus.
%\item $\bar\rho_f\vert_{G_{\bQ_p}}$ is absolutely irreducible;
\item either $\begin{cases}
\textrm{$k=2$}, or \\ %and $a_p=0$}, or\\
\textrm{Condition~\eqref{eq:rtn} holds}.
\end{cases}$
\end{itemize}
Then 
%$X_{\rm str}(f)$ is $\Lambda$-torsion, with
%\begin{equation}\label{eq:Kato-IMC}
%{\rm char}_\Lambda\bigl(X_{\rm str}(f)\bigr)={\rm char}_\Lambda\biggl(\frac{\mathbb{H}_{\rm Iw}^1(T_f)}{\Lambda\cdot\mathbf{z}_f}\biggr)\nonumber
%\end{equation}
%as ideals in $\Lambda$, 
%\otimes\bQ_p$. If in addition $\bar{\rho}_f$ is ramified at some prime $\ell\Vert N$, then  equality \eqref{eq:Kato-IMC} holds in $\Lambda$, 
%and hence Conjecture~\ref{conj:kato} holds true.
Conjecture~\ref{introconj:kato} for $\eta=\omega^{k/2}$ holds. 
%where $\omega:\Delta\rightarrow\bZ_p^\times$ is the Teichm\"uller character.
\end{thmintro}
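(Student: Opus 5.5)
The plan is to follow the strategy sketched in the abstract: one divisibility comes from Kato's Euler system, and the reverse divisibility comes from Rankin--Eisenstein classes together with a Greenberg-type main conjecture for a Rankin--Selberg $p$-adic $L$-function over an auxiliary imaginary quadratic field $K$.

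First I would recast the statement via Lei--Loeffler--Zerbes. Since $2\leq k<p$ and $f$ is $p$-regular, Wach module theory gives signed Coleman maps $\mathrm{Col}^{\sharp},\mathrm{Col}^{\flat}$ on $\rH^1_{\rm Iw}(\bQ_p(\mu_{p^\infty}),T_f)$. These define signed Selmer groups $\mathrm{Sel}^{\sharp/\flat}(f/\bQ(\mu_{p^\infty}))$ and signed $p$-adic $L$-functions $L_p^{\sharp/\flat}=\mathrm{Col}^{\sharp/\flat}(\mathrm{loc}_p\mathbf{z}_f^{\rm int})$. Poitou--Tate then shows that, on the $\eta$-component with $\eta=\omega^{k/2}$, Conjecture~\ref{introconj:kato} is equivalent to the signed main conjecture
\[
\mathrm{char}\bigl(\mathrm{Sel}^{\sharp}(f/\bQ(\mu_{p^\infty}))^{\eta,\vee}\bigr)=(L_p^{\sharp,\eta})
\]
for one sign, provided $L_p^{\sharp,\eta}\neq0$. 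I would choose the sign and the central twist $\eta=\omega^{k/2}$ so that nonvanishing follows from Rohrlich-type results.

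Kato's Euler system argument, run through the Coleman map under the large image hypothesis, gives the divisibility $\mathrm{char}(\mathrm{Sel}^{\sharp,\vee})\mid(L_p^{\sharp})$. I expect this to hold integrally, since $k<p$ keeps the Coleman map image controlled.

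The reverse divisibility is the main work. I would pick an imaginary quadratic field $K$ in which $p$ splits, satisfying a Heegner-type hypothesis relative to $N$ and arranged so that the relevant global root number is $+1$. This is where $N$ squarefree and either $k=2$ or \eqref{eq:rtn} enter: the local sign at the prime $\ell\Vert N$ has to be adjusted. I would then consider $f\otimes\mathbf{g}$, with $\mathbf{g}$ a CM Hida family attached to the anticyclotomic variable of $K$. The Beilinson--Flach (Rankin--Eisenstein) classes of Lei--Loeffler--Zerbes and Kings--Loeffler--Zerbes, after applying the signed Coleman map at one prime above $p$ and the ordinary projection at the other, give a two-variable signed Rankin--Selberg $p$-adic $L$-function. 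The explicit reciprocity law identifies it with the Hida-type $p$-adic $L$-function.

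The lower bound from \cite{CLW} (the Eisenstein congruence method on $\mathrm{U}(3,1)$) gives the divisibility $(\mathcal{L}_p^{\rm RS})\subseteq\mathrm{char}(X_{\rm Gr})$ for a Greenberg Selmer group. Poitou--Tate comparisons with the Beilinson--Flach class, combined with the Euler system upper bound for Beilinson--Flach classes, convert this into the signed Selmer divisibility over $K_\infty$ in two variables. Specializing to the cyclotomic line and using the factorization $\mathrm{Sel}(f/K)=\mathrm{Sel}(f/\bQ)\oplus\mathrm{Sel}(f\otimes\epsilon_K/\bQ)$, together with Kato's divisibility for the twist $f\otimes\epsilon_K$, yields $(L_p^{\sharp})\mid\mathrm{char}(\mathrm{Sel}^{\sharp,\vee})$.

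I expect the main obstacle to be the comparison in the previous paragraph between the signed two-variable Selmer groups and $L$-functions and the Greenberg-type ones. Concretely, this means controlling the integral images of the signed Coleman maps applied to Beilinson--Flach classes, and matching the periods, so that the divisibility holds in $\Lambda$ and not only after inverting $p$. I would first try to prove that the relevant local Iwasawa cohomology quotients are free by using $k<p$ through Fontaine--Laffaille theory.
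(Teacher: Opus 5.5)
Your outline follows the paper's architecture: the LLZ reformulation, Kato's upper bound, Beilinson--Flach classes for $f$ against a CM Hida family over a split $K$, the \cite{CLW} lower bound, and descent via $f$ and $f\otimes\epsilon_K$. However, the step that carries the weight is missing.

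The \cite{CLW} lower bound is ${\rm char}(X_{{\rm str},{\rm rel}}(f_\eta/K_\infty))\subseteq(\mathcal{L}_{\overline{v}}(f/K)^\eta)$; this is the lower-bound direction, and you wrote the reverse inclusion. In general it is only available after inverting the nonzero elements of the cyclotomic algebra $\Lambda_\scO(\Gamma^+)\cong\Lambda_\scO(\Gamma)$, i.e.\ up to height-one primes generated by cyclotomic elements. Every divisibility transferred from it inherits this ambiguity. Restricted to the cyclotomic line, such a statement lives in ${\rm Frac}(\Lambda_\scO(\Gamma))$ and carries no information, so the final comparison with Kato's bound collapses. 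This is not a $\varpi$-power or integral-image issue, and freeness of local Iwasawa cohomology via Fontaine--Laffaille does not touch it.

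The paper removes the ambiguity on the \emph{anticyclotomic} line, where the relevant $p$-adic $L$-functions interpolate central values, by proving $\mu=0$ there. This, not Rohrlich nonvanishing, is why $\eta=\omega^{k/2}$. It is also where ``$k=2$ or \eqref{eq:rtn}'' enters, rather than through making a global root number $+1$.
\begin{itemize}
\item For $k>2$, $K$ is chosen with the prime $\ell$ of \eqref{eq:rtn} ramified and all other primes of $N$ split. Then \eqref{eq:gen-H} holds with $\varepsilon_\ell(BC_K(\pi_f))=+1$, the anticyclotomic restriction of $\mathcal{L}_{\overline{v}}(f/K)^\eta$ is $\mathscr{L}^{\rm BDP}_{\overline{v}}(f/K)^2$ (via Hida's adjoint formula and the Katz $p$-adic $L$-function), and Hsieh's $\mu=0$ applies.
\item For $k=2$, a prime $\ell\Vert N$ at which $\bar\rho_f$ ramifies is taken \emph{inert} in $K$, giving a definite setting. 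There the anticyclotomic restriction of $\varpi^{-s}\mathcal{L}^{\rm (I)}_{i,i}(f/K)^\eta$ is $\Theta^{\rm BD}_i(f/K)^2\cdot c_f/\eta_{f,N^-}$. Its $\mu$-invariant vanishes by Chida--Hsieh and Pollack--Weston, after first proving integrality by a contradiction argument against Kato's bound.
\end{itemize}
Note also that \cite{CLW} requires some prime of $N$ to be nonsplit in $K$, so a classical Heegner hypothesis is not an option.

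Second, your sketch has only one reciprocity law, matching the class at one prime with the \cite{CLW} $L$-function. To reach $L_i(f)^\eta L_i(f\otimes\epsilon_K)^\eta$ you also need the law at the other prime and the identity $(\varpi^{-s}\mathcal{L}^{\rm (I)}_{i,i}(f/K)^\eta\bmod I^-)=(L_i(f)^\eta\cdot L_i(f\otimes\epsilon_K)^\eta)$. For $a_p\neq0$ this is not formal, because $M_{\rm log}$ is inexplicit. The paper proves it for the unbounded $\xi$-components by Coleman-family interpolation, then uses the $\Lambda_\scO(\Gamma)$-linear independence of $\mathfrak{A}^2,\mathfrak{AB},\mathfrak{B}^2$. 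It also needs $L_i(f\otimes\epsilon_K)^\eta\neq0$ for the \emph{same} $i$ as for $f$. For $k=2$ this forces choosing $K$ with $L(f\otimes\epsilon_K,1)\neq0$ (Friedberg--Hoffstein). This nonvanishing is also what makes the signed class cuspidal, so that Shapiro's lemma applies.

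Conversely, no Euler-system upper bound for Beilinson--Flach classes is needed, nor is one available integrally here. Two Poitou--Tate sequences do the transfer: one via ${\rm res}_{\overline{v}}$ comparing strict/relaxed with strict/signed, and one via ${\rm res}_v$ comparing strict/signed with signed/signed. Together with the two reciprocity laws, they turn the \cite{CLW} bound into ${\rm char}(X_{i,i}(f_\eta/K_\infty))\subseteq(\varpi^{-s}\mathcal{L}^{\rm (I)}_{i,i}(f/K)^\eta)$, up to $\xi_{\eta,i}^2$. The law at $\overline{v}$ rests on the modification $T_v\cdot\mathcal{BF}$ and on the antisymmetry $\widetilde{L}^{\rm (II)}_{i,i}=0$, $\widetilde{L}^{\rm (II)}_{1,2}=-\widetilde{L}^{\rm (II)}_{2,1}$.

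Finally, integrality of Kato's bound comes from $\bar\rho_f$ being irreducible and ramified at some $\ell\Vert N$ (via Skinner), not from $k<p$.
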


%\begin{rem}
%remarks on the hypotheses
%\end{rem}

As explained in more detail in Section~\ref{subsec:sketch} below, two key ingredients in the proof of Theorem~\ref{thmintro:kato} are certain Rankin--Eisenstein classes arising from the work of Lei--Loeffler--Zerbes \cite{LLZ} and Kings--Loeffler--Zerbes \cite{KLZ1,KLZ2,LZ-Coleman},  %and their explicit reciprocity laws,  
%\cite{LZ-Coleman}, 
and the ``lower bound'' divisibility in an Iwasawa--Greenberg main conjecture for Rankin--Selberg $p$-adic $L$-functions obtained in our earlier work \cite{CLW}. 

%We also note that a different proof of Theorem~\ref{thmintro:kato} (still building on these ingredients) was first announced in \cite{wan-nonord} (no longer intended for publication) and in the work of the third-named author with Fouquet (cf. \cite[\S{4}]{fouquet-wan}).

\subsection{Strategy of the proof}\label{subsec:sketch}

%To motivate our approach\footnote{which is inspired by the outline given by the third-named author in \cite{wan-ss} and recently carried out in \cite{BSTW} of a proof of Kobayashi's main conjecture}, we begin by noting that
%
Our starting point for the proof of Theorem~\ref{thmintro:kato} 
is an equivalent form of Conjecture~\ref{introconj:kato} 
discovered by Lei--Loeffler--Zerbes \cite{LLZ-AJM}  building on integral $p$-adic Hodge theory, and more specifically,  Berger's theory of Wach modules \cite{berger-limit}. 
To briefly explain this, recall that Perrin-Riou's $p$-adic regulator map \cite{PR115}  
%\[
%\mathscr{L}_{V_f}:\mathbb{H}_{\rm loc}^1(V_f)\rightarrow\mathcal{H}\otimes_{\bQ_p}D(V_f)
%\]
gives a way of attaching to Kato's $\mathbf{z}_f^{\rm int}$ an element
\begin{equation}\label{eq:intro-LPR}
\mathscr{L}^{\rm PR}({\rm res}_p(\mathbf{z}_{f}^{\rm int}))\in\mathcal{H}_L(\widetilde{\Gamma})\otimes_{}\mathbb{D}_{\rm cris}(V_f)
\end{equation}
where $\mathcal{H}_L(\widetilde{\Gamma})$ %\subset\bQ_p[[\widetilde{\Gamma}]]$ 
is the algebra of tempered $L$-valued distributions on $\widetilde{\Gamma}$ and $\mathbb{D}_{\rm cris}(V_f)$ is the Dieudonn\'e--Fontaine module associated to $f$. By virtue of Kato's explicit reciprocity law, pairing \eqref{eq:intro-LPR} against any  vector $\nu\in\mathbb{D}_{\rm cris}(V_f^*(1))$ via the de Rham pairing on $\mathbb{D}_{\rm cris}(V_f)\times\mathbb{D}_{\rm cris}(V_f^*(1))$, where $V_f^*={\rm Hom}_L(V_f,L)$ is the dual of $V_f$, gives an element $\mathscr{L}^{\rm PR}_\nu({\rm res}_p(\mathbf{z}_f^{\rm int}))\in\mathcal{H}_L(\widetilde{\Gamma})$ whose values at certain characters $\chi$ of $\widetilde{\Gamma}$ is related to the critical values of $L(f,\chi,s)$. In particular, the $p$-adic $L$-functions $\mathscr{L}_{p,\xi}^{\rm MTT}$ of Mazur--Tate--Teitelbaum \cite{mtt} associated to any root $\xi\in\{\alpha,\beta\}$ of \eqref{eq:Hecke-p} with $v_p(\xi)<k-1$ are recovered in this manner. In the ordinary case, taking $\xi=\alpha$ to be the $p$-adic unit root of \eqref{eq:Hecke-p} and $\nu=\nu_\alpha$ a certain Frobenius eigenvector with eigenvalue $\alpha$, one thus obtains an equality 
\begin{equation}\label{eq:intro-ERL}
\mathscr{L}_{\nu_\alpha}^{\rm PR}({\rm res}_p(\mathbf{z}_f^{\rm int}))=\mathscr{L}_{p,\alpha}^{\rm MTT}
\end{equation}
as elements in $\Lambda_{\scO}(\widetilde{\Gamma})$ (cf. \cite[Thm.~16.6]{Kato295}). With the help of \eqref{eq:intro-ERL}, Kato was able to deduce from his results on Conjecture~\ref{introconj:kato}, a proof of a divisibility in the Iwasawa main conjecture for these bounded $p$-adic $L$-functions. A proof of the opposite divisibility---and hence a full proof of Conjecture~\ref{introconj:kato}---in the ordinary case under mild hypotheses, was then obtained in the work of Skinner--Urban \cite{SU} on the Iwasawa main conjecture for certain Rankin--Selberg $p$-adic $L$-functions related to $\mathscr{L}_{p,\alpha}^{\rm MTT}$ using Eisenstein congruences on ${\rm GU}(2,2)$. 

For the proof of Theorem~\ref{thmintro:kato} in the nonordinary case, our approach builds on Kato's work and our earlier main result \cite{CLW} on the Iwasawa main conjecture for Rankin--Selberg $p$-adic $L$-functions using Eisenstein congruences on ${\rm GU}(3,1)$.  %but a fundamental difficulty is that in this case, the image of the map $\mathscr{L}_{\nu}^{\rm PR}:\rH^1_{\rm Iw}(\bQ_p(\mu_{p^\infty}),T_f)\rightarrow\mathcal{H}$ is not contained in $\Lambda$. 
However, %in this case 
compared to the ordinary case, 
two fundamental new difficulties arise:
\begin{itemize}
\item[(1)] For any of the roots $\xi\in\{\alpha,\beta\}$ of \eqref{eq:Hecke-p}, the image of the map 
\[
\mathscr{L}_{\nu_\xi}^{\rm PR}:\rH^1_{\rm Iw}(\bQ_p(\mu_{p^\infty}),T_f)\longrightarrow\mathcal{H}_L(\widetilde{\Gamma})
\]
is not contained in $\Lambda_{\scO}(\widetilde{\Gamma})$.
\item[(2)] The Rankin--Selberg $p$-adic $L$-function
\begin{equation}\label{eq:CLW-Lp}
\mathcal{L}_{\overline{v}}(f/K)^\eta
\end{equation}
studied in \cite{CLW} has very little relation with the $p$-adic distributions  $\mathscr{L}_{p,\xi}^{\rm MTT}$ (in particular, their ranges of $p$-adic interpolation are disjoint).
\end{itemize}

To address (1), we begin by recalling that in the case $a_p=0$ (so $\beta=-\alpha$), %=\pm\sqrt{-p}^{k-1}$), %\footnote{the ``most supersingular case'', in the terminology of \cite{pollack}}, 
Lei \cite{Lei-PhD} showed that certain linear combinations of $\mathscr{L}_{\nu_\alpha}^{\rm PR}$ and $\mathscr{L}_{\nu_{-\alpha}}^{\rm PR}$ give rise to $\Lambda_{\scO}(\widetilde{\Gamma})$-linear \emph{(signed) Coleman maps}
\begin{equation}\label{eq:intro-Col}
\mathscr{C}^\pm:\rH^1_{\rm Iw}(\bQ_p(\mu_{p^\infty}),T_f)\longrightarrow\Lambda_{\scO}(\widetilde{\Gamma}) %\otimes_{\bZ_p}\bQ_p
\end{equation}
recovering Kobayashi's construction in the case of rational elliptic curves \cite{kobayashi-ss}; in particular, with the property that $\mathscr{C}^\pm({\rm res}_p(\mathbf{z}_f^{\rm int}))$ recovers Pollack's signed $p$-adic $L$-functions \cite{pollack} attached to $f$. This allowed him, as originally done in \cite{kobayashi-ss} for rational elliptic curves, to formulate (and prove results towards) signed main conjectures equivalent to Kato's main conjecture.

Abstracting this idea, we note that given a $\Lambda_{\scO}(\widetilde{\Gamma})$-linear map $\mathscr{C}$ as in \eqref{eq:intro-Col}, one can define a dual Selmer group
\begin{equation}\label{eq:conj-LLZ}
{\rm Sel}_{\mathscr{C}}(\bQ(\mu_{p^\infty}),T_f^\vee(1))\subset\rH^1(\bQ(\mu_{p^\infty}),T_f^\vee(1))
\end{equation}
%for $T_f^\vee={\rm Hom}_{\bZ_p}(T_f,\mu_{p^\infty})$ 
taking the orthogonal complement ${\rm ker}(\mathscr{C})^\perp\subset\rH^1(\bQ_p(\mu_{p^\infty}),T_f^\vee(1))$ under local Tate duality as the local condition cutting ${\rm Sel}_{\mathscr{C}}(\bQ(\mu_{p^\infty}),T_f^\vee(1))$ at the prime above $p$. In favorable circumstances, %(esp. with a good understanding of the image of $\mathscr{C}$), 
Poitou--Tate duality then gives rise to an exact sequence
\begin{align*}
0\longrightarrow\frac{{\rm H}_{\rm Iw}^1(\bQ(\mu_{p^\infty}),T_f)}{(\mathbf{z}^{\rm int}_f)}\longrightarrow&\frac{\rH^1_{\rm Iw}(\bQ_p(\mu_{p^\infty}),T_f)}{({\rm res}_p(\mathbf{z}_f^{\rm int}))}\\
&\quad\longrightarrow{\rm Sel}_{\mathscr{C}}(\bQ(\mu_{p^\infty}),T_f^\vee(1))^\vee\longrightarrow{\rm Sel}_{\rm str}(\bQ(\mu_{p^\infty}),T_f^\vee(1))^\vee\longrightarrow 0,
\end{align*}
whereby Conjecture~\ref{introconj:kato} amounts to the following statement.

\begin{conj}[$\mathscr{C}$-main conjecture]\label{introconj:LLZ}
Suppose $\eta\in\Delta\rightarrow\bZ_p^\times$ is such that 
\[
\mathscr{C}^\eta:\rH^1_{\rm Iw}(\bQ_p(\mu_{p^\infty}),T_f)^\eta\longrightarrow\Lambda_{\scO}(\Gamma) %\otimes_{\bZ_p}\bQ_p
\]
has $\Lambda_{\scO}(\Gamma)$-torsion cokernel with characteristic ideal generated by $\xi_\eta\in\Lambda_{\scO}(\Gamma)$, and 
\[
\mathscr{C}^\eta({\rm res}_p(\mathbf{z}_f^{{\rm int},\eta}))\neq 0,
\]
where $\mathbf{z}_f^{{\rm int},\eta}$ is the projection of $\mathbf{z}_f^{\rm int}$ to the $\eta$-isotytic component of $\rH^1_{\rm Iw}(\bQ(\mu_{p^\infty}),T_f)$. 
Then the Pontryagin dual %$X_\mathscr{C}(f/\bQ_\infty)$ of ${\rm Sel}_{\mathscr{C}}(\bQ(\mu_{p^\infty}),T_f^\vee(1))^\eta$ 
${\rm Sel}_{\mathscr{C}}(\bQ(\mu_{p^\infty}),T_f^\vee(1))^{\eta,\vee}$
is $\Lambda_{\scO}(\Gamma)$-torsion, with
\[
(\xi_\eta)\cdot{\rm char}_{\Lambda_{\scO}(\Gamma)}\bigl({\rm Sel}_{\mathscr{C}}(\bQ(\mu_{p^\infty}),T_f^\vee(1))^{\eta,\vee}\bigr)=\bigl(\mathscr{C}^\eta({\rm res}_p(\mathbf{z}_f^{{\rm int},\eta}))\bigr).
\]
%as principal ideals in $\Lambda_{\scO}(\Gamma)$.
\end{conj}

%Rather than attempting to deduce from \cite{CLW} a proof   of an Iwasawa main conjecture\footnote{however, see \cite{wan-nonord} and \cite{FW} for an approach along these lines, building on Potthart's theory of analytic Iwasawa cohomology.} for $\mathscr{L}_{p,\alpha}^{\rm MTT}$, 
%Building on this idea, 
Our proof of Theorem~\ref{thmintro:kato} is deduced from a proof of   Conjecture~\ref{introconj:LLZ} for one of the Coleman maps $\mathscr{C}^{i,\eta}_{\rm LLZ}$ ($i\in\{1,2\}$) 
constructed by Lei--Loeffler--Zerbes \cite{LLZ-AJM}. 
%based on an understanding the $\Lambda$-module structure of Berger's Wach module $\mathbb{N}(T_f)$. 
Bypassing the use of Honda theory of formal groups in Kobayashi's original construction of signed Coleman maps, the much more general (albeit less explicit) construction of $\mathscr{C}_{\rm LLZ}^{i,\eta}$ is based on an understanding of the $\Lambda_{\scO}(\widetilde{\Gamma})$-module structure of Berger's Wach module $\mathbb{N}(T_f)$. 

As for (2), to deduce from the divisibility in \cite{CLW} a corresponding divisibility in Conjecture~\ref{introconj:LLZ} for $\mathscr{C}_{\rm LLZ}$, we use certain Beilinson--Flach classes (denoted $\mathcal{BF}^{j,\mathbf{h}_v,\eta}_{\rm int}$ in the body of the text) 
\begin{equation}\label{eq:BF-signed}
\mathcal{BF}^{j,\eta}\in\rH^1_{\rm Iw}(K_\infty,T_f(\eta)),\quad j\in\{1,2\}
\end{equation}
attached to the base-change of $f$ to an auxiliary imaginary quadratic field $K/\bQ$ in which $p=v\overline{v}$ splits, where $K_\infty$ denotes the $\bZ_p^2$-extension of $K$. The construction of \eqref{eq:BF-signed} is deduced from the works of Kings--Loeffler--Zerbes \cite{KLZ2}, Loeffler--Zerbes \cite{LZ-Coleman}, and B\"uy\"ukboduk--Lei \cite{BL-non-ord}, together with the study in \cite{BSTW} of lattices in the  big Galois representations attached to a certain Hida family with CM by $K$. 
%As in the approach introduced by the third-named author in \cite{wan-ss} in the case of rational elliptic curves with $a_p=0$ (no longer intended for publication), 
Building on the explicit reciprocity laws satisfied by these classes, for $i\in\{1,2\}$ and $w\in\{v,\overline{v}\}$ we then construct two-variable signed Coleman and logarithm maps
\begin{equation}\label{eq:Col-Log}
\begin{aligned}
\mathscr{C}_{i,w}^\eta:\prod_{\tilde{w}\vert w}\rH^1_{\rm Iw}(K_{\infty,\tilde{w}},T(\eta))&\longrightarrow\Lambda_{\scO}(\Gamma_K),\quad\quad
\mathscr{L}_{i,w}^\eta:{\rm ker}(\mathscr{C}_{i,w})\longrightarrow\Lambda_{\scO^{\rm ur}}(\Gamma_K),
\end{aligned}
\end{equation}
where $\mathscr{O}^{\rm ur}=\mathscr{O}\hat\otimes_{\bZ_p}W(\overline{\mathbf{F}_p})$, connecting: 
\begin{itemize}
\item ${\rm res}_v(\mathcal{BF}^{i,\eta})$ (upon restriction to the cyclotomic $\bZ_p$-extension of $K$) to the product 
\[
\mathscr{C}^{i,\eta}_{\rm LLZ}({\rm res}_p(\mathbf{z}_f^{{\rm int},\eta}))\cdot\mathscr{C}^{i,\eta}_{\rm LLZ}({\rm res}_p(\mathbf{z}_{f\otimes\epsilon_K}^{{\rm int},\eta})),
\]
for $f\otimes\epsilon_K$ the twist of $f$ by the quadratic character attached to $K$; %(upon restriction to the cyclotomic $\bZ_p$-extension of $K$); 
and 
\item ${\rm res}_{\overline{v}}(\mathcal{BF}^{j,\eta})$ for $j\neq i$ to the $p$-adic $L$-function 
\[
\mathcal{L}_{\overline{v}}(f/K)^\eta,
\]
%in \eqref{eq:CLW-Lp},
%studied in \cite{CLW},
\end{itemize}
respectively. From here, the desired divisibility transfer from \cite{CLW} follows easily by global duality. 
%Thus the approach adapts the strategy introduced by the third-named author in \cite{wan-ss} towards Kobayashi's main conjectures, although significant difficulties arise owing to the different construction of the signed Coleman maps and the inexplicit nature of their interpolation properties.
Together with Kato's opposite divisibility, we thus arrive at the proof of Theorem~\ref{thmintro:kato}.

\subsection{Applications} 

As a consequence of Theorem~\ref{thmintro:kato}, we deduce cases of the $p$-part of the Tamagawa Number Conjecture \cite{BK,BF-doc-math}  for modular forms in analytic rank zero. 

\begin{thmintro}\label{thmintro:TNC}
%Let $f\in S_{k}(\Gamma_0(N))$ and $p>2$ be as in Theorem~\ref{thmintro:kato}, and suppose $L(f,k/2)\neq 0$.
Let $f\in S_k(\Gamma_0(N))$ be a newform and $p>2$ be a prime  of good nonordinary reduction. %for $f$. 
Suppose
\begin{itemize}
%\item[(i)] $f$ has good nonordinary reduction at $p$;
\item $2\leq k< p$; 
\item $N$ is squarefree;
\item $f$ is \emph{$p$-regular}, i.e. $\alpha\neq\beta$;
\item either $\begin{cases}
\textrm{$k=2$}, or \\ %and $a_p=0$}, or\\
\textrm{Condition~\eqref{eq:rtn} holds}.
\end{cases}$
\end{itemize}
If $L(f,k/2)\neq 0$ then the Bloch--Kato Selmer group ${\rm Sel}(\bQ,T_f^\vee(1-k/2))$ is finite, and 
%\[
%{\rm ord}_{\varpi}\biggl(\frac{L(f,k/2)}{\Omega_{\omega}^{\pm}}\biggr)={\rm length}_{\scO}\bigl({\rm Sel}(\bQ,T_f^\vee(1-k/2))\bigr)+\sum_{\ell\mid N}{\rm length}_{\scO}\bigl(\rH^1_{\rm unr}(\bQ_\ell,T_f(k/2))\bigr)
%\]
\[
\left[\mathscr{O}\colon\frac{L(f,k/2)}{(-2\pi i)^{k/2}\cdot\Omega_{\omega_f}^\pm}\right]=\#{\rm Sel}(\bQ,T_f^\vee(1-k/2))\cdot\prod_{\ell\mid N}{\rm Tam}_\ell(T_f(k/2)).
\]
where $\pm=(-1)^{k/2-1}$, $\Omega_{\omega_f}^\pm\in\bC^\times$ is the period %(determined up to a unit in $\scO^\times$) 
of Definition~\ref{def:p-periods}, and
\[
{\rm Tam}_\ell(T_f(k/2))=\left[\rH^1_f(\bQ_\ell,T_f(k/2)):\rH^1_{\rm unr}(\bQ_\ell,T_f(k/2))\right]
%\#\biggl(\frac{\rH^1_f(\bQ_\ell,T_f(k/2))}{\rH^1_{\rm unr}(\bQ_\ell,T_f(k/2))}\biggr)
\]
is the Tamagawa factor of $T_f(k/2)$ at $\ell$.
\end{thmintro}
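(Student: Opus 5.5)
The plan is to deduce Theorem~\ref{thmintro:TNC} from the main conjecture of Theorem~\ref{thmintro:kato} for the branch $\eta=\omega^{k/2}$ by descent to the trivial character of $\Gamma$. This branch is the one containing the central twist: the character $\chi_{\rm cyc}^{k/2}$ restricts to $\omega^{k/2}$ on $\Delta$. The outline has three stages.

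\textbf{Step 1: the analytic side.} Consider the specialisation of $\mathbf{z}_f^{{\rm int},\eta}$ at the character $\chi_{\rm cyc}^{k/2}$, i.e. the image of $\mathbf{z}_f^{\rm int}$ in $\rH^1(\bQ,T_f(k/2))$. Kato's explicit reciprocity law, in the form $\mathscr{L}^{\rm PR}_{\nu}$ evaluated at $\chi_{\rm cyc}^{k/2}$, computes the dual exponential of ${\rm res}_p$ of this class. The result is
\[
\frac{L(f,k/2)}{(-2\pi i)^{k/2}\Omega^\pm_{\omega_f}}
\]
multiplied by an Euler-type factor at $p$, paired with $\omega_f$. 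Since $p$ is good, $2\le k<p$, and the critical twist lies in the Fontaine--Laffaille range, this factor $(1-p^{k/2-1}/\alpha)(1-p^{k/2-1}/\beta)$ is a $p$-adic unit. In the nonordinary case this holds because $v_p(\alpha),v_p(\beta)>0$ with sum $k-1$, and one checks that no cancellation occurs. Integral comparison of $\exp^*$ with the Fontaine--Laffaille lattice then shows that the index of the image of the localised class in $\rH^1_{/f}(\bQ_p,T_f(k/2))$ equals the $\scO$-index of the normalised $L$-value. This uses the normalisation of $\Omega^\pm_{\omega_f}$ in Definition~\ref{def:p-periods}.

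\textbf{Step 2: the algebraic side via a control theorem.} Here we specialise the equality of characteristic ideals in Conjecture~\ref{introconj:kato} at the augmentation ideal of $\Lambda_{\scO}(\Gamma)$ (after twisting by $\chi_{\rm cyc}^{-k/2}$). Since $L(f,k/2)\ne0$, the specialised class $z$ is nonzero and non-torsion, so $\rH^1(\bQ,T_f(k/2))$ has rank one. Mazur's control theorem for the strict Selmer group, together with the vanishing of $\rH^0$-terms (large image), identifies the specialisations of both sides. The characteristic ideal of ${\rm Sel}_{\rm str}^\vee$ specialises to $\#{\rm Sel}_{\rm str}(\bQ,T_f^\vee(1-k/2))$ multiplied by the Tamagawa factors at $\ell\mid N$. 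These factors arise from the difference between the full local cohomology at $\ell$ over $\bQ_\infty$ and the unramified condition, as in the standard computation of Greenberg and Kato. The other side, ${\rm char}(\rH^1_{\rm Iw}/(\mathbf{z}))$, specialises to $[\rH^1(\bQ,T_f(k/2)):\scO z]$. The descent computation is simplified by the absence of exceptional zeros: there is no trivial zero, because the Euler factor is a unit.

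\textbf{Step 3: combining.} The Poitou--Tate exact sequence
\[
0\to \rH^1(\bQ,T)/\scO z\to \rH^1_{/f}(\bQ_p,T)/\scO\,{\rm res}_p z\to {\rm Sel}(\bQ,T^\vee(1))^\vee\to{\rm Sel}_{\rm str}(\bQ,T^\vee(1))^\vee\to0
\]
then converts the equality from Step 2 into the Bloch--Kato formula. In particular the finiteness of ${\rm Sel}$ follows from ${\rm res}_p z\neq0$ (Kato), and the terms combine into the displayed identity.

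\textbf{Main obstacle.} The main obstacle is Step 2, the precise control theorem with the Tamagawa factors, and its compatibility with the integral normalisation of $\mathbf{z}_f^{\rm int}$ and the periods $\Omega^\pm_{\omega_f}$. I would first follow Kato's \S14 computation. The hypotheses that $N$ is squarefree and $p\nmid N$ are used to compute the local terms at $\ell\mid N$ directly.
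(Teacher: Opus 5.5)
Your top-level plan matches the paper's: use Theorem~\ref{thmintro:kato} for $\eta=\omega^{k/2}$, descend to the central twist, get the local term at $p$ from Kato's reciprocity law, and close with Poitou--Tate. Your Step~3 sequence would also finish the proof once Step~2 supplies the right input. But two intermediate claims are false as stated.

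\textbf{Step 1.} The factor $(1-p^{k/2-1}/\alpha)(1-p^{k/2-1}/\beta)=1-a_pp^{-k/2}+p^{-1}$ is in general not a $p$-adic unit.
\begin{itemize}
\item Since $v_p(\alpha)+v_p(\beta)=k-1>2(k/2-1)$, one of the two factors always has negative valuation.
\item For $k=2$ the product is $(p+1-a_p)/p$, of valuation $-1$.
\end{itemize}
What is true is that this factor is cancelled by the index of $\exp^*(\rH^1_{/f}(\bQ_p,T))$ relative to the Fontaine--Laffaille lattice, i.e.\ by the Bloch--Kato Tamagawa factor at $p$. This is Kato's computation in \cite[\S{14}]{Kato295}, quoted in the paper as \eqref{eq:kato-FL}. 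So your conclusion in Step~1 is right, but not for the reason you give.

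\textbf{Step 2 is the genuine gap.}
\begin{itemize}
\item For a torsion $\Lambda$-module $X$ with $X_\Gamma$ finite, the value of ${\rm char}_\Lambda(X)$ at the trivial character has index $\#X_\Gamma/\#X^\Gamma$, not $\#X_\Gamma$.
\item The descent map $\rH^1(G_{\bQ,S},\mathbf{T})_\Gamma\hookrightarrow\rH^1(G_{\bQ,S},T)$ has cokernel $\rH^2(G_{\bQ,S},\mathbf{T})^\Gamma$.
\end{itemize}
Nothing available rules out these finite invariant terms. Already for $\ell\mid N$, the characteristic ideal of $\rH^2(\bQ_\ell,\mathbf{T})$ has index ${\rm Eul}_\ell(f,k/2)$ at the trivial character, while its coinvariants $\rH^2(\bQ_\ell,T)$ have order ${\rm Eul}_\ell(f,k/2)\cdot{\rm Tam}_\ell$. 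So each of your two ``specialises to'' assertions can be off by such a term.

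Nor is a Mazur-type control theorem for the \emph{strict} Selmer group off the shelf. Its global-to-local map is not surjective: the cokernel has $\Lambda$-corank one, injecting into the dual of $\rH^1(G_{\bQ,S},\mathbf{T})$. Hence the cokernel of ${\rm Sel}_{\rm str}(\bQ,A)\to{\rm Sel}_{\rm str}(\bQ_\infty,A)^\Gamma$, with $A=T_f^\vee(1-k/2)$, depends on how global classes localise at $\ell\mid N$. Your argument can only work because the invariant terms on the two sides coincide and cancel, and that must be proved.

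\textbf{How the paper closes this.} It uses an Euler-characteristic argument.
\begin{enumerate}
\item It replaces $\mathbf{z}_f^{\rm int}$ by the $S$-imprimitive element $\mathbf{z}_f^{\{N\}}$ of \eqref{eq:zeta-def}.
\item It restates the main conjecture as: the preimage of $\mathbf{z}_f^{\{N\}}$ is a $\Lambda$-basis of ${\rm det}_\Lambda^{-1}\mathbf{R}\Gamma(G_{\bQ,S},\mathbf{T})$.
\item Base change of determinants then gives $\#\rH^2(G_{\bQ,S},T)=[\rH^1(G_{\bQ,S},T):\mathbf{z}_{f,0}^{\{N\}}]$ directly, which is \eqref{eq:TNC}. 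Here the $\Gamma$-invariants of $\rH^2(G_{\bQ,S},\mathbf{T})$ cancel automatically.
\end{enumerate}
Combine this with $\#\rH^2(\bQ_\ell,T)={\rm Eul}_\ell(f,k/2)\cdot{\rm Tam}_\ell$ and Poitou--Tate for the fine Selmer group. You get exactly the relation $[\rH^1(G_{\bQ,S},T):\scO z]=\#{\rm Sel}_{\rm str}(\bQ,A)\cdot\prod_{\ell\mid N}{\rm Tam}_\ell$ that your Step~3 needs. With this substituted for Step~2, your Step~3 (using ${\rm Sel}$ and ${\rm Sel}_{\rm str}$ in place of the paper's auxiliary $\mathfrak{Sel}$) completes the proof.
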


%Specialised to the case of weight $2$, Theorem~\ref{thmintro:TNC} yields the following result on the Birch--Swinnerton-Dyer conjecture for abelian varieties.
In particular, specialised to the case where $f$ is of weight $k=2$ %\footnote{In which case the $p$-regularity of $f$ is known by \cite{coleman-edixhoven}.} 
with  rational Fourier coefficients, Theorem~\ref{thmintro:TNC} recovers the main result of \cite{BSTW} on the $p$-part of the Birch--Swinnerton-Dyer formula in analytic rank zero,
and extends it to the case $a_p\neq 0$:

\begin{corintro}\label{corintro:BSD-E}
Let $E/\bQ$ be a semistable elliptic curve of conductor $N$, and $p>2$ be a  prime of good nonordinary reduction for $E$. If $L(E,1)\neq 0$ then $E(\bQ)$ and  $\Sha(E/\bQ)[p^\infty]$ are both finite, and
\[
{\rm ord}_p\biggl(\frac{L(E,1)}{\Omega_E}\biggr)={\rm ord}_p\biggl(\#\Sha(E/\bQ)[p^\infty]\cdot\prod_{\ell\mid N}c_\ell(E)\biggr),
\]
where $\Omega_E\in\mathbb{R}_{>0}$ is the positive N\'{e}ron period. Hence the $p$-part of the Birch--Swinnerton-Dyer formula holds for $E$.
\end{corintro}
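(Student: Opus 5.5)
The plan is to specialise Theorem~\ref{thmintro:TNC} to the weight-two newform $f=f_E$ attached to $E$ by modularity. First I check the hypotheses. Since $E$ is semistable, $N$ is squarefree, and $k=2<p$ because $p>2$. The alternative ``$k=2$ or \eqref{eq:rtn}'' is satisfied through $k=2$. For $p$-regularity, the roots of $x^2-a_px+p$ coincide only if $a_p^2=4p$. But $a_p\in\bZ$ and $p$ is odd, so $4p$ is not a square and this is impossible. (One can also appeal to the general result of Coleman--Edixhoven.)

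Next I identify the objects appearing in Theorem~\ref{thmintro:TNC}. We have $F=\bQ$ and $\scO=\bZ_p$ after completion, so the index $[\scO:\,\cdot\,]$ is $p^{{\rm ord}_p(\cdot)}$. Taking $T_f=T_pE$, which is a $G_\bQ$-stable lattice, the Weil pairing gives $T_f^\vee(1-k/2)=T_f^\vee\cong E[p^\infty]$ and $T_f(k/2)=T_pE(1)^{\ast}\cong T_pE$ up to the usual self-duality. Hence ${\rm Sel}(\bQ,T_f^\vee(1-k/2))$ is the classical $p^\infty$-Selmer group ${\rm Sel}_{p^\infty}(E/\bQ)$.

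With these identifications, finiteness of the Selmer group gives that $E(\bQ)\otimes\bQ_p/\bZ_p=0$ and that $\Sha(E/\bQ)[p^\infty]$ is finite, so $E(\bQ)$ has rank zero and is finite. The order of the Selmer group then equals $\#\Sha[p^\infty]\cdot\#(E(\bQ)[p^\infty])$. The torsion term is trivial: $p$ is a good supersingular prime, so $E(\bQ)[p]$ injects into $\tilde E(\mathbf{F}_p)[p]=0$, since for $p>2$ nonordinary forces $a_p=0$ and hence $\#\tilde E(\mathbf{F}_p)=p+1$. Thus $\#{\rm Sel}=\#\Sha(E/\bQ)[p^\infty]$.

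For the local factors, I would identify ${\rm Tam}_\ell(T_pE)$ with the $p$-part of the Tamagawa number $c_\ell(E)$. This is the standard computation $\rH^1_f/\rH^1_{\rm unr}\cong \Phi_\ell(\mathbf{F}_\ell)\otimes\bZ_p$, where $\Phi_\ell$ is the component group of the Néron model, which holds for $\ell\neq p$.

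The remaining step, and the one I expect to need care, is the comparison of periods. Here I must show that $(-2\pi i)\Omega^+_{\omega_f}$ from Definition~\ref{def:p-periods} agrees with $\Omega_E$ up to a $p$-adic unit. I would argue as follows. The integral normalisation in Definition~\ref{def:p-periods} is taken with respect to the lattice $T_f$ and the integral de Rham structure. Via Fontaine--Laffaille theory, which applies since $p>2$, the integral de Rham structure corresponds to the Néron differential $\omega_E$ at $p$ up to a unit. The ratio between the optimal curve and $E$, namely the Manin constant and the isogeny degree, is prime to $p$ because isogenies in the class have degree divisible only by primes at which $\bar\rho$ is reducible. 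Moreover $\bar\rho_{E,p}$ is irreducible, since $\bar\rho|_{G_{\bQ_p}}$ is irreducible at supersingular $p$, so all lattices are homothetic. Finally, the Manin constant of a semistable optimal curve is prime to odd $p$ by Mazur. Combining these identifications with the formula of Theorem~\ref{thmintro:TNC} gives the displayed equality of $p$-adic valuations, and hence the $p$-part of the Birch--Swinnerton-Dyer formula.
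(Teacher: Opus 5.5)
\paragraph{The gap: the period comparison.} Your reduction to Theorem~\ref{thmintro:TNC} is the same as the paper's, and the hypothesis checks are fine; your $a_p^2\neq 4p$ argument for $p$-regularity is a nice elementary substitute for Coleman--Edixhoven.

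The problem is that Definition~\ref{def:p-periods} does not normalise $\Omega^\pm_{\omega_f}$ through $T_f$ and an integral de Rham structure. It expands ${\rm per}_f(\omega_f)$ in generators $\delta^\pm$ of the \emph{Betti} lattice $M_{\cO}(f)^\pm$, the $f$-eigenpart of $\rH^1_c(Y_1(N)(\bC),\bZ)\otimes\cO$. Neither Fontaine--Laffaille theory nor homothety of Galois-stable lattices pins this lattice down. Homothety only says that $M_{\cO}(f)\otimes\bZ_p$ and $\phi^*\rH^1(E^{\rm opt}(\bC),\bZ_p)$ differ by some $p^a$. That $p^a$ is exactly the factor by which $\Omega^+_{\omega_f}$ would be off, so it is the quantity you must compute.

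Proving $a=0$ needs three things:
\begin{itemize}
\item \emph{Optimality}: the kernel of $J\to E^{\rm opt}$ is connected, so $\phi^*\rH^1(E^{\rm opt}(\bC),\bZ)$ is saturated in $\rH^1(X(\bC),\bZ)$.
\item \emph{Non-Eisenstein input}: since $\bar\rho_f$ is irreducible and the kernel of $\rH^1_c(Y_1(N))\to\rH^1(X_1(N))$ is Eisenstein, the two $f$-parts agree.
\item \emph{Passage from $X_0(N)$ to $X_1(N)$}: Mazur's Manin-constant theorem is about the $X_0(N)$-optimal curve, while the paper's lattice lives on $Y_1(N)$. You need $\pi^*\colon\rH^1(X_0(N),\bZ_p)[f]\to\rH^1(X_1(N),\bZ_p)[f]$ to be an isomorphism. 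This holds because the kernel of $J_0(N)\to J_1(N)$, the Shimura subgroup, is Eisenstein.
\end{itemize}
This Betti-side comparison is what the paper imports from \cite[Prop.~3.1]{GV}, giving $(-2\pi i)\Omega^+_{\omega_f}\sim_p\Omega_{E^{\rm opt}}$. Your remaining ingredients match the paper's last step and are correct: the Manin constant is prime to $p$ (the de Rham side, which Fontaine--Laffaille would at best reprove, and only after the lattice identification), and the isogeny $E^{\rm opt}\to E$ has degree prime to $p$. Also, the Manin constant compares $\phi^*\omega_{E^{\rm opt}}$ with $\omega_f$; it is not a ``ratio between the optimal curve and $E$''.

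\paragraph{Smaller slips.}
\begin{itemize}
\item \emph{Choice of lattice.} With the paper's convention ($V_f$ has Hodge--Tate weights $0$ and $1-k$), $T_f$ is the cohomological lattice, $T_f\cong T_pE^*\cong T_pE(-1)$, not $T_pE$. That is what gives $T_f^\vee\cong E[p^\infty]$ and $T_f(1)\cong T_pE$. Your final identifications are right, but the stated reason is not.
\item \emph{Torsion in the Selmer group.} The Kummer image in ${\rm Sel}_{p^\infty}(E/\bQ)$ is $E(\bQ)\otimes\bQ_p/\bZ_p$, which vanishes once $E(\bQ)$ is finite. So no torsion factor enters $\#{\rm Sel}$.
\item \emph{The case $p=3$.} ``$p>2$ nonordinary forces $a_p=0$'' is false at $p=3$, where $a_3=\pm3$ occurs. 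This is precisely the $a_p\neq0$ case the corollary advertises. What you actually need is $E(\bQ)[p]=0$, to read the display as the $p$-part of BSD (whose full form involves $\#E(\bQ)_{\rm tors}^2$). This still holds, since $\#\tilde E(\mathbf{F}_p)=p+1-a_p\equiv 1\pmod p$, or directly from irreducibility of $E[p]$.
\end{itemize}
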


%\begin{proof}
%Let $f\in S_2(\Gamma_0(N))$ be the newform associated to $E$ by \cite{BCDT}. In this case, the $p$-regularity hypothesis is known to hold by \cite{coleman-edixhoven}. Since the period $(-2\pi i)\cdot\Omega_{\omega_f}^+$ in Theorem~\ref{thmintro:TNC} associated to $f$ 
%is known to agree up to a $p$-adic unit with %the positive N\'{e}ron period  $\Omega_E\in\mathbb{R}_{>0}$, 
%$\Omega_E$ (see for example the discussion in \cite[\S{3.3}]{skinner-mult}), the result follows.
%\end{proof}

%Indeed, this follows from the fact that the period $\Omega_{\omega}^+$ in Corollary~\ref{corintro:BSD-A} can be taken to agree (up to a $p$-adic unit) with the positive N\'{e}ron period  $\Omega_E\in\mathbb{R}_{>0}$.

%\begin{proof}
%Immediate from Theorem~\ref{thmintro:BSD}, since in this case we can take $\Omega_{\gamma,\omega}$ to agree (up to a $p$-adic unit) with $\Omega_E$.
%\end{proof}

\begin{rem}
%Building on a different formulation of signed Iwasawa main conjectures, 
An earlier result towards the Birch--Swinnerton-Dyer conjecture for elliptic curves $E/\bQ$ at nonordinary primes $p>2$ with $a_p\neq 0$ was obtained in %\cite[Thm.~C]{CCSS} and 
\cite[Thm.~5.3]{sprung-IMC} conditional %on a conjecture about 
on the existence of signed Beilinson--Flach classes and   reciprocity laws relating them to certain $p$-adic $L$-functions (see Conjecture 3.33 in \emph{op.\,cit.}) akin to some of the results 
in $\S\ref{sec:BF}$ below.
\end{rem}

\begin{rem}
It should be possible to deduce from Theorem~\ref{thmintro:TNC} an extension of Corollary~\ref{corintro:BSD-E} to abelian varieties $A/\bQ$ of ${\rm GL}_2$-type (cf. \cite[Thm.~C]{CCSS}). 
%We leave the details to the interested reader.
\end{rem}

We also note that by a method introduced by Wei~Zhang in the ordinary case \cite{zhang-Kolyvagin}, Theorem~\ref{thmintro:TNC} also has applications to variants of Kolyvagin's conjecture \cite{kolyvagin-Sel}  
for nonordinary primes and higher weight modular forms as in the work of N.~Sweeting \cite{sweeting} and  E.~Da Ronche \cite{daronche}; %among others; 
and together with Theorem~\ref{thmintro:kato}, it should have applications to higher weight analogues of Kurihara's nonvanishing conjectures \cite{kurihara-TATA} and their refinements, as studied in \cite{BCGS,kim-AJM,kim-refined-GL2,c-sano}.

\subsection{Relation to prior work}

In \cite{wan-ss}, the third-named author outlined a proof of Kobayashi's signed main conjectures for elliptic curves $E/\mathbf{Q}$ 
at supersingular primes $p>2$ with $a_p=0$. 
%\begin{equation}\label{eq:ap=0}
%a_p:=p+1-\#E(\bF_p)=0.%\tag{most-SS}
%\end{equation} 
%(Note that this condition is automatic if $p>3$ as a consequence of the Hasse bounds.) 
At the time, the approach was based on a combination of the following three ingredients:
\begin{enumerate}
\item The ``upper bound'' divisibility in the signed main conjectures proved by Kobayashi \cite{kobayashi-ss}; %building on Kato's work \cite{Kato295};
\item The ``lower bound'' divisibility in an Iwasawa--Greenberg main conjecture for certain Rankin--Selberg convolutions obtained in the companion paper \cite{wan-nonord-GU31} via Eisenstein congruences on ${\rm GU}(3,1)$;
\item The construction of ``signed Beilinson--Flach elements'' attached to $E$ and an auxiliary imaginary quadratic field $K$, deduced from the work of Kings--Loeffler--Zerbes \cite{KLZ2} and Loeffler--Zerbes \cite{LZ-Coleman} and their explicit reciprocity laws, 
%relating them to the $p$-adic $L$-functions in (1) and (2), thus 
allowing to transfer the divisibility in (2) to a corresponding divisibility in (1). %thereby concluding the proof of the equality of characteristic ideals predicted by the main conjectures.
\end{enumerate}

Since then, %the Hida theory of semi-ordinary forms and (a slightly modified version of)
the approach to the lower bound divisibility in \cite{wan-nonord-GU31} (in particular, the foundational Hida theory for semi-ordinary forms) has been  developed in \cite{CLW}, while the construction of signed Beilinson--Flach elements for rational elliptic curves with $a_p=0$ %with the desired properties 
has been established in  \cite{BSTW} building on related work of B\"{u}y\"{u}kboduk--Lei \cite{BL-non-ord} and the detailed analysis of the geometry of the Coleman--Mazur  eigencurve at certain weight one points by Betina--Dimitrov--Pozzi \cite{betina-dimitrov-pozzi}. 
%thereby yielding a proof of the following theorem anticipated in \cite[Thm.~1.3]{wan-ss}.

%\begin{thmintro}[Burungale--Skinner--Tian--%Wan]\label{introthm:BSTW}
%Let $E/\bQ$ be an elliptic curve, and $p>2$ a prime of good supersingular reduction for $E$. Assume that $E$ is semistable, and if $p=3$, assume also that $a_3=0$. Then Kobayashi's signed main conjectures \cite[$\S{4}$]{kobayashi-ss} for the trivial character $\eta$ of $\Delta$ hold.
%\end{thmintro}

%Here $\Lambda=\bZ_p[\![\Gamma]\!]$ denotes the cyclotomic Iwasawa algebra, with $\Gamma={\rm Gal}(\bQ_\infty/\bQ)$ the Galois group of the cyclotomic $\bZ_p$-extension of $\bQ$, $\mathcal{L}^\circ(E)\in\Lambda$ denotes the signed $p$-adic $L$-function constructed by Pollack \cite{pollack}, and $\eta$ stands for a character of the torsion part of ${\rm Gal}(\bQ(\mu_{p^\infty})/\bQ)$. 

%******** 
Kobayashi's formulation of Iwasawa main conjectures in the supersingular setting, and his proof of an upper bound  divisibility building on Kato's work, has been extended to other settings, including:
\begin{itemize}
\item[(1)] Rational elliptic curves with good supersingular reduction at $p>2$ (allowing nonzero $a_p$ for $p=3$) in work of Sprung \cite{sprung-JNT};
\item[(2)] Elliptic newforms $f\in S_k(\Gamma_0(N))$ of weight $k\geq 2$ with $a_p=0$ in work of Lei \cite{Lei-PhD}; 
\item[(3)] Elliptic newforms $f\in S_k(\Gamma_0(N))$ of weight $k\geq 2$ with nonordinary reduction at $p>2$ (with $a_p$ not necessarily zero) in work Lei--Loeffler--Zerbes \cite{LLZ-AJM}.
\end{itemize} 

%In this paper we prove an analogue of Theorem~\ref{introthm:BSTW} in these more general settings. In fact, we will 
As mentioned above, to arrive at Theorem~\ref{thmintro:kato} we prove one of the signed main conjectures of Lei--Loeffler--Zerbes \cite{LLZ-AJM}; %for non-ordinary modular forms formulated in \cite{LLZ-AJM} 
since these are known to specialise to conjectures equivalent to Lei's %signed main conjectures 
in the case $a_p=0$, and to Kobayashi's and Sprung's %signed main conjectures 
in the case of rational elliptic curves, Theorem~\ref{thmintro:kato} also yields a proof of the result anticipated in \cite[Thm.~1.3]{wan-ss}\footnote{See also \cite{BSTW} for a recent detailed proof in this case building on \cite{CLW}.}  (not intended for publication) and its corresponding extension to the settings (1)--(3) above. Note however that the approach outlined in \emph{op.\,cit.} (and extended by Sprung \cite{sprung-IMC} to setting of rational elliptic curves at supersingular primes $p>2$ as in \cite{sprung-JNT}) differs markedly from the approach carried out in this paper. In particular, our construction of the signed logarithm maps \eqref{eq:Col-Log} and the proof of the associated explicit reciprocity law connecting ${\rm res}_{\overline{v}}(\mathcal{BF}^{j,\eta})$ to $\mathcal{L}_{\overline{v}}(f/K)^\eta$ is new (cf.  $\S\S\ref{subsec:ERL}$ and \ref{subsec:transfer}).

Finally, %since the main result of \cite{CLW} applies for general good primes, 
we note that it should be possible to adapt the methods in this paper to give a new proof %(under slightly different hypotheses) 
of the main result of \cite{SU} towards the two-variable Iwasawa main conjecture for modular forms in the $p$-ordinary case 
%under slightly different hypotheses 
(cf. \cite{CW-adv,fouquet-wan}). %(indeed, the main result of \cite{CLW} applies for general good primes . 
%We plan to study this in a future work.

%By the known equivalence between the signed main conjectures of \cite{LLZ-AJM} and Kato's main conjecture \cite[Conj.~12.10]{Kato295}, our results yield a proof of the latter in great generality when combined with the deformation-theoretic arguments developed in \cite{fouquet-wan}. 
%(in particular allowing primes $p>2$ of additive reduction).

\begin{rem}
In \cite{wan-nonord} (incorporated as part of \cite{fouquet-wan}), the third-named author outlined a different proof of Theorem~\ref{introconj:kato}. %In contrast to our method in this paper, %the strategy in 
Contrary to the approach via signed Iwasawa theory carried out in this paper, the approach in \emph{op.\,cit.} 
%%\emph{loc.\,cit.} is to deduce from \cite{Kato295}, \cite{CLW}, and 
directly builds on the \emph{unbounded} Beilinson--Flach classes of Loeffler--Zerbes \cite{LZ-Coleman} and a so-called ``unramified Iwasawa theory'' along an appropriately chosen line in a $2$-variable $p$-adic deformation of $T_f$. %The method here is more elegant and essential conceptually. %which is known to be equivalent to Conjecture~\ref{introconj:kato}. %by \cite{pott-cyc}). 
%
%The approach in this paper is 
%inspired by the outline given in \cite{wan-ss} (and recently carried out in \cite{BSTW}) for the proof of Kobayashi's main conjecture, and by 
% inspired by the work of the first-named author with \c{C}iperiani, Skinner, and Sprung \cite{CCSS} in the case of weight $2$.
\end{rem}

\subsection{Remarks on the hypotheses}

We give some comments on the need for the hypotheses in our Theorem~\ref{thmintro:kato} (and its applications).
\begin{enumerate}
\item As already noted, it should be possible to extend Theorem~\ref{thmintro:kato} to good ordinary primes $p>2$ by similar methods. (In some sense, this case is simpler, since there is no `signed decomposition' to be taken, and one can directly use the explicit reciprocity laws from  \cite{KLZ2}.) 
\item The weight restriction $k<p$ arises from our use of integral $p$-adic Hodge theory, in particular, the choice of a good basis of the Wach module $\mathbb{N}(T_f)$ using Fontaine--Laffaille theory. This condition (together with $p$-nonordinarity) implies irreducibility of the residual representation $\bar{\rho}_f\vert_{G_{\bQ_p}}$, a condition that should be added in general. It might be possible to go beyond the Fontaine--Laffaille range by building on very recent advances in integral $p$-adic Hodge theory\footnote{We are grateful to Matthias Flach for informative remarks concerning this point.}, a direction that we intend to explore in future work.
\item The hypothesis that $N$ be squarefree is carried over from \cite{CLW}, and it should be removable with some more work.
\item The $p$-regularity hypothesis on $f$ is carried over from \cite{LZ-Coleman}, but it is also used in some of our arguments here. It might be possible to remove it building on a different construction of $p$-adic $L$-functions interpolating Rankin--Selberg $L$-values, as explained in  \cite[\S{4.7}]{fouquet-wan}.
\item The restriction to the $\eta=\omega^{k/2}$-isotypic component for the action of $\Delta$ arises from our use of anticyclotomic $p$-adic $L$-functions interpolating (twists of) the central $L$-values $L(f/K,k/2)$, and seems essential to our method.
\item Finally, the last hypothesis in Theorem~\ref{thmintro:kato} could be significantly relaxed by a suitable application of the anticyclotomic $\mu=0$ results of \cite{ChHs1} in higher weight (cf. $\S\ref{subsec:ac-descent-def}$). 
%extending the application of these results in the weight $2$ (adding to our application of these result for $k=2$ in Section~\ref{subsec:ac-def}).
%similar results in \cite{hsieh} and \cite{vatsal-special}, respectively).
\end{enumerate}

\subsection{Acknowledgements}
We would like to thank Yukako Kezuka, Masato Kurihara, and Takamichi Sano for helpful discussions, especially in relation to the application of our results to the Tamagawa number conjecture. We would also like to thank Matthias Flach, Chan-Ho Kim, and Antonio Lei for useful comments on an earlier draft. 

During the preparation of this paper, F.C. was partially supported by the NSF grant DMS-2401321 and the 2024--2025 AMS Centennial Research Fellowship; Z.L. was partially supported by the NSF grant DMS-2501507; X.W. was partially supported by the NSFC grants 12288201, 11621061, CAS Project for
Young Scientists in Basic Research grant no. YSBR-033, and the Strategic Priority Research Program of Chinese Academy of Sciences
under Grant XDA0480503.

%\newpage

%\input{1intro-c}

%\section{Preliminaries}\label{sec:prelim}
%\input{2prelim-c}

%\section{Two-variable Iwasawa theory}
%\input{3ERL-c}

%\section{Main results}
%\input{4proofs-c}

%\newpage

\section{Preliminaries}\label{sec:prelim}

The goal of this section is to give a precise formulation of Conjectures~\ref{introconj:kato} and \ref{introconj:LLZ} in our setting and explain their equivalence.  
%The results in this section are essentially contained in the work of Lei--Loeffler--Zerbes, see esp. \cite{LLZ-AJM,LLZ-ANT,LLZ-Sha}.
%
%Our proof of Kato's main conjecture for supersingular primes will rely on a reformulation due to Lei--Loeffler--Zerbes \cite{LLZ-AJM} in terms of signed Selmer groups defined using Wach modules. In this section, we briefly review the aspects of their work that will be needed for our later arguments. %later in the paper.
%
%
Throughout we fix a prime $p>2$, and complex and $p$-adic embeddings $\iota_\infty:\overline{\bQ}\hookrightarrow\bC$, $\iota_p:\overline{\bQ}\hookrightarrow\overline{\bQ}_p$.
%while setting up our notations and conventions along the way.

\subsection{Modular forms}\label{subsec:modforms}

Let $f=\sum_{n=1}^\infty a_nq^n\in S_k(\Gamma_0(N))$ be a  newform of weight $k\geq 2$, level $N$ with $p\nmid N$, and trivial nebentypus. Throughout we assume that
\begin{equation}\label{eq:FL}
k<p.\tag{FL}
\end{equation}

Let $F=\bQ(\{a_n\}_{n})$ be the Hecke field of $f$, viewed inside $\overline{\bQ}$ via $\iota_\infty$, and let $L$ denote the completion of the image of ${F}$ under $\iota_p$. Let $\scO$ be the ring of integers of $L$, and assume that  $f$ is \emph{nonordinary} at $p$, meaning that 
\begin{equation}\label{eq:nonord}
\vert\iota_p(a_p)\vert<1.\nonumber
\end{equation}

Let $V_f$ be the $L$-linear Galois representation associated to $f$ by Deligne. % \cite{deligne-ell-adic}. 
We adopt the convention that the $p$-adic cyclotomic character $\varepsilon:G_{\bQ_p}\rightarrow\bZ_p^\times$ 
has Hodge--Tate weight $+1$, so the Hodge--Tate weights of  $V_f\vert_{G_{\bQ_p}}$ are $0$ and $1-k$.  Let $T_f$ denote the canonical $G_\bQ$-stable $\scO$-lattice in $V_f$ defined by Kato in \cite[\S{8.3}]{Kato295}.

\subsection{Iwasawa algebras}

Let $\widetilde{\Gamma}$ be the Galois group of the cyclotomic $\bZ_p^\times$-extension $\bQ(\mu_{p^\infty})/\bQ$; this decomposes as a direct product
\[
\widetilde{\Gamma}\cong\Delta\times\Gamma,
\]
where $\Gamma$ denotes the Galois group of the cyclotomic $\bZ_p$-extension $\bQ_\infty/\bQ$, and $\Delta={\rm Gal}(\bQ(\mu_p)/\bQ)$ is cyclic of order $p-1$. %\cong\bZ/(p-1)\bZ$. 
Put
\[
%\widetilde{\Lambda}=
\Lambda_{\scO}(\widetilde{\Gamma}):=\scO[\![\widetilde{\Gamma}]\!]\cong\scO[\Delta][\![\Gamma]\!],\quad\quad
%\Lambda=
\Lambda_{\scO}(\Gamma):=\scO[\![{\Gamma}]\!]
\]
for the cyclotomic Iwasawa algebras. 
%and set  $\widetilde{\Lambda}=\Lambda_{\cO}(\widetilde{\Gamma})$ and $\Lambda=\Lambda_{\cO}(\Gamma)$ for the ease of notation.  
Upon the choice of a topological generator $\gamma\in\Gamma$, we shall often identify $\Lambda$ with the formal  power series ring $\scO[\![X]\!]$ via $X=\gamma-1$.

Let $\mathcal{H}_L\subset L[\Delta][\![X]\!]$ be the subring of power  series convergent on the $p$-adic unit disk $\vert X\vert<1$ and put
\[
\mathcal{H}_L(\Gamma)=\{h(\gamma-1)\;\vert\;h\in\mathcal{H}_L\};
\]
this is identified with the ring of locally analytic $L$-valued distributions on $\Gamma$, and it naturally contains $\Lambda_{\scO}(\Gamma)$ as the subring of $\scO$-valued measures on $\Gamma$. Put also 
\[
\Lambda_L(\Gamma):=L\otimes_{\scO}\Lambda_\scO(\Gamma),
\] 
which is similarly identified with the subring of bounded elements in $\mathcal{H}_L(\Gamma)$, and 
\[
\Lambda_L(\widetilde{\Gamma}):=L[\Delta]\otimes_{L}\Lambda_L(\Gamma),\quad\quad\mathcal{H}_L(\widetilde{\Gamma}):=L[\Delta]\otimes_L\mathcal{H}_L(\Gamma).
\]

\subsection{Signed Coleman maps}

In this section we recall the definition of signed Coleman maps originating from the work of Lei--Loeffler--Zerbes \cite{LLZ-AJM}, where such maps are constructed from suitable bases of the Wach module of $T_f$ exploiting its $\Lambda_{\scO}(\widetilde{\Gamma})$-module structure through the Mellin transform.

%Following the work of Lei--Loeffler--Zerbes \cite{LLZ-AJM}, a construction of ``signed'' Coleman maps for $f$ arises from a choice of basis of the Wach module of $V_f$. These Coleman maps are used both to decompose the unbounded $p$-adic $L$-function of $f$ and to define bounded signed Selmer groups over the cyclotomic $\bZ_p$-extension. In this section we recall the construction and a good choice of basis for which one has a tight control on the image of the resulting Coleman maps. 

\subsubsection{Wach modules}

Put $\mathbb{A}_{\bQ_p}^+=\scO[\![\pi]\!]$ for the power series ring\footnote{Note that this ring is most often denoted $\scO\otimes_{\bZ_p}\mathbb{A}_{\bQ_p}^+$ with $\mathbb{A}_{\bQ_p}^+=\bZ_p[\![\pi]\!]$, but since our coefficient ring will be fixed to be $\scO$, the extension of scalars will be omitted for the ease of notation.} over $\scO$ in the  variable $\pi$.  
The ring $\mathbb{A}_{\bQ_p}^+$ is equipped with $\scO$-linear actions of $\varphi$ and $\widetilde{\Gamma}$ defined by 
\[
\varphi(\pi)=(1+\pi)^p-1,\quad\quad\sigma(\pi)=(1+\pi)^{\varepsilon(\sigma)}-1 
\]
for $\sigma\in\widetilde{\Gamma}$. 
%where $\chi:G_{\bQ_p}\rightarrow\bZ_p^\times$ is the cyclotomic character. 
Put  $\mathbb{B}_{\bQ_p}^+=\mathbb{A}_{\bQ_p}^+[1/p]\subset L[\![\pi]\!]$, and let $\mathbb{B}_{{\rm rig},\bQ_p}^+$ be the subring of $L[\![\pi]\!]$ of formal power series convergent in the $p$-adic unit disk $\vert\pi\vert<1$. The actions of $\varphi$ and $\widetilde{\Gamma}$ extend to these larger rings, and %in all these rings 
$\varphi$ has a left inverse $\psi$ satisfying
\[
(\varphi\circ\psi)(g)(\pi)=\frac{1}{p}\sum_{\zeta\in\mu_p}g(\zeta(1+\pi)-1).
\]
One easily checks that $\psi(1+\pi)=0$, and that the action of $\widetilde{\Gamma}$ on $1+\pi$ extends to isomorphisms
\begin{equation}\label{eq:mellin}
\Lambda_{\scO}(\widetilde{\Gamma})\cong(\mathbb{A}_{\bQ_p}^+)^{\psi=0},\quad\quad\Lambda_L(\widetilde{\Gamma})\cong(\mathbb{B}_{\bQ_p}^+)^{\psi=0},\quad\quad\mathcal{H}_L(\widetilde{\Gamma})\cong(\mathbb{B}_{{\rm rig},\bQ_p}^+)^{\psi=0}
\end{equation}
of $\Lambda_{\scO}(\widetilde{\Gamma})$-, $\Lambda_L(\widetilde{\Gamma})$-, and $\mathcal{H}_L(\widetilde{\Gamma})$-modules, respectively, all of which will be denoted by $\mathfrak{M}$ and referred to as the \emph{Mellin transform}.

Let $\mathbb{N}(T_f)\subset\mathbb{N}(V_f)$ be the Wach modules attached to the $G_{\bQ_p}$-representations  $T_f\subset V_f$ by \cite{berger-limit}. These are free rank $2$ modules over $\mathbb{A}_{\bQ_p}^+$ and $\mathbb{B}_{\bQ_p}^+$, respectively, with compatible actions of $\varphi$ and $\widetilde{\Gamma}$. By Corollary~III.4.5 in \emph{op.\,cit.} there is an isomorphism of filtered $\varphi$-modules
\begin{equation}\label{eq:Berger-iso-V}
\mathbb{N}(V_f)/\pi\mathbb{N}(V_f)\cong\mathbb{D}_{\rm cris}(V_f),
\end{equation} 
and the image of $\mathbb{N}(T_f)$ under this isomorphism defines an $\scO$-lattice $\mathbb{D}_{\rm cris}(T_f)\subset\mathbb{D}_{\rm cris}(V_f)$.

\begin{lem}\label{lem:good-int-basis}
%Let $f\in S_k(\Gamma_0(N))$ be a newform satisfying
%\begin{equation}\label{eq:FL}
%k< p.\tag{FL}
%\end{equation}
There exists an $\scO$-basis $(v_1,v_2)$ of $\mathbb{D}_{\rm cris}(T_f(k-1))$ with 
\begin{itemize}
\item[(i)] $v_1$ an $\scO$-module generaotor of ${\rm Fil}^0\,\mathbb{D}_{\rm cris}(T_f(k-1))\subset\mathbb{D}_{\rm cris}(T_f(k-1))$;
\item[(ii)] $v_2=\varphi(v_1)$. 
\end{itemize}
In particular, the matrix $A_\varphi$ of $\varphi$ %acting on $\mathbb{D}_{\rm cris}(V_f(k-1))$ 
with respect to such basis $(v_1,v_2)$ is given by
\[
A_\varphi=\left(\begin{array}{cc}0&-1/p^{k-1}\\
1& a_p/p^{k-1}
\end{array}
\right).
\]
\end{lem}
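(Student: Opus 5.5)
Our approach is to work on the Fontaine--Laffaille side. Since $k<p$ and $p\nmid N$, $V_f(k-1)$ is crystalline with Hodge--Tate weights $k-1$ and $0$ (in our normalisation, where $\varepsilon$ has weight $+1$). These weights lie in an interval of length $k-1<p-1$. By Berger's comparison (\cite{berger-limit}, which identifies Wach modules with strongly divisible lattices in the Fontaine--Laffaille range), the lattice $M:=\mathbb{D}_{\rm cris}(T_f(k-1))$ coming from $\mathbb{N}(T_f(k-1))/\pi$ is a strongly divisible Fontaine--Laffaille module. Concretely, this means three things. First, ${\rm Fil}^i M=M\cap{\rm Fil}^i\mathbb{D}_{\rm cris}(V_f(k-1))$ is a direct summand. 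Second, the divided Frobenii $\varphi_i=p^{-i}\varphi$ satisfy $\varphi_i({\rm Fil}^iM)\subset M$. Third, $\sum_i\varphi_i({\rm Fil}^iM)=M$.

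With the Hodge filtration jumps of $\mathbb{D}_{\rm cris}(V_f(k-1))$ at $1-k$ and $0$, the graded pieces are ${\rm Fil}^{1-k}M=M$ and ${\rm Fil}^0M=\mathcal{O}v_1$, a saturated rank one submodule. The strong divisibility condition then reads
\[
M=\varphi({\rm Fil}^0M)+p^{k-1}\varphi(M)=\mathcal{O}\varphi(v_1)+p^{k-1}\varphi(M).
\]
The key steps, in order, are as follows.

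First, fix $v_1$ to be an $\mathcal{O}$-generator of ${\rm Fil}^0M$; this is possible since ${\rm Fil}^0M$ is a saturated rank one submodule of the free rank two module $M$.

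Second, set $v_2=\varphi(v_1)$ and show that $(v_1,v_2)$ is an $\mathcal{O}$-basis of $M$. Since $v_1$ is part of a basis, write $M=\mathcal{O}v_1\oplus\mathcal{O}w$. Then
\[
M=\mathcal{O}\varphi(v_1)+\mathcal{O}\,p^{k-1}\varphi(w).
\]
Reducing mod $\mathfrak{m}$, $\overline M$ is spanned by $\overline{\varphi(v_1)}$ and $\overline{p^{k-1}\varphi(w)}$. Using nonordinarity, one checks that $p^{k-1}\varphi$ is topologically nilpotent modulo $\varphi({\rm Fil}^0)$. Equivalently, if $v_1,\varphi(v_1)$ were not a basis, then $\overline M$ would be generated by $\overline{\varphi(v_1)}$ together with a line $\overline{v_1}$. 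That would force $\varphi(v_1)\in\mathfrak{m}M+\mathcal{O}v_1$, and hence the Frobenius slope structure would have a unit-root (ordinary) part, contradicting $\vert\iota_p(a_p)\vert<1$.

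I expect this second step to be the main obstacle. I would handle it via the characteristic polynomial. If $\varphi(v_1)=av_1+bw$ with $b\in\mathfrak{m}$, then $M=\mathcal{O}(av_1+bw)+p^{k-1}\varphi(M)$. This forces $a\in\mathcal{O}^\times$ and $p^{k-1}\varphi(w)\equiv$ a generator of $w$ mod $v_1$ and $\mathfrak{m}$. The matrix of $\varphi$ in the basis $(v_1,w)$ is then
\[
\begin{pmatrix}a&*\\ b&u/p^{k-1}\end{pmatrix},\qquad u\in\mathcal{O}^\times.
\]
Its trace $a+u/p^{k-1}$ must equal $a_p/p^{k-1}$ (the trace of Frobenius on $\mathbb{D}_{\rm cris}(V_f(k-1))$, since Frobenius on $\mathbb{D}_{\rm cris}(V_f)$ has characteristic polynomial $x^2-a_px+p^{k-1}$, up to the usual normalisation, and twisting by $k-1$ scales it by $p^{-(k-1)}$). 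Thus $a_p=ap^{k-1}+u$ is a unit, contradicting nonordinarity. So $b\in\mathcal{O}^\times$, and $(v_1,\varphi(v_1))$ is a basis.

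Third, compute $A_\varphi$. By construction $\varphi(v_1)=v_2$, which gives the first column $(0,1)^t$. For the second column, Cayley--Hamilton gives $\varphi^2-t\varphi+d=0$, with trace $t=a_p/p^{k-1}$ and determinant $d=p^{k-1}/p^{2(k-1)}=1/p^{k-1}$. Hence
\[
\varphi(v_2)=\varphi^2(v_1)=-\tfrac{1}{p^{k-1}}v_1+\tfrac{a_p}{p^{k-1}}v_2,
\]
which is exactly the stated matrix. The remaining routine point is to verify the normalisation of the trace and determinant under the twist, given the convention that the Hodge--Tate weights of $V_f$ are $0$ and $1-k$.
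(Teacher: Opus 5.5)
Your proposal is correct and follows essentially the same route as the paper. The paper's proof simply cites \cite[Lem.~3.1]{LLZ-Sha}, which rests on the strong divisibility of $\mathbb{D}_{\rm cris}(T_f(k-1))$ in the Fontaine--Laffaille range, and then reads off $A_\varphi$ from $\varphi^2=a_pp^{1-k}\varphi-p^{1-k}$. Your trace argument supplies the details behind that citation: from $M=\mathcal{O}\varphi(v_1)+\mathcal{O}\,p^{k-1}\varphi(w)$, the assumption $b\in\mathfrak{m}$ forces $a_p=ap^{k-1}+u$ with $u$ a unit, contradicting nonordinarity. The heuristic sentence about $p^{k-1}\varphi$ being ``topologically nilpotent'' is imprecise and can be dropped, since the characteristic-polynomial computation alone does the work.
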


\begin{proof}
The existence of bases $(v_1,v_2)$ as claimed under the Fontaine--Laffaille condition \eqref{eq:FL} is shown in \cite[Lem.~3.1]{LLZ-Sha}. Note that the proof uses the fact that $\mathbb{D}_{\rm cris}(T_f(k-1))$ is then a \emph{strongly divisible} lattice of $\mathbb{D}_{\rm cris}(V_f(k-1))$ in the sense of \cite[\S{7.7}]{FL}. The form of $A_\varphi$ then follows from the relation $\varphi^2=a_pp^{1-k}\varphi-p^{1-k}$.
\end{proof}

Note that by definition %of $\mathbb{N}(T_f(k-1))$ 
we have an isomorphism
\begin{equation}\label{eq:Berger-iso-T}
\mathbb{N}(T_f(k-1))/\pi\mathbb{N}(T_f(k-1))\cong\mathbb{D}_{\rm cris}(T_f(k-1))
\end{equation}
coming from \eqref{eq:Berger-iso-V}.

%Put
%\[
%P_0=\left(\begin{array}{cc}
%0&-\frac{1}{q^{k-1}}\\
%\delta &\frac{a_p}{q^{k-1}}
%\end{array}\right)\in M_{2\times 2}(\mathbb{A}_{\bQ_p}^+),
%\]
%where $\delta=p/(q-\pi^{p-1})\in(\mathbb{A}_{\bQ_p}%^+)^\times$, and $q=\varphi(\pi)/\pi$.

\begin{lem}\label{lem:Wach-basis}
%Let $f\in S_k(\Gamma_0(N))$ be a newform satisfying  \eqref{eq:FL}. Then 
There exists an $\scO$-basis $(v_1,v_2)$ of $\mathbb{D}_{\rm cris}(T_f(k-1))$ as in Lemma~\ref{lem:good-int-basis} and an $\mathbb{A}_{\bQ_p}^+$-basis $(n_1,n_2)$ of $\mathbb{N}(T_f(k-1))$ lifting $(v_1,v_2)$ under \eqref{eq:Berger-iso-T} and  satisfying the following conditions:
\begin{itemize}
\item[(i)] The matrix $P_\varphi$ of $\varphi$ acting on $\mathbb{N}(T_f(k-1))$ with respect to the basis $(n_1,n_2)$ satisfies
\[
P_\varphi\equiv P_0\pmod{\pi},\quad\textrm{where}\quad{P_0=\left(\begin{array}{cc}
0&-1/q^{k-1}\\
\delta^{k-1} & a_p/q^{k-1}
\end{array}\right)}
\]
with $q=\varphi(\pi)/\pi$ and $\delta=p/(q-\pi^{p-1})$; %\in(\mathbb{A}_{\bQ_p}^+)^\times$.
\item[(ii)] $((1+\pi)\varphi(n_1),(1+\pi)\varphi(n_2))$ is a $\Lambda_{\scO}(\widetilde{\Gamma})$-basis of $(\varphi^*\mathbb{N}(T_f(k-1)))^{\psi=0}$.
\end{itemize}
\end{lem}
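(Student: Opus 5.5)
The plan is to follow the construction of Lei--Loeffler--Zerbes \cite[\S3]{LLZ-AJM}, as refined in \cite[Lem.~3.1]{LLZ-Sha}, using Berger's description of Wach modules of crystalline representations in the Fontaine--Laffaille range \cite{berger-limit}. We start from a basis $(v_1,v_2)$ of $\mathbb{D}_{\rm cris}(T_f(k-1))$ as in Lemma~\ref{lem:good-int-basis}. The representation $T_f(k-1)$ has Hodge--Tate weights $0$ and $k-1$ with $k-1\leq p-2$. By Berger's theorem (\cite[\S{IV}]{berger-limit}), an integral Wach module can be built from a strongly divisible lattice. We look for $\mathbb{N}(T_f(k-1))$ with a basis $(n_1,n_2)$ lifting $(v_1,v_2)$, on which $\varphi$ acts by a matrix $P_\varphi\in M_2(\mathbb{A}_{\bQ_p}^+)$ with $P_\varphi\equiv P_0\pmod{\pi}$, where $P_0$ is obtained from $A_\varphi$ by replacing $p$ by $q$ (and $1$ by $\delta^{k-1}$) so as to respect the filtration.

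Concretely, we first check that $P_0\bmod\pi$ equals $A_\varphi$. Since $q\equiv p\pmod{\pi}$ and $\delta\equiv1\pmod\pi$, this holds. Next we check that $\det P_0=\delta^{k-1}/q^{k-1}$, which is $q^{-(k-1)}$ times a unit; this is the determinant shape required for a Wach module with weights $0,k-1$. We then need a $\widetilde{\Gamma}$-action compatible with $\varphi$. Following Berger, we seek matrices $G_\gamma\in{\rm GL}_2(\mathbb{A}^+_{\bQ_p})$ with $G_\gamma\equiv I\pmod\pi$ and $P_0\,\varphi(G_\gamma)=G_\gamma\,\gamma(P_0)$. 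These are solved successively modulo $\pi^n$. The obstruction at each step lies in a quantity that is killed because the eigenvalues of $\varphi$ on $\pi^n$-graded pieces are separated; this uses $k-1<p-1$, which is exactly the Fontaine--Laffaille range. The resulting $\mathbb{A}^+_{\bQ_p}$-module with $(\varphi,\Gamma)$-action is a Wach module, hence equals $\mathbb{N}(T')$ for some lattice $T'\subset V_f(k-1)$. Its reduction mod $\pi$ is the strongly divisible lattice $\mathbb{D}_{\rm cris}(T_f(k-1))$, and the Fontaine--Laffaille equivalence then forces $T'=T_f(k-1)$. This yields (i). Alternatively, and perhaps more cleanly, we can simply cite \cite[Lem.~3.1 and \S3]{LLZ-Sha} and \cite[Prop.~V.2.3]{berger-limit} for this step.

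For (ii), we invoke \cite[Thm.~3.5]{LLZ-AJM}. There it is shown that if $\mathbb{N}(T)$ has Hodge--Tate weights in $[0,k-1]$ and $(n_1,n_2)$ is a basis lifting a basis of $\mathbb{D}_{\rm cris}$ adapted to the filtration, then $(1+\pi)\varphi(n_i)$ form a $\Lambda_{\scO}(\widetilde{\Gamma})$-basis of $(\varphi^*\mathbb{N})^{\psi=0}$. The argument runs as follows. By the Mellin transform \eqref{eq:mellin}, $(\mathbb{A}^+_{\bQ_p})^{\psi=0}=\Lambda_{\scO}(\widetilde\Gamma)\cdot(1+\pi)$. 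Moreover, $\varphi^*\mathbb{N}$ is free over $\mathbb{A}^+_{\bQ_p}$ on $\varphi(n_1),\varphi(n_2)$, so $(\varphi^*\mathbb{N})^{\psi=0}=\bigoplus_i(\mathbb{A}^+_{\bQ_p})^{\psi=0}\varphi(n_i)$. The remaining point is that the $\widetilde\Gamma$-action on $\varphi(n_i)$ is congruent to the identity modulo $\varphi(\pi)$. This lets us replace the twisted action by the untwisted one via a Nakayama argument, so the elements $(1+\pi)\varphi(n_i)$ generate the module over $\Lambda_{\scO}(\widetilde{\Gamma})$ and are free.

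The main obstacle is the integrality in (i): making sure the lift exists over $\scO[\![\pi]\!]$, not merely over $\mathbb{B}^+$. This is where strong divisibility and the condition $k<p$ enter. We would rely on Berger's integral theory for this rather than reproving it.
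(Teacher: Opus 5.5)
Your part (i) is the paper's argument: Berger's Fontaine--Laffaille construction (\cite[Prop.~V.2.3]{berger-limit}, or \cite{LLZ-Sha} for $k\geq 3$) gives a basis with $P_\varphi=P_0$. The gap is in (ii).

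\textbf{What the paper does.} It does not claim that this particular basis satisfies (ii). It uses \cite[Thm.~3.5]{LLZ-AJM} to \emph{replace} $(n_1,n_2)$ by $(n_1,n_2)B$ with $B\equiv I_2\pmod{\pi}$ so that (ii) holds. This is why (i) is only stated as a congruence: the new matrix is $B^{-1}P_0\varphi(B)$.

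\textbf{Why your Nakayama argument fails.} You apply the cited result to your basis directly and justify this by a Nakayama argument, which does not work. Put $\mathcal{N}=(\varphi^*\mathbb{N}(T))^{\psi=0}$. From $\gamma(n_j)\equiv n_j\pmod{\pi}$ you only learn that the twisted and untwisted actions of $\gamma-1$ on $\mathcal{N}$ differ by an operator with values in $\varphi(\pi)\mathcal{N}$. But $\varphi(\pi)\mathcal{N}$ is the \emph{whole} image of $\gamma-1$ under the untwisted action; under the Mellin transform it corresponds to $(\gamma-1)\Lambda_{\scO}(\widetilde\Gamma)$. So the error has the same size as the main term and is not negligible modulo the radical $(\varpi,\gamma-1)$.

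Concretely, write $\gamma(n_1,n_2)=(n_1,n_2)\bigl(I_2+\pi A+O(\pi^2)\bigr)$ with $A\in M_{2\times 2}(\scO)$, and $\varepsilon(\gamma)=1+pu$ with $u\in\bZ_p^\times$. For $1\leq i\leq p-1$ one finds
\[
(\gamma-1)\bigl((1+\pi)^i\varphi(n_1),(1+\pi)^i\varphi(n_2)\bigr)\equiv(1+\pi)^i\varphi(\pi)\bigl(\varphi(n_1),\varphi(n_2)\bigr)\bigl(iu\,I_2+A\bigr)\pmod{\varphi(\pi)^2\mathcal{N}}.
\]
Hence $\Lambda_{\scO}(\widetilde\Gamma)^{\oplus 2}\to\mathcal{N}$ is surjective modulo the radical if and only if $iu\,I_2+A$ is invertible modulo $\varpi$ for every such $i$. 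That is a condition on the first-order term of the $\Gamma$-action, which a congruence modulo $\pi$ cannot supply.

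\textbf{How to repair it.} The conclusion you want is nevertheless true, and there are two easy repairs.
\begin{itemize}
\item Follow the paper: re-choose the basis via \cite[Thm.~3.5]{LLZ-AJM}, noting that $B^{-1}P_0\varphi(B)\equiv P_0\pmod{\pi}$, so (i) survives.
\item Observe that (ii) does not depend on the choice of $\mathbb{A}_{\bQ_p}^+$-basis. Suppose it holds for $(n_1,n_2)$ and let $(n_1',n_2')=(n_1,n_2)B$.
\begin{itemize}
\item Comparing dimensions over $\scO/\varpi$ gives $(\varpi,\gamma-1)\mathcal{N}=(\varpi,\varphi(\pi))\mathcal{N}$.
\item Hence the matrix in $M_{2\times 2}(\Lambda_{\scO}(\widetilde\Gamma))$ expressing $(1+\pi)\varphi(n_j')$ in terms of the $(1+\pi)\varphi(n_i)$ reduces to $B\bmod\pi$ modulo the radical.
\item It is therefore invertible, so once \cite{LLZ-AJM} provides one basis with (ii), your basis from (i) has it too.
\end{itemize}
\end{itemize}
A third route would also rescue your Nakayama argument: check directly that $A\equiv 0\pmod{\varpi}$ for Berger's basis, using the shape of $P_0$ modulo $\varpi$, $a_p\in\varpi\scO$, and $p\geq 3$. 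That computation is missing from the proposal.
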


\begin{proof}
In weight $k\geq 3$, this is shown in \cite[Thm.~3.3]{LLZ-Sha}, in which case one can even take $P_\varphi=P_0$ in part (i). In general, by the proof of \cite[Prop.~V.2.3]{berger-limit} we may choose bases $(v_1,v_2)$ and $(n_1,n_2)$ for which (i) holds with $P_\varphi=P_0$; then by \cite[Thm.~3.5]{LLZ-AJM} we may replace $(n_1,n_2)$ by an $\mathbb{A}_{\bQ_p}^+$-basis $(n_1',n_2')\equiv(n_1,n_2)\pmod{\pi}$ for which (ii) also holds. %whence the result.
\end{proof}

By \cite[Prop.~III.2.1]{berger-limit} we have an inclusion of $\mathbb{B}_{{\rm rig},\bQ_p}^+$-modules
\begin{equation}\label{eq:incl-Brig}
\mathbb{B}_{{\rm rig},\bQ_p}^+\otimes_{\mathbb{A}_{\bQ_p}^+}\mathbb{N}(T_f(k-1))\subset\mathbb{B}_{{\rm rig},\bQ_p}^+\otimes_{\scO}\mathbb{D}_{\rm cris}(T_f(k-1)).
\end{equation}
In particular, for any bases $(v_1,v_2)$ and $(n_1,n_2)$ as in Lemma~\ref{lem:Wach-basis}, which we fix from now on, there is a change of basis matrix
$M\in M_{2\times 2}(\mathbb{B}_{{\rm rig},\bQ_p}^+)$ such that
\begin{equation}\label{eq:change-of-matrix}
(n_1\;,\;n_2)=(v_1\;,\;v_2)\;M.
\end{equation}
In particular, $M\equiv I_2\pmod{\pi}$, where $I_2$ denotes the $2\times 2$ identity matrix. 

\begin{defn}\label{def:log-matrix}
The \emph{logarithm matrix} $M_{\rm log}\in M_{2\times 2}(\mathcal{H}_L(\widetilde{\Gamma}))$ associated to the bases $(v_1,v_2)$, $(n_1,n_2)$ is given by
\[
M_{\rm log}:=\mathfrak{M}^{-1}((1+\pi)A_\varphi\cdot\varphi(M)),
\]
where $M$ is the change of matrix \eqref{eq:change-of-matrix} and $\mathfrak{M}$ is the Mellin transform \eqref{eq:mellin}. 
%(see e.g. \cite[Cor.~B.2.8]{PR-st}), and $\psi$ is the left inverse of $\varphi$.
\end{defn}

\begin{rem}\label{rem:Mlog-Gamma}
One can check that in fact $M_{\rm log}$ is defined over $\mathcal{H}_L(\Gamma)$ (see \cite[\S{3.A}]{LLZ-ANT}).
\end{rem}

\begin{rem}\label{rem:Mlog-basis}
Since $1\mapsto 1+\pi$ under the Mellin transform,  replacing $(n_1,n_2)$ by another basis $(n_1',n_2')$ of $\mathbb{N}(T_f(k-1))$ with $n_i'\equiv n_i\;({\rm mod}\,\pi)$ leaves the resulting logarithm matrix $M_{\rm log}$ unchanged.
\end{rem}

\subsubsection{Signed Coleman maps}\label{subsubsec:signed-Col-cyc}

For any finite dimensional $\bQ_p$-vector space $V$ equipped with a continuous action of $G_{\bQ_p}$ put
\begin{align*}
{\rm H}_{\rm Iw}^1(\bQ_p(\mu_{p^\infty}),T)&:=\varprojlim_n\rH^1(\bQ_p(\mu_{p^n}),T),\\
{\rm H}^1_{\rm Iw}(\bQ_p(\mu_{p^\infty}),V)&:={\rm H}^1_{\rm Iw}(\bQ_p(\mu_{p^\infty}),T)\otimes_{\bZ_p}\bQ_p,
\end{align*}
where $T\subset V$ is any $G_{\bQ_p}$-stable lattice. Put
\[
V=V_f(k-1),\quad\quad T=T_f(k-1). 
\]
%and similarly  $T=T_f(k-1)$.
Note that for any $h\geq 1$ the $G_{\bQ_p}$-representation $V^*(1):={\rm Hom}_L(V,L(1))$ 
%has Hodge--Tate weights $\{2-k,1\}$, and so it 
satisfies ${\rm Fil}^{-h}\mathbb{D}_{\rm cris}(V^*(1))=\mathbb{D}_{\rm cris}(V^*(1))$ (indeed, in our conventions $V^*(1)$ has Hodge--Tate weights $\{2-k,1\}$). Hence by Perrin-Riou's work \cite{PR115}  
for any integer $h\geq 1$ there is a big exponential map
\[
\Omega_{V^*(1),h}:(\mathbb{B}_{{\rm rig},\bQ_p}^+)^{\psi=0}\otimes_{\bQ_p}\mathbb{D}_{\rm cris}(V^*(1))\longrightarrow\mathcal{H}_L(\widetilde{\Gamma})\otimes_{\Lambda_L(\widetilde{\Gamma})}\rH^1_{\rm Iw}(\bQ_p(\mu_{p^\infty}),V^*(1)),
\]
where $(\mathbb{B}_{{\rm rig},\bQ_p}^+)^{\psi=0}$ is being identified with $\mathcal{H}_L(\widetilde{\Gamma})$ via $\mathfrak{M}$, characterised by an interpolation property described in [\emph{op.\,cit.}, \S{3.2.3}]. (The construction of $\Omega_{V^*(1),h}$ depends on the choice of a compatible system $(\zeta_{p^n})_{n\geq 1}$ of $p$-power roots of unity, which we fix once and for all and omit from the notation.) Composing with the $\mathcal{H}_L(\widetilde{\Gamma})$-linear extension of the Perrin-Riou pairing
\[
\langle\,,\,\rangle_{V}:\rH^1_{\rm Iw}(\bQ_p(\mu_{p^\infty}),V)\times\rH^1_{\rm Iw}(\bQ_p(\mu_{p^\infty}),V^*(1))\longrightarrow\Lambda_L(\widetilde{\Gamma})
\]
given by $\langle(x_n)_n,(y_n)_n\rangle_V=\varprojlim_n\sum_{\tau}\langle x_n,y_n^\tau\rangle_{\rm Tate}\cdot\tau$, where $\tau$ runs over the elements in ${\rm Gal}(\bQ_p(\mu_{p^n})/\bQ_p)$ and $\langle\,,\,\rangle_{\rm Tate}:\rH^1(\bQ_p(\mu_{p^n}),T)\times\rH^1(\bQ_p(\mu_{p^n}),T^*(1))\rightarrow\scO$ is the Tate local duality pairing, 
for every $\Xi\in(\mathbb{B}_{{\rm rig},\bQ_p}^+)^{\psi=0}\otimes_{\bQ_p}\mathbb{D}_{\rm cris}(V^*(1))$ %taking $h=1$ 
we obtain a map
\begin{align*}
\mathscr{L}_{V,\Xi}:\rH^1_{\rm Iw}(\bQ_p(\mu_{p^\infty}),V)&\longrightarrow\mathcal{H}_L(\widetilde{\Gamma})\\
\mathbf{z}&\longmapsto\langle\mathbf{z},\Omega_{V^*(1),1}(\Xi)\rangle_V.
\end{align*}
In particular, letting $(v_1^*,v_2^*)$ denote the basis of $\mathbb{D}_{\rm cris}(T^*(1))$ dual to $(v_1,v_2)$ under the natural pairing $[\,,\,]_{\rm dR}:\mathbb{D}_{\rm cris}(T)\times\mathbb{D}_{\rm cris}(T^*(1))\rightarrow\scO$, a key result of Berger \cite[Thm.~II.13]{berger-explicit} (see also \cite[Cor.~3.24]{LLZ-AJM}) is the description of the resulting Perrin-Riou's \emph{big logarithm map}\footnote{Or \emph{big $p$-adic regulator map}.}  
\begin{equation}\label{eq:PR-Qp}
\mathscr{L}_{V}:=(v_1\;,\;v_2)\biggl(\begin{array}{cc}
\mathscr{L}_{V,(1+\pi)\otimes v_1^*}\\
\mathscr{L}_{V,(1+\pi)\otimes v_2^*}
\end{array}\biggr):
\rH^1_{\rm Iw}(\bQ_p(\mu_{p^\infty}),V)\longrightarrow\mathbb{D}_{\rm cris}(V)\otimes_{\bQ_p}\mathcal{H}_L(\widetilde{\Gamma})
\end{equation}
as the composition
\begin{equation}\label{eq:PR-Berger}
\begin{aligned}
\rH^1_{\rm Iw}(\bQ_p(\mu_{p^\infty}),V)\overset{\cong}\longrightarrow\mathbb{N}(V)^{\psi=1}
&\xrightarrow{1-\varphi}(\varphi^*\mathbb{N}(V))^{\psi=0}\\
%&\xrightarrow{{\rm id}\otimes 1}(\varphi^*\mathbb{N}(V))^{\psi=0}\otimes_{\Lambda_L(\widetilde{\Gamma})}\mathbb{B}_{{\rm rig},\bQ_p}^+\\
&\hooklongrightarrow\mathbb{D}_{\rm cris}(V)\otimes_{\bQ_p}(\mathbb{B}_{{\rm rig},\bQ_p}^+)^{\psi=0}\\
&\xrightarrow{\mathfrak{M}^{-1}}\mathbb{D}_{\rm cris}(V)\otimes_{\bQ_p}\mathcal{H}_L(\widetilde{\Gamma}),
\end{aligned}
\end{equation}
where the first arrow is given by Fontaine's isomorphism (see \cite[\S{II.1}]{CC} for a reference), as refined by Berger \cite{berger-explicit} in terms of Wach modules  (see also \cite[Thm.~2.6]{LLZ-Sha}), and the second arrow  is the composite embedding
\[
(\varphi^*\mathbb{N}_{}(V))^{\psi=0}\xrightarrow{{\rm id}\otimes 1}(\varphi^*\mathbb{N}(V))^{\psi=0}\otimes_{\Lambda_L(\widetilde{\Gamma})}\mathbb{B}_{{\rm rig},\bQ_p}^+\cong(\varphi^*\mathbb{N}_{\rm rig}(V))^{\psi=0}\subset\mathbb{D}_{\rm cris}(V)\otimes_{\bQ_p}(\mathbb{B}_{{\rm rig},\bQ_p}^+)^{\psi=0}
\]
explained in \cite[Prop.~2.11]{LLZ-ANT}, where $\varphi^*\mathbb{N}_{\rm rig}(V)$ denotes the $\mathbb{B}_{{\rm rig},\bQ_p}^+$-span of $\varphi(\mathbb{N}_{\rm rig}(V))$ for
\[
\mathbb{N}_{\rm rig}(V):=
\mathbb{N}(V)\otimes_{\mathbb{B}_{\bQ_p}^+}\mathbb{B}_{{\rm rig},\bQ_p}^+. 
\]

The first two maps in \eqref{eq:PR-Berger} admit integral versions, with $T$ in place of $V$. As introduced in \cite{LLZ-AJM}, this leads to the following decomposition of $\mathscr{L}_V$ is terms of \emph{signed Coleman maps} $\widetilde{\mathscr{C}}_i$ in the spirit of Kobayashi \cite{kobayashi-ss}.

\begin{prop}\label{prop:signed-Col-cyc}
Let $(n_1,n_2)$ be a $\mathbb{A}_{\bQ_p}^+$-basis of $\mathbb{N}(T)$ as in Lemma~\ref{lem:Wach-basis}, and for $i\in\{1,2\}$ define 
\[
\widetilde{\mathscr{C}}_i:\rH^1_{\rm Iw}(\bQ_p(\mu_{p^\infty}),T)\longrightarrow\Lambda_{\scO}(\widetilde{\Gamma})
\]
by the composition
\begin{align*}
\widetilde{\mathscr{C}}_i:\rH^1_{\rm Iw}(\bQ_p(\mu_{p^\infty}),T)\overset{\cong}\longrightarrow\mathbb{N}(T)^{\psi=1}
&\xrightarrow{1-\varphi}(\varphi^*\mathbb{N}(T))^{\psi=0}
\xrightarrow{{\rm pr}_i\circ\mathfrak{I}}\Lambda_{\scO}(\widetilde{\Gamma}),
\end{align*}
where the last arrow takes the $i$-th coordinate with respect to the basis $((1+\pi)\varphi(n_1),(1+\pi)\varphi(n_2))$ of $(\varphi^*\mathbb{N}(T))^{\psi=0}$. Then 
%we have the decomposition
\[
\mathscr{L}_{V}=(v_1\;,\;v_2)\cdot M_{{\rm log}}\cdot
\biggl(\begin{array}{cc}
\widetilde{\mathscr{C}}_1\\
\widetilde{\mathscr{C}}_2
\end{array}\biggr)
\]
where $v_i=n_i\;{\rm mod}\,\pi\in\mathbb{D}_{\rm cris}(T)$. %%(so $(v_1,v_2)$ is a $\cO$-basis of $\mathbb{D}_{\rm cris}(T)$ with $v_1\in{\rm Fil}^0\,\mathbb{D}_{\rm cris}(T)$ and $v_2=\varphi(v_1)$).
\end{prop}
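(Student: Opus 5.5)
The plan is to compare the two descriptions of $\mathscr{L}_V$ on the image of $1-\varphi$. By \eqref{eq:PR-Berger}, $\mathscr{L}_V(\mathbf{z})$ is obtained from $x:=(1-\varphi)(\mathbf{z})\in(\varphi^*\mathbb{N}(T))^{\psi=0}$ by viewing $x$ inside $\mathbb{D}_{\rm cris}(V)\otimes(\mathbb{B}^+_{{\rm rig},\bQ_p})^{\psi=0}$ and applying $\mathfrak{M}^{-1}$. Since the first two arrows in the definition of $\widetilde{\mathscr{C}}_i$ are the integral versions of those in \eqref{eq:PR-Berger}, it suffices to compute the image of $x$ in $\mathbb{D}_{\rm cris}(V)\otimes(\mathbb{B}^+_{{\rm rig},\bQ_p})^{\psi=0}$ in terms of its coordinates.

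First, by Lemma~\ref{lem:Wach-basis}(ii) we write
\[
x=\widetilde{\mathscr{C}}_1(\mathbf{z})\cdot(1+\pi)\varphi(n_1)+\widetilde{\mathscr{C}}_2(\mathbf{z})\cdot(1+\pi)\varphi(n_2),
\]
with $\widetilde{\mathscr{C}}_i(\mathbf{z})\in\Lambda_{\scO}(\widetilde{\Gamma})$ acting through $\widetilde{\Gamma}$; this is just the definition of ${\rm pr}_i\circ\mathfrak{I}$. Next, apply $\varphi$ to \eqref{eq:change-of-matrix}. Using that $\varphi$ is semilinear and that on $\mathbb{D}_{\rm cris}$ we have $(\varphi(v_1),\varphi(v_2))=(v_1,v_2)A_\varphi$ by Lemma~\ref{lem:good-int-basis}, we get
\[
(\varphi(n_1),\varphi(n_2))=(v_1,v_2)\,A_\varphi\,\varphi(M).
\]
This identity holds inside $\mathbb{B}^+_{{\rm rig},\bQ_p}\otimes\mathbb{D}_{\rm cris}(V)$ via \eqref{eq:incl-Brig}, whose compatibility with $\varphi$ is part of Berger's comparison.

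Multiplying by $1+\pi$ gives
\[
x=(v_1,v_2)\,\bigl((1+\pi)A_\varphi\varphi(M)\bigr)\cdot\binom{\widetilde{\mathscr{C}}_1(\mathbf{z})}{\widetilde{\mathscr{C}}_2(\mathbf{z})}.
\]
Here the right factor acts through the $\widetilde{\Gamma}$-action, i.e.\ via the $\mathcal{H}_L(\widetilde{\Gamma})$-module structure of $(\mathbb{B}^+_{{\rm rig},\bQ_p})^{\psi=0}$. The entries of $(1+\pi)A_\varphi\varphi(M)$ lie in $(\mathbb{B}^+_{{\rm rig},\bQ_p})^{\psi=0}$, because $\psi((1+\pi)\varphi(g))=g\,\psi(1+\pi)=0$. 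Applying $\mathfrak{M}^{-1}$, which is $\mathcal{H}_L(\widetilde{\Gamma})$-linear, turns each entry $(1+\pi)A_\varphi\varphi(M)_{ij}$ into $(M_{\rm log})_{ij}$ by Definition~\ref{def:log-matrix}. The result is $(v_1,v_2)M_{\rm log}\binom{\widetilde{\mathscr{C}}_1}{\widetilde{\mathscr{C}}_2}(\mathbf{z})$, as claimed.

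The main obstacle is the module-structure bookkeeping in this last step. One must check that the $\Lambda_{\scO}(\widetilde{\Gamma})$-action on $(\varphi^*\mathbb{N}(T))^{\psi=0}$ agrees with the $\mathcal{H}_L(\widetilde{\Gamma})$-action on $\mathbb{D}_{\rm cris}(V)\otimes(\mathbb{B}^+_{{\rm rig},\bQ_p})^{\psi=0}$ under the embedding of \cite[Prop.~2.11]{LLZ-ANT}. The issue is that $\widetilde{\Gamma}$ acts on both tensor factors, while $\mathfrak{M}$ only sees the $\mathbb{B}^+_{\rm rig}$-factor. I would first resolve this by citing the cited embedding, which is $\mathcal{H}_L(\widetilde{\Gamma})$-linear with $\widetilde{\Gamma}$ acting trivially on $\mathbb{D}_{\rm cris}(V)$. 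This is consistent because $\mathbb{N}_{\rm rig}(V)[1/t]=\mathbb{D}_{\rm cris}(V)\otimes\mathbb{B}^+_{\rm rig}[1/t]$ with trivial $\widetilde{\Gamma}$-action on $\mathbb{D}_{\rm cris}$. After that, the computation above is formal and follows \cite[\S3]{LLZ-AJM}.
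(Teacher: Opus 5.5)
Your proposal is correct and is the same argument as the paper's proof, which simply says the decomposition is clear from the description \eqref{eq:PR-Berger} of $\mathscr{L}_V$ and the definition of $M_{\rm log}$. You spell out exactly that computation: expand $(1-\varphi)\mathbf{z}$ in the basis $(1+\pi)\varphi(n_i)$, use $(\varphi(n_1),\varphi(n_2))=(v_1,v_2)A_\varphi\varphi(M)$, and apply the $\mathcal{H}_L(\widetilde{\Gamma})$-linear map $\mathfrak{M}^{-1}$. The $\widetilde{\Gamma}$-invariance of $\mathbb{D}_{\rm cris}(V)$ inside $\mathbb{B}^+_{{\rm rig},\bQ_p}[1/t]\otimes\mathbb{N}(V)$ is the right justification for the module-structure bookkeeping you flag.
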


\begin{proof}
This is clear from the description of $\mathscr{L}_V$ in  \eqref{eq:PR-Berger} and the definition of $M_{\rm log}$.
\end{proof}

\begin{rem}\label{rem:PR-alpha-beta}
Let $\alpha$ and $\beta$ be the roots of $x^2-a_px+p^{k-1}$, where $a_p$ is the $p$-th Fourier coefficient of $f\in S_k(\Gamma_0(N))$, and suppose  the $p$-regularity hypothesis $\alpha\neq\beta$.  
%\begin{equation}\label{eq:reg}
%\alpha\neq\beta. \nonumber %\tag{reg}
%\end{equation}
(As recalled in the Introduction, this is known to hold for $k=2$ by \cite{coleman-edixhoven}.) Then 
\[
v_\alpha:=\frac{1}{\alpha-\beta}(\alpha v_1-p^{k-1}v_2),\quad\quad v_\beta:=\frac{1}{\alpha-\beta}(-\beta v_1+p^{k-1}v_2)
\]
are $\varphi$-eigenvectors in $\mathbb{D}_{\rm cris}(T)$ with eigenvalues $1/\alpha$ and $1/\beta$, respectively, and for $\xi\in\{\alpha,\beta\}$ letting 
\[
%\mathscr{L}_{V}^\gamma=
\mathscr{L}_{V,(1+\pi)\otimes v_\xi^*}:\rH^1_{\rm Iw}(\bQ_p(\mu_{p^\infty}),V)\longrightarrow\mathcal{H}_L(\widetilde{\Gamma})
\]
denote the coordinate of $\mathscr{L}_V$ with respect to $v_\xi$, so $(v_\alpha^*,v_\beta^*)$ is the basis of $\mathbb{D}_{\rm cris}(V^*(1))$ dual to $(v_\alpha,v_\beta)$, the decomposition in Proposition~\ref{prop:signed-Col-cyc} can be rewritten as
\begin{equation}\label{eq:factor-L}
\biggl(\begin{array}{cc}
\mathscr{L}_{V,(1+\pi)\otimes v_\alpha^*}^{}\\
\mathscr{L}_{V,(1+\pi)\otimes v_\beta^*}^{}
\end{array}\biggr)
=%(v_\alpha\;,\;v_\beta)\cdot 
Q_{\alpha,\beta}^{-1} M_{{\rm log}}\cdot
\biggl(\begin{array}{c}
\widetilde{\mathscr{C}}_1\\
\widetilde{\mathscr{C}}_2
\end{array}\biggr),
\end{equation}
where $Q_{\alpha,\beta}=\frac{1}{\alpha-\beta}\left(\begin{smallmatrix}\alpha & -\beta\\ -p^{k-1}&p^{k-1}\end{smallmatrix}\right)$.
\end{rem}

\subsubsection{Images of the signed Coleman maps}

For every character $\eta:\Delta\rightarrow\bZ_p^\times$, applying the projector $e_\eta=\frac{1}{p-1}\sum_{\delta\in\Delta}\eta^{-1}(\sigma)\sigma\in\bZ_p[\Delta]$ to the $\eta$-isotypic component %for the action of $\widetilde{\Gamma}=\Delta\times\Gamma$, 
we get $\Lambda_\scO(\Gamma)$-module maps 
\begin{equation}\label{eq:Col-eta}
e_\eta\widetilde{\mathscr{C}}_i:\rH^1_{\rm Iw}(\bQ_p(\mu_{p^\infty}),T)^\eta\longrightarrow\Lambda_{\scO}(\widetilde{\Gamma})^\eta\cong\Lambda_{\scO}(\Gamma).
\end{equation}

%For $i\in\{1,2\}$, we let
%\begin{equation}\label{eq:Col-eta=1}
%\mathscr{C}_i%:=e_1\widetilde{\mathscr{C}}_i
%:
%\rH^1_{\rm Iw}(\bQ_{p,\infty},T)\cong e_1\rH^1_{\rm Iw}(\bQ_p(\mu_{p^\infty}),T)\longrightarrow\Lambda_{\scO}(\Gamma)
%\end{equation}
%denote the map $e_\eta\widetilde{\mathscr{C}}_i$ for the trivial character $\eta=1$ of $\Delta$, where $\bQ_{p,\infty}=\bQ_p(\mu_{p^\infty})^\Delta$ is the cyclotomic $\bZ_p$-extension of $\bQ_p$. 

For $m\geq 1$, put
\[
\delta_m:=\prod_{j=0}^{m-1}(u^{-j}\gamma-1)\in\Lambda_{\scO}(\Gamma),
\]
where $\gamma\in\Gamma$ and $u\in 1+p\bZ_p$ are topological generators.

\begin{prop}\label{prop:image-Col}
%Let $\eta$ be a character of $\Delta$ and $i\in\{1,2\}$.  Then then 
Let $i\in\{1,2\}$. For every character $\eta:\Delta\rightarrow\bZ_p^\times$, there
exists a factor $\xi_{\eta,i}$ of $\delta_{k-1}$ such that %we have the inclusion 
\[
{\rm Image}(e_\eta\widetilde{\mathscr{C}}_i)\subset \xi_{\eta,i}\Lambda_{\scO}(\Gamma)
\]
%and this has 
with finite index.
\end{prop}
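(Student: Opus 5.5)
The plan is to describe the image of $e_\eta\widetilde{\mathscr{C}}_i$ through the exact sequence defining $\widetilde{\mathscr{C}}_i$, in the spirit of \cite[\S{5}]{LLZ-AJM} (and \cite{LLZ-ANT}). By construction $\widetilde{\mathscr{C}}_i$ is ${\rm pr}_i\circ\mathfrak{I}\circ(1-\varphi)$ applied to $\mathbb{N}(T)^{\psi=1}\cong\rH^1_{\rm Iw}(\bQ_p(\mu_{p^\infty}),T)$. So the first step is to compute the image of $1-\varphi:\mathbb{N}(T)^{\psi=1}\to(\varphi^*\mathbb{N}(T))^{\psi=0}$. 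Since $V$ has no $G_{\bQ_p}$-invariants over $\bQ_p(\mu_{p^\infty})$ in the nonordinary setting, this image is determined by a Berger/Perrin-Riou type exact sequence. Concretely, $x\in(\varphi^*\mathbb{N}(T))^{\psi=0}$ lies in the image, up to finite index, if and only if a family of "evaluation" maps vanish. For each $0\le j\le k-2$ and each Dirichlet character of conductor dividing $p$, these maps send $x$ to the class of $\partial^j x$ modulo $\pi$ in $\mathbb{D}_{\rm cris}(T(j))/(1-p^j\varphi)$-type quotients, twisted appropriately.

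Equivalently, after applying $\mathfrak{M}^{-1}$, the image is cut out by conditions on the values of $\mathfrak{M}^{-1}(x)$ at the characters $\chi^j\eta$ with $\chi$ the cyclotomic character. I would invoke the result of \cite[Prop.~4.11/Thm.~5.10]{LLZ-AJM}, valid when $V$ has Hodge–Tate weights in $[1-k,0]$ and $k<p$. Fontaine–Laffaille guarantees the integrality needed to pass from $L$ to $\scO$ with only finite-index error.

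Next, fix $\eta$ and the coordinate $i$. The vanishing condition at the character $\chi^j\eta$ is a linear condition on the pair $(x_1(\chi^j\eta),x_2(\chi^j\eta))$, where $x=\sum_i x_i\,(1+\pi)\varphi(n_i)$. Using $P_\varphi\equiv P_0\pmod\pi$ from Lemma~\ref{lem:Wach-basis}, this condition reads $x_1(\chi^j\eta)\,v_1+x_2(\chi^j\eta)\,v_2\in$ a specified line, or possibly all of $\mathbb{D}_{\rm cris}$. In the nonordinary case the line is never the coordinate axis $v_{3-i}$ alone. Hence projecting to coordinate $i$ gives one of two outcomes at each $j$: either (a) $x_i(\chi^j\eta)=0$ is forced, or (b) no condition arises. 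Which case occurs depends on $j$ and on $\eta|_{\mu_{p-1}}$ versus $\omega^j$. Only the characters with $\eta=\omega^j$ give nontrivial conditions on the $\eta$-component, since for other $\eta$ the relevant twisted Frobenius invariants vanish.

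Let $\xi_{\eta,i}=\prod_j(u^{-j}\gamma-1)$, taken over the $j\in\{0,\dots,k-2\}$ in case (a). This divides $\delta_{k-1}$. Vanishing at $\chi^j$ is exactly divisibility by $u^{-j}\gamma-1$, and these are coprime distinct linear factors. So ${\rm Image}(e_\eta\widetilde{\mathscr{C}}_i)\subset\xi_{\eta,i}\Lambda_{\scO}(\Gamma)$. For finite index, the converse direction of the exact sequence shows that any pair $(x_1,x_2)$ satisfying the conditions comes from $\mathbb{N}(T)^{\psi=1}$ after multiplying by a bounded power of $p$. To get such pairs I would choose $x_{3-i}$ suitably to fix the case (b) conditions, which is possible by the Chinese remainder theorem in $\Lambda_{\scO}(\Gamma)$. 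Then the image contains $p^c\xi_{\eta,i}\Lambda_{\scO}(\Gamma)$, and it is also a $\Lambda$-submodule of full rank with bounded index.

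The main obstacle is the integral control: showing that the defect between the rational image and the integral one is finite. This requires the LLZ integral description under (FL), in particular that $\mathbb{D}_{\rm cris}(T)$ is strongly divisible, so that $1-p^j\varphi$ on it has controlled cokernel. Otherwise only a statement over $\Lambda_L(\Gamma)$ would follow. I would first try to quote \cite[\S5]{LLZ-AJM} directly for this step.
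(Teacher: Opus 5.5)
Your reconstruction has a genuine gap. It puts the constraints on the image in the wrong place, and so it produces the wrong $\xi_{\eta,i}$.

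\textbf{The obstructions you use are zero.} The quotients $\mathbb{D}_{\rm cris}(V)/(1-p^j\varphi)$ vanish here: the eigenvalues of $\varphi$ on $\mathbb{D}_{\rm cris}(V)$ are $1/\alpha,1/\beta$, never $p^{-j}$. Hence $1-\varphi$ maps $(\mathbb{B}^+_{{\rm rig},\bQ_p}\otimes\mathbb{D}_{\rm cris}(V))^{\psi=1}$ bijectively onto $(\mathbb{B}^+_{{\rm rig},\bQ_p}\otimes\mathbb{D}_{\rm cris}(V))^{\psi=0}$, and these Perrin-Riou-type obstructions cut out nothing (integrally, at most a finite defect). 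Were they the only conditions, both $e_\eta\widetilde{\mathscr{C}}_1$ and $e_\eta\widetilde{\mathscr{C}}_2$ would have finite-index image in $\Lambda_{\scO}(\Gamma)$.

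\textbf{Where the constraint actually comes from.} The unique solution $y$ of $(1-\varphi)y=x$ must lie in $\mathbb{N}(V)$, not merely in $\mathbb{B}^+_{{\rm rig},\bQ_p}\otimes\mathbb{D}_{\rm cris}(V)$ (cf. \eqref{eq:incl-Brig}). By Berger's description this is a Hodge-filtration condition. Expand $y$ at $\zeta_p-1$ and apply $\varphi^{-1}$ to the $\mathbb{D}_{\rm cris}(V)$-coefficients; the $t^j$-coefficients for $0\le j\le k-2$ must lie in ${\rm Fil}^0\mathbb{D}_{\rm cris}(V)=Lv_1$. The analogous conditions at $\zeta_{p^n}-1$, $n\ge 2$, are automatic because $x\in\varphi^*\mathbb{N}(V)$.

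Write $y=x+\varphi(y)$. The contribution of $\varphi(y)$ at $\zeta_p-1$ is the expansion of $y$ at $\pi=0$. Its $t^j$-coefficient is $(1-p^j\varphi)^{-1}\partial^jx(0)/j!$, with $\partial=(1+\pi)\frac{d}{d\pi}$, so it only sees the $\omega^j$-isotypic part of $x$. On the $\eta$-branch, in the Mellin normalisation, the point $\gamma=u^j$ corresponds to the character $\varepsilon^j\cdot\eta\omega^{-j}$.
\begin{itemize}
\item If $\eta\neq\omega^j$, this character has conductor $p$ and only $x$ itself contributes. Since $\varphi(n_i)$ specialises to $\varphi(v_i)$ at $\zeta_p-1$, $v_2=\varphi(v_1)$, and $n_1$ respects the filtration, the condition becomes vanishing of the second Coleman coordinate.
\item If $\eta=\omega^j$, the extra Euler-type term tilts the line off both coordinate axes (this is where nonordinarity enters). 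So neither coordinate is constrained by itself.
\end{itemize}

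\textbf{Consequences for your argument.} The forced zeros occur exactly at the $j$ with $\eta\neq\omega^j$, and only for $i=2$. This is the opposite of your claim that only $\eta=\omega^j$ contributes.
\begin{itemize}
\item It is what Remark~\ref{rem:xi} records: $\xi_{\eta,1}=1$, and $\xi_{\eta,2}\neq 1$ unless $k=2$ and $\eta$ is trivial. Your recipe contradicts this already for $k=2$ and $\eta$ nontrivial. There you impose no condition and would conclude that ${\rm Image}(e_\eta\widetilde{\mathscr{C}}_2)$ has finite index in $\Lambda_{\scO}(\Gamma)$, whereas it lies in $(\gamma-1)\Lambda_{\scO}(\Gamma)$, which has infinite index.
\item For comparison, when $a_p=0$, Theorem~\ref{thm:MTT} gives $L_\beta(f)=-L_\alpha(f)$ at characters of conductor $p$, which is why Pollack's $L_p^+$ vanishes there.
\item Your case split is internally inconsistent: if the relevant line is never a coordinate axis, outcome (a) never occurs and you would get $\xi_{\eta,i}=1$ throughout.
\item For $j\ge 1$, the congruence $P_\varphi\equiv P_0\pmod{\pi}$ is not enough, because higher Taylor coefficients of $n_2$ at $\pi=0$ enter. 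Ruling out degeneracy requires the finer analysis of \cite{LLZ-ANT}, via $\det M_{\rm log}$. This, not integrality, is the real difficulty.
\end{itemize}

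The paper itself simply cites \cite[Cor.~5.3, Thm.~5.10]{LLZ-ANT}, where this analysis is carried out.
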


\begin{proof}
See Corollary~5.3 and Theorem~5.10 in \cite{LLZ-ANT}.
\end{proof}

\begin{rem}\label{rem:xi}
As observed in \cite[Rem.~2.15]{BL-non-ord}, we have $\xi_{\eta,1}=1$ for all $\eta$, while $\xi_{\eta,2}\neq 1$ unless $k=2$ and $\eta=\mathds{1}$ is the trivial character of $\Delta$.
\end{rem}

%In \cite{LLZ-AJM}, the signed Coleman maps ${\rm Col}_i$ (and the corresponding signed Selmer groups ${\rm Sel}_{p^\infty}^i$) are defined using the basis from \cite{BLZ} (which requires $v_p(a_p)\geq\lfloor\frac{k-2}{p-1}\rfloor$).

%****************

%work of \cite{LLZ-AJM} (see also \cite{LLZ-ANT}): signed Coleman maps, signed $p$-adic $L$-functions (using Kato's Euler system), signed IMC and equivalence with Kato's IMC. 

%Write in the style of \cite{BSTW}

\subsection{Kato's Euler system}\label{subsec:Kato-ES}

Let $V_{F}(f)$ and $S(f)$ be the subspaces of de Rham and Betti cohomology attached to the eigenform $f\in S_k(\Gamma_0(N))$ as in \cite[\S{6.3}]{Kato295}; %(where the latter is denoted $S(f)$); 
these are ${F}$-vector spaces of dimension $1$ and $2$, respectively. For any commutative ring extension $A$ of ${F}$, we put $V_{A}(f)=V_{F}(f)\otimes_{{F}}A$ and $S_A(f)=S(f)\otimes_{{F}}A$. Then one has canonical isomorphisms 
%then one has canonical isomorphisms 
\begin{equation}\label{eq:comp-iso}
V_{L}(f)\cong V_f,\quad\quad S_{L}(f)\cong {\rm Fil}^1\mathbb{D}_{\rm cris}(V_f),
\end{equation}
%(see \cite[Lem.~A.3]{nakamura-invmath} for the first isomorphism), 
%by the comparison isomorphisms, 
where we recall that $L=F_\mathfrak{P}$ is the completion of $F$ at the prime $\mathfrak{P}$ above $p$ induced by $\iota_p$.

Let $V_{\bC}(f)^\pm$ denote the $\pm$-eigenspace of $V_{\bC}(f)$ under the action of complex conjugation. For every  $\gamma\in V_{F}(f)$ with $\gamma^\pm\neq 0\in V_{\bC}(f)^\pm$ and an ${F}$-basis $\omega\in S_{}(f)$,  define $\Omega_{\gamma,\omega}^\pm\in\bC^\times$ by
\[
{\rm per}_f(\omega)=\Omega_{\gamma,\omega}^+\cdot\gamma^++\Omega_{\gamma,\omega}^-\cdot\gamma^-,
\]
where ${\rm per}_f:S_{}(f)\rightarrow V_{\mathbf{C}}(f)$ is the period map of \cite[\S{6.3}]{Kato295}.

\subsubsection{Kato's explicit reciprocity law}

Let $q\in\bZ$. Following \cite[\S{12.2}]{Kato295}, for $V$ a finite dimensional $\bQ_p$-vector space equipped with a continuous $G_{\bQ}$-action unramified outside a finite set of primes  and $T\subset V$ any $G_{\bQ}$-stable lattice, we put
\begin{align*}
{\rm H}_{\rm Iw}^q(\bQ(\mu_{p^\infty}),T)&:=\varprojlim_n\rH^q(\bZ[\mu_{p^n},1/p],j_*T),\\
{\rm H}_{\rm Iw}^q(\bQ(\mu_{p^\infty}),V)&:={\rm H}_{\rm Iw}^q(\bQ(\mu_{p^\infty}),T)\otimes_{\bZ_p}\bQ_p,
\end{align*}
where $j:{\rm Spec}(\bQ(\mu_{p^n}))\rightarrow{\rm Spec}(\bZ[\mu_{p^n},1/p])$ is the natural map. These are zero for $q\neq 1,2$, and the latter is independent of the choice of $T$.

%By a twisted variant of \cite[Thm.~12.5]{Kato295} (cf. \cite[Cor.~A.5]{nakamura-invmath}), 
By \cite[Thm.~12.5]{Kato295}, there is an $L$-linear map 
\[
V_f\longrightarrow{\rm H}^1_{\rm Iw}(\bQ(\mu_{p^\infty}),V_f):\;\gamma\longmapsto\mathbf{z}_{\gamma}^{(p)} 
\]
%sending $\gamma\mapsto\mathbf{z}_{V,\gamma}$ 
%(denoted by $\gamma\mapsto\mathbf{z}_\gamma^{(p)}(k-1)$ in \emph{loc.\,cit.}) 
with the property that for $0\leq j\leq k-2$, if $\gamma\in V_{F}(f)\subset V_f$ under the identification \eqref{eq:comp-iso}, then the image 
%${\rm exp}^*({\rm res}_p({\rm pr}_{\bQ}(\mathbf{z}^{(p)}_{\gamma,k-1-j})))$ 
of $\mathbf{z}_{\gamma}^{(p)}$ by the composition%\footnote{using $V_f^*\cong V_f(k-1)=V$ for the last equality}
\begin{align*}
{\rm H}^1_{\rm Iw}(\bQ(\mu_{p^\infty}),V_f)\xrightarrow{\otimes((\zeta_{p^n})_{n\geq 0})^{\otimes(k-1-j)}}
{\rm H}^1_{\rm Iw}(\bQ(\mu_{p^\infty}),V(-j))&\xrightarrow{{\rm pr}_{\bQ}}\rH^1(\bQ,V(-j))\\%=\rH^1(\bQ,V_f(k-1-j))\\
&\xrightarrow{{\rm res}_p}\rH^1(\bQ_p,V(-j))\\
%\rH^1(\bQ_p,V_f(k-1-j))\\
&\xrightarrow{{\rm exp}^*
}{\rm Fil}^0\mathbb{D}_{\rm cris}(V_f^*(-j))={\rm Fil}^1\mathbb{D}_{\rm cris}(V_f)
\end{align*}
is contained in $S_{}(f)\subset{\rm Fil}^1\mathbb{D}_{\rm cirs}(V_f)$ under the identification \eqref{eq:comp-iso}, and satisfies
%\begin{equation}\label{eq:kato-ERL}
%(\textrm{image of $\mathbf{z}_\gamma^{(p)}$})\xrightarrow{{\rm per}_f^\pm}
%{\rm per}_f^\pm({\rm exp}^*({\rm res}_p({\rm pr}_{\bQ}(\mathbf{z}^{(p)}_{\gamma,k-1-j}))))=(2\pi i)^{k-j-2}\cdot%\frac{L_{\{p\}}(f,j+1)}{\Omega_{\gamma,\omega}^\pm}\cdot\omega,
%takes the image of $\mathbf{z}_\gamma^{(p)}$ to 
\begin{equation}\label{eq:kato-ERL}
{\rm per}_f^\pm(\textrm{image of $\mathbf{z}_\gamma^{(p)}$})=%(2\pi i)^{k-2-j}\cdot L_{\{p\}}(f,j+1)\cdot\gamma^\pm,
\frac{L_{\{p\}}(f,j+1)}{(-2\pi i)^{j+1}}\cdot\gamma^\pm,
\end{equation}
where $L_{\{p\}}(f,s)$ is the $L$-function on $f$ with the Euler factor at $p$ removed, the sign is $\pm=(-1)^{j}$, and ${\rm per}_f^\pm:S(f)\rightarrow V_{\bC}(f)^\pm$ is the composition of ${\rm per}_f$ with the projection to the $\pm$-eigenspace. 

\subsubsection{Integral normalisations}\label{subsec:integral-normalisation}

Let $\cO=\cO_{F,\mathfrak{P}}$ denote the localization of the ring of integers of $F$ at the prime $\mathfrak{P}$ above $p$ determined by $\iota_p$. 

Taking cohomology with coefficients in $\cO$ %rather than $F$ 
in the definition of $V_F(f)$ defines a lattice $T_\cO(f)\subset V_F(f)$ contained in $T_f$. In the following we put 
\[
T_{\cO}(f)^{\pm}=T_\cO(f)\cap V_{\bC}(f)^{\pm}
\]
and fix $\cO$-module generators $\gamma_f^\pm\in T_{\cO}(f)^{\pm}$. 

On the other hand, adapting the notation in \cite[\S{2.8}]{KLZ2} let $M_{\cO}(f)$ denote the maximal subspace of
\[
\rH^1_{c}(Y_1(N)(\bC),{\rm Sym}^{k-2}\mathscr{H}_{\bZ}^\vee)\otimes_{\bZ}\cO
\]
on which the Hecke operators $T_n$ act as multiplication by $a_n$, and put
\[
M_{\cO}(f)^\pm:=M_{\cO}(f)\cap\bigl(\rH^1_c(Y_1(N)(\bC),{\rm Sym}^{k-2}\mathscr{H}_{\bZ}^\vee)\otimes_{\bZ}\bC\bigr)^\pm,
\]
and fix $\cO$-module generators $\delta_f^\pm\in M_{\cO}(f)^\pm$. 

From the definitions, we have natural inclusions $M_{\cO}(f)^\pm\hookrightarrow T_{\cO}(f)^\pm$ which become isomorphisms after extending scalars to $F$.

\begin{lem}\label{lem:Gor}
%Suppose the residual representation $\bar{\rho}_f:G_\bQ\rightarrow{\rm GL}_2(\kappa_L)$ is irreducible. 
We can write 
\[
\delta_f^\pm=c^\pm\cdot\gamma_{f}^\pm
\]
with $c^\pm=c_f\in\cO$ a generator of the congruence ideal for $f$.
\end{lem}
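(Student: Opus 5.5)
The plan is to compare the two rank-one $\cO$-lattices $M_{\cO}(f)^\pm\subset T_{\cO}(f)^\pm$ inside the same one-dimensional $F$-line. Since $\cO$ is a discrete valuation ring and both lattices are free of rank one, we can always write $\delta_f^\pm=c^\pm\gamma_f^\pm$ with $c^\pm\in\cO\smallsetminus\{0\}$. The content of the lemma is that $c^\pm$ generates the congruence ideal $\eta_f$ of $f$; this is Hida's interpretation of the congruence module as the index of a "sub" lattice in a "quotient" lattice.

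First, we localize at the maximal ideal $\mathfrak{m}$ of the Hecke algebra $\mathbb{T}$ (acting on $\rH^1_c(Y_1(N),{\rm Sym}^{k-2}\mathscr{H}^\vee_{\bZ})\otimes\cO$) corresponding to $f$. Since $f$ is nonordinary and $k<p$, the representation $\bar\rho_f|_{G_{\bQ_p}}$ is irreducible (Fontaine--Laffaille, as noted in the introduction), so $\mathfrak{m}$ is non-Eisenstein. Hence, after localization, compactly supported, interior and full cohomology all agree. Write $H_{\mathfrak{m}}$ for this common localization.

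By the multiplicity one / Gorenstein results in the Fontaine--Laffaille range (Mazur, Wiles, Faltings--Jordan, Diamond), each sign part $H_{\mathfrak{m}}^\pm$ is free of rank one over $\mathbb{T}_{\mathfrak{m}}$, and $\mathbb{T}_{\mathfrak{m}}$ is Gorenstein. This step is where I would invoke existing literature rather than reprove anything, and it is the main obstacle: one must check that the hypotheses of these freeness results (weight $2\le k<p$, $p\nmid N$, $\bar\rho_f$ irreducible) are met, and that $\pm$ eigenspaces behave well, which requires $p>2$.

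With freeness in hand, we identify the two lattices in module-theoretic terms:
\begin{itemize}
\item $M_{\cO}(f)^\pm$ is $H^\pm_{\mathfrak{m}}[\lambda_f]$, the $f$-eigen \emph{sub}module. Under $H^\pm_{\mathfrak{m}}\cong\mathbb{T}_{\mathfrak{m}}$ it corresponds to $\mathbb{T}_{\mathfrak{m}}[\lambda_f]$, i.e. the annihilator of $\ker\lambda_f$.
\item $T_{\cO}(f)^\pm$, which is Kato's lattice obtained by projecting integral cohomology to the $f$-part, is the image of $H^\pm_{\mathfrak{m}}$ in $H^\pm_{\mathfrak{m}}\otimes F$ under the $f$-projector. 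Under the same isomorphism it corresponds to the image $\lambda_f(\mathbb{T}_{\mathfrak{m}})=\cO$, viewed inside the $f$-component $\cO\times\{0\}$ of $\mathbb{T}_{\mathfrak{m}}\otimes\bQ\cong F_\mathfrak{P}\times(\text{rest})$.
\end{itemize}
If $T_{\cO}(f)$ is instead defined directly as the integral $f$-part, the same comparison can be phrased via the Poincaré duality pairing between $\rH^1_c$ and $\rH^1$, which is perfect after localizing at $\mathfrak{m}$. This pairing identifies the sub-lattice of one side with the dual of the quotient-lattice of the other.

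The index of $\mathbb{T}_{\mathfrak{m}}[\lambda_f]$ inside its projection to the $f$-component is by definition $\lambda_f(\mathbb{T}_{\mathfrak{m}}[\lambda_f])=\eta_f$, the congruence ideal (Hida). Hence $[T_{\cO}(f)^\pm:M_{\cO}(f)^\pm]=\cO/\eta_f$, so $c^\pm$ generates $\eta_f$. Because $\eta_f$ does not depend on the sign, we may take $c^+=c^-=c_f$, after adjusting $\gamma_f^\pm$ by units.
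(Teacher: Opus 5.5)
Your proposal is correct and follows essentially the same route as the paper. The paper gets residual irreducibility of $\bar\rho_f$ (even restricted to $G_{\bQ_p}$) from Fontaine's result \cite[Thm.~2.6]{Edi}, and Gorensteinness of the localized Hecke algebra from \cite[Thm.~2.1]{faltings-jordan}. It then outsources the comparison of the eigen-sublattice $M_{\cO}(f)^\pm\cong\mathbb{T}_{\mathfrak{m}}[\lambda_f]$ with the projection lattice $T_{\cO}(f)^\pm\cong\lambda_f(\mathbb{T}_{\mathfrak{m}})$, whose index is $\cO/\eta_f$, to \cite[Lem.~2.5]{BSTW}; your last step spells out exactly that Hida-style argument.
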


\begin{proof}
Let $\kappa_L=\mathscr{O}/\mathfrak{m}_L$ be the residue field of $\mathscr{O}$. By a result of Fontaine  (see e.g.  \cite[Thm.~2.6]{Edi}), the residual representation $
\bar{\rho}_f:G_{\bQ}\rightarrow{\rm GL}_2(\kappa_L)$
attached to $f$ is irreducible (even after restriction to $G_{\bQ_p}$), and so the localisation of the Hecke algebra at the corresponding maximal ideal is known to be Gorenstein %(see \cite[Thm.~1.5]{kilford-wiese} and the references therein)
(see \cite[Thm.~2.1]{faltings-jordan}). Thus the argument in \cite[Lem.~2.5]{BSTW} applies.
\end{proof}

%Note that if follows from \cite{hida-AJM} that the $\cO$-module
%\[
%H^1_k(N_f)_\cO^\pm:=H^1_k(N_f)_{\cO}\cap H^1_k(N_f)_{\bC}^\pm[\mathfrak{p}_f]
%\]
%s free of rank one. 

Following \cite[\S{2.1}]{maksoud} we consider the following $p$-normalisation of periods.

\begin{defn}\label{def:p-periods}
%Let $\gamma_\cO^{\pm}$ be $\cO$-module generators of 
%\[
%\bigl(\rH^1_c(Y_1(N)(\bC),{\rm Sym}^{k-%2}\mathscr{H}_{\bZ}^\vee)\otimes_{\bZ}\cO\bigr)^\pm[\mathfrak{p}_f],
%\]
%respectively, which we identify with their images in $V_F(f)$ 
%(see \cite{KLZ1} for the unexplained notation). 
For any nonzero $\omega\in S(f)$, the \emph{$p$-normalised periods} of $\omega$ are the complex numbers $\Omega_{\omega}^\pm\in\bC^\times$ defined by
\[
{\rm per}_f(\omega)=\Omega_\omega^+\cdot{\delta}_\cO^++\Omega_\omega^-\cdot{\delta}_\cO^-.
\]
In other words, $\Omega_\omega^\pm=\Omega_{\delta_\cO,\omega}^\pm$ where $\delta_\cO:=\delta_\cO^++\delta_\cO^-$.
%
%he \emph{$p$-normalized periods} of $f$ are the complex numbers $\Omega_f^\pm\in\bC^\times$ (well-defined up to a unit in $\cO^\times$) defined by the relation
%\[
%{\rm per}_f(\omega_f)^\pm=\Omega_f^\pm\cdot\nu_f^\pm,
%\]
%where $\nu_f^\pm$ is an $\cO$-basis of $
%H^1_k(N_f)_\cO^\pm$.
\end{defn}

%\begin{rem}
%Viewed in $V_{F}(f)=M_{F}(f)^*$, the class $\Upsilon$ is contained in $M_{\cO}(f)^*$. 
%\end{rem}

%\begin{defn}\label{def:Kato}
%Fix $\cO$-bases $\gamma^\pm$ of $H_k^1(N_f)_{\cO}^\pm$, and put $\gamma:=\gamma^++\gamma^-\in T$. 
%In the following, we let 
On the other hand, we consider the following $p$-integral normalisation of Kato's class:
\[
\mathbf{z}_f^{\rm int}:=\mathbf{z}_{\gamma_\cO}^{(p)}\in{\rm H}^1_{\rm Iw}(\bQ(\mu_{p^\infty}),V_f),
\]
where $\gamma_\cO:=\gamma_\cO^++\gamma_\cO^-\in T_{\cO}(f)\subset T_f$. %is as in Definition~\ref{def:p-periods}.
%be the class $\mathbf{z}_\gamma^{(p)}(k-1)$ of \cite[Thm.~12.5]{Kato295} associated to $\gamma=\Upsilon$.   
%\end{defn}
%
%In particular, letting $z_f\in\rH^1(\bQ,V)$ be the image of $\mathbf{z}_f$ under the natural projection $\mathbb{H}^1(\bQ(\mu_{p^\infty}),V)\rightarrow\rH^1(\bQ,V)$, by Kato’s explicit reciprocity law \cite{Kato295} the Bloch--Kato dual exponential map ${\rm exp}^*_{\omega}:\rH^1(\bQ_p,V)\rightarrow F$ sends the restriction ${\rm res}_p(z_f)$ to
%\begin{equation}\label{eq:kato-ERL}
%{\rm exp}_{\omega}^*({\rm res}_p(z_1))=\frac{L_{\{p\}}(f,1)}{\Omega},
%\end{equation}
%where $L_{\{p\}}(f,s)$ denotes the $L$-function on $f$ with the Euler factor at $p$ removed. ***** NEED OTHER $j\in\{1,\dots,k-1\}$ HERE.
%

\begin{prop}\label{prop:Kato-int}
Then class $\mathbf{z}_{f}^{\rm int}$ is contained in $\rH^1_{\rm Iw}(\bQ(\mu_{p^\infty}),T_f)\subset\rH^1_{\rm Iw}(\bQ(\mu_{p^\infty}),V_f)$.
\end{prop}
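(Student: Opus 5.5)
The plan is to reduce the integrality of $\mathbf{z}_f^{\rm int}=\mathbf{z}^{(p)}_{\gamma_\cO}$ to Kato's integrality result for zeta elements, together with a comparison between the lattice $T_\cO(f)$ and the lattice $T_f$. Recall that in \cite[Thm.~12.5]{Kato295} the map $\gamma\mapsto\mathbf{z}^{(p)}_\gamma$ is built from the Beilinson--Kato elements ${}_{c,d}z_{p^n}(f,r,r',\xi,S)$. Kato shows that these land in the \emph{integral} Iwasawa cohomology of the geometric lattice attached to $\rH^1_{\rm et}(Y_1(N)_{\overline{\bQ}},{\rm Sym}^{k-2}\mathscr{H}_{\bZ_p}^\vee)\otimes\scO$, projected to its $f$-part. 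Denominators enter only through the smoothing factors $(c^2-c^{k+1-j}\sigma_c)(d^2-d^{j+1}\sigma_d)$ used to remove the auxiliary choices of $c,d$.

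The first step is to choose the auxiliary integers $c,d$ (prime to $6pN$, with $c\equiv d\equiv 1\bmod N$) so that these factors become units in $\Lambda_\scO(\widetilde\Gamma)$ after localising at the maximal ideal of $f$. Under the large image assumption this is possible, as in \cite[\S13]{Kato295}: there is $\sigma\in G_{\bQ(\mu_{p^\infty})}$ acting on $T_f$ with $\det(1-\sigma)$ a unit, and via Chebotarev one can pick $c,d$ accordingly. Kato in fact proves in \cite[Thm.~12.5(4)]{Kato295} that $\mathbf{z}^{(p)}_\gamma\in\rH^1_{\rm Iw}(\bQ(\mu_{p^\infty}),T_f)$ for $\gamma\in T_f$ under this hypothesis, where $T_f$ is the lattice of \cite[\S8.3]{Kato295}.

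The second step is to check that $\gamma_\cO=\gamma_\cO^++\gamma_\cO^-$ lies in $T_f$. By construction $T_\cO(f)\subset T_f$, since both come from integral cohomology of the modular curve with ${\rm Sym}^{k-2}$ coefficients, compared via the Betti–étale comparison isomorphism; this inclusion is noted right before Lemma~\ref{lem:Gor}. So $\gamma_\cO\in T_f$, and the $L$-linear map restricts to $T_f\to\rH^1_{\rm Iw}(\bQ(\mu_{p^\infty}),T_f)$.

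The main obstacle is the compatibility of normalisations: Kato's \cite[Thm.~12.5]{Kato295} is phrased with $\gamma\in V_F(f)$ identified via \eqref{eq:comp-iso}, and the integral statement uses his lattice $T_f$. I would verify that the comparison isomorphism carries $T_\cO(f)\otimes\scO_{\mathfrak P}$ into $T_f$, rather than into a dual lattice. The issue is that homology versus compactly supported cohomology differ by the congruence ideal, as Lemma~\ref{lem:Gor} illustrates through $\delta_f^\pm=c_f\gamma_f^\pm$. If this direction turned out to be wrong, the fallback is to use $\delta_\cO$, which lies in the smaller lattice $M_\cO(f)$. Alternatively, one can use the Gorenstein property and irreducibility of $\bar\rho_f$ (Lemma~\ref{lem:Gor}) to see that all $G_\bQ$-stable lattices in $V_f$ are homothetic, and conclude by comparing the generators.
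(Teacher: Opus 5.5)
Your reduction to \cite[Thm.~12.5(4)]{Kato295}, together with $\gamma_{\cO}\in T_{\cO}(f)\subset T_f$, is the same route the paper takes. The gap is that you invoke Kato's result under a ``large image'' hypothesis, and that hypothesis is not available here. The running assumptions are only that $p\nmid N$, $f$ has trivial character, $k<p$, and $f$ is nonordinary at $p$. Under these, $\bar\rho_f$ can have small image, e.g.\ dihedral, as for CM forms at primes inert in the CM field. The missing idea is the one the paper's proof rests on. Under these assumptions $\bar\rho_f$ is irreducible, even restricted to $G_{\bQ_p}$, by Fontaine's theorem \cite[Thm.~2.6]{Edi}. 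Moreover, the integrality part of Kato's argument needs only this irreducibility, not the stronger image condition. As written, your argument proves the proposition only under an extra hypothesis.

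Your sketch of how $c,d$ are removed is also incorrect. Write a smoothing factor as $c^2-c^{a}\sigma_c$. No choice of $c$ makes this a unit of $\Lambda_{\scO}(\widetilde\Gamma)$: on the $\omega^t$-component with $t\equiv 2-a\pmod{p-1}$, its augmentation $c^2-c^{a}\omega^t(c)$ is $\equiv 0\pmod{\varpi}$ for every $c$. An element $\sigma\in G_{\bQ(\mu_{p^\infty})}$ with $\det(1-\sigma\,|\,T_f)$ a unit is an Euler-system hypothesis and has no bearing on these factors. What does work is the following:
\begin{itemize}
\item $\mathbf{z}^{(p)}_{\gamma}$ lies a priori in $\rH^1_{\rm Iw}(\bQ(\mu_{p^\infty}),T_f)\otimes\bQ_p$, so only powers of $\varpi$ can occur as denominators.
\item Each smoothing factor, although not a unit, is nonzero modulo $\varpi$ on every $\Delta$-component, because $\langle c\rangle\neq 1$.
\item Irreducibility of $\bar\rho_f$ makes $\rH^1_{\rm Iw}(\bQ(\mu_{p^\infty}),T_f)$ free of rank one over $\Lambda_{\scO}(\widetilde\Gamma)$ \cite[Thm.~12.4]{Kato295}.
\end{itemize}
Hence multiplication by such a factor is injective modulo $\varpi$, and dividing by it cannot introduce denominators.

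Your fallback does not repair this either. Homothety of all $G_\bQ$-stable lattices follows from irreducibility alone, with no Gorenstein input, but it only shows the claim is independent of the lattice. Passing from $\gamma_{\cO}$ to $\delta_{\cO}=c_f\gamma_{\cO}$ would prove a weaker statement than the one asserted.
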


\begin{proof}
As noted in the proof of Lemma~\ref{lem:Gor}, under our running assumptions the residual representation $\bar{\rho}_f$ is irreducible, so the result thus follows from (the proof of) \cite[Thm.~12.5(4)]{Kato295}.
\end{proof}

%From now on we assume that the residual representation
%\begin{equation}\label{eq:irred}
%\textrm{$\bar{\rho}_f:G_{\bQ}\longrightarrow{\rm GL}_2(\kappa_L)$ is irreducible}\tag{irred}
%\end{equation}
%(as will be the case in our applications in this paper). Then by \cite[Thm.~12.5]{Kato295} we know that $\mathbf{z}_f^{\rm int}$ is integral, i.e. $\mathbf{z}_f^{\rm int}\in{\rm H}^1_{\rm Iw}(\bQ(\mu_{p^\infty}),T_f)$.

\begin{defn}\label{def:cyc-Lp}
For $\xi\in\{\alpha,\beta\}$ and $i\in\{1,2\}$ put
\[
L_\xi(f):=\mathscr{L}_{V,(1+\pi)\otimes v_\xi^*}({\rm res}_p(\mathbf{z}_{f,k-1}^{\rm int})), %\in\mathcal{H}_L(\Gamma),
\quad\quad L_{i}(f):=\widetilde{\mathscr{C}}_i({\rm res}_p(\mathbf{z}_{f,k-1}^{\rm int})), %\in\Lambda_L(\Gamma).
\]
where $\mathbf{z}_{f,k-1}^{\rm int}$ denotes the image of $\mathbf{z}_f^{\rm int}$ under the twisting map 
\[
{\rm H}^1_{\rm Iw}(\bQ(\mu_{p^\infty}),V_f)\xrightarrow{\otimes((\zeta_{p^n})_{n\geq 0})^{\otimes(k-1)}}{\rm H}^1_{\rm Iw}(\bQ(\mu_{p^\infty}),V).
\]
%given by  $\otimes((\zeta_{p^n})_{n\geq 0})^{\otimes(k-1)}$.
Note that these are elements in $\mathcal{H}_L(\widetilde{\Gamma})$ and $\Lambda_{\scO}(\widetilde{\Gamma})$, respectively. 
\end{defn}

\subsubsection{Interpolation formulas}\label{subsec:interp-MTT}

Fix a topological generator $\gamma\in\Gamma$, and for $\zeta$ a primitive $p^t$-th root of unity, let $\psi_\zeta:G_\bQ\twoheadrightarrow\Gamma\rightarrow\overline{\bQ}_p^\times$ be the character determined by $\psi_\zeta(\gamma)=\zeta$. By abuse of notation, we also let $\psi_\zeta$ the resulting Dirichlet character $(\bZ/p^{t+1}\bZ)^\times\rightarrow\overline{\bQ}_p^\times$. Let 
\[
\varepsilon:\widetilde{\Gamma}\longrightarrow\bZ_p^\times,\quad\quad
\langle\varepsilon\rangle=\varepsilon\omega^{-1}:\Gamma\longrightarrow 1+p\bZ_p,
\]
be the $p$-adic cyclotomic character, where $\omega:\Delta\rightarrow\bZ_p^\times$ is the mod $p$ cyclotomic character. 

\begin{defn}
For every character $\eta:\Delta\rightarrow\bZ_p^\times$, let 
\[
L_\xi(f)^\eta\in\mathcal{H}_L(\Gamma),\quad\quad \textrm{(resp. $L_i(f)^\eta\in\Lambda_{\scO}(\Gamma)$)}
\]
denote the images of $L_\xi(f)$ and $L_i(f)$ under the projections$\mathcal{H}_L(\widetilde{\Gamma})\rightarrow\mathcal{H}_L(\Gamma)$ (resp.  $\Lambda_\scO(\widetilde{\Gamma})\rightarrow\Lambda_{\scO}(\Gamma)$) defined by $e_\eta$. 
\end{defn}

As in \cite[\S{14.22}]{Kato295}, the image of the differential $\omega_f\in S(f)$ attached to $f$ (see e.g. \cite[\S{6.1}]{KLZ1}) under the Faltings--Tsuji comparison isomorphism \eqref{eq:comp-iso} gives an $\scO$-basis of ${\rm Fil}^1\mathbb{D}_{\rm cris}(T_f)$. Thus in the following we shall assume that the basis $(v_1,v_2)$ from Lemma~\ref{lem:Wach-basis} is such that $v_1=\omega_f$. 

%The differential $\omega_f$ %%\in\rH^0(X_1(N)_{/\cO},\omega^k)$ 
%attached to $f$ generates an $\cO$-lattice $S_{\cO}(f):=\cO\omega_f\subset S_{}(f)$ with 
%\[
%_{\cO}(f)\otimes_{\cO}\mathscr{O}\cong{\rm Fil}^1\mathbb{D}_{\rm cris}(T_f)
%\]
%nder the second isomorphism \eqref{eq:comp-iso}; thus we can write 
%\begin{equation}\label{eq:comp-omega}
%v_1=r_f\cdot\omega_f 
%\end{equation}
%with $r_f\in\mathscr{O}$. 

%Let $v_p$ be the $p$-adic valuation of $\overline{\bQ}_p$ with $v_p(p)=1$.

\begin{thm}\label{thm:MTT}
Let $\xi\in\{\alpha,\beta\}$ and $\eta:\Delta\rightarrow\bZ_p^\times$ be a character. Then $L_\xi(f)^\eta$ satisfies the following interpolation property: For all characters $\chi=\langle\varepsilon\rangle^j\psi_\zeta$ of $\Gamma$ with $0\leq j\leq k-2$ and $\zeta$ a primitive $p^t$-th root of unity for some $t\geq 0$, we have
\[
\phi_\chi(L_\xi(f)^\eta)=%\frac{\lambda_f}{\alpha-\beta}
%\frac{1}{r_f}\cdot 
e_{p,\xi,\eta}(f,\chi)\cdot\frac{j!\cdot L(f\otimes\psi_\zeta^{-1}\eta\omega^{j},j+1)}{(-2\pi i)^{j+1}\cdot\mathfrak{g}(\psi_\zeta^{-1}\eta\omega^j)\cdot\Omega_{\omega_f}^{\pm}},
\]
where $\mathfrak{g}(\psi_\zeta^{-1}\eta\omega^j)$ is the Gauss sum, the sign is $\pm=(-1)^j$,  
$\Omega_{\omega_f}^\pm$ is the $p$-normalised period in Definition~\ref{def:p-periods}, and
%\item 
\[
e_{p,\xi,\eta}(f,\chi)=\frac{p^{t'(j+1)}}{\xi^{t'}}\cdot\biggl(1-\frac{\psi_\zeta^{-1}\eta\omega^j(p)p^{k-2-j}}{\xi}\biggr)\biggl(1-\frac{\psi_\zeta\eta^{-1}\omega^{-j}(p)p^j}{\xi}\biggr)
\]
%\begin{cases}\frac{1}{\gamma^{t+1}}\cdot\frac{p^{(j+1)(t+1)}}{\mathfrak{g}(\psi_\zeta^{-1})}&\textrm{if $t\geq 1$, with $\mathfrak{g}(\psi_\zeta^{-1})$ the Gauss sum,}\\[0.6em]
%\Bigl(1-\frac{p^j}{\gamma}\Bigr)\Bigl(1-\frac{p^{k-2-j}}{\gamma}\Bigr)&\textrm{else.}\end{cases}
%\end{itemize}
with $t'=0$ or $t+1$ according to whether or not $\eta=\omega^{-j}$ and $t=0$.
%\[
%t'=
%\begin{cases}
%0&\textrm{if $\eta=\omega^{-j}$ and $t=0$},\\  t+1&\textrm{otherwise}.
%\end{cases}
%\]
%\noindent Moreover, if $v_p(\xi)<k-1$, then $L_{\xi}(f)$ is uniquely characterised by this property.
\end{thm}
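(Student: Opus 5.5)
We will deduce the interpolation formula by combining the interpolation property of Perrin-Riou's big exponential map with Kato's explicit reciprocity law \eqref{eq:kato-ERL}.

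\textbf{Step 1: reduce to a dual exponential.} By definition, $L_\xi(f)=\langle {\rm res}_p(\mathbf{z}^{\rm int}_{f,k-1}),\Omega_{V^*(1),1}((1+\pi)\otimes v_\xi^*)\rangle_V$. We evaluate at the character $\chi\eta$ of $\widetilde{\Gamma}$, writing $\chi=\langle\varepsilon\rangle^j\psi_\zeta$. Evaluating Perrin-Riou's pairing at a finite-order twist of $\varepsilon^{j}$ amounts to twisting the Iwasawa class down by $-j$ (that is, twisting by $\varepsilon^{-j}$ together with the finite-order character $\psi_\zeta^{-1}\eta\omega^j$) and projecting to level $p^{t+1}$ (or to $\bQ_p$ when $t'=0$). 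For the Iwasawa class $\mathbf{z}$ we then use Perrin-Riou's interpolation formula for $\Omega_{V^*(1),1}$ at the twist by $-j$ (cf. [PR115, \S3.2.3], or the explicit form in \cite[\S{3}]{LLZ-ANT} and Berger \cite{berger-explicit}). Since $v_\xi^*$ is a $\varphi$-eigenvector with eigenvalue $\xi$ (dual to $1/\xi$), this formula expresses $\phi_{\chi\eta}(L_\xi(f))$ as an explicit factor times
\[
[\exp^*(\mathbf{z}_{-j,\psi}),\,v_\xi^*]_{\rm dR}.
\]
In the finite-order case the twist is handled through $\sum_\sigma\psi(\sigma)\sigma$ and produces a Gauss sum. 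The explicit factor is:
\begin{itemize}
\item the factorial $j!$, coming from $\Gamma^*$ of the Hodge--Tate shift, with sign $(-1)^j$;
\item when $t'>0$, the factor $p^{t'(j+1)}\xi^{-t'}$, coming from $\varphi^{-t'}$ acting on $v_\xi^*$ twisted by $j$;
\item when $t'=0$, the two Euler-type factors $(1-p^j\varphi)(1-p^{-1-j}\varphi^{-1})^{-1}$, which in terms of $\xi$ become $(1-p^j/\xi)$ and $(1-p^{k-2-j}/\xi)$ after using $\alpha\beta=p^{k-1}$.
\end{itemize}
With the character $\psi_\zeta^{-1}\eta\omega^j$ inserted, these give exactly $e_{p,\xi,\eta}(f,\chi)$.

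\textbf{Step 2: apply Kato's reciprocity law.} We invoke \eqref{eq:kato-ERL}, in its twisted form \cite[Thm.~12.5]{Kato295} for Dirichlet characters of $p$-power conductor. This identifies $\exp^*$ of the $(-j,\psi)$-specialisation of ${\rm res}_p(\mathbf{z}_{\gamma_\cO})$, lying in $S(f)\otimes\bQ(\zeta)$, via ${\rm per}_f^\pm$ with
\[
\frac{L_{\{p\}}(f\otimes\psi_\zeta^{-1}\eta\omega^j,\,j+1)}{(-2\pi i)^{j+1}}\,\gamma_\cO^\pm,
\]
up to the Gauss sum. When the character is nontrivial at $p$, $L_{\{p\}}=L$; in the case $t'=0$ the removed Euler factor is absorbed into $e_{p,\xi,\eta}$.

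\textbf{Step 3: periods.} We write the result as a multiple of $\omega_f$ using ${\rm per}_f(\omega_f)=\Omega^+\delta^++\Omega^-\delta^-$ from Definition~\ref{def:p-periods}. Since $v_1=\omega_f$, we have $[\omega_f,v_\xi^*]_{\rm dR}$ equal to a unit: $v_\xi^*$ has $v_1$-coordinate read off from $Q_{\alpha,\beta}$, and this needs to be checked. The discrepancy between $\gamma_\cO$ and $\delta_\cO$ is controlled by Lemma~\ref{lem:Gor}, which relates $\delta_f^\pm$ and $\gamma_f^\pm$ via the congruence number. We need to check that this discrepancy disappears in the normalisation chosen, or is absorbed into the definition of the periods.

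\textbf{Main obstacle.} The main difficulty is the bookkeeping of normalisations: the signs, the Gauss-sum convention, the factor $j!$ versus $(-1)^j$, and particularly the exact constant from the pairing $[\omega_f,v_\xi^*]$ and the $\gamma/\delta$ comparison. I would carry this out by matching against \cite[Thm.~16.6]{Kato295} and the version in \cite{LZ-Coleman}, \cite{LLZ-ANT}, treating the case $t'=0$ separately.
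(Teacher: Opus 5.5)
Your plan follows the paper's argument, unpacked. The paper's proof calls the statement a reformulation of \cite[Thm.~16.6]{Kato295}, which is exactly Perrin-Riou's interpolation formula combined with the twisted form of \eqref{eq:kato-ERL}. The only extra input it supplies is the identification \eqref{eq:dual-eta} of Kato's normalised $\varphi$-eigenvector (characterised by $[\eta_\xi,\omega_f]_{\rm dR}=1$) with a vector of the dual basis $(v_\alpha^*,v_\beta^*)$. Redoing Kato's computation directly with $v_\xi^*$ is fine. It also has the small advantage that the root attached to each vector is read off from its $\varphi$-eigenvalue rather than from Kato's conventions.

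Two of your loose ends close immediately.

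The pairing: from the formulas in Remark~\ref{rem:PR-alpha-beta} one has $v_\alpha+v_\beta=v_1=\omega_f$, hence $[\omega_f,v_\xi^*]_{\rm dR}=1$ on the nose. This is Kato's normalisation, and it is what \eqref{eq:dual-eta} encodes.

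The case $t'=0$: your Step~1 bullet is not right as stated. On the $v_\xi$-coordinate (where $\varphi v_\xi=\xi^{-1}v_\xi$), Perrin-Riou's factor $(1-p^j\varphi)(1-p^{-1-j}\varphi^{-1})^{-1}$ becomes the quotient $(1-p^j/\xi)(1-\xi p^{-1-j})^{-1}$. Its denominator cancels one factor of the Euler factor $(1-\alpha p^{-1-j})(1-\beta p^{-1-j})$ removed in $L_{\{p\}}(f,j+1)$. The surviving factor $(1-\xi'p^{-1-j})=(1-p^{k-2-j}/\xi)$, with $\xi'$ the other root, is the second factor of $e_{p,\xi,\eta}(f,\chi)$. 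So the ``absorption'' in your Step~2 is a partial cancellation, not an additional factor.

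Your third loose end, the $\gamma_\cO$/$\delta_\cO$ discrepancy, does not go the way you hope.
\begin{itemize}
\item By \eqref{eq:kato-ERL}, the dual exponential of the specialised class $\mathbf{z}_{\gamma_\cO}$ is $L_{\{p\}}\cdot\bigl((-2\pi i)^{j+1}\Omega^\pm_{\gamma_\cO,\omega_f}\bigr)^{-1}\cdot\omega_f$.
\item Lemma~\ref{lem:Gor} (namely $\delta^\pm=c_f\gamma^\pm$) gives $\Omega^\pm_{\gamma_\cO,\omega_f}=c_f\cdot\Omega^\pm_{\omega_f}$.
\item Nothing else in the computation depends on the choice of lattice, so this factor survives.
\end{itemize}
The paper's proof does not address this either. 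It reads the period off Kato's theorem, where the period is the one attached to the $\gamma$ indexing the class. So with the definitions as written, your argument, like the paper's, yields the displayed formula with $c_f\,\Omega^\pm_{\omega_f}$ in place of $\Omega^\pm_{\omega_f}$. It gives the statement literally only if $\mathbf{z}^{\rm int}_f$ is indexed by $\delta_\cO$ instead of $\gamma_\cO$, or if $c_f\in\scO^\times$.
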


\begin{proof}
This is a reformulation of \cite[Thm.~16.6]{Kato295}, and follows from Kato's explicit reciprocity law \eqref{eq:kato-ERL} (and its variant for twists by finite order characters of $\widetilde{\Gamma}$) and the interpolation property of $\mathscr{L}_{V}$. Indeed, for $\xi\in\{\alpha,\beta\}$ Kato considers the differentials $\eta_\xi\in\mathbb{D}_{\rm cris}(V_f)$ attached to $\omega_f$ characterised by the properties $\varphi(\eta_\xi)=\xi\cdot\eta_\xi$ and $[\eta_\xi,\omega_f]_{\rm dR}=1$, and shows that the image 
\[
\eta_{\xi,1}:=\eta_\xi\otimes t^{-1}e_1\in%\mathbb{D}_{\rm cris}(V_f(1))%=\mathbb{D}_{\rm cris}(V_f(k-1)^*(1))
\mathbb{D}_{\rm cris}(V_f)\otimes\mathbb{D}_{\rm dR}(\bQ_p(1))\cong\mathbb{D}_{\rm cris}(V^*(1))
\]
(where $t\in\mathbb{B}_{\rm dR}$ and the basis $e_1\in\bQ_p(1)$ are the ones defined by our chosen  $(\zeta_{p^n})_{n\geq 0}$) is such that
\[
\mathscr{L}_{V,(1+\pi)\otimes\eta_{\xi,1}}({\rm res}_p(\mathbf{z}_{f,k-1}^{\rm int}))\in\mathcal{H}_L(\widetilde{\Gamma})
\]
has the stated interpolation property after applying $e_\eta$. Since a direct calculation shows that
\begin{equation}\label{eq:dual-eta}
\eta_{\xi,1}=%r_f\cdot v_{\xi'}^*,
v_{\xi'}^*
\end{equation}
where $\xi'$ is the element in $\{\alpha,\beta\}$ different from $\xi$, %and $r_f$ is as in \eqref{eq:comp-omega}, 
the result follows. 
%The second claim follows from the fact that $L_\xi(f)\in\mathcal{H}_L(\widetilde{\Gamma})$ has growth $h=v_p(\xi)$ by \cite[\S{3.2.4}]{PR115}. %(see \cite[\S{14}]{mtt}).
\end{proof}

Note that from \eqref{eq:factor-L} we have the decomposition
\begin{equation}\label{eq:factor-Lp}
\biggl(\begin{array}{cc}
L_{\alpha}(f)\\
L_{\beta}(f)
\end{array}\biggr)
=Q_{\alpha,\beta}^{-1} M_{{\rm log}}\cdot
\biggl(\begin{array}{cc}
L_1(f)\\
L_2(f)\end{array}\biggr)\in\mathcal{H}_L(\widetilde{\Gamma})^{\oplus 2}.
\end{equation}
%in $\mathcal{H}_L(\widetilde{\Gamma})^{\oplus{2}}$. 

\begin{prop}\label{prop:L-nonzero}
For every character $\eta:\Delta\rightarrow\bZ_p^\times$, the projection $L_{i}(f)^\eta\in\Lambda_{\scO}(\Gamma)$ is nonzero for some $i\in\{1,2\}$.
\end{prop}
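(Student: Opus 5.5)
The plan is to argue by contradiction, using the factorisation \eqref{eq:factor-Lp} after projecting to the $\eta$-isotypic component. Suppose $L_1(f)^\eta=L_2(f)^\eta=0$. Both $M_{\rm log}$ (which is defined over $\mathcal{H}_L(\Gamma)$ by Remark~\ref{rem:Mlog-Gamma}) and $Q_{\alpha,\beta}^{-1}$ (a constant matrix, well defined by $p$-regularity $\alpha\neq\beta$) commute with $e_\eta$. Applying $e_\eta$ to \eqref{eq:factor-Lp} would therefore give $L_\alpha(f)^\eta=L_\beta(f)^\eta=0$ in $\mathcal{H}_L(\Gamma)$. So it is enough to show that at least one of $L_\alpha(f)^\eta$, $L_\beta(f)^\eta$ is nonzero.

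For that I would use the interpolation formula of Theorem~\ref{thm:MTT}. Take characters $\chi=\langle\varepsilon\rangle^j\psi_\zeta$ with $j$ fixed (for instance $j=0$) and $\zeta$ a primitive $p^t$-th root of unity, with $t$ large. In that case $t'=t+1$ and the Euler-type factors in $e_{p,\xi,\eta}(f,\chi)$ equal $1$, because the twisting character $\psi_\zeta^{-1}\eta\omega^j$ is ramified at $p$. Hence $e_{p,\xi,\eta}(f,\chi)=p^{(t+1)(j+1)}\xi^{-(t+1)}\neq 0$. The Gauss sum and the period are nonzero. So $\phi_\chi(L_\xi(f)^\eta)\neq 0$ as soon as $L(f\otimes\psi_\zeta^{-1}\eta\omega^j,j+1)\neq 0$.

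The main obstacle is the nonvanishing of a twisted $L$-value for at least one such $\chi$. I would first choose $j$ so that $j+1$ lies outside the critical center $k/2$ whenever possible. If $k\geq 4$, take $j=0$: the point $s=1$ lies in the region where the Rankin--Selberg/Jacquet--Shalika nonvanishing on the edge $\mathrm{Re}(s)=1+\frac{k-1}{2}$ does not apply directly. It is cleaner to take $j=k-2$, so $s=k-1$, which is the edge of the critical strip. There $L(f\otimes\psi,k-1)\neq 0$ by Jacquet--Shalika (nonvanishing on the line $\mathrm{Re}(s)=k/2+1/2$ in the analytic normalisation), or by the Euler product when it converges absolutely. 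If $k=2$ this point is the center $s=1$. In that case I would instead invoke Rohrlich's theorem that $L(f\otimes\psi,1)\neq 0$ for all but finitely many characters $\psi$ of $p$-power conductor, applied to the family $\psi_\zeta^{-1}\eta$. That theorem is what Kato uses (cf. \cite[Thm.~12.5, \S{14}]{Kato295}) to show his zeta elements are non-torsion.

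Either way, infinitely many specialisations of $L_\xi(f)^\eta$ are nonzero, so $L_\xi(f)^\eta\neq 0$. This contradicts the vanishing of both $L_i(f)^\eta$, and the statement follows for every $\eta$.
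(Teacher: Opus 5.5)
Your argument is correct and is essentially the paper's proof. It uses the interpolation formula of Theorem~\ref{thm:MTT} together with Rohrlich's theorem \cite{rohrlich-division} for $k=2$, and for $k\geq 4$ the nonvanishing of the absolutely convergent Euler product at $s=k-1$, which lies strictly to the right of $\mathrm{Re}(s)=(k+1)/2$ (so Jacquet--Shalika is not needed; this is \cite[Prop.~2]{shimuraCPAM}); this gives $L_\xi(f)^\eta\neq 0$, and \eqref{eq:factor-Lp} then forces some $L_i(f)^\eta\neq 0$. Your contrapositive formulation also makes clear that the paper's appeal to the nonvanishing of $\det(Q_{\alpha,\beta}^{-1}M_{\rm log})$ is not needed for this direction.
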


\begin{proof}
By the interpolation property of Theorem~\ref{thm:MTT} and the nonvanishing result of \cite{rohrlich-division} (or just \cite[Prop.~2]{shimuraCPAM} for $k>2$), the left-hand side of \eqref{eq:factor-Lp} 
has nonzero image under $e_\eta$. Since $Q_{\alpha,\beta}^{-1}M_{\rm log}$ has nonzero determinant (see \cite[Cor.~3.2]{LLZ-ANT}), the result follows.
\end{proof}

\begin{rem}\label{rem:cf-ap=0}
Contrary to the case $a_p=0$ studied in \cite[Cor.~5.11]{pollack}, due to the inexplicit nature of $M_{\rm log}$ in general, the interpolation property of Theorem~\ref{thm:MTT} does not translate into a similar property of $L_i(f)^\eta$ in a way allowing us to show that $L_i(f)^\eta\neq 0$ for \emph{both} $i\in\{1,2\}$ from the aforementioned nonvanishing results (cf. \cite[Cor.~3.29]{LLZ-AJM}).
\end{rem}

\subsection{Main conjectures}

%In this section 
We recall the formulation of signed main conjectures from \cite{LLZ-AJM,LLZ-ANT} and their relation with Kato's main conjecture.

\subsubsection{Formulations}

For $i\in\{1,2\}$, define the \emph{$i$-th submodule} of $\rH^1_{\rm Iw}(\bQ_p(\mu_{p^\infty}),T)$ by
\[
\rH^1_{{\rm Iw},i}(\bQ_p(\mu_{p^\infty}),T):={\rm ker}(\widetilde{\mathscr{C}}_i),
\]
where $\widetilde{\mathscr{C}}_i$ is the signed Coleman map of Proposition~\ref{prop:signed-Col-cyc}. Let $V^*(1)={\rm Hom}_L(V,L(1))$ be the Kummer dual of $V$, put $A_f(1)=V_f(1)/T_f(1)$, and using the identification\footnote{Recall that in general $V_f(k-1)^*\cong V_{\bar{f}}$ where $\bar{f}$ is the form obtained by complex conjugating the Fourier coefficients of $f$; however $\bar{f}=f$ for $f$ with trivial nebentypus.}
\begin{equation}\label{eq:dual-kummer}
V^*(1)=V_f(k-1)^*(1)\cong V_f(1).
\end{equation}
let $\rH^1_i(\bQ_p(\mu_{p^\infty}),A_f(1))$ be the orthogonal complement of $\rH^1_{{\rm Iw},i}(\bQ_p(\mu_{p^\infty}),T)$ under  local Tate duality
\[
\rH^1_{\rm Iw}(\bQ_p(\mu_{p^\infty}),T)\times\rH^1(\bQ_p(\mu_{p^\infty}),A_f(1))\longrightarrow\bQ_p/\bZ_p.
\]

\begin{defn}\label{def:signed-Sel}
For $i\in\{1,2\}$ put
\[
{\rm Sel}_i(f/\bQ(\mu_{p^\infty})):={\rm ker}\biggl\{\rH^1(\bQ(\mu_{p^\infty}),A_f(1))\longrightarrow\frac{\rH^1(\bQ_p(\mu_{p^\infty}),A_f(1))}{\rH^1_i(\bQ_p(\mu_{p^\infty}),A_f(1))}\times\prod_{w\nmid p}\rH^1(\bQ(\mu_{p^\infty})_w,A_f(1))\biggr\};
\]
and define the \emph{strict Selmer group} by
\[
{\rm Sel}_{\rm str}(f/\bQ(\mu_{p^\infty}))
:={\rm ker}\bigl\{{\rm Sel}_i(f/\bQ(\mu_{p^\infty}))\longrightarrow\rH^1(\bQ_p(\mu_{p^\infty}),A_f(1))\bigr\}
\]
(which is of course independent of $i$). 
\end{defn}

Let $X_i(f/\bQ(\mu_{p^\infty}))={\rm Hom}_{\bZ_p}({\rm Sel}_{i}(f/\bQ(\mu_{p^\infty})),\bQ_p/\bZ_p)$ be the Pontryagin dual of ${\rm Sel}_i(f/\bQ(\mu_{p^\infty}))$ and similarly put $X_{\rm str}(f/\bQ(\mu_{p^\infty}))={\rm Hom}_{\bZ_p}({\rm Sel}_{\rm str}(f/\bQ(\mu_{p^\infty})),\bQ_p/\bZ_p)$. 

%Following \cite[\S{12.2}]{Kato295}, for $q\in\bZ$ put
%\begin{align*}
%\mathbb{H}^q(\bQ(\mu_{p^\infty}),T)&:=\varprojlim_n\rH^q(\bZ[\mu_{p^n},1/p],j_*T),\\
%\mathbb{H}^q(\bQ(\mu_{p^\infty}),V)&:=\mathbb{H}^q(\bQ(\mu_{p^\infty}),T)\otimes_{\bZ_p}\bQ_p,
%\end{align*}
%where $j:{\rm Spec}(\bQ(\mu_{p^n}))\rightarrow{\rm Spec}(\bZ[\mu_{p^n},1/p])$ is the natural map. These are zero for $q\neq 1,2$, and the latter is independent of the choice of $T$. 

\subsubsection{Kato's main conjecture}

As we explain below, the following is a reformulation of Kato's main conjecture (cf. \cite[Conj.~12.10]{Kato295}).

\begin{conj}[Kato's main conjecture]\label{conj:kato}
%For $p>2$, the class $\mathbf{z}_{f}^{\rm int}=\mathbf{z}_{\gamma_{\cO}}^{(p)}\in\rH^1_{\rm Iw}(\bQ(\mu_{p^\infty}),V_f)$ is contained in $\rH^1_{\rm Iw}(\bQ(\mu_{p^\infty}),T_f)$, and 
For every character $\eta:\Delta\rightarrow\bZ_p^\times$ we have
\[
{\rm char}_{\Lambda_\scO(\Gamma)}\biggl(\frac{{\rm H}_{\rm Iw}^1(\bQ(\mu_{p^\infty}),T)^\eta}{(\mathbf{z}_{f,k-1}^{\rm int})^\eta}\biggr)={\rm char}_{\Lambda_\scO(\Gamma)}\bigl(X_{\rm str}(f/\bQ(\mu_{p^\infty}))^\eta\bigr)
\]
as characteristic ideals of torsion $\Lambda_{\scO}(\Gamma)$-modules.
\end{conj}

\begin{rem}\label{rem:equiv-KatoIMC}
Let $S$ be a set of primes containing $\infty$ and the primes dividing $Np$, and let $\bQ^S$ denote the maximal extension of $\bQ$ unramified outside $S$. As shown in \cite[Lemma~4.3]{kurihara-invmath} (see also \cite[\S{6.2}]{Lei-PhD}), Poitou--Tate duality gives
\begin{equation}\label{eq:PT-H2}
X_{\rm str}(f/\bQ(\mu_{p^\infty}))\cong{\rm ker}\biggl\{{\rm H}^2_{{\rm Iw},S}(\bQ(\mu_{p^\infty}),T)\longrightarrow\prod_{\ell\in S}\prod_{w\mid\ell}\rH^2_{\rm Iw}(\bQ(\mu_{p^\infty})_w,T)\biggr\},
\end{equation}
where ${\rm H}^2_{{\rm Iw},S}(\bQ(\mu_{p^\infty}),T)=\varprojlim_n\rH^2({\rm Gal}(\bQ^S/\bQ(\mu_{p^n})),T)$. On the other hand, from the localisation sequence in \'{e}tale cohomology we have
\begin{equation}\label{eq:loc-H2}
\rH^2_{\rm Iw}(\bQ(\mu_{p^\infty}),T)\cong{\rm ker}\biggl\{{\rm H}^2_{{\rm Iw},S}(\bQ(\mu_{p^\infty}),T)\longrightarrow\prod_{\ell\in S\smallsetminus\{p\}}\prod_{w\mid\ell}\rH^2_{\rm Iw}(\bQ(\mu_{p^\infty})_w,T)\biggr\}
\end{equation}
(see \cite[\S{6}]{kurihara-invmath}). Since by \cite[Lem.~4.4]{Lei-PhD}\footnote{Whose proof, building on \cite[Prop.~4.1.4]{BLZ}, does not require $a_p=0$.} we have   $\rH^0(\bQ_p(\mu_{p^\infty}),A_f(1))=0$, %for $w_p$ the prime of $\bQ(\mu_{p^\infty})$ above $p$, 
comparing \eqref{eq:PT-H2} and \eqref{eq:loc-H2} and using local Tate duality it follows that 
\[
X_{\rm str}(f/\bQ(\mu_{p^\infty}))\cong\rH^2_{\rm Iw}(\bQ(\mu_{p^\infty}),T).
\]
Thus Conjecture~\ref{conj:kato} (for  $\eta:\Delta\rightarrow\bZ_p^\times$) is equivalent to Conjecture~\ref{introconj:kato} (for $\eta\omega^{k-1}$) in the Introduction.
\end{rem}

\subsubsection{Signed main conjectures}

%Extending work of Kobayashi and Lei in the case $a_p=0$ and of Sprung \cite{sprung-IMC} in the case of elliptic curves $E/\bQ$, Lei--Loeffler--Zerbes \cite{LLZ-AJM} gave the following formulation of signed Iwasawa main conjecture in the nonordinary setting. 

The next recall the formulation of signed Iwasawa main conjectures due to Lei--Loeffler--Zerbes (cf. \cite[Conj.~1.1]{LLZ-ANT}).

%For every character $\eta:\Delta\rightarrow\bZ_p^\times$, let $L_i(f)^\eta\in\Lambda_{\scO}({\Gamma})$ denote the image of $L_i(f)$ in Definition~\ref{def:cyc-Lp} under the projection $e_\eta:\Lambda_{\scO}(\widetilde{\Gamma})\rightarrow\Lambda_{\scO}({\Gamma})$.

\begin{conj}[Lei--Loeffler--Zerbes signed main conjectures]\label{conj:signed}
Let $i\in\{1,2\}$. For every character $\eta:\Delta\rightarrow\bZ_p^\times$ we have 
%$X_i(f)$ is $\Lambda_\scO(\Gamma)$-torsion with
\[
{\rm char}_{\Lambda_\scO(\Gamma)}\bigl(X_i(f/\bQ(\mu_{p^\infty}))^\eta\bigr)={\rm char}_{\Lambda_\scO(\Gamma)}\biggl(\frac{{\rm Image}(\widetilde{\mathscr{C}}_i)^\eta}{(L_{i}(f)^\eta)}\biggr);
\]
%where $L_i(f)^\eta$ is the image of $L_i(f)$ 
%in Definition~\ref{def:cyc-Lp} 
%under the projection $e_\eta:\Lambda_{\scO}(\widetilde{\Gamma})\rightarrow\Lambda_{\scO}({\Gamma})$;
equivalently, we have 
\[
(\xi_{\eta,i})\cdot{\rm char}_{\Lambda_\scO(\Gamma)}\bigl(X_i(f/\bQ(\mu_{p^\infty}))^\eta\bigr)=\bigl(L_i(f)^\eta\bigr),
\]
where $\xi_{\eta,i}$ is as in Proposition~\ref{prop:image-Col}.
\end{conj}

\subsubsection{Equivalence}

Our results on Kato's main conjecture for $f$ %for nonordinary prime 
will build on the following reformulation discovered by Lei--Loeffler--Zerbes \cite{LLZ-AJM}. %\cite[Thm.\,6.5]{LLZ-AJM}.

%\begin{rem}
%Implicit in the statement of Conjecture~\ref{conj:signed} is the prediction that $\mathscr{L}_p^\circ(f)$ is nonzero for each choice of $\circ\in\{+,-\}$.
%\end{rem}

\begin{prop}\label{prop:equiv-kato}
Let $i\in\{1,2\}$. 
%and suppose $\bar{\rho}_f$ irreducible as $G_{\bQ}$-representation. 
If $\eta:\Delta\rightarrow\bZ_p^\times$ is such that $L_i(f)^\eta\neq 0$, then the following are equivalent:
\begin{enumerate}
\item[(i)] $X_i(f/\bQ(\mu_{p^\infty}))^\eta$ is $\Lambda_\scO(\Gamma)$-torsion with
\[
(\xi_{\eta,i})\cdot{\rm char}_{\Lambda_\scO(\Gamma)}\bigl(X_i(f/\bQ(\mu_{p^\infty}))^\eta\bigr)\subset\bigl(L_i(f)^\eta\bigr);
\]
\item[(ii)] $X_{\rm str}(f/\bQ(\mu_{p^\infty}))^\eta$ is $\Lambda_{\scO}(\Gamma)$-torsion with
\[
{\rm char}_{\Lambda_\scO(\Gamma)}\bigl(X_{\rm str}(f/\bQ(\mu_{p^\infty}))^\eta\bigr)\subset{\rm char}_{\Lambda_\scO(\Gamma)}\biggl(\frac{{\rm H}_{\rm Iw}^1(\bQ(\mu_{p^\infty}),T)^\eta}{(\mathbf{z}_{f,k-1}^{\rm int})^\eta}\biggr);
\]
\end{enumerate}
and the same holds for the respective opposite divisibilities. In particular, Conjecture~\ref{conj:signed} for $L_i(f)^\eta$ and Kato's main Conjecture~\ref{conj:kato} for $\eta$ are equivalent.
\end{prop}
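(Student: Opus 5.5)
The plan is to compare the two Selmer structures through the Poitou--Tate exact sequence attached to the pair of local conditions at $p$, the $i$-th one versus the strict (zero) one, and then take $\eta$-isotypic characteristic ideals. Write $\mathbb{H}^1=\rH^1_{\rm Iw}(\bQ(\mu_{p^\infty}),T)$ and $\mathbb{H}^1_p=\rH^1_{\rm Iw}(\bQ_p(\mu_{p^\infty}),T)$. Poitou--Tate duality, applied to the relaxed-at-$p$ Selmer structure in the Iwasawa limit, together with Remark~\ref{rem:equiv-KatoIMC} (which identifies $X_{\rm str}$ with $\rH^2_{\rm Iw}(\bQ(\mu_{p^\infty}),T)$), gives an exact sequence
\[
0\longrightarrow\mathbb{H}^1\xrightarrow{\;\widetilde{\mathscr{C}}_i\circ{\rm res}_p\;}{\rm Image}(\widetilde{\mathscr{C}}_i)\longrightarrow X_i(f/\bQ(\mu_{p^\infty}))\longrightarrow X_{\rm str}(f/\bQ(\mu_{p^\infty}))\longrightarrow 0.
\]
Here one uses that $\mathbb{H}^1_p/{\rm ker}(\widetilde{\mathscr{C}}_i)\cong{\rm Image}(\widetilde{\mathscr{C}}_i)$, and that the dual of $\rH^1_i$ inside the local cohomology is exactly $\mathbb{H}^1_p/\rH^1_{{\rm Iw},i}$ by local Tate duality.

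Injectivity of the first map needs care. First, the kernel of ${\rm res}_p$ on $\mathbb{H}^1$ should vanish: $\mathbb{H}^1$ is torsion-free of rank one over $\Lambda_\scO(\widetilde{\Gamma})$ by Kato, with $\bar\rho_f$ irreducible and $\rH^0(\bQ(\mu_{p^\infty}),A_f(1))=0$, and ${\rm res}_p(\mathbf{z}^{\rm int}_{f,k-1})^\eta$ is nonzero. Second, the rank-one module $\mathbb{H}^{1,\eta}$ maps to a nonzero element under $\widetilde{\mathscr{C}}_i$, since $L_i(f)^\eta\neq 0$. This step uses the hypothesis $L_i(f)^\eta\neq0$. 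Quotienting by the image of $(\mathbf{z}^{\rm int}_{f,k-1})^\eta$ and applying $e_\eta$ then yields the exact sequence of $\Lambda_\scO(\Gamma)$-modules
\[
0\to\frac{\mathbb{H}^{1,\eta}}{(\mathbf{z}^{\rm int}_{f,k-1})^\eta}\to\frac{{\rm Image}(\widetilde{\mathscr{C}}_i)^\eta}{(L_i(f)^\eta)}\to X_i^\eta\to X_{\rm str}^\eta\to0 .
\]
The first quotient is torsion, because $\mathbb{H}^{1,\eta}$ has rank one and the Kato class is nonzero; injectivity here also uses that $\mathbb{H}^{1,\eta}$ is torsion-free.

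By Proposition~\ref{prop:image-Col}, ${\rm char}\bigl({\rm Image}(\widetilde{\mathscr{C}}_i)^\eta/(L_i(f)^\eta)\bigr)=(L_i(f)^\eta)/(\xi_{\eta,i})$ up to a pseudo-null correction. That correction is harmless: ${\rm Image}(e_\eta\widetilde{\mathscr{C}}_i)$ has finite index in $\xi_{\eta,i}\Lambda_\scO(\Gamma)$, and finite modules have trivial characteristic ideal over $\Lambda_\scO(\Gamma)\cong\scO[\![X]\!]$. Since the middle term is torsion, $X_i^\eta$ is torsion if and only if $X_{\rm str}^\eta$ is. Multiplicativity of characteristic ideals then gives
\[
(\xi_{\eta,i})\,{\rm char}(X_i^\eta)\,{\rm char}\Bigl(\frac{\mathbb{H}^{1,\eta}}{(\mathbf{z}^{\rm int})^\eta}\Bigr)=(L_i(f)^\eta)\,{\rm char}(X_{\rm str}^\eta).
\]
From this identity the equivalence of (i) and (ii) is immediate, as is the equivalence for the opposite divisibilities and for the equalities. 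Each side is obtained from the other by cancelling nonzero elements in the UFD $\Lambda_\scO(\Gamma)$, using that ${\rm char}(\mathbb{H}^{1,\eta}/(\mathbf{z}))\neq0$.

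The main obstacle I expect is establishing the Poitou--Tate sequence with exactly these terms at the integral level. This requires checking that the local conditions away from $p$ contribute nothing: the places $w\nmid p$ impose no condition after passing to the cyclotomic tower, as in the comparison of \eqref{eq:PT-H2} and \eqref{eq:loc-H2}. It also requires verifying that ${\rm ker}(\widetilde{\mathscr{C}}_i)^\perp$ is dual to $\mathbb{H}^1_p/{\rm ker}$ with no error terms. For both points I would follow \cite[\S6]{LLZ-AJM} and \cite[Lem.~4.3]{kurihara-invmath}, reusing the vanishing $\rH^0(\bQ_p(\mu_{p^\infty}),A_f(1))=0$ from \cite[Lem.~4.4]{Lei-PhD}.
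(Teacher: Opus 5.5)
Your proposal is correct and is essentially the paper's proof. Both run the Poitou--Tate sequence comparing the $i$-th and strict local conditions at $p$, get injectivity of $\widetilde{\mathscr{C}}_i\circ{\rm res}_p$ on the free rank-one $\eta$-part of $\rH^1_{\rm Iw}(\bQ(\mu_{p^\infty}),T)$ from $L_i(f)^\eta\neq 0$, quotient by the Kato class, and conclude with Proposition~\ref{prop:image-Col} and multiplicativity of characteristic ideals. Your separate argument that ${\rm res}_p$ is injective is superfluous, since the nonvanishing of the composite already gives injectivity. Also, injectivity should be claimed only after applying $e_\eta$, because the hypothesis concerns a single $\eta$.
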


\begin{proof}
This is essentially shown in \cite[Thm.~6.5]{LLZ-AJM} (cf. \cite[\S{7}]{kobayashi-ss}), but we provide the details for the convenience of the reader. As noted in the proof of Proposition~\ref{prop:Kato-int}, under our assumptions the residual representation $\bar\rho_f$ is irreducible, and so by \cite[Thm.~12.4]{Kato295} we know that 
$\rH^1_{\rm Iw}(\bQ(\mu_{p^\infty}),T)$ is a free $\Lambda_{\scO}(\widetilde{\Gamma})$-module of rank $1$. From Poitou--Tate duality we have the exact sequence
\begin{equation}\label{eq:PT-1}
{\rm H}^1_{\rm Iw}(\bQ(\mu_{p^\infty}),T)\longrightarrow\frac{{\rm H}^1_{\rm Iw}(\bQ_p(\mu_{p^\infty}),T)}{{\rm H}^1_{{\rm Iw},i}(\bQ_p(\mu_{p^\infty}),T)}\longrightarrow X_i(f/\bQ(\mu_{p^\infty}))\longrightarrow X_{\rm str}(f/\bQ(\mu_{p^\infty}))\longrightarrow 0,
\end{equation}
and by definition and the hypothesis on $\eta$, the composition of the left-most map in \eqref{eq:PT-1} with $\widetilde{\mathscr{C}_i}$ is a nonzero map $\rH^1_{\rm Iw}(\bQ(\mu_{p^\infty}),T)\rightarrow\Lambda_{\scO}(\widetilde{\Gamma})$ taking $\mathbf{z}_{f,k-1}^{\rm int}$ to $L_i(f)$. Thus the latter map is an injection, and quotienting out by the image of $\mathbf{z}_{f,k-1}^{\rm int}$ and taking $\eta$-isotypic components, \eqref{eq:PT-1} yields the short exact sequence
\begin{equation}\label{eq:PT-quot}
0\longrightarrow\frac{{\rm H}^1_{\rm Iw}(\bQ(\mu_{p^\infty}),T)^\eta}{(\mathbf{z}_{f,k-1}^{\rm int})^\eta}\longrightarrow\frac{{\rm Image}(\widetilde{\mathscr{C}}_i)^\eta}{(L_i(f)^\eta)}\longrightarrow X_i(f/\bQ(\mu_{p^\infty}))^\eta\longrightarrow X_{\rm str}(f/\bQ(\mu_{p^\infty}))^\eta\longrightarrow 0.\nonumber
\end{equation}
Using Proposition~\ref{prop:image-Col} and  multiplicativity of characteristic ideals, this yields the result. %from \eqref{eq:PT-quot}.
\end{proof}

%\subsection{Anticyclotomic descent}\label{subsec:ac-descent}

%For an ideal $I\subset\Lambda_K$, let $I^-$ (resp. $I^+$) the image of $I$ under the natural projection $\Lambda_K\rightarrow\Lambda^-$ (resp. $\Lambda_K\rightarrow\Lambda$).

%\begin{prop}\label{prop:cyc-restr}
%We have the divisibility in $\Lambda^-$
%\[
%{\rm char}_{\Lambda_K}\bigl(X_{\rm Gr}(f/K_\infty)\bigr)
%\otimes_{\Lambda_K}\Lambda^-
%\;\supset\;{\rm char}_{\Lambda^-}\bigr(X_{\rm Gr}(f/K_\infty^-)\bigr).
%\]
%\end{prop}

%\begin{proof}
%\end{proof}

%\newpage

\section{Beilinson--Flach elements}\label{sec:BF}

Let $K/\bQ$ be an imaginary quadratic field of discriminant $-D_K<0$ such that
\begin{equation}\label{eq:spl}
\textrm{$p=v\overline{v}$\quad splits in $K$.}\tag{spl}
\end{equation}
For every prime $w$ of $K$ above $p$, let $K_\infty^w$ denote the $\bZ_p$-extension unramified of $K$ outside $w$, and put $\Gamma^w={\rm Gal}(K_\infty^w/K)$. Let ${\rm rec}_K:\mathbb{A}_K^\times\rightarrow G_{K}^{\rm ab}$ be the geometrically normalised reciprocity map of class field theory, and let ${\rm rec}_w:K_w^\times\rightarrow G_{K_w}^{\rm ab}$ denote the restriction of ${\rm rec}_K$ to $K_w^\times$. Then ${\rm rec}_{w}(1+p\bZ_p)$ injects into $\Gamma^w$ (here we identified $1+p\bZ_p$ with the group of $1$-units in $\cO_{K_w}$), and so
\[
[\Gamma^w:{\rm rec}_{w}(1+p\bZ_p)]=p^{h_p}
\]
for some $h_p\geq 0$ (with $h_p=0$ if the class number $h_K$ of $K$ is coprime to $p$). 

As usual, upon the choice of a topological generator $\gamma_w\in\Gamma^w$, we shall often identify $\Lambda_{\scO}(\Gamma^w)$ with the one-variable power series ring $\scO[\![T_w]\!]$ via $\gamma_w\mapsto 1+T_w$.

\subsection{Canonical CM family}\label{subsec:CM}

Following \cite[\S{4.1.6}]{BSTW}, define  $\mathbf{h}_v\in\Lambda_{\bZ_p}(\Gamma^v)[\![q]\!]$ by %the $q$-series 
%canonical CM family %with CM by $K$ 
%as defined in \cite[\S{4.1.6}]{BSTW}, i.e. 
%(\emph{cf.} \cite[\S{5.2}]{jsw}, 
\[
\mathbf{h}_v(q)=\sum_{n=1}^\infty\mathbf{b}_nq^n,\quad
\textrm{}\quad 
\mathbf{b}_n=\sum_{\substack{\mathfrak{a}\subset\cO_K,\mathbf{N}(\mathfrak{a})=n\\ v\nmid\mathfrak{a}}}\Theta_v(x_\mathfrak{a}),
\]
with $\Theta_v:\mathbb{A}_K^\times\rightarrow\Gamma^v$ the composition of ${\rm rec}_K:\mathbb{A}_K^\times\rightarrow G_K^{\rm ab}$ with the projection $G_K^{\rm ab}\rightarrow\Gamma^v$, and where for each ideal $\mathfrak{a}\subset\cO_K$, we let $x_\mathfrak{a}=(x_{\mathfrak{a},w})_w$ be any finite id\`ele with ${\rm ord}_w(x_{\mathfrak{a},w})={\rm ord}_w(\mathfrak{a})$ for all finite places $w$ of $K$ with ${\rm ord}_w(\mathfrak{a})\neq 0$ and $x_{\mathfrak{a},w}=1$ for all other places $w$.

The $q$-series $\mathbf{h}_v(q)$ is a \emph{CM Hida family} of tame level $D_K$ and tame character given by the quadratic Dirichlet character $\epsilon_K$ corresponding to $K$, in the sense that if $\phi:\Lambda_{\bZ_p}(\Gamma^v)\rightarrow\bQ_p$ is the $\bZ_p$-algebra homomorphism defined by $\phi(\gamma_v^{h_p})=(1+p)^m$ for some $m\geq 0$, %where $\gamma_v\in\Gamma^v$ is a topological generator, 
the specialisation
\[
\phi(\mathbf{h}_v)(q):=\sum_{n=1}^\infty\phi(\mathbf{b}_n)q^n
\]
is the $q$-expansion of an ordinary $p$-stabilized newform $\phi(\mathbf{h}_v)\in S_{m+1}(\Gamma_0(D_Kp),\epsilon_K\omega^{-m})$ with complex multiplication by $K$.

\subsubsection{Galois representations attached to CM eigenforms}

Let 
\[
\psi:K^\times\backslash\mathbb{A}_K^\times\longrightarrow\bC^\times
\]
be an algebraic Hecke character.  %of conductor $\mathfrak{c}_\psi$. 
We say that $\psi$ has \emph{infinity type} $(a,b)\in\bZ^2$ is $\psi(z)=z^a\overline{z}^b$ for all $z\in(K\otimes_{\bQ}\bR)^\times\cong\bC^\times$, where the identification is via $\iota_\infty$. The restriction of $\psi$ to the finite id\`{e}les $\mathbb{A}_{K,{\rm f}}^\times$ takes values in $\overline{\bQ}^\times$. The \emph{$p$-adic avatar} of $\psi$ is the character 
$\hat\psi:G_K\rightarrow\overline{\bQ}_p^\times$ defined by 
\[
\hat\psi(\sigma\vert_{K^{\rm ab}})=\iota_p\iota_\infty^{-1}(\psi(x))x_v^ax_{\overline{v}}^b,
\]
where $x\in\mathbb{A}_{K,{\rm f}}^{\times}$ is such that ${\rm rec}_K(x)=\sigma\vert_{K^{\rm ab}}$. Thus, letting $\mathfrak{c}_\psi$ denote the conductor of $\psi$, we have 
\[
\hat\psi({\rm Frob}_w)=\psi(w)
\]
for all finite primes $w$ of $K$ with $w\nmid p\mathfrak{c}_\psi$,  where ${\rm Frob}_w$ denotes a geometric Frobenius at $w$.

Suppose the infinity type of $\psi$ is of the form $(1-k,0)$ for some $k\geq 1$, and let $g=\theta_\psi$ be the theta series of weight $k$ associated to $\psi$. 
Let $L$ be the finite extension of $\bQ_p$ generated by the image under $\iota_p$ of the field generated over $\mathbb{Q}$ by $\iota_\infty^{-1}(\psi(\mathbb{A}_{K,{\rm f}}^\times))$. 
%the values of $\psi$ on $\mathbb{A}_{K,{\rm f}}^\times$. 
Then the $L$-valued Galois representation $V_g$ associated to $g$ by Deligne \cite{deligne-ell-adic} (see also in \cite[\S{8.3}]{Kato295}) is such that
\[
V_g\cong{\rm Ind}_K^\bQ(\hat{\psi}).
\]
%as $G_{\bQ}$-modules.
%where $V_g^*={\rm Hom}_L(V_g,L)$ denotes the contragredient of $g$.

\subsubsection{Galois representation attached to $\mathbf{h}_v$}

Put $\Lambda_D=\bZ_p[\![\bZ_p^\times]\!]$. For integers  $M>0$ prime to $p$, let
\[
\mathfrak{h}_{Mp^\infty}^{\rm ord}:=\varprojlim_r e_{\rm ord}'\mathfrak{h}_{\Gamma_1(Mp^r)},\quad\quad\mathfrak{H}_{Mp^\infty}^{\rm ord}:=\varprojlim_r e_{\rm ord}'\mathfrak{H}_{\Gamma_1(Mp^r)},
\]
be Hida's anti-ordinary Hecke algebras acting on $\Lambda_D$-adic modular forms and cusp forms, respectively (see \cite[\S{7.4}]{KLZ2}), and note that $\mathfrak{H}_{Mp^\infty}^{\rm ord}$ surjects onto $\mathfrak{h}_{Mp^\infty}^{\rm ord}$ by restriction. Put
\begin{equation}\label{eq:big-H}
\begin{aligned}
H^1_{\rm ord}(Mp^\infty)&:=\varprojlim_re_{\rm ord}'\rH^1_{\rm\acute{e}t}(X_1(Mp^r)_{\overline{\bQ}},\bZ_p(1)),\\
\widetilde{H}^1_{\rm ord}(Mp^\infty)&:=\varprojlim_re_{\rm ord}'\rH^1_{\rm\acute{e}t}(Y_1(Mp^r)_{\overline{\bQ}},\bZ_p(1)),
\end{aligned}
\end{equation}
equipped with their natural $\mathfrak{h}_{Mp^\infty}^{\rm ord}[G_\bQ]$- and $\mathfrak{H}_{Mp^\infty}^{\rm ord}[G_\bQ]$-module structures, respectively. 
%
%\begin{defn}\label{def:CM-Tate-lattice}

Let $\varphi_v:\mathfrak{h}_{D_Kp^\infty}^{\rm ord}\rightarrow\Lambda_{\bZ_p}(\Gamma^v)$ be the $\Lambda_D$-algebra homomorphism corresponding to $\mathbf{h}_v$, where $\Lambda_{\bZ_p}(\Gamma^v)$ is viewed as a $\Lambda_D$-module via the $\bZ_p$-algebra homomorphism $\Lambda_D\rightarrow\Lambda_{\bZ_p}(\Gamma^{v})$ determined by $[\langle\varepsilon(\tilde{\gamma})\rangle]\mapsto\varepsilon(\tilde{\gamma})^{-1}\gamma_v$ for all $\tilde{\gamma}\in G_K$ via the cyclotomic character $\varepsilon:G_K\rightarrow\bZ_p^\times$, and put
\begin{equation}\label{def:CM-Tate-lattice}
\begin{aligned}
\mathbb{T}_{\mathbf{h}_v}&:=H^1_{\rm ord}(D_Kp^\infty)\otimes_{\mathfrak{h}_{D_Kp^\infty}^{\rm ord}}\Lambda_{\bZ_p}(\Gamma^v),\\%\quad\quad
\widetilde{\mathbb{T}}_{\mathbf{h}_v}&:=\widetilde{H}^1_{\rm ord}(D_Kp^\infty)\otimes_{\mathfrak{H}_{D_Kp^\infty}^{\rm ord}}\Lambda_{\bZ_p}(\Gamma^v),
\end{aligned}
\end{equation}
where the tensor products are with respect to $\varphi_v$ and its composition with the projection $\mathfrak{H}_{D_Kp^\infty}^{\rm ord}\rightarrow\mathfrak{h}_{D_Kp^\infty}^{\rm ord}$, respectively.
%\end{defn}

Let $\Psi^v:G_K\rightarrow\Lambda_{\bZ_p}(\Gamma^v)^\times$ denote the canonical projection $G_K\rightarrow\Gamma^v$ composed with the inclusion $\Gamma^v\hookrightarrow\Lambda_{\bZ_p}(\Gamma^v)^\times$, %via group-like elements, 
and write $\Lambda_{\bZ_p}(\Gamma^v)^\iota$ for the free $\Lambda_{\bZ_p}(\Gamma^{v})$-module of rank one on which $G_K$ acts via the inverse of $\Psi^v$. 
%The construction of two-variable Beilinson--Flach elements for the base change $f/K$ in the next section will be based on the constructions in \cite{LZ-Coleman} and \cite{BL-non-ord} and the following key result from \cite{BSTW}.

\begin{thm}\label{thm:Ind}
Let $\mathbb{T}_{\mathbf{h}_v}^\square$ denote the quotient of $\mathbb{T}_{\mathbf{h}_v}$ by its torsion submodule. Then 
\[
%%\mathbb{T}_1
\mathbb{T}_{\mathbf{h}_v}^{\square}\cong{\rm Ind}_K^{\bQ}\,\Lambda_{\bZ_p}(\Gamma^v)^\iota
\]
as $\Lambda_{\bZ_p}(\Gamma^v)[G_\bQ]$-modules.
\end{thm}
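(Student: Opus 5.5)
The plan is to compare $\mathbb{T}_{\mathbf{h}_v}^\square$ with the induced module through their traces and specialisations, and then to pin down the lattice. Put $\Lambda=\Lambda_{\bZ_p}(\Gamma^v)$ and $\mathbb{T}^\square=\mathbb{T}_{\mathbf{h}_v}^\square$.

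\textbf{Step 1: the module is free of rank $2$.} I would first show that $\mathbb{T}^\square$ is free of rank $2$ over $\Lambda$. Since $\Lambda\cong\bZ_p[\![T_v]\!]$ is a two-dimensional regular local ring, it suffices to show that $\mathbb{T}^\square$ is reflexive of generic rank $2$.
\begin{itemize}
\item The generic rank is $2$ by Hida's control theorem for $H^1_{\rm ord}(D_Kp^\infty)$ (cf.\ \cite[\S{7}]{KLZ2}), applied at the classical specialisations $\phi(\mathbf{h}_v)$ of weight $m+1\geq 2$.
\item For reflexivity, I would use Ohta's description of $H^1_{\rm ord}$ as an extension of ``sub'' and ``quotient'' $\Lambda_D$-modules, each free over the Hecke algebra after localisation at non-Eisenstein maximal ideals. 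The relevant maximal ideal here is non-Eisenstein, because the residual representation is ${\rm Ind}_K^\bQ$ of a character $\bar\chi$ with $\bar\chi\neq\bar\chi^c$ after the usual twist.
\item If non-Eisensteinness fails, I would instead argue directly: $\mathbb{T}_{\mathbf{h}_v}$ is finitely generated and torsion-free modulo torsion, so $\mathbb{T}^\square$ embeds into its reflexive hull $(\mathbb{T}^\square)^{**}$. One then checks that the cokernel is killed by passing to height-one primes, using the control theorem modulo $T_v$.
\end{itemize}

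\textbf{Step 2: the generic fibre is the induced character.} Over ${\rm Frac}(\Lambda)$, the representation $\mathbb{T}^\square\otimes{\rm Frac}(\Lambda)$ has the same traces of Frobenius as ${\rm Ind}_K^\bQ\Lambda^\iota$. Both traces at $\ell\nmid D_Kp$ are given by the coefficient $\mathbf{b}_\ell$, because of the defining formula for $\mathbf{b}_n$ via $\Theta_v$. The inverse $\iota$ and the $\bZ_p(1)$-twist account for the cohomological normalisation of $H^1(\,\cdot\,,\bZ_p(1))$, which is the dual of the homological one. By Chebotarev and Brauer--Nesbitt, the two semisimplifications agree. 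Since ${\rm Ind}_K^\bQ\Psi^{v,-1}$ is absolutely irreducible over ${\rm Frac}(\Lambda)$ (its restriction to $G_K$ is the sum of the two distinct characters $\Psi^{v,-1}$ and its conjugate), we get $\mathbb{T}^\square\otimes{\rm Frac}(\Lambda)\cong{\rm Ind}_K^\bQ{\rm Frac}(\Lambda)^\iota$.

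\textbf{Step 3: the lattice is the induced lattice (main obstacle).} The remaining and hardest point is to identify the $\Lambda$-lattice inside this generic fibre.
\begin{itemize}
\item Restricted to $G_K$, $\mathbb{T}^\square$ splits generically into two eigenlines for the distinct characters $\Psi^{v,-1}$ and $(\Psi^{v,-1})^c$. Intersecting $\mathbb{T}^\square$ with these eigenlines gives rank-one reflexive, hence free, submodules $L_1$ and $L_2$.
\item Complex conjugation $c\in G_\bQ$ swaps $L_1$ and $L_2$, so $L_1\oplus cL_1\subset\mathbb{T}^\square$, and the inclusion has finite-length-free cokernel $Q$.
\item To show $Q=0$, I would reduce modulo the maximal ideal. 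Because $\bar\chi\neq\bar\chi^c$ (one characters ramified at $\overline{v}$ and the other not, using that $\Gamma^v$ is ramified only at $v$), the residual representation $\mathbb{T}^\square/\mathfrak{m}$ is irreducible of the form ${\rm Ind}\bar\chi$.
\item Irreducibility gives uniqueness of the lattice up to homothety, via a Ribet-type argument over the two-dimensional base. Concretely, the $G_K$-module $\mathbb{T}^\square$ has two distinct characters residually, so the idempotents in $\Lambda[G_K]$ (or in the image of $\Lambda[\![G_K]\!]$) cutting out the $\Psi^{v,-1}$- and $(\Psi^{v,-1})^c$-parts are integral. This decomposes $\mathbb{T}^\square=e_1\mathbb{T}^\square\oplus e_2\mathbb{T}^\square$, with $c$ exchanging the summands, which yields the induced structure.
\end{itemize}

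The key input for the integrality of the idempotents is the existence of some $\sigma\in G_K$ with $\Psi^{v,-1}(\sigma)-(\Psi^{v,-1})^c(\sigma)\in\Lambda^\times$. This is exactly where the possible $p$-divisibility of $h_K$ (through $h_p$) and the residual distinctness could fail, and I expect handling this, as in the lattice analysis of \cite{BSTW}, to be the main difficulty.
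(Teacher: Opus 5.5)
\textbf{The gap is the premise behind Steps~1 and~3: the residual representation here is Eisenstein, and this holds in every case, not just when $p$ divides $h_K$.} Since $\Gamma^v\cong\bZ_p$ is pro-$p$ and $\Lambda=\Lambda_{\bZ_p}(\Gamma^v)\cong\bZ_p[\![T_v]\!]$ is local with residue field $\mathbf{F}_p$, every group-like element $\gamma\in\Gamma^v$ satisfies $\gamma\equiv 1 \pmod{T_v}$.

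\begin{itemize}
\item It follows that $\Psi^{v,-1}(\sigma)-(\Psi^{v,-1})^c(\sigma)\in(T_v)$ for every $\sigma\in G_K$. At $T_v=0$ both characters are identically trivial.
\item The two characters are distinct over $\Lambda$, since they differ on inertia at $v$. That is why your Step~2 works. But this ramification is wild and disappears modulo $\mathfrak{m}$.
\item So your claim $\bar\chi\neq\bar\chi^c$ is false. The residual semisimplification is $\mathbf{1}\oplus\epsilon_K$, and this holds even when $h_p=0$.
\item The paper records exactly this: $\mathbf{h}_v$ specialises at the trivial character to the $p$-stabilisation of $E_1(1,\epsilon_K)$, and $T_v$ kills $\widetilde{\mathbb{T}}^\square_{\mathbf{h}_v}/\mathbb{T}^\square_{\mathbf{h}_v}$.
\end{itemize}

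Consequently, the $\sigma$ you need in Step~3 never exists, and the eigen-idempotents are never integral. Lattices are also not unique up to homothety. For example, take the eigenbasis $e_1$, $e_2=ce_1$ of ${\rm Ind}_K^{\bQ}{\rm Frac}(\Lambda)^\iota$ and set $M=\Lambda e_1+\Lambda\,T_v^{-1}(e_1+e_2)$.
\begin{itemize}
\item $M$ is a $G_{\bQ}$-stable free lattice.
\item Its eigenline intersections are $\Lambda e_1$ and $\Lambda e_2$.
\item Your quotient $Q$ equals $\Lambda/(T_v)$. This is supported on a height-one prime, not of finite length.
\item $M$ is not induced.
\end{itemize}
So no residual-distinctness argument can finish Step~3.

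The same problem breaks Step~1. There is no non-Eisenstein localisation to use. At this weight-one Eisenstein point even Gorensteinness can fail (cf.\ \cite{betina-dimitrov-pozzi}), so Ohta's freeness results cannot just be quoted. Your fallback does not repair this. For torsion-free finitely generated $M$ over the two-dimensional regular local ring $\Lambda$, the cokernel of $M\hookrightarrow M^{**}$ has finite length, so it is invisible at every height-one prime. Killing it needs a depth-two statement, e.g.\ that $\mathbb{T}^\square_{\mathbf{h}_v}/T_v\mathbb{T}^\square_{\mathbf{h}_v}$ is $\bZ_p$-torsion-free. That is integral control exactly at the Eisenstein point $T_v=0$.

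Your Step~2 is the easy part. What actually has to be shown is an integral statement. Ohta's rank-one $G_{\bQ_p}$-stable submodule $\mathbb{T}^+_{\mathbf{h}_v}$ must generically be one of the two $G_K$-eigenlines, because the characters differ on $G_{K_v}$. One must show that it and its image under complex conjugation span $\mathbb{T}^\square_{\mathbf{h}_v}$; this is the form $\mathbb{T}^-_{\mathbf{h}_v}\cong c\cdot\mathbb{T}^+_{\mathbf{h}_v}$ in which the paper later uses the result. Proving this needs the geometric origin of $\mathbb{T}_{\mathbf{h}_v}$ near the weight-one Eisenstein point. The paper does not prove it either: it imports the statement from \cite[Thm.~5.19]{BSTW}, whose lattice analysis is built for exactly this residually reducible situation.
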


\begin{proof}
This is shown in Theorem~5.19 of \cite{BSTW}.
\end{proof}

\subsection{Zeta elements over $K$}\label{subsec:zeta}

Let $\alpha$ and $\beta$ be the roots of the Hecke polynomial $x^2-a_px+p^{k-1}$, and for $\xi\in\{\alpha,\beta\}$ let $f^\xi\in S_k(\Gamma_0(Np))$ be the $p$-stabilization of $f$ with $U_p$-eigenvalue $\xi$. Upon enlarging $\scO$ if necessary, we assume that $\alpha,\beta\in\scO$. Recall from \S\ref{subsec:modforms} the $\scO$-lattice $T_f\subset V_f$. Working with level $Np$, we define $T_{f^\xi}\subset V_{f^\xi}$ similarly as before, and write $T_f^*={\rm Hom}_{\scO}(T_f,\scO)$ and likewise define $T_{f^\xi}^*$.

Replacing $\bZ_p(1)$ by the $p$-adic \'{e}tale sheaf ${\rm TSym}^{k-2}\mathscr{H}_r(1)$ on $X_1(Mp^r)$ and $Y_1(Mp^r)$ introduced in \cite[\S{2.3}]{KLZ2}, the inverse limits  \eqref{eq:big-H} define the $\mathfrak{h}^{\rm ord}_{Mp^\infty}[G_\bQ]$- and $\mathfrak{H}^{\rm ord}_{Mp^\infty}[G_\bQ]$-modules $H_{\rm ord}^1(Mp^{\infty})^{[k-2]}$ and $\widetilde{H}_{\rm ord}^1(Mp^\infty)^{[k-2]}$. It follows from \cite[Thm.~4.5.1, Rem.~4.5.3]{KLZ2} %(see also [\emph{loc.\,cit.}, Rem.~4.5.3]), 
that these are identified with the twists of $H_{\rm ord}^1(Mp^{\infty})$ and $\widetilde{H}_{\rm ord}^1(Mp^\infty)$, respectively, by the $\bZ_p$-linear twisting map 
\begin{equation}\label{eq:moment}
m_{k-2}:\Lambda_D\longrightarrow\Lambda_D
\end{equation}
given by $z\mapsto z^{2-k}[z]$ for $z\in\bZ_p^\times$. From these, we define $\mathbb{T}_{\mathbf{h}_v}^{[k-2]}$ and $\widetilde{\mathbb{T}}_{\mathbf{h}_v}^{[k-2]}$ by tensoring with $\varphi_v$ as in \eqref{def:CM-Tate-lattice}, and let $\mathbb{T}_{\mathbf{h}_v}^{\square,[k-2]}$ and $\widetilde{\mathbb{T}}_{\mathbf{h}_v}^{\square,[k-2]}$ denote the quotient by the torsion submodules.

Put $M=ND_K$, fix an integer $c>1$ with $(c,6pM)=1$, and let
\begin{equation}\label{eq:BF-unb}
_c\mathcal{BF}^{\xi,\mathbf{h}_v}\in \rH^1(\bZ[1/p],T_{f^\xi}^*\hat\otimes\bigl(\widetilde{\mathbb{T}}_{\mathbf{h}_v}^{\square,[k-2]}\bigr)\hat\otimes\mathcal{H}_{L}(\widetilde{\Gamma}))
\end{equation}
be the Beilinson--Flach element ${}_c{\rm BF}_{m}^{\xi,\mathbf{g}}$ of \cite[Thm.~3.2]{BL-non-ord} for $m=1$ and $\mathbf{g}=\mathbf{h}_v$ the canonical CM family of $\S\ref{subsec:CM}$; by construction, this interpolates for varying $n\geq 1$ and $0\leq j\leq k-2$  the image of the modified Rankin--Eisenstein classes
\[
\frac{(-1)^j\bigl((U_p')^{-n},(U_p')^{-n}\bigr)}{j!\binom{k-2}{j}^2}{}_{c}\mathcal{RI}_{p^n,p^{n+1}M,1}^{[j]}
\]
of \cite[Def.~5.1.5]{KLZ2} (as well as \cite[Def.~3.2.1]{LZ-Coleman}) under the composite map
\begin{equation}\label{eq:BF-def}
\begin{aligned}
\rH^3_{\rm\acute{e}t}(Y(p^n,p^{n+1}M)^2,\Lambda(\mathscr{H}_{\bZ_p}\langle t_{p^{n+1}M}\rangle)&^{[j,j]}(2-j)\otimes L)\\
&\longrightarrow\rH^1(\bQ(\mu_{p^n}),T_{f^\xi}^*\hat\otimes\widetilde{H}^1_{\rm ord}(Mp^\infty)^{[k-2]}(-j)\otimes L)\\
&\longrightarrow\rH^1(\bQ(\mu_{p^n}),T_{f^\xi}^*\hat\otimes\widetilde{\mathbb{T}}_{\mathbf{h}_v}^{\square,[k-2]}(-j)\otimes L)
\end{aligned}
\end{equation}
of \cite[p.\,10685]{BL-non-ord}, where the second arrow in \eqref{eq:BF-def} is induced by the projection 
\[
\widetilde{H}^1_{\rm ord}(Mp^\infty)^{[k-2]}\longrightarrow\widetilde{\mathbb{T}}_{\mathbf{h}_v}^{[k-2]}\longrightarrow\widetilde{\mathbb{T}}_{\mathbf{h}_v}^{\square,[k-2]}.
\]

By \cite[Prop.~7.3.1]{KLZ2} (see also \cite[Prop.~4.3.6]{LLZ-K}), there is an isomorphism $({\rm Pr}^\xi)_*:T_{f^\xi}^*\xrightarrow{\sim}T_f^*$ induced by a  $p$-stabilisation map ${\rm Pr}^\xi$; in the following we shall thus view the Beilinson--Flach elements \eqref{eq:BF-unb} in $\rH^1(\bZ[1/p],T_{f}^*\hat\otimes(\widetilde{\mathbb{T}}_{\mathbf{h}_v}^{\square,[k-2]})\hat\otimes\mathcal{H}_{L}(\widetilde{\Gamma})^\iota)$. Moreover, as observed in \cite[\S{6.4}]{BSTW}, we may find $c$ as above such that 
\[
\mathbf{c}:=(c^2-\epsilon_K(c)\otimes\langle c\rangle\gamma_v^{-r_c})\otimes\gamma_{\rm cyc}^{2r_c}
\] 
is invertible, where $r_c=\log_p(c)$. Upon the choice of such $c$ we then set
\[
\mathcal{BF}^{\xi,\mathbf{h}_v}:=\mathbf{c}^{-1}\cdot{}_c\mathcal{BF}^{\xi,\mathbf{h}_v}\in\rH^1(\bZ[1/p],T\hat\otimes(\widetilde{\mathbb{T}}_{\mathbf{h}_v}^{\square,[k-2]})\hat\otimes\mathcal{H}_{L}(\widetilde{\Gamma})^\iota),
\]
which is independent of $c$, and where we used the identification $T_f^*\cong T$ (since $f$ has trivial nebentypus, and so the complex conjugate $\bar{f}$  is $f$  itself).

\begin{thm}%[B\"uy\"ukboduk--Lei]
\label{thm:BF-factor}
There exist classes  
\[
{\mathcal{BF}}^{1,\mathbf{h}_v},\;{\mathcal{BF}}^{2,\mathbf{h}_v}
\in 
\rH^1(\bZ[1/p],T\hat\otimes(\widetilde{\mathbb{T}}_{\mathbf{h}_v}^{\square,[k-2]})\hat\otimes\Lambda_{\bZ_p}(\widetilde{\Gamma})^\iota)\otimes_{}L
\] 
such that
\begin{equation}
\left(\begin{array}{cc}
\mathcal{BF}^{\alpha,\mathbf{h}_v}\\
\mathcal{BF}^{\beta,\mathbf{h}_v}
\end{array}\right)
=Q_{\alpha,\beta}^{-1}M_{{\rm log}}\cdot
\left(\begin{array}{cc}
{\mathcal{BF}}^{1,\mathbf{h}_v}\\
{\mathcal{BF}}^{2,\mathbf{h}_v}
\end{array}\right),\nonumber
\end{equation}
where $Q_{\alpha,\beta}=\frac{1}{\alpha-\beta}\bigl(\begin{smallmatrix}\alpha & -\beta\\ -p^{k-1}&p^{k-1}\end{smallmatrix}\bigr)$ and $M_{{\rm log}}\in M_{2\times 2}(\mathcal{H}_L(\Gamma))$ is the logarithm matrix of Definition~\ref{def:log-matrix}.
\end{thm}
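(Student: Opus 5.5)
The plan is to follow Büyükboduk--Lei \cite{BL-non-ord} (and \cite{LLZ-K} for the analogous two-variable statement). The decomposition of the unbounded classes is obtained by the same linear algebra as in Proposition~\ref{prop:signed-Col-cyc} and Remark~\ref{rem:PR-alpha-beta}, now applied ``one level up'' in Iwasawa cohomology with coefficients. Concretely, I would set
\[
\left(\begin{array}{c}\mathcal{BF}^{1,\mathbf{h}_v}\\ \mathcal{BF}^{2,\mathbf{h}_v}\end{array}\right):=M_{\rm log}^{-1}Q_{\alpha,\beta}\left(\begin{array}{c}\mathcal{BF}^{\alpha,\mathbf{h}_v}\\ \mathcal{BF}^{\beta,\mathbf{h}_v}\end{array}\right).
\]
This makes sense over the fraction field of $\mathcal{H}_L(\Gamma)$, since $\det M_{\rm log}\neq 0$ by \cite[Cor.~3.2]{LLZ-ANT}. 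The identity in the statement then holds by definition, and the whole content of the theorem is that these classes lie in the bounded module $\rH^1(\bZ[1/p],T\hat\otimes\widetilde{\mathbb{T}}_{\mathbf{h}_v}^{\square,[k-2]}\hat\otimes\Lambda_{\bZ_p}(\widetilde{\Gamma})^\iota)\otimes L$.

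The key steps are as follows.

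\textbf{Step 1 (growth).} First I would record the growth of the unbounded classes. By \cite[Thm.~3.2]{BL-non-ord}, each $\mathcal{BF}^{\xi,\mathbf{h}_v}$ has order $v_p(\xi)$ as an $\mathcal{H}_L(\widetilde{\Gamma})$-valued class. Dividing by the invertible $\mathbf{c}$ does not affect this.

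\textbf{Step 2 (interpolation).} Next I would use the interpolation property defining the classes. For $0\leq j\leq k-2$ and characters $\theta$ of $\widetilde{\Gamma}$ of conductor $p^n$, the specialisation of $\mathcal{BF}^{\xi,\mathbf{h}_v}$ at $\varepsilon^j\theta$ equals $\xi^{-n}$ times a class coming from the Rankin--Eisenstein class with $(U_p')^{-n}$ removed. That residual class is independent of $\xi$ after applying $({\rm Pr}^\xi)_*$, up to the explicit Euler-like factors from the $p$-stabilisation. This is exactly the shape of interpolation satisfied by $\mathscr{L}_{V,(1+\pi)\otimes v_\xi^*}$ applied to a bounded Iwasawa class. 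The pair $(\mathcal{BF}^{\alpha},\mathcal{BF}^{\beta})$ therefore behaves like the vector $(v_\alpha^*,v_\beta^*)$-coordinates of a ``$\mathbb{D}_{\rm cris}$-valued'' object. One then argues as in \cite[\S3]{BL-non-ord} via the criterion of \cite[Lem.~2.?]{BL-non-ord}/\cite{LLZ-ANT}: a pair of distributions of growth $v_p(\alpha),v_p(\beta)$ whose specialisations at $\varepsilon^j\theta$ ($0\le j\le k-2$, all $n$) satisfy these compatibility relations is of the form $Q_{\alpha,\beta}^{-1}M_{\rm log}\cdot(\text{bounded pair})$.

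\textbf{Step 3 (boundedness).} The mechanism behind that criterion is the following. The matrix $M_{\rm log}$ has entries of growth at most $v_p(\alpha),v_p(\beta)$, and $\det M_{\rm log}$ is, up to a unit, $\delta_{k-1}$ divided by a product of cyclotomic polynomials-type factors $\prod_n \varphi^{n}(q)^{k-1}/p^{k-1}$ (cf. \cite[Cor.~3.2]{LLZ-ANT}). Its zeros are precisely at the characters $\varepsilon^j\theta$ above. The interpolation relations show that $M_{\rm log}^{-1}Q_{\alpha,\beta}(\mathcal{BF}^\alpha,\mathcal{BF}^\beta)^t$ has no poles at these points, hence its entries are in $\mathcal{H}_L$. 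A growth count then shows they are of growth $0$, hence bounded, i.e.\ in $\Lambda\otimes L$. Everything is done coefficientwise in the finite-free (after inverting $p$) $\Lambda_{\bZ_p}(\Gamma^v)$-module $\widetilde{\mathbb{T}}_{\mathbf{h}_v}^{\square,[k-2]}$, using Theorem~\ref{thm:Ind} to know this module is torsion-free of rank two. Cohomology of $\bZ[1/p]$ commutes with the relevant base changes because $\rH^0$ vanishes (residual irreducibility of $T$).

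\textbf{Main obstacle.} I expect the main obstacle to be Step 2's vanishing at the zeros of $\det M_{\rm log}$. One must check that the twisted specialisations of $\alpha^{n}\mathcal{BF}^{\alpha}$ and $\beta^n\mathcal{BF}^{\beta}$ match the way $M_{\rm log}$ mixes $v_1,v_2$ at level $n$, namely via $A_\varphi^{n}$ and the Wach-module relation of Lemma~\ref{lem:Wach-basis}. My first attempt would be to reduce to the analogous statement already proven in \cite[Thm.~3.5]{BL-non-ord}. That statement is made for general Hida families $\mathbf{g}$ with the matrix $M_{\rm log}$ built from the same Wach basis, so one would just specialise to $\mathbf{g}=\mathbf{h}_v$ and push forward along $\widetilde{H}^1_{\rm ord}\to\widetilde{\mathbb{T}}^{\square,[k-2]}_{\mathbf{h}_v}$. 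It would then remain to verify that their basis conventions agree with those of Lemma~\ref{lem:Wach-basis} (Remark~\ref{rem:Mlog-basis} handles changes mod $\pi$).
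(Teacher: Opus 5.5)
Your final paragraph is exactly the paper's proof, which consists only of invoking B\"uy\"ukboduk--Lei's decomposition of the unbounded Beilinson--Flach classes attached to a Hida family $\mathbf{g}$, specialised to $\mathbf{g}=\mathbf{h}_v$; the result is \cite[Thm.~3.7]{BL-non-ord}, not Thm.~3.5, and the normalisation by the invertible $\mathbf{c}$ is harmless. Your Steps~2--3 are a reasonable sketch of the mechanism behind that theorem, with one slip: $\det M_{\rm log}\sim_p{\rm log}_{p,k-1}/\delta_{k-1}$ (cf.\ Remark~\ref{rem:L12-L}), not its inverse, so the poles of $M_{\rm log}^{-1}$ that the interpolation relations must cancel are the zeros of ${\rm log}_{p,k-1}$ not removed by $\delta_{k-1}$.
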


\begin{proof}
This follows from \cite[Thm.~3.7]{BL-non-ord}.
\end{proof}

Fix some $s\geq 0$ such that for both $i\in\{1,2\}$ the classes
\begin{equation}\label{eq:varpi-s}
\mathcal{BF}_{\rm int}^{i,\mathbf{h}_v}:=\varpi^s\cdot\mathcal{BF}^{i,\mathbf{h}_v}
\end{equation}
land in $\rH^1(\bZ[1/p],T\hat\otimes(\widetilde{\mathbb{T}}_{\mathbf{h}_v}^{\square,[k-2]})\hat\otimes\Lambda_{\bZ_p}(\widetilde{\Gamma})^\iota)$ (the particular choice of $s$ will be irrelevant  
for our arguments later). 
In the following, twisting by the inverse of the character $m_{k-2}$ of \eqref{eq:moment}, we shall view $\mathcal{BF}_{\rm int}^{i,\mathbf{h}_v}$ as living in $\rH^1(\bZ[1/p],T\hat\otimes(\widetilde{\mathbb{T}}_{\mathbf{h}_v}^{\square})\hat\otimes\Lambda_{\bZ_p}(\widetilde{\Gamma})^\iota)$.

\subsection{Two-variable Coleman maps}

Let $F_\infty$ be the unramified $\bZ_p$-extension of $\bQ_p$, and put
\[
U:={\rm Gal}(F_\infty/\bQ_p),\quad\quad G:={\rm Gal}(F_\infty(\mu_{p^\infty})/\bQ_p)\cong\widetilde{\Gamma}\times U.
\]
We begin by recalling the construction by Loeffler--Zerbes \cite{LZ2} of a two-variable big logarithm map $\mathscr{L}_{V,F_\infty}$ interpolating Perrin-Riou's
\[
\mathscr{L}_{V,F}:\rH^1_{\rm Iw}(F(\mu_{p^\infty}),V)\longrightarrow\mathbb{D}_{\rm cris}(V)\otimes_{\bQ_p}\mathcal{H}_L(\widetilde{\Gamma})
\]
(cf. \eqref{eq:PR-Qp}) over the finite extensions $F/\bQ_p$ contained in $F_\infty$.

For any finite unramified extension $F/\bQ_p$, write $\mathbb{N}_F(T)$ for the Wach module attached to $T\vert_{G_F}$. By \cite[Prop.~4.5]{LZ2}, the Fontaine--Berger isomorphism $\rH^1_{\rm Iw}(F(\mu_{p^\infty}),T)\cong\mathbb{N}_F(T)^{\psi=1}$ extends to
\[
\rH^1_{\rm Iw}(F_\infty(\mu_{p^\infty}),T)\cong\mathbb{N}_{F_\infty}(T)^{\psi=1},
\]
where $\mathbb{N}_{F_\infty}(T)=\varprojlim_F\mathbb{N}_F(T)$ with inverse limit relative to the trace maps, and with $F$ running over the finite extensions of $\bQ_p$ contained in $F_\infty$; and by Proposition~3.12 in \emph{op.\,cit.} we  have
\[
(\varphi^*\mathbb{N}_{F_\infty}(T))^{\psi=0}=(\varphi^*\mathbb{N}_{}(T))^{\psi=0}\hat\otimes_{\bZ_p}S_{F_\infty/\bQ_p},
\]
where $S_{F_\infty/\bQ_p}\subset\Lambda_{\widehat{\scO}_{F_\infty}}(U)$ %--- the \emph{Yager module} --- 
is a free $\Lambda_{\bZ_p}(U)$-module of rank one. Following  \cite[Def.~4.6]{LZ2} (see also \cite[\S{2.3}]{BL-non-ord}), letting $\Omega_{\bQ_p}$ be a $\Lambda_{\bZ_p}(U)$-basis of $S_{F_\infty/\bQ_p}$, 
similarly as in \eqref{eq:PR-2var} we define 
\begin{equation}\label{eq:PR-2var}
\mathscr{L}_{V,F_\infty}:\rH^1_{\rm Iw}(F_\infty(\mu_{p^\infty}),T)\longrightarrow\Omega_{\bQ_p}\cdot\mathcal{H}_L(G)
\otimes_{\bQ_p}\mathbb{D}_{\rm cris}(V)\nonumber
\end{equation}
by the composition
\begin{equation}\label{eq:PR-2var-def}
\begin{aligned}
\rH^1_{\rm Iw}(F_\infty(\mu_{p^\infty}),T)\overset{\cong}\longrightarrow\mathbb{N}_{F_\infty}(T)^{\psi=1}
&\xrightarrow{1-\varphi}
(\varphi^*\mathbb{N}_{}(T))^{\psi=0}\hat\otimes_{\bZ_p}S_{F_\infty/\bQ_p}\\
&\hooklongrightarrow\mathbb{D}_{\rm cris}(V)\otimes_{\bQ_p}(\mathbb{B}_{{\rm rig},\bQ_p}^+)^{\psi=0}\hat\otimes_{\bZ_p}S_{F_\infty/\bQ_p}\\
&\xrightarrow{\mathfrak{M}^{-1}}\mathbb{D}_{\rm cris}(V)\otimes_{\bQ_p}(\mathcal{H}_L(\widetilde{\Gamma})\hat\otimes_{\bZ_p}S_{F_\infty/\bQ_p})\\
&\hooklongrightarrow\Omega_{\bQ_p}\cdot\mathcal{H}_L(G)
\otimes_{\bQ_p}\mathbb{D}_{\rm cris}(V),
\end{aligned}
\end{equation}
using the embedding of \cite[Prop.~2.11]{LLZ-ANT} and the inclusion $\mathcal{H}_L(\widetilde{\Gamma})\hat\otimes_{\bZ_p}\Lambda_{\bZ_p}(U)\subset\mathcal{H}_L(\widetilde{\Gamma})\hat\otimes_{\bZ_p}\mathcal{H}(U)=\mathcal{H}_L(G)$, respectively, for the unlabeled arrows. 
%In the same manner as in Proposition~\ref{prop:signed-Col-cyc}, we then have the following.

\begin{prop}\label{prop:signed-Col-2var}
Let $(n_1,n_2)$ be a $\mathbb{A}_{\bQ_p}^+$-basis of $\mathbb{N}(T)$ as in Lemma~\ref{lem:Wach-basis}. Then 
we can write
%$\mathscr{L}_{V,F_\infty}$ %of \eqref{eq:PR-2var} 
%decomposes as
\begin{equation}\label{eq:dec-L-2var}
\mathscr{L}_{V,F_\infty}=(v_1\;,\;v_2)\cdot M_{{\rm log}}\cdot
\biggl(\begin{array}{cc}
\widetilde{\mathscr{C}}_{1,F_\infty}\\
\widetilde{\mathscr{C}}_{2,F_\infty}
\end{array}\biggr),
\end{equation}
where $\widetilde{\mathscr{C}}_{i,F_\infty}$
%:\rH^1_{\rm Iw}(F_\infty(\mu_{p^\infty}),T)\longrightarrow\Omega_{\bQ_p}\cdot\Lambda_{\cO}(G)$ 
is defined as the composition
\begin{align*}
\widetilde{\mathscr{C}}_{i,F_\infty}:\rH^1_{\rm Iw}(F_\infty(\mu_{p^\infty}),T)\overset{\cong}\longrightarrow\mathbb{N}_{F_\infty}(T)^{\psi=1}
&\xrightarrow{1-\varphi}(\varphi^*\mathbb{N}(T))^{\psi=0}\hat\otimes_{\bZ_p}S_{F_\infty/\bQ_p}\\
&\xrightarrow{{\rm pr}_i\circ\mathfrak{I}}\Lambda_{\scO}(\widetilde{\Gamma})\hat\otimes_{\bZ_p}S_{F_\infty/\bQ_p}\\
&=\Lambda_{\scO}(\widetilde{\Gamma})\hat\otimes_{\bZ_p}(\Omega_{\bQ_p}\cdot\Lambda_{\bZ_p}(U))\\&=\Omega_{\bQ_p}\cdot\Lambda_{\scO}(G),
\end{align*}
where the second arrow is given by taking the $i$-th coordinate with respect to the basis $(n_1,n_2)$. 
\end{prop}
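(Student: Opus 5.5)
The decomposition \eqref{eq:dec-L-2var} is the two-variable analogue of Proposition~\ref{prop:signed-Col-cyc}, and I would prove it the same way: compare the integral composite defining $\widetilde{\mathscr{C}}_{i,F_\infty}$ with the rational composite \eqref{eq:PR-2var-def} defining $\mathscr{L}_{V,F_\infty}$. The two composites share their first two arrows, namely the Fontaine--Berger isomorphism and $1-\varphi$. So it is enough to compare the remaining arrows, which all start from $(\varphi^*\mathbb{N}(T))^{\psi=0}\hat\otimes_{\bZ_p}S_{F_\infty/\bQ_p}$.

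First I would reduce to a statement that does not involve $F_\infty$. Every remaining arrow is of the form $(\text{map on }(\varphi^*\mathbb{N}(T))^{\psi=0})\hat\otimes{\rm id}_{S_{F_\infty/\bQ_p}}$, up to the final identification $\mathcal{H}_L(\widetilde{\Gamma})\hat\otimes S_{F_\infty/\bQ_p}\hookrightarrow\Omega_{\bQ_p}\cdot\mathcal{H}_L(G)$. That identification is $\mathcal{H}_L(\widetilde{\Gamma})$-linear and compatible with the inclusion $\Lambda_{\scO}(\widetilde{\Gamma})\hat\otimes S_{F_\infty/\bQ_p}=\Omega_{\bQ_p}\Lambda_{\scO}(G)$. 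Hence it suffices to prove the cyclotomic identity: for $x=a_1\cdot(1+\pi)\varphi(n_1)+a_2\cdot(1+\pi)\varphi(n_2)$ with $a_i\in\Lambda_{\scO}(\widetilde{\Gamma})$, the image of $x$ under the embedding of \cite[Prop.~2.11]{LLZ-ANT} followed by $\mathfrak{M}^{-1}$ equals $(v_1,v_2)\,M_{\rm log}\,{}^t(a_1,a_2)$. Extending this identity $\Lambda_{\bZ_p}(U)$-linearly, and continuously with respect to the completed tensor product, then yields \eqref{eq:dec-L-2var}.

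For the cyclotomic identity I would use \eqref{eq:change-of-matrix}. It gives $(\varphi(n_1),\varphi(n_2))=(\varphi(v_1),\varphi(v_2))\,\varphi(M)=(v_1,v_2)\,A_\varphi\,\varphi(M)$ inside $\mathbb{D}_{\rm cris}(V)\otimes\mathbb{B}^+_{{\rm rig},\bQ_p}$. Multiplying by $1+\pi$, and using that the $\widetilde{\Gamma}$-action is compatible with the Mellin transform (which is $\Lambda$-linear), I get $\mathfrak{M}^{-1}(x)=(v_1,v_2)\,\mathfrak{M}^{-1}((1+\pi)A_\varphi\varphi(M))\,{}^t(a_1,a_2)$. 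By Definition~\ref{def:log-matrix}, the middle factor is exactly $M_{\rm log}$.

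The one point requiring care is justifying this step: $\sigma\in\widetilde{\Gamma}$ acts on $\varphi^*\mathbb{N}$ semilinearly. So one must check that the $\Lambda(\widetilde{\Gamma})$-module structure used in Lemma~\ref{lem:Wach-basis}(ii) matches the one on $\mathbb{D}_{\rm cris}(V)\otimes(\mathbb{B}^+_{{\rm rig}})^{\psi=0}$ transported by the embedding. This is precisely the content of \cite[Prop.~2.11]{LLZ-ANT} and is already implicit in Proposition~\ref{prop:signed-Col-cyc}, so I would cite it rather than redo it. I would also note that the $S_{F_\infty/\bQ_p}$ factor carries no $\widetilde{\Gamma}$-action, so the extension of scalars to $F_\infty$ introduces no new twist.
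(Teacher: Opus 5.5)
Your proposal is correct and follows the same route as the paper, whose proof simply says that, exactly as for Proposition~\ref{prop:signed-Col-cyc}, the decomposition follows by unwinding the composite \eqref{eq:PR-2var-def} against the definition of $M_{\rm log}$. Your write-up makes that unwinding explicit, using $(\varphi(n_1),\varphi(n_2))=(v_1,v_2)A_\varphi\varphi(M)$, the $\widetilde{\Gamma}$-linearity of $\mathfrak{M}$ (with $\widetilde{\Gamma}$ acting trivially on $\mathbb{D}_{\rm cris}(V)$), and the $\Lambda_{\bZ_p}(U)$-linear extension along $S_{F_\infty/\bQ_p}$.
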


\begin{proof}
In the same way as in Proposition~\ref{prop:signed-Col-cyc}, this follows from the description of $\mathscr{L}_{V,F_\infty}$ in \eqref{eq:PR-2var-def} and the definition of $M_{\rm log}$.
\end{proof}

\begin{prop}\label{prop:image-Col-2var}
Let $i\in\{1,2\}$. For every character $\eta:\Delta\rightarrow\bZ_p^\times$, we have an inclusion %we have the inclusion 
\[
{\rm Image}(e_\eta\widetilde{\mathscr{C}}_{i,F_\infty})\subset \Omega_{\bQ_p}\cdot\xi_{\eta,i}\Lambda_{\scO}(G)
\]
%and this has 
with finite index, where  $\xi_{\eta,i}$ is the same factor of $\delta_{k-1}$ as in Proposition~\ref{prop:image-Col}.
\end{prop}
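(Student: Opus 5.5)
The plan is to reduce the two-variable statement to the one-variable Proposition~\ref{prop:image-Col}, using the tensor decomposition $(\varphi^*\mathbb{N}_{F_\infty}(T))^{\psi=0}=(\varphi^*\mathbb{N}(T))^{\psi=0}\hat\otimes_{\bZ_p}S_{F_\infty/\bQ_p}$ from \cite[Prop.~3.12]{LZ2}. Under this identification, $\widetilde{\mathscr{C}}_{i,F_\infty}$ is the composite of $1-\varphi$ on $\mathbb{N}_{F_\infty}(T)^{\psi=1}$ with $({\rm pr}_i\circ\mathfrak{I})\hat\otimes{\rm id}_{S_{F_\infty/\bQ_p}}$. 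So the image question becomes one about the image of $1-\varphi:\mathbb{N}_{F_\infty}(T)^{\psi=1}\to(\varphi^*\mathbb{N}(T))^{\psi=0}\hat\otimes S_{F_\infty/\bQ_p}$.

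\textbf{Step 1 (image at finite level).} Recall that in \cite[\S5]{LLZ-ANT} the image of $e_\eta\widetilde{\mathscr{C}}_i$ is computed from the image of $1-\varphi$ on $\mathbb{N}(T)^{\psi=1}$. That image consists of those $x\in(\varphi^*\mathbb{N}(T))^{\psi=0}$ satisfying finitely many conditions on specialisations at the characters $\chi=\varepsilon^j\eta$ with $0\le j\le k-2$. These conditions come from the cokernel of $1-\varphi$, i.e.\ from $\mathbb{D}_{\rm cris}(V(-j))^{\varphi=1}$-type obstructions and the Fil$^0$ constraint at $\pi=0$. They are linear conditions on $x$ modulo $\pi$ after twisting, and they involve only the $\widetilde{\Gamma}$-variable. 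Running the same argument for $\mathbb{N}_F(T)$ with $F/\bQ_p$ finite unramified (where the Wach module is $\mathbb{N}(T)\otimes_{\bZ_p}\scO_F$, since $T$ is a $G_{\bQ_p}$-representation), the image of $1-\varphi$ on $\mathbb{N}_F(T)^{\psi=1}$ is cut out by the same conditions, now with $\varphi$ acting also as Frobenius on $\scO_F$.

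\textbf{Step 2 (passage to the limit).} Take the inverse limit over $F\subset F_\infty$ along traces, using \cite[Prop.~4.5]{LZ2} and the Yager module description. The conditions from Step 1 become conditions on the $\widetilde{\Gamma}$-specialisations, valued in $\Lambda_{\bZ_p}(U)$-coefficients. Since $S_{F_\infty/\bQ_p}=\Omega_{\bQ_p}\Lambda_{\bZ_p}(U)$ is free of rank one and flat, and the conditions are $\Lambda(U)$-linear, the image of $e_\eta\widetilde{\mathscr{C}}_{i,F_\infty}$ should equal $\Omega_{\bQ_p}\cdot({\rm Image}(e_\eta\widetilde{\mathscr{C}}_i)\hat\otimes_{\bZ_p}\Lambda_{\bZ_p}(U))$, up to finite index. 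Proposition~\ref{prop:image-Col} says that ${\rm Image}(e_\eta\widetilde{\mathscr{C}}_i)\subset\xi_{\eta,i}\Lambda_\scO(\Gamma)$ with finite index, so tensoring gives the inclusion into $\Omega_{\bQ_p}\xi_{\eta,i}\Lambda_\scO(G)$.

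\textbf{Main obstacle.} The main obstacle is the finite-index assertion. Tensoring a finite-index inclusion of $\Lambda_\scO(\Gamma)$-modules with $\Lambda(U)$ gives a quotient that is finitely generated over $\Lambda(U)$, not finite, so the naive tensoring argument is not enough. To handle this I would try to show that the cokernel of $1-\varphi$ on $\mathbb{N}_{F_\infty}(T)^{\psi=1}$ is controlled by $\varprojlim_F\bigoplus_j \rH^0(F(\mu_{p^\infty}),\ldots)$-type terms, i.e.\ invariants of twists of $T$. These terms vanish because $\bar\rho_f|_{G_{\bQ_p}}$ is irreducible, which follows from (FL) together with nonordinarity, and this irreducibility persists over unramified extensions. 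With those terms gone, the cokernel is exactly the part accounted for by $\xi_{\eta,i}$.

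Failing that, I would interpret ``finite index'' as meaning the quotient is pseudo-null, or is torsion with trivial characteristic ideal over $\Lambda_\scO(G)$. That weaker statement is all that is needed for the characteristic-ideal arguments later in the paper, and it follows from the tensoring argument since the finite cokernel over $\Lambda_\scO(\Gamma)$ becomes pseudo-null over $\Lambda_\scO(G)$.
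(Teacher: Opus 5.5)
Your reduction to Proposition~\ref{prop:image-Col} through the construction of $\widetilde{\mathscr{C}}_{i,F_\infty}$ is the route the paper takes; the paper itself defers the details to \cite[Lem.~2.16]{BL-non-ord}. However, the step carrying all the weight is your Step~2 identification of the image with $\Omega_{\bQ_p}\cdot({\rm Image}(e_\eta\widetilde{\mathscr{C}}_i)\hat\otimes\Lambda_{\bZ_p}(U))$. That step is only asserted, and the reason you give for it does not work.

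Under $\mathbb{N}_{F_\infty}(T)\cong\mathbb{N}(T)\hat\otimes_{\bZ_p}S_{F_\infty/\bQ_p}$ the Frobenius is not $\varphi\otimes 1$. Via the Yager isomorphism $\varprojlim_F\scO_F\cong S_{F_\infty/\bQ_p}$, the arithmetic Frobenius on the coefficients becomes multiplication by the group-like element $[\sigma]$, where $\sigma\in U$ is Frobenius (or its inverse, depending on normalisations). Consequences:
\begin{itemize}
\item $\mathbb{N}_{F_\infty}(T)^{\psi=1}$ is the module of $x$ with $(\psi\otimes 1)(x)=[\sigma]x$. It is not $\mathbb{N}(T)^{\psi=1}\hat\otimes S_{F_\infty/\bQ_p}$.
\item It interpolates the modules $\mathbb{N}(T)^{\psi=\tau(\sigma)}$ for characters $\tau$ of $U$, i.e. the Iwasawa cohomology of unramified twists of $T$.
\item Accordingly, $e_\eta\widetilde{\mathscr{C}}_{i,F_\infty}$ specialises at $\tau$ to the signed Coleman map of that twist.
\end{itemize}
Proposition~\ref{prop:image-Col} only concerns $\tau=\mathds{1}$. $\Lambda_{\bZ_p}(U)$-linearity of the conditions and flatness of $S_{F_\infty/\bQ_p}$ say nothing about the other fibres. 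Your Step~1 claim that the finite-level image is ``cut out by the same conditions'' is exactly the missing statement.

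What is needed is the one-variable inclusion for all unramified twists at once, with the same $\xi_{\eta,i}$; this is what must be extracted from the construction. It is plausible because an unramified twist rescales $A_\varphi$ and $P_\varphi$ by a common unit, so the change-of-basis matrix $M$ is unchanged and the Fontaine--Laffaille shape of the bases is kept. Granting it, divisibility of an element of $\Lambda_\scO(G)^\eta$ by each linear factor $u^{-j}\gamma-1$ of $\xi_{\eta,i}$ follows from its vanishing at infinitely many $\tau$.

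On the index: you are right that tensoring a finite nonzero cokernel with $\Lambda_{\bZ_p}(U)$ cannot give a finite one. Your first remedy does not close this. Showing the cokernel is ``exactly the part accounted for by $\xi_{\eta,i}$'' would amount to ${\rm Image}(e_\eta\widetilde{\mathscr{C}}_i)=\xi_{\eta,i}\Lambda_\scO(\Gamma)$. That is more than Proposition~\ref{prop:image-Col} provides, and it does not follow from vanishing of invariants.

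Your fallback (pseudo-null quotient) is weaker than the statement, though it is all that the characteristic-ideal arguments in Corollaries~\ref{cor:BF-IMC} and \ref{cor:ii-IMC} use. As you justify it, it again rests on the unproven tensor identification. A control argument gives it once the inclusion is known:
\begin{itemize}
\item Let $\mathcal{C}$ be the quotient of $\Omega_{\bQ_p}\xi_{\eta,i}\Lambda_\scO(G)^\eta$ by the image.
\item Corestriction identifies $\rH^1_{\rm Iw}(F_\infty(\mu_{p^\infty}),T)/(\sigma-1)$ with $\rH^1_{\rm Iw}(\bQ_p(\mu_{p^\infty}),T)$. The obstruction lies in $\rH^2_{\rm Iw}(F_\infty(\mu_{p^\infty}),T)$, which is dual to $\rH^0(F_\infty(\mu_{p^\infty}),T^\vee(1))$.
\item That $\rH^0$ vanishes because $\bar\rho_f\vert_{G_{\bQ_p}}$ is absolutely irreducible and $G_{F_\infty(\mu_{p^\infty})}$ is normal in $G_{\bQ_p}$ with abelian quotient.
\item The identification is compatible with the Coleman maps, since $\Omega_{\bQ_p}$ augments to a unit.
\item Hence $\mathcal{C}/(\sigma-1)\mathcal{C}$ is the finite one-variable cokernel. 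By Nakayama, $\mathcal{C}$ is finitely generated over $\Lambda_\scO(U)$ with finite $\mathcal{C}[\sigma-1]$.
\item So ${\rm char}(\mathcal{C})$ reduces to the unit ideal modulo $\sigma-1$, and $\mathcal{C}$ is pseudo-null.
\end{itemize}
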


\begin{proof}
This follows readily from the construction of $\widetilde{\mathscr{C}}_{i,F_\infty}$ and Proposition~\ref{prop:image-Col} (see \cite[Lem.~2.16]{BL-non-ord} for the details).
\end{proof}

Similarly as in $\S\ref{subsubsec:signed-Col-cyc}$, letting 
\[
\mathscr{L}_{V,\xi,F_\infty}=\mathscr{L}_{V,(1+\pi)v_\xi^*,F_\infty}:\rH^1_{\rm Iw}(F_\infty(\mu_{p^\infty}),T)\longrightarrow\Omega_{\bQ_p}\cdot\mathcal{H}_L(G)
\]
be the coordinate of $\mathscr{L}_{V,F_\infty}$ with respect to $v_\xi$, %so that $\mathscr{L}_{V,F_\infty}=\mathscr{L}_{V,\alpha,F_\infty}\cdot v_\alpha+\mathscr{L}_{V,\beta,F_\infty}\cdot v_\beta$, 
the decomposition \eqref{eq:dec-L-2var} %in Proposition~\ref{prop:signed-Col-2var} 
can be rewritten as
\begin{equation}\label{eq:factor-L-2var}
\biggl(\begin{array}{cc}
\mathscr{L}_{V,\alpha,F_\infty}\\
\mathscr{L}_{V,\beta,F_\infty}
\end{array}\biggr)
=Q_{\alpha,\beta}^{-1} M_{{\rm log}}\cdot
\biggl(\begin{array}{cc}
\widetilde{\mathscr{C}}_{1,F_\infty}\\
\widetilde{\mathscr{C}}_{2,F_\infty}
\end{array}\biggr),
\end{equation}
where $Q_{\alpha,\beta}=\frac{1}{\alpha-\beta}\bigl(\begin{smallmatrix}\alpha & -\beta\\ -p^{k-1}&p^{k-1}\end{smallmatrix}\bigr)$.

\subsubsection{Semi-local signed Coleman maps} 

Let 
\[
\Gamma_K={\rm Gal}(K_\infty/K)\cong\bZ_p^2
\]
be the Galois group of the $\bZ_p^2$-extension  $K_\infty/K$. For every $w\in\{v,\overline{v}\}$, fix a prime $\widetilde{w}_0$ of $K_\infty$ above $w$, and let $\Gamma_{\widetilde{w}_0}\subset\Gamma_K$ denote the  associated decomposition group at $w_0$. For every character $\eta:\Delta\rightarrow\bZ_p^\times$, the 
%$K_{\infty,\widetilde{w}}$ is an extension of $K_w$ of the form $F_\infty(\mu_{p^\infty})^\Delta$ for some infinite unramified extension $F_\infty/\bQ_p$, where $\Delta={\rm Gal}(K_w(\mu_\infty)/K_w)_{\rm tor}$, and so 
constructions of the preceding paragraphs give 
%(after taking $\eta$-isotypic components) 
maps
\begin{equation}\label{eq:L-Col-wtilde}
\begin{aligned}
e_\eta\mathscr{L}_{V,\xi,K_{\infty,\widetilde{w}_0}}:\rH^1_{\rm Iw}(K_{\infty,\widetilde{w}_0},V(\eta))&\longrightarrow\Omega_{\bQ_p}\cdot\mathcal{H}_L(\Gamma_{\widetilde{w}_0}),\\
e_\eta\widetilde{\mathscr{C}}_{i,K_{\infty,\widetilde{w}_0}}:\rH^1_{\rm Iw}(K_{\infty,\widetilde{w}_0},T(\eta))&\longrightarrow\Omega_{\bQ_p}\cdot\Lambda_{\scO}(\Gamma_{\widetilde{w}_0}),
\end{aligned}
\end{equation}
where $V(\eta)$ denotes the twist $V\otimes\eta$, and likewise for $T(\eta)$. Using the Shapiro's lemma isomorphism 
\begin{align*}
\rH^1(K_w,T(\eta)\otimes\Lambda_{\scO}(\Gamma_K)^\iota)&
\cong\bigoplus_{\widetilde{w}\mid w}\rH^1_{\rm Iw}(K_{\infty,\widetilde{w}},T(\eta))\\
&\cong\Lambda_{\scO}(\Gamma_K)\otimes_{\Lambda_{\scO}(\Gamma_{\widetilde{w}_0})}\rH^1_{\rm Iw}(K_{\infty,\widetilde{w}_0},T(\eta)),
\end{align*} 
where $\widetilde{w}$ runs over the primes of $K_\infty$ above $w$, we define
\begin{equation}\label{eq:def-2var}
\begin{aligned}
e_\eta\mathscr{L}_{V,\xi,w}:\rH^1(K_w,V(\eta)\otimes\Lambda_{\scO}(\Gamma_K)^\iota)&\longrightarrow\Omega_{\bQ_p}\cdot\mathcal{H}_L(\Gamma_K),\\
e_\eta\widetilde{\mathscr{C}}_{i,w}:\rH^1(K_w,T(\eta)\otimes\Lambda_{\scO}(\Gamma_K)^\iota)&\longrightarrow\Omega_{\bQ_p}\cdot\Lambda_\scO(\Gamma_K)
\end{aligned}
\end{equation}
by tensoring \eqref{eq:L-Col-wtilde} with $\Lambda_{\scO}(\Gamma_K)$ over $\Lambda_{\scO}(\Gamma_{\widetilde{w}_0})$. Then from \eqref{eq:factor-L-2var} we obtain the relations
\begin{equation}\label{eq:factor-L-2var-w}
\biggl(\begin{array}{cc}
e_\eta\mathscr{L}_{V,\alpha,w}\\
e_\eta\mathscr{L}_{V,\beta,w}
\end{array}\biggr)
=Q_{\alpha,\beta}^{-1} M_{{\rm log},w}\cdot
\biggl(\begin{array}{cc}
e_\eta\widetilde{\mathscr{C}}_{1,w}\\
e_\eta\widetilde{\mathscr{C}}_{2,w}
\end{array}\biggr),
\end{equation}
where $M_{{\rm log},w}\in M_{2\times 2}(\mathcal{H}_L(\Gamma^w))$ is the image of the logarithm matrix $M_{\rm log}$ of Definition~\ref{def:log-matrix} under the inverse of the isomorphism given by the composition $\Gamma^w\hookrightarrow\Gamma_K\twoheadrightarrow\Gamma$.

\subsection{Two-variable Selmer groups}\label{subsec:2var-Sel}

For every character $\eta:\Delta\rightarrow\bZ_p^\times$, consider the $G_K$-module
\[
\mathbf{T}_{K}^\eta:=T(\eta)\otimes\Lambda_{\scO}(\Gamma_K)^\iota,\quad\quad\mathbf{A}_K^\eta:=T(\eta)^\vee(1)\otimes\Lambda_{\scO}(\Gamma_K),
\]
where we recall that $T=T_f(k-1)$. For $\bullet\in\{1,2,{\rm rel},{\rm str}\}$ and $w\in\{v,\overline{v}\}$ a prime of $K$ above $p$, put
\[
\rH^1_\bullet(K_w,\mathbf{T}_K^\eta):=\begin{cases}
{\rm ker}(e_\eta\widetilde{\mathscr{C}}_{\bullet,w})&\textrm{if $\bullet\in\{1,2\}$,}\\[0.2em]
\rH^1(K_w,\mathbf{T}_K^\eta)&\textrm{if $\bullet={\rm rel}$,}\\[0.2em]
0&\textrm{if $\bullet={\rm str}$},
\end{cases}
\]
and let $\rH^1_{\bullet}(K_w,\mathbf{A}_K^\eta)$ be the orthogonal complement of $\rH^1_{\bullet}(K_w,\mathbf{A}_K^\eta)$ under the local Tate duality
\[
\rH^1(K_w,\mathbf{T}_K^\eta)\times\rH^1(K_w,\mathbf{A}_K^\eta)\longrightarrow\bQ_p/\bZ_p.
\]

%\begin{defn}\label{def:2var-Sel}
For $(\bullet,\circ)\in\{1,2,{\rm rel},{\rm str}\}^{\oplus 2}$, define the (compact) Selmer group $\rH^1_{\bullet,\circ}(K,\mathbf{T}_K^\eta)$ by setting
\begin{equation}\label{eq:def-2var-Sel}
\rH^1_{\bullet,\circ}(K,\mathbf{T}_K^\eta):={\rm ker}\Biggl\{\rH^1(K,\mathbf{T}_K^\eta)\longrightarrow\frac{\rH^1(K_v,\mathbf{T}_K^\eta)}{\rH^1_\bullet(K_v,\mathbf{T}_K^\eta)}\times\frac{\rH^1(K_{\overline{v}},\mathbf{T}_K^\eta)}{\rH^1_\circ(K_{\overline{v}},\mathbf{T}_K^\eta)}\times\prod_{w\nmid p}\rH^1(K_w^{\rm unr},\mathbf{T}_K^\eta)\Biggr\};\nonumber
\end{equation}
%where $\rH^1_{\rm unr}(K_w,\mathbf{T}_K^\eta)={\rm ker}\{\rH^1(K_w,\mathbf{T}_K^\eta)\rightarrow\rH^1(K_w^{\rm unr},\mathbf{T}_K^\eta)\}$ is the unramified submodule; 
and the \emph{dual (discrete) Selmer group} ${\rm Sel}_{\bullet,\circ}(f_\eta/K_\infty)\subset\rH^1(K,\mathbf{A}_K^\eta)$ by replacing $\mathbf{T}_K^\eta$ with $\mathbf{A}_K^\eta$ in the above definition. Finally, write $X_{\bullet,\circ}(f_\eta/K_\infty)$  
%$={\rm Sel}_{\bullet,\circ}(f/K_\infty)^{\eta,\vee}$ 
for the Pontryagin dual of ${\rm Sel}_{\bullet,\circ}(f_\eta/K_\infty)$.

%\eqref{eq:def-2var-Sel}. 

\subsection{Explicit reciprocity laws}\label{subsec:ERL}

In this section we explain the relation between the classes $\mathcal{BF}^{i,\mathbf{h}_v}$ of Theorem~\ref{thm:BF-factor} and two different types of %special values of 
Rankin--Selberg $p$-adic $L$-functions arising from the explicit reciprocity laws established in \cite{LZ-Coleman}.

%\subsection{Zeta elements}
\subsubsection{Filtrations}

By Ohta's work \cite{OhtaII}, the modules $H^1_{\rm ord}(Mp^\infty)$ and $\widetilde{H}^1_{\rm ord}(Mp^\infty)$ introduced in $\S\ref{subsec:CM}$ 
are equipped with $\mathfrak{H}_{Mp^\infty}^{\rm ord}[G_{\bQ_p}]$-stable filtrations 
\begin{align*}
\mathscr{F}^+H^1_{\rm ord}(Mp^\infty)&\subset\mathscr{F}^+H^1_{\rm ord}(Mp^\infty),\\%\quad\quad
\mathscr{F}^+\widetilde{H}^1_{\rm ord}(Mp^\infty)&\subset\mathscr{F}^+\widetilde{H}^1_{\rm ord}(Mp^\infty)
\end{align*}
with $\mathscr{F}^+H^1_{\rm ord}(Mp^\infty)=\mathscr{F}^+\widetilde{H}^1_{\rm ord}(Mp^\infty)$ isomorphic to $\mathfrak{h}_{Mp^\infty}^{\rm ord}$ as $\mathfrak{H}^{\rm ord}_{Mp^\infty}$-modules. Put 
\begin{align*}
\mathbb{T}^+_{\mathbf{h}_v}&:=\mathscr{F}^+H^1_{\rm ord}(D_Kp^\infty)\otimes_{\mathfrak{h}_{D_Kp^\infty}^{\rm ord}}\Lambda_{\bZ_p}(\Gamma^v),\\
\widetilde{\mathbb{T}}^+_{\mathbf{h}_v}&:=\mathscr{F}^+\widetilde{H}^1_{\rm ord}(D_Kp^\infty)\otimes_{\mathfrak{h}_{D_Kp^\infty}^{\rm ord}}\Lambda_{\bZ_p}(\Gamma^v),
\end{align*}
%\mathbb{T}^+_{\mathbf{h}_v}&:=\mathscr{F}^+H^1_{\rm ord}(D_Kp^\infty)\otimes_{\mathfrak{h}_{D_Kp^\infty}^{\rm ord}}\Lambda_{\bZ_p}(\Gamma^v),
%\quad\quad
%&&\widetilde{\mathbb{T}}^+_{\mathbf{h}_v}:=\mathscr{F}^+\widetilde{H}^1_{\rm ord}(D_Kp^\infty)\otimes_{\mathfrak{h}_{D_Kp^\infty}^{\rm ord}}\Lambda_{\bZ_p}(\Gamma^v),\\
%\mathbb{T}^-_{\mathbf{h}_v}&:=\mathbb{T}_{\mathbf{h}_v}/\mathbb{T}^+_{\mathbf{h}_v},\quad\quad
%&&\widetilde{\mathbb{T}}^-_{\mathbf{h}_v}:=\widetilde{\mathbb{T}}_{\mathbf{h}_v}/\widetilde{\mathbb{T}}^+_{\mathbf{h}_v},
%\end{align*}
%and $\mathbb{T}^-_{\mathbf{h}_v}:=\mathbb{T}_{\mathbf{h}_v}/\mathbb{T}^+_{\mathbf{h}_v}$, $\mathbb{H}^-_{\mathbf{h}_v}:=\mathbb{H}_{\mathbf{h}_v}/\mathbb{H}^+_{\mathbf{h}_v}$.
%and $\mathbb{T}^-_{\mathbf{h}_v}:=\mathbb{T}_{\mathbf{h}_v}/\mathbb{T}^+_{\mathbf{h}_v}$, $\widetilde{\mathbb{T}}^-_{\mathbf{h}_v}:=\widetilde{\mathbb{T}}_{\mathbf{h}_v}/\widetilde{\mathbb{T}}^+_{\mathbf{h}_v}$
so we have a commutative diagram with exact rows
\begin{equation}\label{eq:fil-Ohta}
\begin{aligned}
\xymatrix{
0\ar[r]&\mathbb{T}_{\mathbf{h}_v}^+\ar[r]\ar@{=}[d]&\mathbb{T}_{\mathbf{h}_v}^{\square}\ar[r]\ar@{^{(}->}[d]&\mathbb{T}_{\mathbf{h}_v}^-:=\mathbb{T}_{\mathbf{h}_v}^{\square}/\mathbb{T}^+_{\mathbf{h}_v}\ar[r]\ar@{^{(}->}[d]&0\\
0\ar[r]&\widetilde{\mathbb{T}}_{\mathbf{h}_v}^+\ar[r]&\widetilde{\mathbb{T}}_{\mathbf{h}_v}^{\square}\ar[r]&\widetilde{\mathbb{T}}_{\mathbf{h}_v}^-:=\widetilde{\mathbb{T}}_{\mathbf{h}_v}^{\square}/\widetilde{\mathbb{T}}^+_{\mathbf{h}_v}\ar[r]&0.
}
\end{aligned}
\end{equation}

\subsubsection{Triangulations} 
Let $\mathscr{R}$ denote %\footnote{consistently with the notation used in $\S\ref{subsec:Kato-ES}$} 
the Robba ring, consisting of Laurent power series in $L(\!(\pi)\!)$ %in the variable $\pi$ 
convergent in some annulus inside $\{\vert\pi\vert<1\}$, and let $\mathbb{D}_{\rm rig}^\dagger$ be Fontaine's functor from $p$-adic $G_{\bQ_p}$-representations to $(\varphi,\widetilde{\Gamma})$-modules over $\mathscr{R}$, as extended by Berger--Colmez \cite{berger-colmez-familles} to $p$-adic families. 

Put $\mathscr{F}^{\pm}\mathbf{D}_{\rm rig}^\dagger(\widetilde{\mathbb{T}}^\square_{\mathbf{h}_v})=\mathbf{D}_{\rm rig}^\dagger(\mathscr{F}^\pm\widetilde{\mathbb{T}}_{\mathbf{h}_v}^\square)$, and similarly with $\mathbb{T}^\square_{\mathbf{h}_v}$; and for $\xi\in\{\alpha,\beta\}$ and $\circ,\bullet\in\{+,-,\emptyset\}$ put
\[
\mathscr{F}_{\xi,{\rm ord}}^{\circ,\bullet}\mathbb{D}_{\rm rig}^\dagger(V\otimes\widetilde{\mathbb{T}}_{\mathbf{h}_v}^{\square}):=\mathscr{F}_\xi^\circ\mathbb{D}_{\rm rig}^\dagger(V)\otimes\mathscr{F}^\bullet\mathbb{D}_{\rm rig}^\dagger(\widetilde{\mathbb{T}}_{\mathbf{h}_v}^{\square}),
\]
and similarly with $\mathbb{T}_{\mathbf{h}_v}^{\square}$ in place of $\widetilde{\mathbb{T}}_{\mathbf{h}_v}^{\square}$, where $\mathscr{F}_\xi^\pm$ denotes the filtered pieces in the triangulation 
\[
0\longrightarrow\mathscr{F}_{\xi}^+\mathbb{D}_{\rm rig}^\dagger(V)\longrightarrow\mathbb{D}_{\rm rig}^\dagger(V)\longrightarrow\mathscr{F}_\xi^-\mathbb{D}_{\rm rig}^\dagger(V)\longrightarrow 0
\]
arising from \cite{kisin-FM} and its extension to Coleman families in \cite{liu-triangulation}.

The regulator maps $e_\eta\mathscr{L}_{V,\xi,w}$ of \eqref{eq:def-2var} extend by linearity to
\begin{equation}\label{eq:def-2var-H}
e_\eta\mathscr{L}_{V,\xi,w}:\rH^1(K_w,T(\eta)\otimes\Lambda_{\scO}(\Gamma_K)^\iota\otimes_{\Lambda_{\scO}(\Gamma)}\mathcal{H}_L(\Gamma))\longrightarrow\Omega_{\bQ_p}\cdot\mathcal{H}_L(\Gamma_K)
\end{equation}
using the isomorphism (see \cite[Prop.~2.4.2]{LZ-Coleman})
\[
\rH^1\bigl(K_w,V(\eta)\otimes\Lambda_{\scO}(\Gamma_K)^\iota\bigr)\otimes_{\Lambda_{\scO}(\Gamma)}\mathcal{H}_L(\Gamma)\cong\rH^1\bigl(K_w,V(\eta)\otimes\Lambda_{\scO}(\Gamma_K)^\iota\otimes_{\Lambda_{\scO}(\Gamma)}\mathcal{H}_L(\Gamma)\bigr).
\]

\subsubsection{Explicit reciprocity law for $w=v$}

As shown in \cite[Thm.~7.1.2]{LZ-Coleman}, the image of ${\rm res}_p(\mathcal{BF}^{\xi,\mathbf{h}_v})$ under the composite map
\begin{align*}
\rH^1(\bQ_p,V\otimes(\widetilde{\mathbb{T}}_{\mathbf{h}_v}^{\square})\otimes\Lambda_{\scO}(\widetilde{\Gamma})^\iota)\otimes_{\Lambda_{\scO}(\widetilde{\Gamma})}\mathcal{H}_L(\widetilde{\Gamma})&\cong\rH^1_{\rm Iw}(\bQ_p(\mu_{p^\infty}),\mathbb{D}_{\rm rig}^\dagger(V\otimes\widetilde{\mathbb{T}}_{\mathbf{h}_v}^{\square}))\\
&\longrightarrow\rH^1_{\rm Iw}(\bQ_p(\mu_{p^\infty}),\mathscr{F}_{\xi,{\rm ord}}^{-,\emptyset}\mathbb{D}_{\rm rig}^\dagger(V\otimes\widetilde{\mathbb{T}}_{\mathbf{h}_v}^{\square}))
\end{align*}
is contained in the image of the injection
\[
\rH^1_{\rm Iw}(\bQ_p(\mu_{p^\infty}),\mathscr{F}_{\xi,{\rm ord}}^{-,+}\mathbb{D}_{\rm rig}^\dagger(V\otimes\widetilde{\mathbb{T}}_{\mathbf{h}_v}^{\square}))\longrightarrow\rH^1_{\rm Iw}(\bQ_p(\mu_{p^\infty}),\mathscr{F}_{\xi,{\rm ord}}^{-,\emptyset}\mathbb{D}_{\rm rig}^\dagger(V\otimes\widetilde{\mathbb{T}}_{\mathbf{h}_v}^{\square})).
\]
On the other hand, under the isomorphism
\[
\rH^1(K_v,V(\eta)\otimes\Lambda_{\bZ_p}(\Gamma^v)\hat\otimes\Lambda_{\bZ_p}(\Gamma)^\iota)\otimes_{\Lambda_{\scO}(\Gamma)}\mathcal{H}_L({\Gamma})\cong
\rH^1_{\rm Iw}(\bQ_p(\mu_{p^\infty}),\mathscr{F}_{\xi,{\rm ord}}^{\emptyset,+}\mathbb{D}_{\rm rig}^\dagger(V\otimes\widetilde{\mathbb{T}}_{\mathbf{h}_v}^{\square}))^\eta
\]
induced by the identification $K_v=\bQ_p$ and the isomorphism %$\mathbb{T}_{\mathbf{h}_v}^+=
\[
\widetilde{\mathbb{T}}_{\mathbf{h}_v}^+=\mathbb{T}^+_{\mathbf{h}_v}\cong\Lambda_{\bZ_p}(\Gamma^v)^\iota
\]
as $G_{\bQ_p}$-modules (cf. \eqref{eq:fil-Ohta} and Theorem~\ref{thm:BSTW}) defined by the $\Lambda$-adic Ohta's differential  $\omega_{\mathbf{h}_v}$ attached to $\mathbf{h}_v$ as in \cite[Prop.~10.1.1]{KLZ2}, the extended regulator map $e_\eta\mathscr{L}_{V,\xi,v}$ of \eqref{eq:def-2var-H} factors through the natural map
\[
\rH^1_{\rm Iw}(\bQ_p(\mu_{p^\infty}),\mathscr{F}_{\xi,{\rm ord}}^{\emptyset,+}\mathbb{D}_{\rm rig}^\dagger(V\otimes\widetilde{\mathbb{T}}_{\mathbf{h}_v}^{\square}))^\eta\longrightarrow\rH^1_{\rm Iw}(\bQ_p(\mu_{p^\infty}),\mathscr{F}_{\xi,{\rm ord}}^{-,+}\mathbb{D}_{\rm rig}^\dagger(V\otimes\widetilde{\mathbb{T}}_{\mathbf{h}_v}^{\square}))^\eta,
\]
and so $e_\eta\mathscr{L}_{V,\xi,v}$ can naturally be evaluated at ${\rm res}_p(\mathcal{BF}^{\xi,\mathbf{h}_v,\eta})$ for the  $\eta$-isotypic component $\mathcal{BF}^{\xi,\mathbf{h}_v,\eta}$ of $\mathcal{BF}^{\xi,\mathbf{h}_v}$.

\begin{defn}\label{def:2var-unb}
For $\xi\in\{\alpha,\beta\}$ and a character  $\eta:\Delta\rightarrow\bZ_p^\times$, put
\[
L_{\xi,\xi}^{\rm (I)}(f,\mathbf{h}_v)^\eta:=e_\eta\mathscr{L}_{V,\xi,v}({\rm res}_p(\mathcal{BF}^{\xi,\mathbf{h}_v,\eta})).
\]
%which yields an element in $\mathcal{H}_L(\Gamma_K)$.
\end{defn}

%Note that $L_{\xi,\xi}(f,\mathbf{h}_v)^\eta$ is an element of $\mathcal{H}_L(\Gamma_K)$ with growth $h=2v_p(\xi)$ in the cyclotomic direction. 
Put
\[
\Xi^{\rm (I)}:=\bigl\{(1,\chi_2):\Gamma^v\hat\otimes\Gamma\rightarrow\overline{\bQ}_p^\times\;|\;\chi_2=\langle\varepsilon\rangle^j\psi_\zeta,\;\,0\leq j\leq k-2, \;\,\zeta\in\mu_{p^\infty}\bigr\},
\]
where $\psi_\zeta$ is the character associated to $\zeta$ as in $\S\ref{subsec:interp-MTT}$.

%We shall only need the following interpolation property satisfied by $L_{\xi_1,\xi_2}(f,\mathbf{h}_v)$ in the case $\xi_1=\xi_2$, so in the following to restrict to this case. %**** ( BETTER TO PUT THIS AFTER SPLITTING OF $L_p$ TO AVOID CONFUSION )

%***** (  RECALL THE ${\rm Tw}_{k/2-1}$ from Corollary~\ref{cor:signed-BF} )

\begin{thm}[Reciprocity Law I]\label{thm:ERL-I}
Let $\xi\in\{\alpha,\beta\}$ and $\eta:\Delta\rightarrow\bZ_p^\times$ be a character. Then $L_{\xi,\xi}^{\rm (I)}(f,\mathbf{h}_v)^\eta$ 
satisfies the following interpolation property: For all $\chi=(1,\chi_2)\in\Xi^{\rm (I)}$ 
with $\chi_2=\langle\varepsilon\rangle^j\psi_\zeta$ for $0\leq j\leq k-2$ and $\zeta$ a primitive $p^t$-th root of unity for some $t\geq 0$,  we have 
\[
\phi_\chi\bigl(L^{\rm (I)}_{\xi,\xi}(f,\mathbf{h}_v)^\eta\bigr)= %\frac{1}{r_f}\cdot 
e_{p,\xi,\eta}(f,\chi)^2\cdot\frac{(j!)^2}{2^{2j+k+1}\cdot(-i)^{k-1}}\cdot\frac{L(f\otimes\omega^j\eta\psi_\zeta^{-1},\mathbf{h}_{v,1}^\circ,j+1)}{\mathfrak{g}(\omega^j\eta\psi_\zeta^{-1})^2\cdot\pi^{2j+2}\cdot\langle f,f\rangle_N},
\]
where  $e_{p,\xi,\eta}(f,\chi)$, $\mathfrak{g}(\omega^j\eta\psi_\zeta^{-1})$ are as in Theorem~\ref{thm:MTT}, 
%\[
%\mathcal{E}_{p,\xi,\eta}(f,\chi)=\begin{cases}\frac{1}{\xi^{2(t+1)}}\cdot\frac{p^{2(j+1)(t+1)}}{\mathfrak{g}(\omega^j\eta\psi_\zeta^{-1})^2}&\textrm{if $t\geq 1$}
%\\[0.6em]
%\Bigl(1-\frac{p^j}{\xi}\Bigr)^2\Bigl(1-\frac{p^{k-2-j}}{\xi}\Bigr)^2&\textrm{else,}\end{cases}
%\]
%with $\mathfrak{g}(\omega^j\eta\psi_\zeta^{-1})$ the Gauss sum, 
and $\langle f,f\rangle_N$ is the Petersson norm of $f$ with  
\[
\langle f_1,f_2\rangle_{N}:=\int_{\Gamma_1(N)\backslash\mathfrak{H}}f_1(\tau)\overline{f_2(\tau)}y^{k-2}dxdy
\] 
for $\tau=x+iy\in\mathfrak{H}$.
\end{thm}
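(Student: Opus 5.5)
The plan is to deduce the formula from the explicit reciprocity law of Loeffler--Zerbes \cite[Thm.~7.1.2 and Thm.~7.1.5]{LZ-Coleman}, specialised to $\mathbf{g}=\mathbf{h}_v$ and pulled back along the $p$-stabilisation isomorphism $({\rm Pr}^\xi)_*$. That result applies to the unbounded Beilinson--Flach classes of \cite{BL-non-ord}, which interpolate the Rankin--Iwasawa classes appearing in \eqref{eq:BF-def}. It computes the pairing of the $\mathscr{F}^{-,+}_{\xi,{\rm ord}}$-projection of ${\rm res}_p({}_c\mathcal{BF}^{\xi,\mathbf{g}})$ against $\eta_\xi\otimes\omega_{\mathbf{g}}$ via Perrin-Riou's regulator. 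The output is the three-variable Rankin--Selberg $p$-adic $L$-function $L_p(f^\xi,\mathbf{g},{\bf s})$ of Urban/Loeffler--Zerbes, up to the factor $\mathbf{c}$ attached to $c$.

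First, restricting to the weight-one specialisation $\phi=1$ of $\Gamma^v$ recovers $\mathbf{h}_{v,1}$ in the first variable. The $\chi_2$-component then selects the cyclotomic twist. I will identify $e_\eta\mathscr{L}_{V,\xi,v}$ with the $v_\xi^*$-coordinate of $\mathscr{L}_V$, which by \eqref{eq:dual-eta} equals Kato's $\eta_{\xi',1}$ up to relabelling. The factor $\mathbf{c}^{-1}$ cancels the $c$-dependence exactly. The Ohta differential $\omega_{\mathbf{h}_v}$ identifies $\widetilde{\mathbb{T}}^+_{\mathbf{h}_v}$ with $\Lambda(\Gamma^v)^\iota$, so no further period enters on the $\mathbf{h}_v$-side.

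Second, I will evaluate the interpolation formula of \cite{LZ-Coleman} at $(1,\langle\varepsilon\rangle^j\psi_\zeta)$ with $0\le j\le k-2$. This is the range where $f$ dominates in weight, so the $L$-value $L(f\otimes\omega^j\eta\psi_\zeta^{-1},\mathbf{h}^\circ_{v,1},j+1)$ is critical and the period is $\langle f,f\rangle_N$. The Euler factors in that formula for $\mathbf{h}_{v,1}$, whose ordinary root is $\mathbf{b}_p=\psi(\bar v)$ and non-ordinary root is $\psi(v)$, combine with those for $f^\xi$. The resulting four factors
\[
\Bigl(1-\tfrac{p^j}{\xi\,\psi(v)}\cdots\Bigr)
\]
collapse to $e_{p,\xi,\eta}(f,\chi)^2$. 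This collapse uses the fact that, since $\mathbf{h}_{v,1}$ is a theta series of $\epsilon_K$-twist type, the Euler factor at $p$ of the base change to $K$ is the product of the factors of $f$ and of $f\otimes\epsilon_K$ at $p$. Because $p$ splits, $\epsilon_K(p)=1$, which is why the square appears. For $t\ge1$ the Gauss sum and the $\xi^{-t'}$ contributions appear squared for the same reason.

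Third, I will collect the archimedean constants $(j!)^2$, $2^{2j+k+1}$, $(-i)^{k-1}$ and $\pi^{2j+2}$ from the normalisation of the Rankin--Selberg integral in \cite[Thm.~2.7.4]{KLZ2}. I will also check that $\eta$-isotypic projection matches twisting by $\eta$.

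The main obstacle is the Euler factor bookkeeping at the weight-one point. There $\mathbf{h}_{v,1}$ may be $p$-irregular or have an Eisenstein-like ordinary filtration, and the classes live only in $\widetilde{\mathbb{T}}^\square$. I would first treat the factorisation using Theorem~\ref{thm:Ind} to make the $p$-adic Hodge decomposition of $\mathbf{h}_{v,1}$ explicit, and only then match the factors with $e_{p,\xi,\eta}$.
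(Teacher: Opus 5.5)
Your route matches the paper's in outline. You express $L^{\rm (I)}_{\xi,\xi}(f,\mathbf{h}_v)^\eta$ as a Perrin-Riou regulator of the Beilinson--Flach class, identify it with Urban's Rankin--Selberg $p$-adic $L$-function via \cite[Thm.~7.1.5]{LZ-Coleman}, and read off the interpolation at $(1,\langle\varepsilon\rangle^j\psi_\zeta)$. There the four-term Euler factor becomes a square because both roots of the Hecke polynomial of $E_1(1,\epsilon_K)$ at the split prime $p$ equal $1$. But your normalisation step is wrong as stated. The Loeffler--Zerbes law is not a statement about the pairing with Kato's $\eta_\xi$ (normalised by $[\eta_\xi,\omega_f]_{\rm dR}=1$). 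It is an identity over the Coleman family $\mathbf{f}^\xi$ through $f^\xi$, for
\[
L_p^{\rm geo}(\mathbf{f}^\xi,\mathbf{h}_v,1+\mathbf{j})=(-1)^{1+\mathbf{j}}\lambda_N(\mathbf{f}^\xi)\cdot\bigl\langle\mathcal{L}(\mathcal{BF}^{[\mathbf{f}^\xi,\mathbf{h}_v]}_{1,1}),\eta_{\mathbf{f}^\xi}\otimes\omega_{\mathbf{h}_v}\bigr\rangle,
\]
where $\eta_{\mathbf{f}^\xi}$ is the family class. Correspondingly, Urban's interpolation formula at your points reads
\[
\frac{e_{p,\xi,\eta}(f,\chi)^2}{\mathcal{E}_{p,\xi}(f)\cdot\mathcal{E}^*_{p,\xi}(f)}\cdot(\cdots),\qquad\mathcal{E}_{p,\xi}(f)=1-\frac{p^{k-2}}{\xi^2},\quad\mathcal{E}^*_{p,\xi}(f)=1-\frac{p^{k-1}}{\xi^2}.
\]
Your claim that ``the four factors collapse to $e_{p,\xi,\eta}(f,\chi)^2$'' accounts only for the numerator. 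As written, you would obtain the stated formula divided by $\mathcal{E}_{p,\xi}(f)\mathcal{E}^*_{p,\xi}(f)$, which is not a unit in general. The missing ingredient is \cite[Cor.~6.4.3]{LZ-Coleman}: $\eta_{\mathbf{f}^\xi}$ specialises at $f^\xi$ to $\lambda_N(f)^{-1}\mathcal{E}_{p,\xi}(f)^{-1}\mathcal{E}^*_{p,\xi}(f)^{-1}\cdot\eta_\xi$. Passing from $\eta_{\mathbf{f}^\xi}$ to $\eta_\xi$, and then to the dual basis vector via \eqref{eq:dual-eta}, is exactly what cancels both $\lambda_N$ and the adjoint Euler factors.

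The difficulty you single out as the main obstacle is sidestepped in the paper, not confronted. There $L_p^{\rm geo}=L_p^{\rm Urb}$ is proved as an identity of three-variable functions by Zariski density of crystalline points, using \cite[Thm.~6.5.9]{KLZ1}. One then simply specialises it at $(f^\xi,\mathbf{h}_{v,1},j)$, with the ordinary filtration on the $\mathbf{h}_v$-side handled in the family by Ohta's $\omega_{\mathbf{h}_v}$ on $\widetilde{\mathbb{T}}^+_{\mathbf{h}_v}$. So no analysis of the $p$-irregular point $\mathbf{h}_{v,1}$ is needed. Your proposed detour through Theorem~\ref{thm:Ind} would not work here in any case. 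That result describes $\mathbb{T}^\square_{\mathbf{h}_v}$, while $\mathcal{BF}^{\xi,\mathbf{h}_v}$ lives in $\widetilde{\mathbb{T}}^\square_{\mathbf{h}_v}$-valued cohomology. In general it only lands in the $\mathbb{T}^\square_{\mathbf{h}_v}$-valued cohomology after multiplication by $T_v$, and $T_v$ vanishes at every $\chi=(1,\chi_2)\in\Xi^{\rm (I)}$. The finer cuspidality statement, Theorem~\ref{thm:BSTW}, is itself deduced from the reciprocity law you are trying to prove.
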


\begin{proof}
This is essentially contained \cite[Thm.~7.1.5]{LZ-Coleman}; since the statement in \emph{loc.\,cit.} only includes the case $\zeta=1$, while for our purposes we will need the result for arbitrary $p$-power roots of unity $\zeta$, we provide some details.

Let $f^\xi$ denote the $p$-stabilisation of $f$ with $U_p$-eigenvalue $\xi$, and let $\mathbf{f}^\xi$ denote the Coleman family passing through $f^\xi$. Let $L_p^{\rm Urb}(\mathbf{f}^\xi,\mathbf{h}_v,1+\mathbf{j})$ be the $3$-variable $p$-adic Rankin $L$-function constructed by Urban \cite{urban-rankin} (see also \cite[App.~B]{AI-triple} and \cite[\S{7.3}]{GPRJ}). On the other hand, let
\[
L_p^{\rm geo}(\mathbf{f}^\xi,\mathbf{h}_v,1+\mathbf{j}):=(-1)^{1+\mathbf{j}}\lambda_N(\mathbf{f}^\xi)\cdot\left\langle\mathcal{L}(\mathcal{BF}_{1,1}^{[\mathbf{f}^\xi,\mathbf{h}_v]}),\eta_{\mathbf{f}^\xi}\otimes\omega_{\mathbf{h}_v}\right\rangle
\]
be the $3$-variable \emph{geometric} $p$-adic $L$-function introduced in \cite[Def.~9.1.1]{LZ-Coleman}, so by Theorem~7.1.5 in \cite{LZ-Coleman} one has the equality 
\[
L_p^{\rm geo}(\mathbf{f}^\xi,\mathbf{h}_v,1+\mathbf{j})=L_p^{\rm Urb}(\mathbf{f}^\xi,\mathbf{h}_v,1+\mathbf{j})
\]
(indeed, this follows from the Zariski-density of the crystalline points at which their corresponding specialisations agree by virtue of \cite[Thm.~6.5.9]{KLZ1}). 

By the interpolation property satisfied by $L_p^{\rm Urb}(\mathbf{f}^\xi,\mathbf{h}_v,1+\mathbf{j})$, it follows that if we let $L_p^{\rm geo}(f^\xi,\mathbf{h}_v,1+\mathbf{j})$ denote the image of $L_p^{\rm geo}(\mathbf{f}^\xi,\mathbf{h}_v,1+\mathbf{j})$ under the specialisation map at $f^\xi$, for all  $\eta:\Delta\rightarrow\bZ_p^\times$ and  $\chi\in\Xi^{\rm (I)}$ as in the statement we have
\[
\phi_{\eta\chi}\bigl(
L_p^{\rm geo}(f^\xi,\mathbf{h}_v,1+\mathbf{j})\bigr)=\frac{e_{p,\xi,\eta}(f,\chi)^2}{\mathcal{E}_{p,\xi}(f)\cdot\mathcal{E}^*_{p,\xi}(f)}\cdot\frac{(j!)^2}{2^{2j+k+1}\cdot(-i)^{k-1}}\cdot\frac{L(f\otimes\omega^j\eta\psi_\zeta^{-1},\mathbf{h}_{v,1}^\circ,j+1)}{\mathfrak{g}(\omega^j\eta\psi_\zeta^{-1})^2\cdot\pi^{2j+2}\cdot\langle f,f\rangle_N},
\]
where $\mathcal{E}_{p,\xi}(f)=\Bigl(1-\frac{p^{k-2}}{\xi^2}\Bigr)$, $\mathcal{E}_{p,\xi}^*(f)=\Bigl(1-\frac{p^{k-1}}{\xi^2}\Bigr)$. 

Comparing the interpolation properties satisfied by  $\mathscr{L}_V$ and $\mathcal{L}$ (see \cite[Thm.~4.15]{LZ2} and \cite[Thm.~7.1.4]{LZ-Coleman}), the result thus follows from the relation \eqref{eq:dual-eta} and
%
%\[
%\eta_f'=\pm\frac{1}{r_f}\cdot v_{\xi}^*,
%\]
%where $\eta_f'$ is as in \cite[Def.~6.1.3]{KLZ1} 
%and $\xi'\in\{\alpha,\beta\}$ is different from $\xi$ (cf. \eqref{eq:dual-eta}); and 
the fact that by \cite[Cor.~6.4.3]{LZ-Coleman}, $\eta_{\mathbf{f}^\gamma}$ specialises at $f^\xi$ to
$\lambda_N(f)^{-1}\mathcal{E}_{p,\xi}(f)^{-1}\mathcal{E}^*_{p,\xi}(f)^{-1}\cdot\eta_\xi$.
%
%see also \cite[Thm.~3.6.1]{BLLV}
%
%a key input is \cite{nakamura-JIMJ}
%
%Let $\mathbf{D}_{\rm rig}^\dagger$ be the functor from $p$-adic representations of $G_{\bQ_p}$ to $(\varphi,\Gamma)$-modules over $\mathscr{R}=\mathbb{B}_{{\rm rig},\bQ_p}^\dagger$, as extended in \cite{berger-colmez-familles} to $A$-representations of $G_{\bQ_p}$ and $(\varphi,\Gamma)$-modules over $\mathscr{R}_A=\mathscr{R}\hat\otimes A$ for  reduced affinoid algebras $A$ over $\bQ_p$.
%
%***** relate to $\mathscr{L}_{f^\gamma,\mathbf{h}_v}$
\end{proof}

\subsubsection{Cuspidality of Beilinson--Flach classes}

To motivate the discussion in the next two subsections, let us note that our next goal is the proof of a second explicit reciprocity law for the natural image of ${\rm res}_p(\mathcal{BF}^{\xi,\mathbf{h}_v,\eta})$ under the %two-variable 
$p$-adic regulator map $e_\eta\mathscr{L}_{V,\xi,\overline{v}}$, yielding an analogue of Theorem~\ref{thm:ERL-I} with $\overline{v}$ in place of $v$. This will be the essentially content of Theorem~\ref{thm:ERL-II} below, but a difficulty that arises is that the above evaluation only makes sense after multiplying the classes $\mathcal{BF}^{\xi,\mathbf{h}_v,\eta}$ by $T_v$. Thus we shall first consider this modification, and building on the first explicit reciprocity law (as in the proof of Theorem~\ref{thm:BSTW}) and a key computation by B\"uy\"ukboduk--Lei (as in Corollary~\ref{cor:BF-pm}) arrive at the desired Theorem~\ref{thm:ERL-II}. %for the original classes $\mathcal{BF}^{\xi,\mathbf{h}_v,\eta}$. 

We begin by noting that as in the proof of \cite[Thm.~6.22]{BSTW}, multiplication by $T_v$ annihilates the quotient $\widetilde{\mathbb{T}}_{\mathbf{h}_v}^{\square}/\mathbb{T}_{\mathbf{h}_v}^{\square}$, and so the classes
\begin{equation}\label{eq:BF-cusp}
\mathcal{BF}^{\xi,\mathbf{h}_v}_{\rm cusp}:=T_v\cdot\mathcal{BF}^{\xi,\mathbf{h}_v}
\end{equation}
land in the image of the natural map
\[
\rH^1(\bQ,V\otimes(\mathbb{T}_{\mathbf{h}_v}^{\square})\otimes\Lambda_{\scO}(\widetilde{\Gamma})^\iota)\otimes\mathcal{H}_L(\widetilde{\Gamma})\longrightarrow
\rH^1(\bQ,V\otimes(\widetilde{\mathbb{T}}_{\mathbf{h}_v}^{\square})\otimes\Lambda_{\scO}(\widetilde{\Gamma})^\iota)\otimes\mathcal{H}_L(\widetilde{\Gamma}),
\]
which is an injection. Viewing $\mathcal{BF}_{\rm cusp}^{\xi,\mathbf{h}_v}$ in the former space, and taking $\eta$-isotypic components $\mathcal{BF}_{\rm cusp}^{\xi,\mathbf{h}_v,\eta}$,  from Theorem~\ref{thm:BF-factor} (and its proof) we have 
classes  
\[
{\mathcal{BF}_{\rm cusp}^{i,\mathbf{h}_v,\eta}}\in 
\rH^1(\bZ[1/p],T(\eta)\hat\otimes({\mathbb{T}}_{\mathbf{h}_v}^{\square})\hat\otimes\Lambda_{\bZ_p}({\Gamma})^\iota)\otimes_{}L
\] 
such that
\begin{equation}\label{eq:BF-cusp-dec}
\left(\begin{array}{cc}
\mathcal{BF}_{\rm cusp}^{\alpha,\mathbf{h}_v,\eta}\\
\mathcal{BF}_{\rm cusp}^{\beta,\mathbf{h}_v,\eta}
\end{array}\right)
=Q_{\alpha,\beta}^{-1}M_{{\rm log}}\cdot
\left(\begin{array}{cc}
{\mathcal{BF}}_{\rm cusp}^{1,\mathbf{h}_v,\eta}\\
{\mathcal{BF}}_{\rm cusp}^{2,\mathbf{h}_v,\eta}
\end{array}\right),
\end{equation}
and similarly as in \eqref{eq:varpi-s} we put
\begin{equation}\label{eq:BF-cusp-int}
\mathcal{BF}_{\rm cusp,int}^{i,\mathbf{h}_v,\eta}:=\varpi^s\cdot\mathcal{BF}_{\rm cusp}^{i,\mathbf{h}_v,\eta}\in\rH^1(\bZ[1/p],T(\eta)\hat\otimes({\mathbb{T}}_{\mathbf{h}_v}^{\square})\hat\otimes\Lambda_{\bZ_p}({\Gamma})^\iota). 
\end{equation}
Since Theorem~\ref{thm:BSTW} together with Shapiro's lemma give isomorphisms
\begin{align*}
\rH^1(\bQ,T(\eta)\otimes(\mathbb{T}_{\mathbf{h}_v}^{\square})\otimes\Lambda_{\scO}(\Gamma)^\iota)
&\cong\rH^1(\bQ,T(\eta)\otimes{\rm Ind}_{K}^{\bQ}(\Lambda_{\bZ_p}(\Gamma^v)^\iota)\otimes\Lambda_{\scO}(\Gamma)^\iota)\\
&\cong
\rH^1(K,T(\eta)\otimes\Lambda_{\scO}(\Gamma_K)^\iota),
\end{align*}
for every prime $w\in\{v,\overline{v}\}$ of $K$ above $p$ we may consider the image of ${\rm res}_w(\mathcal{BF}_{\rm cusp}^{\xi,\mathbf{h}_v,\eta})$ and ${\rm res}_w(\mathcal{BF}_{\rm cusp}^{i,\mathbf{h}_v,\eta})$ under the maps \eqref{eq:def-2var} and \eqref{eq:def-2var-H}.
%(taking their $\mathcal{H}_L(\Gamma)$-linear extension to $\rH^1(K,\Lambda_{\scO}(\Gamma_K)^\iota)\otimes\mathcal{H}_L(\Gamma)$ for the latter).

\begin{defn}\label{def:2var-unb-tilde}
For $\xi,\xi'\in\{\alpha,\beta\}$ and $i,j\in\{1,2\}$ put
\begin{align*}
\widetilde{L}_{\xi,\xi'}^{\rm (I)}(f,\mathbf{h}_v)^\eta&:=e_\eta\mathscr{L}_{V,\xi,v}({\rm res}_v(\mathcal{BF}_{\rm cusp}^{\xi',\mathbf{h}_v,\eta})),&\qquad
\widetilde{L}^{\rm (I)}_{i,j}(f,\mathbf{h}_v)^\eta&:=e_\eta\widetilde{\mathscr{C}}_{i,v}({\rm res}_v(\mathcal{BF}_{\rm cusp,int}^{j,\mathbf{h}_v,\eta})),\\
\widetilde{L}_{\xi,\xi'}^{\rm (II)}(\mathbf{h}_v,f)^\eta&:=e_\eta\mathscr{L}_{V,\xi,\overline{v}}({\rm res}_{\overline{v}}(\mathcal{BF}_{\rm cusp}^{\xi',\mathbf{h}_v,\eta})),&\qquad
\widetilde{L}^{\rm (II)}_{i,j}(\mathbf{h}_v,f)^\eta&:=e_\eta\widetilde{\mathscr{C}}_{i,\overline{v}}({\rm res}_{\overline{v}}(\mathcal{BF}_{\rm cusp,int}^{j,\mathbf{h}_v,\eta})),
\end{align*}
viewed as elements in $\mathcal{H}_L(\Gamma_K)$ and $\Lambda_{\scO}(\Gamma_K)$, respectively.
%which yields an element in $\Lambda_{\scO}(\Gamma_K)$.
\end{defn}

\begin{rem}\label{rem:unb-tilde}
For $\xi=\xi'$, we have 
\[
\widetilde{L}_{\xi,\xi}^{\rm (I)}(f,\mathbf{h}_v)^\eta=T_v\cdot L^{\rm (I)}_{\xi,\xi}(f,\mathbf{h}_v)^\eta,
\]
where $L_{\xi,\xi}^{\rm (I)}(f,\mathbf{h}_v)^\eta$ is as in Definition~\ref{def:2var-unb}.
\end{rem}

In analogy with \cite{pollack}, we have the following decomposition.

\begin{cor}\label{cor:dec-2var-signed}
We have
\begin{align*}
\varpi^s\cdot\biggl(\begin{array}{cc}\widetilde{L}^{\rm (I)}_{\alpha,\alpha}(f,\mathbf{h}_v)^\eta&\widetilde{L}^{\rm (I)}_{\alpha,\beta}(f,\mathbf{h}_v)^\eta\\
\widetilde{L}^{\rm (I)}_{\beta,\alpha}(f,\mathbf{h}_v)^\eta&\widetilde{L}^{\rm (I)}_{\beta,\beta}(f,\mathbf{h}_v)^\eta
\end{array}\biggr)
&=Q_{\alpha,\beta}^{-1}M_{{\rm log},v}
\cdot\biggl(\begin{array}{cc}\widetilde{L}^{\rm (I)}_{1,1}(f,\mathbf{h}_v)^\eta&\widetilde{L}^{\rm (I)}_{1,2}(f,\mathbf{h}_v)^\eta\\
\widetilde{L}^{\rm (I)}_{2,1}(f,\mathbf{h}_v)^\eta&\widetilde{L}^{\rm (I)}_{2,2}(f,\mathbf{h}_v)^\eta
\end{array}\biggr)
\cdot(Q_{\alpha,\beta}^{-1}M_{{\rm log},{v}})^{\rm tr},
\end{align*}
and similarly
\begin{align*}
\varpi^s\cdot\biggl(\begin{array}{cc}\widetilde{L}^{\rm (II)}_{\alpha,\alpha}(\mathbf{h}_v,f)^\eta&\widetilde{L}^{\rm (II)}_{\alpha,\beta}(\mathbf{h}_v,f)^\eta\\
\widetilde{L}^{\rm (II)}_{\beta,\alpha}(\mathbf{h}_v,f)^\eta&\widetilde{L}^{\rm (II)}_{\beta,\beta}(\mathbf{h}_v,f)^\eta
\end{array}\biggr)
&=Q_{\alpha,\beta}^{-1}M_{{\rm log},\overline{v}}
\cdot\biggl(\begin{array}{cc}\widetilde{L}^{\rm (II)}_{1,1}(\mathbf{h}_v,f)^\eta&\widetilde{L}_{1,2}^{\rm (II)}(\mathbf{h}_v,f)^\eta\\
\widetilde{L}_{2,1}^{\rm (II)}(\mathbf{h}_v,f)^\eta&\widetilde{L}^{\rm (II)}_{2,2}(\mathbf{h}_v,f)^\eta
\end{array}\biggr)
\cdot(Q_{\alpha,\beta}^{-1}M_{{\rm log},v})^{\rm tr},
\end{align*}
where $M_{{\rm log},w}\in M_{2\times 2}(\mathcal{H}_L(\Gamma^w))$ %for $w\in\{v,\overline{v}\}$ 
is as in \eqref{eq:factor-L-2var-w}, and $A^{\rm tr}$ denotes the transpose of a matrix $A$.
\end{cor}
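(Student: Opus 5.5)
The plan is to prove both identities the same way: factor the class and the regulator map separately, then combine the two factorisations into a bilinear expression. I treat the first identity in detail; the second is identical with $\overline{v}$ in place of $v$ for the regulator.

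\emph{Step 1 (factorise the class).} Start from \eqref{eq:BF-cusp-dec}. Multiply both sides by $\varpi^s$ and use \eqref{eq:BF-cusp-int}. This expresses $\varpi^s\,\mathcal{BF}^{\xi',\mathbf{h}_v,\eta}_{\rm cusp}$, for $\xi'\in\{\alpha,\beta\}$, as
\[
\sum_j (Q_{\alpha,\beta}^{-1}M_{\rm log})_{\xi',j}\,\mathcal{BF}^{j,\mathbf{h}_v,\eta}_{\rm cusp,int}.
\]
Here the entries of $M_{\rm log}$ lie in $\mathcal{H}_L(\Gamma)$ (Remark~\ref{rem:Mlog-Gamma}) and act through the cyclotomic variable of the coefficient ring $\Lambda_{\scO}(\Gamma)^\iota$.

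\emph{Step 2 (transport the coefficient into the two-variable algebra).} Apply the Shapiro isomorphism, which uses Theorem~\ref{thm:Ind}, to identify these classes with classes over $K$ with coefficients in $\Lambda_{\scO}(\Gamma_K)^\iota\otimes\mathcal{H}_L(\Gamma)$. The key point is the image of the cyclotomic $\Gamma$ inside $\Gamma_K$. Since $\mathbf{h}_v$ has characters of $\Gamma^v$ as its Galois representation, the cyclotomic variable of the Beilinson--Flach class becomes a variable of $\Gamma_K$. Locally at $v$, this variable is identified with the variable along which the local cyclotomic Coleman map of $K_v=\bQ_p$ is computed. I would check that under restriction to $v$ and the identification $\Gamma^v\hookrightarrow\Gamma_K\twoheadrightarrow\Gamma$, the coefficient matrix $M_{\rm log}$ is sent to $M_{{\rm log},v}$. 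This bookkeeping of which copy of $\Gamma$ maps where is the main obstacle. It is also exactly what produces the asymmetric $(Q^{-1}M_{{\rm log},v})^{\rm tr}$ factor in the second formula. There the class factorisation still lives in the $v$-direction, while the regulator is taken at $\overline{v}$, so $M_{{\rm log},\overline{v}}$ appears only from Step 3.

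\emph{Step 3 (factorise the regulator).} Apply $e_\eta\mathscr{L}_{V,\xi,w}$ to ${\rm res}_w$ of the relation from Step 1. By $\mathcal{H}_L(\Gamma_K)$-linearity of the extended regulator \eqref{eq:def-2var-H}, this yields
\[
\varpi^s\,\widetilde{L}_{\xi,\xi'}=\sum_j (Q^{-1}M_{{\rm log},v})_{\xi',j}\; e_\eta\mathscr{L}_{V,\xi,w}({\rm res}_w\mathcal{BF}^{j}_{\rm cusp,int}).
\]
Next apply \eqref{eq:factor-L-2var-w}. This gives
\[
e_\eta\mathscr{L}_{V,\xi,w}=\sum_i (Q^{-1}M_{{\rm log},w})_{\xi,i}\,e_\eta\widetilde{\mathscr{C}}_{i,w},
\]
and the terms $e_\eta\widetilde{\mathscr{C}}_{i,w}({\rm res}_w\mathcal{BF}^{j}_{\rm cusp,int})$ are by definition $\widetilde{L}_{i,j}$.

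\emph{Step 4 (assemble).} Substituting Step 3 into Step 2 gives
\[
\varpi^s\,\widetilde{L}_{\xi,\xi'}=\sum_{i,j}(Q^{-1}M_{{\rm log},w})_{\xi,i}\,\widetilde{L}_{i,j}\,(Q^{-1}M_{{\rm log},v})_{\xi',j},
\]
which is the $(\xi,\xi')$ entry of the stated matrix product. For $w=v$ this is the first formula and for $w=\overline{v}$ the second. The factor $\Omega_{\bQ_p}$ is common to both sides and is suppressed, as in Definition~\ref{def:2var-unb-tilde}.
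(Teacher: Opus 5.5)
Your route is the same as the paper's, whose proof just combines \eqref{eq:BF-cusp-dec} and \eqref{eq:BF-cusp-int} on the class side with \eqref{eq:factor-L-2var-w} on the regulator side, using linearity of the regulator. Steps~1, 3 and~4 are fine. The gap is Step~2, which you rightly call the crux but then settle the wrong way.

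By Theorem~\ref{thm:Ind} and Shapiro's lemma, $G_K$ acts on $\Lambda_{\bZ_p}(\Gamma^v)^\iota\hat\otimes\Lambda_{\scO}(\Gamma)^\iota$ through $(\Psi^v,{\rm cyc})\colon\Gamma_K\to\Gamma^v\times\Gamma$. Hence the cyclotomic coefficient $\gamma\in\Gamma$ of the class, through which $M_{\rm log}$ in \eqref{eq:BF-cusp-dec} acts, becomes the element of $\Gamma_K$ with trivial image in $\Gamma^v$ and cyclotomic image $\gamma$. This is an element of ${\rm Gal}(K_\infty/K_\infty^v)$.
\begin{itemize}
\item Since $K_\infty^v/K$ is unramified at $\overline{v}$, this group contains the inertia group $I_{\overline{v}}\subset\Gamma_K$ with finite index.
\item It meets $I_v$ only in a finite group, because $I_v$ maps with finite index onto $\Gamma^v$.
\item The variable of the local factorisation \eqref{eq:factor-L-2var-w} at $w$ is the inertia subgroup ${\rm Gal}(F_\infty(\mu_{p^\infty})/F_\infty)\subset G$, i.e.\ $I_w$. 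This is what the definition of $M_{{\rm log},w}$ via $\Gamma^w\hookrightarrow\Gamma_K\twoheadrightarrow\Gamma$ encodes.
\end{itemize}
So your claim that, locally at $v$, the class's cyclotomic variable is the one along which the Coleman map of $K_v=\bQ_p$ is computed is false. At $v$ the coefficient $\mathbb{T}^+_{\mathbf{h}_v}\cong\Lambda_{\bZ_p}(\Gamma^v)^\iota$ is ramified. In the coordinates $\Gamma_K\simeq\Gamma^v\times\Gamma$, the group $I_v$ is a graph mixing both factors, while $I_{\overline{v}}$ is, up to finite index, the factor $\{1\}\times\Gamma$.

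Doing Step~2 correctly therefore turns the class-side matrix into $M_{{\rm log},\overline{v}}$, not $M_{{\rm log},v}$. Your Steps~3--4 then give
\[
\varpi^s\,\widetilde{L}_{\xi,\xi'}=\sum_{i,j}(Q_{\alpha,\beta}^{-1}M_{{\rm log},w})_{\xi,i}\,\widetilde{L}_{i,j}\,(Q_{\alpha,\beta}^{-1}M_{{\rm log},\overline{v}})_{\xi',j},
\]
that is, both identities with $(Q_{\alpha,\beta}^{-1}M_{{\rm log},\overline{v}})^{\rm tr}$ as the right-hand factor. Your explanation of the ``asymmetric'' factor is thus reversed. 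Two different variables occur in the \emph{first} identity, where the regulator is at $v$. In the second identity the regulator at $\overline{v}$ and the class factorisation use the same variable, because $\mathbb{T}^-_{\mathbf{h}_v}$ is unramified at $p$.

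This corrected form is also what is used later: the proof of Corollary~\ref{cor:BF-pm} multiplies the second identity on both sides by adjugates of one and the same matrix $\bigl(\begin{smallmatrix}\mathfrak{A}&\mathfrak{B}\\ \mathfrak{C}&\mathfrak{D}\end{smallmatrix}\bigr)$. Since $M_{{\rm log},v}$ and $M_{{\rm log},\overline{v}}$ are $M_{\rm log}$ evaluated in independent variables of $\Gamma_K$, they do not coincide. The bookkeeping, which the paper's one-line proof does not spell out, cannot produce $M_{{\rm log},v}$ on the class side. You should prove, and state, the version with $M_{{\rm log},\overline{v}}$ on the right.
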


\begin{proof}
Immediate from \eqref{eq:factor-L-2var-w}, \eqref{eq:BF-cusp-dec}, and \eqref{eq:BF-cusp-int}.
\end{proof}

%\begin{rem} Similarly as in Remark~\ref{rem:cf-ap=0}, 
%the interpolation property in Theorem~\ref{thm:ERL-I} for $L_{\xi,\xi}(f,\mathbf{h}_v)^\eta$  does not directly translate into a similar interpolation property for $L_{i,i}(f,\mathbf{h}_v)^\eta$ due to the non-explicit nature of the logarithm matrix in the general nonordinary case (cf. \S\ref{subsubsec:restr-cyc} below). %(cf. Theorem~6.4 in \emph{op.\,cit.}).
%\end{rem}

\subsubsection{Cuspidality of Beilinson--Flach classes, bis}

\begin{thm}\label{thm:BSTW}
Suppose $l\in\{1,2\}$ and $\eta:\Delta\rightarrow\bZ_p^\times$ are such that 
\begin{equation}\label{eq:Lp-nonzero}
(L_{l,l}^{\rm (I)}(f,\mathbf{h}_v)^\eta\;\,{\rm mod}\,I^-)\neq 0,
%L_i(f)^\eta\cdot L_i(f\otimes\epsilon_K)^\eta\neq 0.
\end{equation}
where $I^-={\rm ker}(\Lambda_{\scO}(\Gamma_K)\twoheadrightarrow\Lambda_{\scO}(\Gamma^+)
)$. 
Then the $\eta$-component $\mathcal{BF}_{\rm int}^{l,\mathbf{h}_v,\eta}$  of \eqref{eq:varpi-s} is in the image of the natural injection
\[
\rH^1(\bZ[1/p],T(\eta)\hat\otimes(\mathbb{T}_{\mathbf{h}_v}^{\square})^{}\hat\otimes\Lambda_{\bZ_p}(\Gamma)^\iota)\longrightarrow\rH^1(\bZ[1/p],T(\eta)\hat\otimes(\widetilde{\mathbb{T}}_{\mathbf{h}_v}^{\square})^{}\hat\otimes\Lambda_{\bZ_p}(\Gamma)^\iota).
\]
%and we have 
%\[
%\mathbb{T}_{\mathbf{h}_v/{\rm tor}}\simeq{\rm Ind}_K^\bQ\,\Lambda(\Gamma_K^v)^\iota
%\] 
%as $G_{\bQ}$-modules. %with $\mathbb{T}^+%\simeq\Lambda_K^v(\Psi_K^v)$.
\end{thm}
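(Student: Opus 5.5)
The plan is to follow the argument of \cite[Thm.~6.22]{BSTW}, as the text preceding the statement already indicates. The obstruction to cuspidality sits in the quotient $\widetilde{\mathbb{T}}_{\mathbf{h}_v}^{\square}/\mathbb{T}_{\mathbf{h}_v}^{\square}$, and we will show it vanishes for the $l$-th signed class.

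\emph{Step 1: identify the quotient.} Write $Q:=\widetilde{\mathbb{T}}_{\mathbf{h}_v}^{\square}/\mathbb{T}_{\mathbf{h}_v}^{\square}$. It is the Eisenstein part of the anti-ordinary cohomology of the open modular curve, which is a quotient of $\widetilde{\mathbb{T}}_{\mathbf{h}_v}^-$. It is annihilated by $T_v$, so $Q$ is a $\Lambda_{\bZ_p}(\Gamma^v)/(T_v)=\bZ_p$-module. The long exact cohomology sequence reduces the claim to showing that the image $\partial$ of $\mathcal{BF}_{\rm int}^{l,\mathbf{h}_v,\eta}$ in $\rH^1(\bZ[1/p],T(\eta)\otimes Q\hat\otimes\Lambda(\Gamma)^\iota)$ vanishes. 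Injectivity of the natural map should follow from $\rH^0(\bZ[1/p],T(\eta)\otimes Q\hat\otimes\Lambda(\Gamma)^\iota)=0$, which holds because $\bar\rho_f$ is irreducible.

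\emph{Step 2: show $\partial$ lies in a torsion-free module.} The $G_\bQ$-action on $Q$ is through characters (Dirichlet characters times powers of $\varepsilon$). Hence $\rH^1(\bZ[1/p],T(\eta)\otimes Q\hat\otimes\Lambda(\Gamma)^\iota)$ is an Iwasawa cohomology group of $T(\eta)$ twisted by a character over $\bQ(\mu_{p^\infty})$. Since $\bar\rho_f$ is irreducible, Kato's results \cite[Thm.~12.4]{Kato295} make it a torsion-free (in fact free) $\Lambda_{\scO}(\Gamma)$-module. It therefore suffices to show that some $\Lambda$-linear functional that is injective, or at least has torsion kernel, kills $\partial$.

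\emph{Step 3: connect to the $p$-adic $L$-function.} By construction the boundary map is compatible with setting $T_v=0$, i.e.\ with passing to the cyclotomic line. Reduce modulo $I^-$ (equivalently, modulo $T_v$ up to the identification of $\Gamma^v$ with the relevant quotient). Then $\mathcal{BF}^{l}$ specializes to a class over the cyclotomic line. Its restriction at $p$, composed with the Coleman map $e_\eta\widetilde{\mathscr{C}}_{l,v}$, gives $L_{l,l}^{\rm (I)}(f,\mathbf{h}_v)^\eta\bmod I^-$, which is nonzero by hypothesis \eqref{eq:Lp-nonzero}. This identity comes from Theorem~\ref{thm:ERL-I} together with Theorem~\ref{thm:BF-factor} and Corollary~\ref{cor:dec-2var-signed}, after multiplying by $T_v$ and dividing back.

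Nonvanishing implies that the specialized class generates a rank-one piece. On the other hand, at $T_v=0$ the classes $\mathcal{BF}$ are known to factor through Eisenstein/Kato classes: Rankin--Eisenstein classes at Eisenstein points degenerate to Kato classes. So $\partial$ is expressed as $\mathscr{C}_l$ of Kato's class times a term which, by the rank-one freeness of $\rH^1_{\rm Iw}$ and the nonvanishing, forces the boundary to vanish.

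\emph{Main obstacle.} The expected difficulty is this last comparison: showing that the component landing in $Q$ is killed, not merely torsion. The first attempt would be a weight/Frobenius argument. The Eisenstein quotient carries $\varphi$-eigenvalues on $\mathscr{F}^-$ incompatible with the signed local condition, so $\widetilde{\mathscr{C}}_{l}\circ{\rm res}_p$ vanishes on the image of $\rH^1(T\otimes Q)$ whenever that image comes from $\mathbb{T}^+$. Combining this with the nonvanishing of $L_{l,l}^{\rm (I)}\bmod I^-$ and the torsion-freeness from Step 2 would give $\partial=0$.
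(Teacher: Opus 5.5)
\textbf{There is a genuine gap in Step~3.} Your Step~1 reduction is fine: it suffices that the image $\partial$ of $\mathcal{BF}^{l,\mathbf h_v,\eta}_{\rm int}$ in $\rH^1$ with coefficients in $Q=\widetilde{\mathbb T}^{\square}_{\mathbf h_v}/\mathbb T^{\square}_{\mathbf h_v}$ vanishes. The torsion-freeness in Step~2 is also fine. What is missing is the decisive step: you never show that $\partial$ is killed by a functional which the hypothesis makes injective.

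\begin{itemize}
\item The sentence about Rankin--Eisenstein classes ``degenerating to Kato classes'' gives no formula for $\partial$. As written it equates a cohomology class with an element of $\Lambda$, and no such input is available here.
\item The Frobenius-eigenvalue heuristic is misdirected. $Q$ is a quotient of $\widetilde{\mathbb T}^-_{\mathbf h_v}$ (since $\widetilde{\mathbb T}^+_{\mathbf h_v}=\mathbb T^+_{\mathbf h_v}$), so ``the image coming from $\mathbb T^+$'' is not the relevant situation.
\item Read literally, your claim makes $e_\eta\widetilde{\mathscr C}_l\circ{\rm res}_p$ vanish on all of $\rH^1(\bZ[1/p],T(\eta)\otimes Q\hat\otimes\Lambda_{\bZ_p}(\Gamma)^\iota)$. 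That contradicts the injectivity of the same map which you need in Step~2.
\item You do not explain how \eqref{eq:Lp-nonzero}, a statement about a two-variable class on the $v$-side, yields injectivity of any functional on the $Q$-twisted cyclotomic module.
\end{itemize}

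\textbf{The missing idea} is to transport $\partial$ into the cuspidal module, where the reciprocity laws already proved apply.

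\emph{Reduction to a class mod $T_v$.} We have $T_v\widetilde{\mathbb T}^{\square}_{\mathbf h_v}\subset\mathbb T^{\square}_{\mathbf h_v}$, and $\widetilde{\mathbb T}^{\square}_{\mathbf h_v}$ is $T_v$-torsion-free. Hence multiplication by $T_v$ gives an injection $Q\hookrightarrow\mathbb T^{\square}_{\mathbf h_v}/T_v$. On $\rH^1$ it sends $\partial$ to $\bar c$, the reduction of the cuspidal class $c:=\mathcal{BF}^{l,\mathbf h_v,\eta}_{\rm cusp,int}=T_v\cdot\mathcal{BF}^{l,\mathbf h_v,\eta}_{\rm int}$ modulo $T_v$. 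Under \eqref{eq:Sh-omega}, reduction modulo $T_v$ is reduction modulo $I^-$. It therefore suffices to show $\bar c=0$. Then $c=T_vc'$ with $c'\in\rH^1(K,\mathbf T_K^\eta)$. The $\rH^1$ with $\widetilde{\mathbb T}^\square_{\mathbf h_v}$-coefficients has no $T_v$-torsion, because the relevant $\rH^0$ vanishes by irreducibility of $\bar\rho_f$. Hence $\mathcal{BF}^{l,\mathbf h_v,\eta}_{\rm int}=c'$.

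\emph{Local conditions on $\bar c$.} The class $\bar c$ satisfies both signed conditions:
\begin{itemize}
\item at $\overline v$, because $\widetilde L^{\rm (II)}_{l,l}(\mathbf h_v,f)^\eta=0$ by Corollary~\ref{cor:BF-pm};
\item at $v$, because $e_\eta\widetilde{\mathscr C}_{l,v}({\rm res}_v c)=T_v\cdot L^{\rm (I)}_{l,l}(f,\mathbf h_v)^\eta\equiv 0$ mod $I^-$, by the signed form of Remark~\ref{rem:unb-tilde}.
\end{itemize}
So $\bar c$ lies in the cyclotomic doubly-signed Selmer group.

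\emph{Vanishing of that Selmer group.} As in the proof of Proposition~\ref{prop:f-fK}, this group splits into the $l$-th signed compact Selmer groups of $f$ and of $f\otimes\epsilon_K$ over $\bQ(\mu_{p^\infty})$. Each is zero, because $\rH^1_{\rm Iw}$ is free of rank one (Kato) and $e_\eta\widetilde{\mathscr C}_l\circ{\rm res}_p$ is nonzero on Kato's class. That nonvanishing is $L_l(f)^\eta\cdot L_l(f\otimes\epsilon_K)^\eta\neq 0$, which by Proposition~\ref{prop:L-cyc} is exactly \eqref{eq:Lp-nonzero}.

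The paper's proof, following \cite{BSTW}, rests on precisely this input. It invokes the nonvanishing of $e_\eta\widetilde{\mathscr C}_{l,v}\circ{\rm res}_v$ on the Selmer group $\rH^1_{{\rm rel},l}(K,\mathbf T_K^\eta)$ containing the cuspidal class, deduced from Theorem~\ref{thm:ERL-I} and the hypothesis.
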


\begin{proof}
Following the argument in the proof of \cite[Cor.~6.22]{BSTW}, the desired result follows from the nonvanishing of the composite map
\[
e_\eta\widetilde{\mathscr{C}}_{l,v}\circ{\rm res}_v:\rH^1_{{\rm rel},l}(K,\mathbb{T}_K^\eta)\longrightarrow\Lambda_{\scO}(\Gamma_K),
\]
and this follows from Theorem~\ref{thm:ERL-I} and the nonvanishing hypothesis.
\end{proof}

%For $j\in\bZ$, let ${\rm Tw}_{j}:\Lambda_{\bZ_p}(\Gamma)\rightarrow\Lambda_{\bZ_p}(\Gamma)$ be the $\bZ_p$-linear twisting isomorphism given by $\gamma\mapsto\epsilon(\gamma)^j\gamma$ for $\gamma\in\Gamma$, and also denote by ${\rm Tw}_j$ its  extension of $\mathcal{H}_{\bQ_p}(\Gamma)$. 
In view of Theorem~\ref{thm:Ind} and Shapiro's lemma, we consider the composition
\begin{equation}\label{eq:Sh-omega}
\begin{aligned}
\rH^1(\bZ[1/p],T(\eta)\hat\otimes(\mathbb{T}_{\mathbf{h}_v}^{\square})\hat\otimes\Lambda_{\bZ_p}(\Gamma)^\iota)
%&\simeq\rH^1(\cO_K[1/p],T_f^*\hat\otimes\mathbb{T}^+\hat\otimes\Lambda)\\
&\overset{\simeq}\longrightarrow\rH^1(\cO_K[1/p],T(\eta)\hat\otimes\Lambda_{\bZ_p}(\Gamma^v)^\iota\hat\otimes\Lambda_{\bZ_p}(\Gamma)^\iota)\\
&\overset{\simeq}\longrightarrow\rH^1(\cO_K[1/p],T(\eta)\hat\otimes\Lambda_{\bZ_p}(\Gamma_K)^\iota).
%&\overset{{{\rm Tw}_{1-k/2}}}\longrightarrow\rH^1(\cO_K[1/p],T(1-k/2)(\eta\omega^{k/2-1})\hat\otimes\Lambda_{\bZ_p}(\Gamma_K)^\iota),
\end{aligned}
\end{equation}
%where ${\rm Tw}_{1-k/2}:\Lambda_{\bZ_p}(\Gamma)\rightarrow\Lambda_{\bZ_p}(\Gamma)$ is the $\bZ_p$-linear twisting isomorphism given by $\gamma\mapsto\langle\varepsilon(\gamma)\rangle^{1-k/2}\gamma$ for $\gamma\in\Gamma$.
%
%the first arrow is the isomorphism arising from Shapiro's lemma, the second arising from the inverse of the  isomorphism 
%\[
%\theta:\Gamma_K\simeq\Gamma^v\times\Gamma\xrightarrow{\sim}\Gamma^v\times\Gamma
%\]
%given by $\gamma_v\mapsto\gamma_v^{-1}$ for $\gamma_v\in\Gamma^v$ and the identity on $\Gamma$. 
%and the last arrow is the twist by $\epsilon^{1-k/2}$.

Abusing notation, for $l\in\{1,2\}$ and $\eta:\Delta\rightarrow\bZ_p^\times$ such that \eqref{eq:Lp-nonzero} holds, we shall still denote by 
\[
\mathcal{BF}^{l,\mathbf{h}_v,\eta}_{\rm int}\in\rH^1(\cO_K[1/p],T(\eta)\hat\otimes\Lambda_{\bZ_p}(\Gamma_K)^\iota)
\]
the image of the $\eta$-component of \eqref{eq:varpi-s} for $i=l$ under the composition \eqref{eq:Sh-omega}.

%\begin{defn}\label{def:Z}
%Let $i\in\{1,2\}$ and $\eta:\Delta\rightarrow\bZ_p^\times$ be such that \eqref{eq:Lp-nonzero} holds. We denote by
%\[
%\mathcal{BF}^{i,\eta}(f_{/K})\in
%\rH^1(\cO_K[1/p],T(\eta)\hat\otimes\Lambda_{\bZ_p}%(\Gamma_K)^\iota)
%%\rH^1(\cO_K[1/p],T(1-k/2)(\eta\omega^{k/2-1})\hat\otimes\Lambda_{\bZ_p}(\Gamma_K)^\iota)
%\]
%the image of $\mathcal{BF}_{\rm int}^{i,\mathbf{h}_v,\eta}$ under the composition \eqref{eq:Sh-omega}.
%\end{defn}

%The splitting of Theorem~\ref{thm:BF-factor} thus takes the following form, which we record for our later use.

%\begin{cor}\label{cor:signed-BF}
%For $i\in\{1,2\}$ for which $\mathcal{BF}^i(f_{/K})$ is defined, we have
%\begin{equation}
%\left(\begin{array}{cc}
%\varpi^s\cdot\mathcal{BF}_{\alpha,\mathbf{h}_v}\\
%\varpi^s\cdot\mathcal{BF}_{\beta,\mathbf{h}_v}
%\end{array}\right)
%=Q_{\alpha,\beta}^{-1}\cdot{\rm Tw}_{k/2-1}(M_{\rm log})\cdot
%\left(\begin{array}{cc}
%{\mathcal{BF}}_1\\
%{\mathcal{BF}}_2
%\end{array}\right),\nonumber
%\end{equation}
%where ${\rm Tw}_{k/2-1}(M_{\rm log})\in M_{2\times 2}(\mathcal{H}(\Gamma))$ is the result of applying ${\rm Tw}_{k/2-1}$ to $M_{\rm log}$ coordinate-wise.
%\end{cor}

\begin{rem}
In our application to Kato's main conjecture, $K$ will be chosen so that 
\begin{equation}\label{eq:nonzero-L-value}
L(f\otimes\epsilon_K\omega^{k/2-1},1)\neq 0.\nonumber
\end{equation}
For such $K$, by virtue of \cite[Prop.~2]{shimuraCPAM} and \cite[Prop.~3.28]{LLZ} the nonvanishing condition \eqref{eq:Lp-nonzero} for $\eta=\omega^{1-k/2}$ is satisfied for all $l\in\{1,2\}$ unless $k=2$ and $a_p\neq 0$, in which case Proposition~\ref{prop:L-nonzero} shows that the condition is satisfied for at least some $l\in\{1,2\}$, and this will suffice for our purposes.
\end{rem}

%\begin{rem}
%For $f$ of weight $2$, the class $\mathcal{BF}^{i_0}$ is the  \emph{zeta element} attached to $f$  and $K$ in %the terminology of 
%\cite{BSTW} in the supersingular case with $a_p=0$  (denoted $\mathcal{Z}^\circ(f_{/K})$ for $\circ=\{-,+\}$, and corresponding to $i_0=\{1,2\}$ in our notation). In that case, thanks to the interpolation property of the $p$-adic $L$-functions $\mathscr{L}_p^\circ(g)$, one can show that \eqref{eq:Lp-nonzero} holds for both choices of $i_0$.
%\end{rem}

%Following \cite{BSTW}, we shall refer to $\mathcal{BF}^\circ(f_{/K})$ as the \emph{zeta elements} associated to  $f_{/K}$.

\subsubsection{Explicit reciprocity law for $w=\overline{v}$}\label{subsec:ERL-vbar}

\begin{prop}\label{prop:BF-geo}
For every $\xi\in\{\alpha,\beta\}$ and $\eta:\Delta\rightarrow\bZ_p^\times$, we have
\[
%e_\eta\mathscr{L}_{V,\xi,\overline{v}}({\rm res}_{\overline{v}}(\mathcal{BF}^{\xi,\mathbf{h}_v,\eta}_{\rm cusp}))=0.
\widetilde{L}^{(\rm II)}_{\xi,\xi}(\mathbf{h}_v,f)^\eta=0.
\]
Moreover, if $\xi'\in\{\alpha,\beta\}$ is different from $\xi$, then
\[
%e_\eta\mathscr{L}_{V,\xi,\overline{v}}({\rm res}_{\overline{v}}(\mathcal{BF}_{\rm cusp}^{\xi',\mathbf{h}_v,\eta}))=-e_\eta\mathscr{L}_{V,\xi',\overline{v}}({\rm res}_{\overline{v}}(\mathcal{BF}_{\rm cusp}^{\xi,\mathbf{h}_v,\eta})),
\widetilde{L}^{(\rm II)}_{\xi,\xi'}(\mathbf{h}_v,f)^\eta=-\widetilde{L}^{(\rm II)}_{\xi',\xi}(\mathbf{h}_v,f)^\eta.
\]
\end{prop}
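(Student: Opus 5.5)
The plan is to identify $\widetilde{L}^{\rm (II)}_{\xi,\xi'}(\mathbf{h}_v,f)^\eta$ with a \emph{geometric} pairing in the spirit of \cite[\S{6}--\S{9}]{LZ-Coleman}, and then to exploit the antisymmetry of Beilinson--Flach classes under swapping the two factors, together with the fact that at $\overline{v}$ the CM family $\mathbf{h}_v$ is ``crystalline in the $f$-direction only''.

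The first step concerns the identification at $\overline{v}$. Via Theorem~\ref{thm:Ind} and Shapiro's lemma, the restriction to $K_{\overline{v}}$ of a class in $\rH^1(\bQ,V\otimes\mathbb{T}^\square_{\mathbf{h}_v}\otimes\Lambda(\Gamma)^\iota)$ corresponds to the restriction at $p$ followed by projection to the quotient $\mathbb{T}^-_{\mathbf{h}_v}\cong\Lambda(\Gamma^v)^\iota$ twisted by the Galois character of $\overline{v}$. Indeed, under $\mathbb{T}^\square\cong{\rm Ind}_K^\bQ$, the sub $\mathbb{T}^+$ is the $v$-summand and $\mathbb{T}^-$ is the $\overline{v}$-summand. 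Hence $e_\eta\mathscr{L}_{V,\xi,\overline{v}}\circ{\rm res}_{\overline{v}}$ is the Perrin-Riou regulator for $V$ applied to the image of ${\rm res}_p(\mathcal{BF}^{\xi',\mathbf{h}_v,\eta}_{\rm cusp})$ in $\rH^1_{\rm Iw}(\bQ_p(\mu_{p^\infty}),\mathscr{F}^{\emptyset,-}_{\rm ord}\mathbb{D}^\dagger_{\rm rig}(V\otimes\mathbb{T}^\square_{\mathbf{h}_v}))$, paired against $v_\xi^*$. By \eqref{eq:dual-eta}, pairing against $v_\xi^*=\eta_{\xi',1}$ kills $\mathscr{F}^+_{\xi'}$ and factors through $\mathscr{F}^-_{\xi'}$. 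So the quantity factors through the $\mathscr{F}^{-,-}_{\xi',{\rm ord}}$-graded piece, with the triangulation taken for the root $\xi'$ when the regulator index is $\xi$.

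The second step proves vanishing for $\xi=\xi'$. Here the regulator indexed by $\xi$ factors through $\mathscr{F}^-_{\xi'}$ with $\xi'\neq\xi$, whereas \cite[Thm.~7.1.2]{LZ-Coleman} says the image of ${\rm res}_p(\mathcal{BF}^{\xi,\mathbf{h}_v})$ in the $\mathscr{F}^{-}_\xi$-quotient of $V$ lies in the $\mathscr{F}^{-,+}$-part. I would apply the same theorem with the roles of $f$ and $\mathbf{h}_v$ reversed: it places the class in $\mathscr{F}^{+,-}$ after projecting to $\mathbb{T}^-$. Its image in the rank-one graded piece $\mathscr{F}^{-}_{\xi}\otimes\mathscr{F}^-$ would then vanish. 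Combining this with $\mathscr{F}^{\pm}_\xi$ versus $\mathscr{F}^{\pm}_{\xi'}$ and the decomposition $\mathbb{D}_{\rm cris}=Lv_\alpha\oplus Lv_\beta$ gives $\widetilde{L}^{\rm (II)}_{\xi,\xi}=0$. This is the analogue of the vanishing of $L^{\rm geo}$ at the $(\xi,\xi)$-corner when the second family is ordinary in the ``wrong'' slope.

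The third step proves the antisymmetry for $\xi\neq\xi'$. I would express both sides through the Loeffler--Zerbes geometric $p$-adic $L$-function attached to the pair of Coleman families $(\mathbf{f}^\xi,\mathbf{f}^{\xi'})$, or rather use the relation \cite[Prop.~3.5]{BL-non-ord} writing $\mathcal{BF}^{\xi,\mathbf{h}_v}$ as the $(f^\xi)$-projection of a class symmetric in the $\alpha,\beta$-stabilisations. The key input would be the antisymmetry of the Perrin-Riou pairing $\langle\,,\,\rangle$ on $\wedge^2$ combined with the symmetry ${\rm BF}^{[f,g]}=-{\rm BF}^{[g,f]}$ of Rankin--Eisenstein classes \cite[Prop.~5.2.3]{KLZ1}, applied to the $\overline{v}$-graded piece where $\mathbf{h}_v$ contributes only the $\mathscr{F}^-$ line. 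Concretely, after Step~1 the quantity $\widetilde{L}^{\rm (II)}_{\xi,\xi'}$ becomes a bilinear expression $B(v_\xi^*,v_{\xi'}^*)$ in the two dual eigenvectors, obtained from a single alternating two-variable object. Step~2 shows $B$ vanishes on the diagonal, and alternating plus bilinear gives $B(x,y)=-B(y,x)$.

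The main obstacle is making this bilinear-form formalism rigorous. The classes $\mathcal{BF}^{\xi}$ are defined separately for each $\xi$ and are unbounded. I would first prove the identities after specialising at a Zariski-dense set of crystalline points, where everything reduces to finite-level classes and Frobenius-eigenvector decompositions. I would then conclude by density, using the growth bounds of $\mathcal{H}_L(\Gamma_K)$ elements with the multiplication by $T_v$ accounted for.
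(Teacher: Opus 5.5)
Your Step~1 attaches the wrong refinement to the regulator, and Step~2 does not recover from it. You conclude that $e_\eta\mathscr{L}_{V,\xi,\overline{v}}$ factors through $\mathscr{F}^-_{\xi'}$ with $\xi'\neq\xi$. But \cite[Thm.~7.1.2]{LZ-Coleman} only tells you that the image of ${\rm res}_{\overline{v}}(\mathcal{BF}^{\xi,\mathbf{h}_v,\eta}_{\rm cusp})$ lies in $\mathscr{F}^{+,-}_{\xi,{\rm ord}}$. The composite $\mathscr{F}^+_\xi\mathbb{D}^\dagger_{\rm rig}(V)\to\mathbb{D}^\dagger_{\rm rig}(V)\to\mathscr{F}^-_{\xi'}\mathbb{D}^\dagger_{\rm rig}(V)$ is injective, and an isomorphism after inverting $t$, so no vanishing follows. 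With your bookkeeping the argument actually yields $\widetilde{L}^{\rm (II)}_{\xi',\xi}(\mathbf{h}_v,f)^\eta=0$, the off-diagonal term. That is not the claim, and it is incompatible with the nonvanishing of the off-diagonal terms carrying the $L$-values of Theorem~\ref{thm:ERL-II}. Your sentence about ``$\mathscr{F}^\pm_\xi$ versus $\mathscr{F}^\pm_{\xi'}$ and $\mathbb{D}_{\rm cris}=Lv_\alpha\oplus Lv_\beta$'' does not bridge this.

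Check the factorization directly rather than through \eqref{eq:dual-eta}. By definition of the dual basis, $v_\xi^*$ annihilates $v_{\xi'}$. The line $Lv_{\xi'}$, the $\varphi$-eigenline with eigenvalue $1/\xi'=\xi p^{1-k}$, is exactly the crystalline part of the rank-one submodule $\mathscr{F}^+_\xi\mathbb{D}^\dagger_{\rm rig}(V)$ attached to $f^\xi$; in the ordinary case it is the non-unramified line of $V_f(k-1)$. So $e_\eta\mathscr{L}_{V,\xi,\overline{v}}$ factors through $\mathscr{F}^{-,-}_{\xi,{\rm ord}}$, with the same index as the class. The diagonal vanishing is then immediate from \cite[Thm.~7.1.2]{LZ-Coleman}, and this is the paper's argument.

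For the antisymmetry, the diagonal vanishing cannot be the engine. The bilinear bookkeeping you want is already given by Corollary~\ref{cor:dec-2var-signed}. But knowing that the $2\times 2$ matrix $(\widetilde{L}^{\rm (II)}_{\xi,\xi'}(\mathbf{h}_v,f)^\eta)_{\xi,\xi'}$ has zero diagonal imposes no relation between its off-diagonal entries. ``Alternating'' would require vanishing at every non-eigen combination of $v_\alpha^*,v_\beta^*$ as well, and there is no Beilinson--Flach class attached to such a combination to which \cite[Thm.~7.1.2]{LZ-Coleman} applies. The symmetry of Rankin--Eisenstein classes under swapping factors does not help either: it exchanges $f$ and $\mathbf{h}_v$, not the two $f$-slots (the dual vector in the regulator versus the $p$-stabilisation of the class).

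The missing input is a genuine comparison of $\mathcal{BF}^{\alpha,\mathbf{h}_v}$ and $\mathcal{BF}^{\beta,\mathbf{h}_v}$ at $\overline{v}$, which the paper imports from \cite[Prop.~3.14]{BL-non-ord}. In essence, in the range where $\mathbf{h}_v$ dominates, the reciprocity law expresses both off-diagonal terms through the same Rankin--Selberg values, whose Euler factors are symmetric in $\alpha,\beta$. The only asymmetry is the normalisation of $\omega_{f^\xi}$ against the dual eigenvector. That normalisation carries a factor $(\alpha-\beta)^{-1}$, so it changes sign under $\alpha\leftrightarrow\beta$ (cf. the proof of Theorem~\ref{thm:ERL-II}). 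Density of the interpolation points then gives the identity in $\mathcal{H}_L(\Gamma_K)$.
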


\begin{proof}
By Theorem~\ref{thm:BSTW} we have $G_K$-module isomorphisms 
$\mathbb{T}_{\mathbf{h}_v}^+\cong\Lambda_{\bZ_p}(\Gamma^v)^\iota$ and $\mathbb{T}_{\mathbf{h}_v}^-\cong c\cdot\mathbb{T}_{\mathbf{h}_v}^+$. Thus    
%the isomorphism
%\[
%\rH^1(\bQ,V\otimes\mathbb{T}_{\mathbf{h}_v}^{\square,[k-2]}\otimes\Lambda_{\scO}(\Gamma)^\iota)\otimes\mathcal{H}_L(\Gamma)\cong\rH^1_{\rm Iw}(\bQ_p(\mu_{p^\infty}),\mathbb{D}_{\rm rig}^\dagger(V\otimes\mathbb{T}_{\mathbf{h}_v}^{\square,[k-2]}))
%\]
%from 
by Shapiro's lemma, ${\rm res}_{\overline{v}}(\mathcal{BF}_{\rm cusp}^{\xi,\mathbf{h}_v,\eta})$ is identified with the image of ${\rm res}_p(\mathcal{BF}_{\rm cusp}^{\xi,\mathbf{h}_v,\eta})$ under the natural map
\begin{equation}\label{eq:proj-F-}
\rH^1_{\rm Iw}(\bQ_p(\mu_{p^\infty}),\mathbb{D}_{\rm rig}^\dagger(V\otimes\mathbb{T}_{\mathbf{h}_v}^{\square}))^\eta\longrightarrow\rH^1_{\rm Iw}(\bQ_p(\mu_{p^\infty}),\mathscr{F}_{\xi,{\rm ord}}^{\emptyset,-}\mathbb{D}_{\rm rig}^\dagger(V\otimes\mathbb{T}_{\mathbf{h}_v}^{\square}))^\eta.
\end{equation}

On the other hand, by \cite[Thm.~7.1.2]{LZ-Coleman} the image of ${\rm res}_p(\mathcal{BF}_{\rm cusp}^{\xi,\mathbf{h}_v,\eta})$ under \eqref{eq:proj-F-} is contained in the image of 
\begin{equation}\label{eq:image-F+}
\rH^1_{\rm Iw}(\bQ_p(\mu_{p^\infty}),\mathscr{F}_{\xi,{\rm ord}}^{+,-}\mathbb{D}_{\rm rig}^\dagger(V\otimes{\mathbb{T}}_{\mathbf{h}_v}^{\square}))^\eta\longrightarrow
\rH^1_{\rm Iw}(\bQ_p(\mu_{p^\infty}),\mathscr{F}_{\xi,{\rm ord}}^{\emptyset,-}\mathbb{D}_{\rm rig}^\dagger(V\otimes{\mathbb{T}}_{\mathbf{h}_v}^{\square}))^\eta
\end{equation}
Since under the above identifications the regulator map $e_\eta\mathscr{L}_{V,\xi,\overline{v}}$ factors through 
\[
\rH^1_{\rm Iw}(\bQ_p(\mu_{p^\infty}),\mathscr{F}_{\xi,{\rm ord}}^{\emptyset,-}\mathbb{D}_{\rm rig}^\dagger(V\otimes{\mathbb{T}}_{\mathbf{h}_v}^{\square}))^\eta
\longrightarrow\rH^1_{\rm Iw}(\bQ_p(\mu_{p^\infty}),\mathscr{F}_{\xi,{\rm ord}}^{-,-}\mathbb{D}_{\rm rig}^\dagger(V\otimes{\mathbb{T}}_{\mathbf{h}_v}^{\square}))^\eta,
\]
whose composition with \eqref{eq:image-F+} is zero, the proof of the first claim follows. The second claim is shown in  \cite[Prop.~3.14]{BL-non-ord}. 
\end{proof}

\begin{cor}\label{cor:BF-pm}
For every $i\in\{1,2\}$ and $\eta:\Delta\rightarrow\bZ_p^\times$, we have
\[
%e_\eta\mathscr{C}_{i,\overline{v}}({\rm res}_{\overline{v}}(\mathcal{BF}_{\rm cusp,int}^{i,\mathbf{h}_v,\eta}))=0.
\widetilde{L}^{(\rm II)}_{i,i}(\mathbf{h}_v,f)^\eta=0.
\]
%hence the inclusion $\mathcal{BF}^i_K\in\rH^1_{{\rm rel},i}(K,T_f^*\hat\otimes\Lambda_K)$. Moreover,
Moreover, writing  
%$Q_{\alpha,\beta}^{-1}{\rm Tw}_{k/2-1}M_{{\rm log}}=\bigl(\begin{smallmatrix}a&b\\c&b\end{smallmatrix}\bigr)$.
$Q_{\alpha,\beta}^{-1}M_{{\rm log}}=\bigl(\begin{smallmatrix}\mathfrak{A}&\mathfrak{B}\\\mathfrak{C}&\mathfrak{D}\end{smallmatrix}\bigr)$ we have
\begin{align*}
\widetilde{L}^{(\rm II)}_{1,2}(\mathbf{h}_v,f)^\eta\cdot(\mathfrak{AD-BC})
&=-
\widetilde{L}^{(\rm II)}_{2,1}(\mathbf{h}_v,f)^\eta\cdot(\mathfrak{AD-BC})\\
&=-\widetilde{L}^{(\rm II)}_{\beta,\alpha}(\mathbf{h}_v,f)^\eta\cdot\varpi^s.
%e_\eta\mathscr{C}_{1,\overline{v}}({\rm res}_{\overline{v}}(\mathcal{BF}^{2,\mathbf{h}_v,\eta}_{\rm cusp,int}))(\mathfrak{AD-BC})&=-e_\eta\mathscr{C}_{2,\overline{v}}({\rm res}_{\overline{v}}(\mathcal{BF}^{1,\mathbf{h}_v,\eta}_{\rm cusp,int}))(\mathfrak{AD-BC})\\
%&=\varpi^{s}\cdot e_\eta\mathscr{L}_{V,\beta,\overline{v}}({\rm res}_{\overline{v}}(\mathcal{BF}_{\rm cusp}^{\alpha,\mathbf{h}_v,\eta})).
\end{align*}
%where we write 
%%$Q_{\alpha,\beta}^{-1}{\rm Tw}_{k/2-1}M_{{\rm log}}=\bigl(\begin{smallmatrix}a&b\\c&b\end{smallmatrix}\bigr)$.
%$Q_{\alpha,\beta}^{-1}M_{{\rm log}}=\bigl(\begin{smallmatrix}\mathfrak{A}&\mathfrak{B}\\\mathfrak{C}&\mathfrak{D}\end{smallmatrix}\bigr)$.
%$j$ is the element in $\{1,2\}$ different from $i$.
\end{cor}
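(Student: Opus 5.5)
The plan is to read Corollary~\ref{cor:BF-pm} off from the second matrix identity of Corollary~\ref{cor:dec-2var-signed}, using Proposition~\ref{prop:BF-geo}. Write
\[
X=\bigl(\widetilde{L}^{\rm (II)}_{\xi,\xi'}(\mathbf{h}_v,f)^\eta\bigr)_{\xi,\xi'\in\{\alpha,\beta\}},\qquad
Y=\bigl(\widetilde{L}^{\rm (II)}_{i,j}(\mathbf{h}_v,f)^\eta\bigr)_{i,j\in\{1,2\}},\qquad
J=\bigl(\begin{smallmatrix}0&1\\-1&0\end{smallmatrix}\bigr).
\]
Proposition~\ref{prop:BF-geo} says the diagonal of $X$ vanishes and $X$ is antisymmetric. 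Hence $X=x\cdot J$ with $x=\widetilde{L}^{\rm (II)}_{\alpha,\beta}(\mathbf{h}_v,f)^\eta=-\widetilde{L}^{\rm (II)}_{\beta,\alpha}(\mathbf{h}_v,f)^\eta$. Corollary~\ref{cor:dec-2var-signed} then reads $\varpi^s x J=P\,Y\,P'^{\rm tr}$, where $P=Q_{\alpha,\beta}^{-1}M_{{\rm log},\overline{v}}$ and $P'=Q_{\alpha,\beta}^{-1}M_{{\rm log},v}$.

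The key algebraic input is the identity $A J A^{\rm tr}=\det(A)\,J$, valid for any $2\times 2$ matrix $A$ over a commutative ring. To use it, I first argue that $P$ and $P'$ coincide as matrices over $\mathcal{H}_L(\Gamma_K)$ once everything is viewed in $\Gamma_K$. By Remark~\ref{rem:Mlog-Gamma}, $M_{\rm log}$ has entries in $\mathcal{H}_L(\Gamma)$, so it depends only on the cyclotomic variable. In the decompositions \eqref{eq:factor-L-2var-w} and \eqref{eq:BF-cusp-dec}, the matrix enters only through the cyclotomic direction, namely through its action on the $\Lambda_{\scO}(\Gamma)^\iota$-factor of the Beilinson--Flach classes and of the Iwasawa cohomology. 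Hence both factors are the image of one matrix $Q_{\alpha,\beta}^{-1}M_{\rm log}=\bigl(\begin{smallmatrix}\mathfrak{A}&\mathfrak{B}\\\mathfrak{C}&\mathfrak{D}\end{smallmatrix}\bigr)$, as the statement of the corollary implicitly asserts.

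Granting this, I pass to the fraction field of (the $\eta$-part of) $\mathcal{H}_L(\Gamma_K)$, which is an integral domain. There $\det P=\mathfrak{AD-BC}$ is nonzero, since $\det M_{\rm log}\neq 0$ by \cite[Cor.~3.2]{LLZ-ANT} and $\det Q_{\alpha,\beta}\neq 0$ by $p$-regularity. Then
\[
Y=\varpi^s x\,P^{-1}J(P^{\rm tr})^{-1}=\frac{\varpi^s x}{\mathfrak{AD-BC}}\,J,
\]
by applying the identity to $A=P^{-1}$. Reading off entries gives $\widetilde{L}^{\rm (II)}_{1,1}(\mathbf{h}_v,f)^\eta=\widetilde{L}^{\rm (II)}_{2,2}(\mathbf{h}_v,f)^\eta=0$. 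It also gives $\widetilde{L}^{\rm (II)}_{1,2}(\mathbf{h}_v,f)^\eta\cdot(\mathfrak{AD-BC})=\varpi^s x=-\varpi^s\,\widetilde{L}^{\rm (II)}_{\beta,\alpha}(\mathbf{h}_v,f)^\eta$ and $\widetilde{L}^{\rm (II)}_{2,1}(\mathbf{h}_v,f)^\eta\cdot(\mathfrak{AD-BC})=-\varpi^s x$. These identities involve no denominators, so they hold in $\mathcal{H}_L(\Gamma_K)$ itself, which is exactly the claim.

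The main obstacle is the identification of the two transformation matrices $M_{{\rm log},\overline{v}}$ and $M_{{\rm log},v}$. If they genuinely differed, $P J P'^{\rm tr}$ would not be a scalar multiple of $J$ and the diagonal vanishing of $Y$ would fail. I would therefore first recheck, through the Shapiro isomorphism \eqref{eq:Sh-omega} and the construction in Theorem~\ref{thm:BF-factor} (via \cite[Thm.~3.7]{BL-non-ord}), that both factors act through the same cyclotomic variable of $\Gamma_K$. Only then would I run the determinant computation above.
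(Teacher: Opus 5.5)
Your argument is correct and matches the paper's proof. The paper multiplies the second identity of Corollary~\ref{cor:dec-2var-signed} on the left by $\bigl(\begin{smallmatrix}\mathfrak{D}&-\mathfrak{B}\\-\mathfrak{C}&\mathfrak{A}\end{smallmatrix}\bigr)$ and on the right by its transpose, which is the same computation as your identity $AJA^{\rm tr}=\det(A)\,J$. It then uses Proposition~\ref{prop:BF-geo} and cancels one factor of $\mathfrak{AD-BC}$, tacitly using the same $\mathfrak{A},\mathfrak{B},\mathfrak{C},\mathfrak{D}$ on both sides, just as you do.

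One correction to how you justify that identification. It is not that $M_{{\rm log},v}$ and $M_{{\rm log},\overline{v}}$ coincide in $\mathcal{H}_L(\Gamma_K)$: they are $M_{\rm log}$ in the inertia directions at $v$ and at $\overline{v}$, which are distinct subgroups of $\Gamma_K$ interchanged by complex conjugation. The correct reason is that the transposed factor comes from \eqref{eq:BF-cusp-dec}, so it acts on the $\Lambda_{\scO}(\Gamma)^\iota$-variable. Under \eqref{eq:Sh-omega} that variable is the inertia direction at $\overline{v}$, because $K_\infty^v/K$ is unramified at $\overline{v}$. Hence both factors in the $\overline{v}$-identity are really $Q_{\alpha,\beta}^{-1}M_{{\rm log},\overline{v}}$, and your proposed recheck via Shapiro's lemma will confirm exactly this.
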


\begin{proof}
%This is \cite[Cor.~3.15]{BL-non-ord}: 
%and follows immediately from  Corollary~\ref{cor:BF-pm} and Proposition~\ref{prop:BF-geo}. Indeed, 
%writing $Q_{\alpha,\beta}^{-1}{\rm Tw}_{k/2-1}M_{{\rm log}}=\bigl(\begin{smallmatrix}a&b\\c&b\end{smallmatrix}\bigr)$ and 
Multiplying the second equality of Corollary~\ref{cor:dec-2var-signed} by $\bigl(\begin{smallmatrix}\mathfrak{D}&-\mathfrak{B}\\-\mathfrak{C}&\mathfrak{A}\end{smallmatrix}\bigr)$ on the left and $\bigl(\begin{smallmatrix}\mathfrak{D}&-\mathfrak{C}\\-\mathfrak{B}&\mathfrak{A}\end{smallmatrix}\bigr)$ on the right, and using Proposition~\ref{prop:BF-geo} yields
\begin{align*}
(\mathfrak{AD-BC})^2&\biggl(\begin{array}{cc}\widetilde{L}^{(\rm II)}_{1,1}(\mathbf{h}_v,f)^\eta&\widetilde{L}^{(\rm II)}_{1,2}(\mathbf{h}_v,f)^\eta\\
\widetilde{L}^{(\rm II)}_{2,1}(\mathbf{h}_v,f)^\eta&\widetilde{L}^{(\rm II)}_{2,2}(\mathbf{h}_v,f)^\eta
\end{array}\biggr)\\
&\quad=\biggl(\begin{array}{cc}0&-(\mathfrak{AD-BC})\cdot\widetilde{L}^{(\rm II)}_{\beta,\alpha}(\mathbf{h}_v,f)^\eta\cdot\varpi^s\\
(\mathfrak{AD-BC})\cdot\widetilde{L}^{(\rm II)}_{\beta,\alpha}(\mathbf{h}_v,f)^\eta\cdot\varpi^s&0
\end{array}\biggr),
\end{align*}
whence the result (cf. \cite[Cor.~3.15]{BL-non-ord}).
\end{proof}

%\begin{defn}%[$\overline{v}$-Rankin--Eisenstein $p$-adic $L$-function]

\begin{cor}\label{cor:div-Tv}
Suppose $l\in\{1,2\}$ and $\eta:\Delta\rightarrow\bZ_p^\times$ are such that \eqref{eq:Lp-nonzero} holds. 
Then $\widetilde{L}_{1,2}^{\rm (II)}(\mathbf{h}_v,f)^\eta=-\widetilde{L}_{2,1}^{\rm (II)}(\mathbf{h}_v,f)^\eta$ are divisible by $T_v$.
\end{cor}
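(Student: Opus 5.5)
The equality $\widetilde{L}_{1,2}^{\rm (II)}(\mathbf{h}_v,f)^\eta=-\widetilde{L}_{2,1}^{\rm (II)}(\mathbf{h}_v,f)^\eta$ is already part of Corollary~\ref{cor:BF-pm}, so only the divisibility by $T_v$ needs proof. The plan is to exploit the fact that the classes $\mathcal{BF}_{\rm cusp,int}^{j,\mathbf{h}_v,\eta}$ are, by construction \eqref{eq:BF-cusp}, $T_v$ times the classes $\mathcal{BF}^{j,\mathbf{h}_v,\eta}_{\rm int}$ of \eqref{eq:varpi-s}. By Theorem~\ref{thm:BSTW}, under the hypothesis \eqref{eq:Lp-nonzero}, the class $\mathcal{BF}_{\rm int}^{l,\mathbf{h}_v,\eta}$ itself already lies in the cuspidal cohomology $\rH^1(\bZ[1/p],T(\eta)\hat\otimes\mathbb{T}^{\square}_{\mathbf{h}_v}\hat\otimes\Lambda_{\bZ_p}(\Gamma)^\iota)$. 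Via \eqref{eq:Sh-omega}, it is therefore an honest class in $\rH^1(\cO_K[1/p],T(\eta)\hat\otimes\Lambda_{\bZ_p}(\Gamma_K)^\iota)$, to which the integral Coleman map $e_\eta\widetilde{\mathscr{C}}_{i,\overline{v}}\circ{\rm res}_{\overline{v}}$ applies with values in $\Lambda_\scO(\Gamma_K)$.

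The key step is to compare, for $j=l$, the two classes $T_v\cdot\mathcal{BF}_{\rm int}^{l,\mathbf{h}_v,\eta}$ (viewed in the cuspidal space) and $\mathcal{BF}^{l,\mathbf{h}_v,\eta}_{\rm cusp,int}$. Both map to the same element of the $\widetilde{\mathbb{T}}$-cohomology, and the natural map from cuspidal to non-cuspidal cohomology is injective. Hence they coincide, and
\[
\widetilde{L}^{\rm (II)}_{i,l}(\mathbf{h}_v,f)^\eta=T_v\cdot e_\eta\widetilde{\mathscr{C}}_{i,\overline{v}}\bigl({\rm res}_{\overline{v}}(\mathcal{BF}_{\rm int}^{l,\mathbf{h}_v,\eta})\bigr).
\]
The second factor lies in $\Lambda_{\scO}(\Gamma_K)$, up to the Yager basis $\Omega_{\bQ_p}$, which is a unit-type generator and harmless. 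Taking $i\neq l$ gives divisibility by $T_v$ of $\widetilde{L}^{\rm (II)}_{i,l}$, which is one of $\widetilde{L}^{\rm (II)}_{1,2}$ or $\widetilde{L}^{\rm (II)}_{2,1}$. The antisymmetry from Corollary~\ref{cor:BF-pm} then transfers the divisibility to the other one.

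The main obstacle is the identification $\mathcal{BF}^{l}_{\rm cusp,int}=T_v\cdot\mathcal{BF}^{l}_{\rm int}$ at the level of the signed classes. The signed classes are only defined via the splitting in Theorem~\ref{thm:BF-factor} and \eqref{eq:BF-cusp-dec}, so one must check that this splitting commutes with multiplication by $T_v$ and with the cuspidal-to-noncuspidal injection. Since $M_{\rm log}$ has entries in $\mathcal{H}_L(\Gamma)$ and $\det(Q_{\alpha,\beta}^{-1}M_{\rm log})\neq0$, the signed classes are uniquely determined by the $\xi$-classes after inverting this determinant. Both sides satisfy the same relation with $\mathcal{BF}^{\xi}_{\rm cusp}=T_v\mathcal{BF}^{\xi}$, so they agree after tensoring with $\mathcal{H}_L$. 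Torsion-freeness of the relevant Iwasawa cohomology then gives equality integrally. A further point to check is that $T_v$ is a prime element of $\Lambda_\scO(\Gamma_K)$ not interacting with the $\Omega_{\bQ_p}$ factor, so that divisibility in $\Omega_{\bQ_p}\Lambda_\scO(\Gamma_K)$ is meaningful.
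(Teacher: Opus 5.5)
Your proposal is correct and follows essentially the same route as the paper. Theorem~\ref{thm:BSTW} lets you view $\mathcal{BF}^{l,\mathbf{h}_v,\eta}_{\rm int}$ as a class over $K$, the identity $T_v\cdot e_\eta\widetilde{\mathscr{C}}_{l',\overline{v}}({\rm res}_{\overline{v}}(\mathcal{BF}^{l,\mathbf{h}_v,\eta}_{\rm int}))=\widetilde{L}^{\rm (II)}_{l',l}(\mathbf{h}_v,f)^\eta$ gives the divisibility, and Corollary~\ref{cor:BF-pm} transfers it to the other entry; your extra check that $\mathcal{BF}^{l,\mathbf{h}_v,\eta}_{\rm cusp,int}=T_v\cdot\mathcal{BF}^{l,\mathbf{h}_v,\eta}_{\rm int}$, via injectivity of the cuspidal-to-noncuspidal map, is what the paper takes as holding by definition from \eqref{eq:BF-cusp} and \eqref{eq:BF-cusp-dec}.
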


\begin{proof}
Note that the stated equality follows from Corollary~\ref{cor:BF-pm}. By Theorem~\ref{thm:BSTW}, under the stated assumption the class $\mathcal{BF}_{\rm int}^{l,\mathbf{h}_v,\eta}$ is naturally contained in  $\rH^1(\cO_K[1/p],T(\eta)\hat\otimes\Lambda_{\bZ_p}(\Gamma_K)^\iota)$, and so $e_\eta\widetilde{\mathscr{C}}_{l',\overline{v}}$, for any $l'\in\{1,2\}$, may be evaluated at ${\rm res}_{\overline{v}}(\mathcal{BF}_{\rm int}^{l,\mathbf{h}_v,\eta})$. Since by definition (see \eqref{eq:BF-cusp} and \eqref{eq:BF-cusp-dec}) we have
\[
T_v\cdot e_\eta\widetilde{\mathscr{C}}_{l'
,\overline{v}}({\rm res}_{\overline{v}}(\mathcal{BF}_{\rm int}^{l,\mathbf{h}_v,\eta}))=e_\eta\widetilde{\mathscr{C}}_{l',\overline{v}}({\rm res}_{\overline{v}}(\mathcal{BF}_{{\rm cusp},{\rm int}}^{l,\mathbf{h}_v,\eta}))=\widetilde{L}_{l',l}^{\rm (II)}(\mathbf{h}_v,f)^\eta,
\]
together with Corollary~\ref{cor:BF-pm} this yields the result.
\end{proof}

%This motivates the following.

\begin{defn}\label{def:L-12}
In light of Corollary~\ref{cor:div-Tv}, for every character $\eta:\Delta\rightarrow\bZ_p^\times$ we put
\[
L_{1,2}^{\rm (II)}(\mathbf{h}_v,f)^\eta:=\frac{\widetilde{L}_{1,2}^{\rm (II)}(\mathbf{h}_v,f)^\eta}{T_v}\in\Lambda_{\scO}(\Gamma_K).
\]
%and note that this is an element in $\Lambda_{\scO}(\Gamma_K)\otimes_{\scO}L$.
\end{defn}

%For $i,j\in\{1,2\}$ with $i\neq j$, put
%\[
%\widetilde{L}_{i,j}^{}(\mathbf{h}_v,f)^\eta:=\varpi^{-s}\cdot\frac{[\varphi(\omega_f),\omega_f]_{\rm dR}}{\delta_{k-1}}\cdot e_\eta\widetilde{\mathscr{C}}_{i,\overline{v}}({\rm res}_{\overline{v}}(\mathcal{BF}_{\rm cusp, int}^{j,\mathbf{h}_v,\eta})).
%\]
%\end{defn}

\begin{rem}\label{rem:L12-L}
As shown in \cite[Cor.~3.2]{LLZ-ANT}, the determinant of $M_{\rm log}$ is given up to a $p$-adic unit by 
\[
{\rm det}(M_{\rm log})\,\sim_p\,\frac{{\rm log}_{p,k-1}}{\delta_{k-1}}\sim_p\,\frac{p^{k-1}\prod_{j=1}^{k-1}\ell_{-j}}{\delta_{k-1}},
\]
where $\ell_{-j}=\frac{{\rm log}_p(\gamma)}{{\rm log}_p\varepsilon(\gamma)}+j$ for $\gamma\in\Gamma$ any topological generator. 
%
%or equivalently, $p^{k-1}\bigl(\prod_{j=1}^{k-1}\ell_{-j}\bigr)/\delta_{k-1}$ up to a unit. 
Hence  Corollary~\ref{cor:BF-pm} shows that
\[
\widetilde{L}_{i,j}^{\rm (II)}(\mathbf{h}_v,f)^\eta\,\sim_p\,\frac{e_\eta\mathscr{L}_{V,\beta,\overline{v}}({\rm res}_{\overline{v}}(\mathcal{BF}_{\rm cusp}^{\alpha,\mathbf{h}_v,\eta}))}{(\alpha-\beta)\cdot\prod_{j=1}^{k-1}\ell_{-j}}\cdot\delta_{k-1}\cdot\varpi^s.
\]
%up to a unit.
\end{rem}

To describe the interpolation property of $L_{1,2}^{}(\mathbf{h}_v,f)^\eta$, put
\[
\Xi^{\rm (II)}:=\biggl\{(\chi_1,\chi_2):\Gamma^v\hat\otimes\Gamma\rightarrow\overline{\bQ}_p^\times\;\biggl|\;\begin{aligned}\chi_1(\gamma_v^{h_p})&=\langle\varepsilon(\gamma)\rangle^m,\;\,m\geq 1,\;\,m\equiv 0\;({\rm mod}\;{p-1})\\\chi_2&=\psi_\zeta\langle\varepsilon\rangle^n,\;\,0\leq n< m,\;\,\zeta\in\mu_{p^\infty}\end{aligned}\biggr\}.
\]
%where $\gamma_v\in\Gamma^v$ and $\gamma\in\Gamma$ are topological generators. 

\begin{thm}[Explicit Reciprocity Law II]\label{thm:ERL-II}
%The measures
%\[
%L_p^{i,j}(\mathbf{h}_v,f):={\rm Col}_{i,\overline{v}}({\rm res}_{\overline{v}}(\mathcal{BF}^{j,\mathbf{h}_v}))\in\Lambda(\Gamma_K)
%\]
%satisfies the interpolation property: 
For all $\chi=(\chi_1,\chi_2)\in\Xi^{\rm (II)}$ with $\chi_2=\psi_\zeta\langle\varepsilon\rangle^n$ for a primitive $p^t$-th root of unity $\zeta$ for some $t\geq 0$, we have
\begin{align*}
\phi_\chi\bigl(
L_{1,2}^{\rm (II)}(\mathbf{h}_v,f)^\eta\bigr)&=\frac{\mathcal{E}_{p,\eta}(\mathbf{h}_v,f,\chi)}{\mathcal{E}_p(\mathbf{h}_v)\cdot\mathcal{E}_p^*(\mathbf{h}_v)}\cdot\frac{n!(n+1-k)!}{\pi^{2n+3-k}\cdot(-i)^{m+1-k}\cdot 2^{2n+m+3-k}}\cdot\phi_\chi\biggl(\frac{c\cdot\omega_{\mathbf{h}_v}}{\eta_{\mathbf{h}_v}}\biggr)\cdot\phi_\chi(\delta_{k-1})\\
&\quad\times\frac{L(\mathbf{h}_{v,\chi_1}^\circ,f\otimes\eta\omega^n\psi_\zeta^{-1},n+1)}{\langle\mathbf{h}_{v,\chi_1}^\circ,\mathbf{h}_{v,\chi_1}^\circ\rangle_{D_K}}\cdot\frac{\varpi^s}{[\varphi(\omega_f),\omega_f]_{\rm dR}},
\end{align*}
where 
\begin{itemize}
\item{}
$\mathcal{E}_{p,\eta}(\mathbf{h}_v,f,\chi)=\begin{cases}\Bigl(1-\frac{p^n}{\chi_1(\overline{v})\alpha}\Bigr)\Bigl(1-\frac{p^n}{\chi_1(\overline{v})\beta}\Bigr)\Bigl(1-\frac{p^{m}\alpha}{p^{n+1}\chi_1(\overline{v})}\Bigr)\Bigl(1-\frac{p^{m}\beta}{p^{n+1}\chi_1(\overline{v})}\Bigr),%&\textrm{if $\eta=\omega^{-n}$ ant $t=0$},
\\[0.6em]
\Bigl(\frac{p^{t+1}}{\mathfrak{g}(\eta\omega^n\psi_\zeta^{-1})}\Bigr)^2\Bigl(\frac{p^{2n+1-k}}{\chi_1(\overline{v})^2}\Bigr)^{t+1}, 
%&\textrm{else},
\end{cases}$
\end{itemize}
according to whether or not $\eta=\omega^{-n}$ and $t=0$,
\begin{itemize}
\item
$\mathcal{E}_p(\mathbf{h}_v)=\Bigl(1-\frac{p^{m-1}}{\chi_1(\overline{v})^2}\Bigr)$,
\item
$\mathcal{E}_p^*(\mathbf{h}_v)=\Bigl(1-\frac{p^m}{\chi_1(\overline{v})^2}\Bigr)$,
\end{itemize}
and $\mathbf{h}_{v,\chi_1}^\circ$ is the newform of level $D_K$ and weight $m+1$ with ordinary $p$-stabilization $\chi_1(\mathbf{h}_v)$. 
%Moreover,
%\[
%L_{2,1}^{\mathbf{h}_v}(\mathbf{h}_v,f)=-L_{1,2}^{\mathbf{h}_v}(\mathbf{h}_v,f)
%\]
%and $L_{1,1}^{\mathbf{h}_v}(\mathbf{h}_v,f)=L_{2,2}^{\mathbf{h}_v}(\mathbf{h}_v,f)=0$.
\end{thm}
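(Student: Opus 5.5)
The plan is to reduce the statement, via Corollary~\ref{cor:BF-pm}, to an interpolation formula for the unbounded element $\widetilde{L}^{\rm (II)}_{\beta,\alpha}(\mathbf{h}_v,f)^\eta=e_\eta\mathscr{L}_{V,\beta,\overline{v}}({\rm res}_{\overline{v}}(\mathcal{BF}^{\alpha,\mathbf{h}_v,\eta}_{\rm cusp}))$, and then read that formula off the geometric $p$-adic $L$-function of \cite{LZ-Coleman} with the roles of the two families swapped. Concretely, Corollary~\ref{cor:BF-pm} and Definition~\ref{def:L-12} give
\[
T_v\cdot(\mathfrak{AD-BC})\cdot L^{\rm (II)}_{1,2}(\mathbf{h}_v,f)^\eta=-\varpi^s\cdot\widetilde{L}^{\rm (II)}_{\beta,\alpha}(\mathbf{h}_v,f)^\eta,
\]
and by Remark~\ref{rem:L12-L} the factor $\mathfrak{AD-BC}=\det(Q_{\alpha,\beta}^{-1}M_{\rm log})$ equals, up to a unit, $(\alpha-\beta)^{-1}\cdot\delta_{k-1}^{-1}\cdot p^{k-1}\prod_{j=1}^{k-1}\ell_{-j}$. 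At $\chi=(\chi_1,\chi_2)\in\Xi^{\rm (II)}$ the cyclotomic specialisation is $\langle\varepsilon\rangle^n\psi_\zeta$ with $0\le n<m$. So one must first check that $\phi_\chi(\prod_j\ell_{-j})$ and $\phi_\chi(T_v)$ are nonzero, or handle the points where they vanish by Zariski density, since both sides are Iwasawa functions. This accounts for the factor $\phi_\chi(\delta_{k-1})$ in the statement. The constant $[\varphi(\omega_f),\omega_f]_{\rm dR}$ should come out of comparing $v_\beta^*$ with the Kato differential $\eta_{\alpha,1}$ via \eqref{eq:dual-eta}, together with the $(\alpha-\beta)$ normalisation in $Q_{\alpha,\beta}$.

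The main step is the interpolation of $\widetilde{L}^{\rm (II)}_{\beta,\alpha}$ at $\chi$. Under Shapiro's lemma and Theorem~\ref{thm:Ind}, restriction to $\overline{v}$ corresponds to projecting to $\mathbb{T}^-_{\mathbf{h}_v}$, which is the conjugate $c\cdot\mathbb{T}^+_{\mathbf{h}_v}$, exactly as in the proof of Proposition~\ref{prop:BF-geo}. By \cite[Thm.~7.1.2]{LZ-Coleman}, the image of ${\rm res}_p(\mathcal{BF}^{\alpha})$ in the $\mathscr{F}^{\emptyset,-}$-quotient lies in the $\mathscr{F}^{+,-}$-piece. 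Projecting to $\mathscr{F}^{+,-}$ and pairing with $\omega$ for $f$ and $\eta_{\mathbf{h}_v}$ (or, after identifying $\mathbb{T}^-$ via $c$, with $c\cdot\omega_{\mathbf{h}_v}$) gives the geometric $p$-adic $L$-function $L_p^{\rm geo}(\mathbf{h}_v,\mathbf{f}^\alpha,1+\mathbf{j})$ in which $\mathbf{h}_v$ is now the ``dominant'' family. The regulator $\mathscr{L}_{V,\beta}$ on the $\mathscr{F}^+_\alpha$-line realises the Perrin-Riou logarithm.

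I would then follow the proof of Theorem~\ref{thm:ERL-I} with the roles swapped. The equality $L_p^{\rm geo}=L_p^{\rm Urb}$ from \cite[Thm.~7.1.5]{LZ-Coleman} holds in the region where $\mathbf{h}_v$ has weight $m+1>$ the cyclotomic variable $n+1\ge$ the weight of $f$. Urban's interpolation there gives $L(\mathbf{h}^\circ_{v,\chi_1},f\otimes\eta\omega^n\psi_\zeta^{-1},n+1)/\langle\mathbf{h}^\circ,\mathbf{h}^\circ\rangle_{D_K}$ times the Euler factor $\mathcal{E}_{p,\eta}$. The factor $\mathcal{E}_p\mathcal{E}_p^*$ comes from the specialisation of $\eta_{\mathbf{h}_v}$ in \cite[Cor.~6.4.3]{LZ-Coleman}. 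The ratio $c\,\omega_{\mathbf{h}_v}/\eta_{\mathbf{h}_v}$ arises because the trivialisation of $\mathbb{T}^-$ is given by $c\cdot\omega_{\mathbf{h}_v}$ rather than by $\eta_{\mathbf{h}_v}$. Finally, the Gamma factors $n!(n+1-k)!$ come from the interpolation of $\mathscr{L}_V$ in \cite[Thm.~4.15]{LZ2} (the $n!$) together with Urban's archimedean factor.

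The hardest part will be the bookkeeping of normalisations: twists by $m_{k-2}$, the cuspidal modification $T_v$, the $\mathbf{c}^{-1}$ normalisation, and signs and powers of $2\pi i$. A second obstacle is extending beyond $\zeta=1$, which I would handle exactly as in Theorem~\ref{thm:ERL-I} via the Gauss-sum branch of the Perrin-Riou interpolation.
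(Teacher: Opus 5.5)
Your overall route is the paper's. You reduce via Corollary~\ref{cor:BF-pm} and Remark~\ref{rem:L12-L} to $\widetilde{L}^{\rm (II)}_{\beta,\alpha}(\mathbf{h}_v,f)^\eta$. You then identify this, after removing $T_v$, with the Loeffler--Zerbes geometric $p$-adic $L$-function in which $\mathbf{h}_v$ is the dominant family. Finally you read off the interpolation from $L_p^{\rm geo}=L_p^{\rm Urb}$ \cite[Thm.~7.1.5]{LZ-Coleman}. The problem is the constant-tracking, which you flag as the hard part: several of your attributions are wrong, and followed literally they would not reproduce the stated formula.

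\emph{The determinant.} Since $\det Q_{\alpha,\beta}^{-1}=(\alpha-\beta)/p^{k-1}$, one gets $\mathfrak{AD-BC}\sim(\alpha-\beta)\prod_{j=1}^{k-1}\ell_{-j}/\delta_{k-1}$. You wrote $(\alpha-\beta)^{-1}p^{k-1}$ in place of $(\alpha-\beta)$, and neither $\alpha-\beta$ nor $p^{k-1}$ is a unit in the nonordinary case.

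\emph{The constant $[\varphi(\omega_f),\omega_f]_{\rm dR}$.} It cannot come from \eqref{eq:dual-eta}, which says $\eta_{\alpha,1}=v_\beta^*$ exactly, with no constant. Kato's $\eta_\xi$ enters Theorem~\ref{thm:ERL-I} only because there $f$ is dominant and its side is paired with $\eta_{\mathbf{f}^\xi}$. In the $\mathbf{h}_v$-dominant law, the $f$-side of the $\mathscr{F}^{+,-}$-projection is paired with $\omega_{\mathbf{f}}$ instead. The paper, working with $L_p^{\rm geo}(\mathbf{h}_v,\mathbf{f}^\beta,1+\mathbf{j})$, uses $\omega_{f^\beta}=\frac{[\varphi(\omega_f),\omega_f]_{\rm dR}}{\alpha-\beta}\,v_\beta^*$ up to a unit. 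The $\alpha-\beta$ there cancels the $(\alpha-\beta)^{-1}$ of Remark~\ref{rem:L12-L}, leaving $\varpi^s/[\varphi(\omega_f),\omega_f]_{\rm dR}$.

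\emph{The Euler factors $\mathcal{E}_p(\mathbf{h}_v)\mathcal{E}_p^*(\mathbf{h}_v)$.} These are already part of Urban's interpolation formula for the dominant family, just as $\mathcal{E}_{p,\xi}(f)\mathcal{E}^*_{p,\xi}(f)$ were in the proof of Theorem~\ref{thm:ERL-I}. Moreover, $\eta_{\mathbf{h}_v}$ is never specialised: it stays $\Lambda$-adic inside $c\cdot\omega_{\mathbf{h}_v}/\eta_{\mathbf{h}_v}$, which is why $\phi_\chi(c\cdot\omega_{\mathbf{h}_v}/\eta_{\mathbf{h}_v})$ appears in the statement. If you also specialised $\eta_{\mathbf{h}_v}$ via \cite[Cor.~6.4.3]{LZ-Coleman}, these Euler factors would cancel rather than appear.

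\emph{The Gamma factor.} Likewise, the full factor $n!(n+1-k)!$ is already in the interpolation of $L_p^{\rm geo}$. Comparing $\mathscr{L}_V$ with the Loeffler--Zerbes regulator only has to absorb $\prod_j\ell_{-j}$; it does not supply an extra $n!$.

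\emph{The factor $T_v$.} No density argument is needed here: $\phi_\chi(T_v)=\chi_1(\gamma_v)-1\neq 0$ because $m\geq 1$.
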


\begin{proof}
With the same notations as in the proof of Theorem~\ref{thm:ERL-I}, but reversing the roles of $\mathbf{f}^\beta$ and $\mathbf{h}_v$, let $L_p^{\rm Urb}(\mathbf{h}_v,\mathbf{f}^\beta,1+\mathbf{j})$ be Urban's $3$-variable $p$-adic Rankin $L$-function, and put
\[
L_p^{\rm geo}(\mathbf{h}_v,\mathbf{f}^\beta,1+\mathbf{j}):=(-1)^{1+\mathbf{j}}\lambda_N(\mathbf{h}_v)\cdot\left\langle\mathcal{L}(\mathcal{BF}_{1,1}^{[\mathbf{f}^\beta,\mathbf{h}_v]}),\omega_{\mathbf{f}^\xi}\otimes\eta_{\mathbf{h}_v}\right\rangle
\]
for the geometric $p$-adic $L$-function of Loeffler--Zerbes \cite{LZ-Coleman}; so Theorem~7.1.5 in \emph{op.\,cit.} gives
\begin{equation}\label{eq:LZ-ERL2}
L_p^{\rm geo}(\mathbf{h}_v,\mathbf{f}^\beta,1+\mathbf{j})=L_p^{\rm Urb}(\mathbf{h}_v,\mathbf{f}^\beta,\mathbf{h}_v,1+\mathbf{j}).\nonumber
\end{equation}
Letting $L_p^{\rm geo}(\mathbf{h}_v,f^\beta,1+\mathbf{j})$ be the image of $L_p^{\rm geo}(\mathbf{h}_v,\mathbf{f}^\beta,1+\mathbf{j})$ under the specialisation map at $f^\beta$, by %\eqref{eq:LZ-ERL2} and
the interpolation property of $L_p^{\rm Urb}(\mathbf{h}_v,\mathbf{f}^\beta,\mathbf{h}_v,1+\mathbf{j})$,   it follows that for all characters $\eta:\Delta\rightarrow\bZ_p^\times$ and $\chi=(\chi_1,\chi_2)\in\Xi^{\rm (II)}$ %with $\chi_2(\gamma_{\rm cyc})=\zeta\epsilon(\gamma_{\rm cyc})^n$ for a primitive $p^t$-th root of unity $\zeta$
as in the statement, we have
\begin{equation}\label{eq:interp-geo}
\begin{aligned}
\phi_{\eta\chi}\bigl(
L_{p}^{\rm geo}(\mathbf{h}_v,f^\beta,1+\mathbf{j})\bigr)&=\frac{\mathcal{E}_{p,\eta}(\mathbf{h}_v,f,\chi)}{\mathcal{E}_p(\mathbf{h}_v)\cdot\mathcal{E}_p^*(\mathbf{h}_v)}\cdot\frac{n!(n+1-k)!}{\pi^{2n+3-k}\cdot(-i)^{m+1-k}\cdot 2^{2n+m+3-k}}\\
&\quad\times\frac{L(\mathbf{h}_{v,\chi_1}^\circ,f\otimes\eta\omega^n\psi_\zeta^{-1},n+1)}{\langle\mathbf{h}_{v,\chi_1}^\circ,\mathbf{h}_{v,\chi_1}^\circ\rangle_{D_K}}.
\end{aligned}
\end{equation}
%where 
%\begin{itemize}
%\item{}
%$\mathcal{E}_p(\mathbf{h}_v,f,\chi)=\begin{cases}\Bigl(1-\frac{p^n}{\chi_1(\overline{v})\alpha}\Bigr)\Bigl(1-\frac{p^n}{\chi_1(\overline{v})\beta}\Bigr)\Bigl(1-\frac{p^{m}\alpha}{p^{n+1}\chi_1(\overline{v})}\Bigr)\Bigl(1-\frac{p^{m}\beta}{p^{n+1}\chi_1(\overline{v})}\Bigr),\\[0.6em]
%\Bigl(\frac{p^{t+1}}{\mathfrak{g}(\psi_\zeta^{-1}\omega^n)}\Bigr)^2\Bigl(\frac{p^{2n+1-k}}{\chi_1(\overline{v})^2}\Bigr)^{t+1},
%\end{cases}$
%\end{itemize}
%according to whether $\zeta=1$ and $n\equiv 0\;({\rm mod}\,p-1)$ or otherwise,
%\begin{itemize}
%\item
%$\mathcal{E}_p(\mathbf{h}_v)=\Bigl(1-\frac{p^{m-1}}{\chi_1(\overline{v})^2}\Bigr)$,
%\item
%$\mathcal{E}_p^*(\mathbf{h}_v)=\Bigl(1-\frac{p^m}%{\chi_1(\overline{v})^2}\Bigr)$,
%\end{itemize}
%and $\mathbf{h}_{v,\chi_1}^\circ$ is the newform of level $D_K$ and weight $m+1$ with ordinary $p$-stabilization $\chi_1(\mathbf{h}_v)$.
%
%(this should be with a shift by $k-1$ wrt to what we want; and the correct range after dividing by $\ell_{1-k}\circ\cdots\circ\ell_{-2}\circ\ell_{-1}$)
%*****
Let $\omega_{f^\beta}$ denote the image of $\omega_{\mathbf{f}^\beta}$ under the specialisation map at $f^\beta$. Comparing the interpolation formulas of the maps $\mathscr{L}_V$ and $\mathcal{L}$ (as given in \cite[Thm.~4.15]{LZ2} and \cite[Thm.~7.1.4]{LZ-Coleman}, respectively), and noting the relation up to a $p$-adic unit
\[
\omega_{f^\beta}=\frac{[\varphi(\omega_f),\omega_f]_{\rm dR}}{\alpha-\beta}\cdot v_\beta^*,
\]
as follows from a straightforward computation (and also noted in the proof of \cite[Thm.~3.6.5]{BLLV}), 
from Remark~\ref{rem:L12-L} we thus find that
\begin{align*}
\phi_\chi(L^{\rm (II)}_{1,2}(\mathbf{h}_v,f)^\eta)&=
\phi_\chi\biggl(\frac{e_\eta\mathscr{L}_{V,\beta,\overline{v}}(\mathcal{BF}^{\alpha,\mathbf{h}_v,\eta}_{\rm cusp})}{T_v\cdot\prod_{j=1}^{k-1}\ell_{-j}}\biggr)\cdot\phi_\chi(\delta_{k-1})\cdot\frac{\varpi^s}{(\alpha-\beta)}\\
&=\phi_{\eta\chi}\biggl(\frac{(-1)^{1+\mathbf{j}}}{\lambda_N(\mathbf{h}_v)}\cdot L_p^{\rm geo}(\mathbf{h}_v,f^\beta,1+\mathbf{j})\cdot\frac{c\cdot\omega_{\mathbf{h}_v}}{\eta_{\mathbf{h}_v}}\biggr)\cdot\phi_\chi(\delta_{k-1})\cdot\frac{\varpi^s}{[\varphi(\omega_f),\omega_f]_{\rm dR}}
\end{align*}
using that $\phi_\chi(\mathcal{BF}_{\rm cusp}^{\alpha,\mathbf{h}_v,\eta})=\phi_\chi(T_v)\cdot\phi_\chi(\mathcal{BF}^{\alpha,\mathbf{h}_v,\eta})$ by definition, with $\phi_\chi(T_v)\neq 0$, for the last equality. Together with \eqref{eq:interp-geo}, this gives the result.
\end{proof}

We conclude this section with the following observation on the nonvanishing on $L_{i,j}^{\rm (II)}(\mathbf{h}_v,f)^\eta$, which lies much less deep than that of $L_{i,i}^{\rm (I)}(f,\mathbf{h}_v)^\eta$.

\begin{cor}\label{cor:h-unb-nonzero}
For $i\neq j\in\{1,2\}$ and $\eta:\Delta\rightarrow\bZ_p^\times$ a character, %the $p$-adic $L$-function 
$L_{i,j}^{\rm (II)}(\mathbf{h}_v,f)^\eta$ is nonzero.
\end{cor}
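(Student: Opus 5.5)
By Corollary~\ref{cor:BF-pm} and Definition~\ref{def:L-12} we have $\widetilde{L}^{\rm (II)}_{2,1}(\mathbf{h}_v,f)^\eta=-\widetilde{L}^{\rm (II)}_{1,2}(\mathbf{h}_v,f)^\eta$. Hence it suffices to treat $(i,j)=(1,2)$; the case $(2,1)$ follows from the corresponding quotient by $T_v$. The plan is to exhibit a single character $\chi\in\Xi^{\rm (II)}$ at which $\phi_\chi\bigl(L^{\rm (II)}_{1,2}(\mathbf{h}_v,f)^\eta\bigr)\neq 0$, using the interpolation formula of Theorem~\ref{thm:ERL-II}. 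We choose $\chi=(\chi_1,\chi_2)$ with the weight $m+1$ of $\mathbf{h}_{v,\chi_1}^\circ$ very large, $m\equiv 0\pmod{p-1}$, and $\chi_2=\psi_\zeta\langle\varepsilon\rangle^n$ with $k-1\le n<m$. The point of this choice is that, unlike for $L^{\rm (I)}_{i,i}$, the Rankin--Selberg $L$-value then lies at a point of absolute convergence or on the edge of it. This is why the nonvanishing is ``much less deep''.

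We then check that each factor in Theorem~\ref{thm:ERL-II} is nonzero at such $\chi$.

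First, the $L$-value $L(\mathbf{h}_{v,\chi_1}^\circ,f\otimes\eta\omega^n\psi_\zeta^{-1},n+1)$. The two forms have weights $m+1$ and $k$, so the center of the Rankin--Selberg $L$-function is at $s=(m+k)/2$. Taking $n+1$ as large as allowed, namely $n=m-1$, gives $n+1=m>(m+k)/2$ as soon as $m>k$. The point $n+1=m$ is then either in the region of absolute convergence or on its boundary. In the first case the Euler product is nonzero. In the boundary case we instead invoke the nonvanishing of Rankin--Selberg $L$-functions on the edge of the critical strip (Jacquet--Shalika/Shahidi), which applies because $f$ is not a CM form of $K$, being non-CM at a nonordinary prime $p>k$.

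Second, the Gamma factor $n!(n+1-k)!$ requires $n\ge k-1$, which our choice of $n$ satisfies.

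Third, the Euler factors. We take $t\ge1$ (or arrange $\eta\ne\omega^{-n}$), so that $\mathcal{E}_{p,\eta}$ is a Gauss-sum expression and hence nonzero. The factors $\mathcal{E}_p(\mathbf{h}_v)$ and $\mathcal{E}_p^*(\mathbf{h}_v)$ are nonzero because $\chi_1(\overline{v})$ is a $p$-adic unit while $p^{m-1},p^m$ are nonunits.

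Fourth, $\phi_\chi(\delta_{k-1})$. Its zeros are the characters $\langle\varepsilon\rangle^{j}$ with $0\le j\le k-2$ and $\zeta=1$, so it is nonzero for $t\ge1$.

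Fifth, the remaining constants. The value $[\varphi(\omega_f),\omega_f]_{\rm dR}$ is nonzero by Lemma~\ref{lem:good-int-basis}.

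The main obstacle is the ratio $\phi_\chi(c\cdot\omega_{\mathbf{h}_v}/\eta_{\mathbf{h}_v})$, a congruence-number type quantity for the CM family. To handle it, I would first show that $\eta_{\mathbf{h}_v}$ and $\omega_{\mathbf{h}_v}$ are both nonzero elements of rank-one modules over $\Lambda_{\bZ_p}(\Gamma^v)\otimes\bQ_p$, which follows from Ohta's theory and Theorem~\ref{thm:Ind}, so that their ratio is a nonzero element of the fraction field. A nonzero element of the fraction field of a one-variable Iwasawa algebra has only finitely many zeros and poles among arithmetic characters $\chi_1$. Avoiding these finitely many $\chi_1$, together with the finitely many exceptional weights in the other steps, and using that infinitely many $m\equiv0\pmod{p-1}$ are available, yields a $\chi$ at which every factor is nonzero. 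This concludes $L^{\rm (II)}_{1,2}(\mathbf{h}_v,f)^\eta\neq0$.
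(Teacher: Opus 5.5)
Your argument is correct and is essentially the paper's: it evaluates the interpolation formula of Theorem~\ref{thm:ERL-II} at a character in $\Xi^{\rm (II)}$ where the Rankin--Selberg Euler product converges absolutely. Your factor-by-factor check, including the ratio $c\cdot\omega_{\mathbf{h}_v}/\eta_{\mathbf{h}_v}$ (which the paper tacitly treats as a nonzero element of the fraction field), fills in what the paper leaves implicit; the boundary case and the appeal to Jacquet--Shalika/Shahidi are unnecessary, since with $n+1=m$ and $m>k+1$ the point lies strictly inside the half-plane $\mathrm{Re}(s)>(m+k+1)/2$ of absolute convergence.
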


\begin{proof}
One just needs to note that the range of $p$-adic interpolation in Theorem~\ref{thm:ERL-II} contains values  $L(\mathbf{h}_{v,\chi_1}^\circ,f\otimes\eta\omega^n\psi_\zeta^{-1},n+1)$ for which the defining Euler product is absolutely convergent (cf. \cite[Prop.~4.7]{BL-non-ord}). 
\end{proof}

\section{Descent}\label{sec:descent}

In this section we relate the two-variable $p$-adic $L$-functions $L^{\rm (II)}_{i,j}(\mathbf{h}_v,f)^\eta$ for $i\neq j$ (resp. $L^{\rm (I)}_{i,i}(f,\mathbf{h}_v)^\eta$) to certain anticyclotomic (resp. cyclotomic and anticyclotomic) $p$-adic $L$-functions constructed independently. 

%For the latter two-variable $p$-adic $L$-functions, it will be convenient to normalise it as follows: 
%
%\begin{defn}\label{def:int-Lii}
%For $\xi\in\{\alpha,\beta\}$, $i\in\{1,2\}$, and a character $\eta:\Delta\rightarrow\bZ_p^\times$, put
%\[
%\mathcal{L}^{\rm (I)}_{\xi,\xi}(f/K)^\eta:=c_f\cdot %L_{\xi,\xi}^{\rm (I)}(f,\mathbf{h}_v)^\eta,\quad\quad
%\mathcal{L}_{i,i}^{\rm (I)}(f/K)^\eta:=c_f\cdot L^{\rm (I)}_{i,i}(f,\mathbf{h}_v)^\eta,
%\]
%where $c_f$ is the congruence number of $f$.
%\end{defn}

\subsection{Anticyclotomic descent: Indefinite}\label{subsubsec:restr-ac}

%\subsubsection{In:}
Let $\pi_f=\otimes_{\ell\leq\infty}\pi_{f,\ell}$ be the cuspidal automorphic representation of ${\rm GL}_2(\mathbb{A})$ generated by $f$, and let $BC_K(\pi_f)$ denote its base change to %an automorphic representation of 
${\rm GL}_2(\mathbb{A}_K)$. %Following the conventions in \cite[\S{4.1}]{JSW}, 
Write
\[
N=N^+ N^-
\]
with $N^+$ (resp. $N^-$) divisible only by primes that are split or ramified (resp. inert) in $K$. Throughout this section, the we assume the following \emph{generalised Heegner hypotheses}:
\begin{equation}\label{eq:gen-H}
%(N,D_K)=1\quad\textrm{and}\quad
N^-=1\quad\textrm{and}\quad\textrm{$\varepsilon_v(BC_K(\pi_f))=+1$ for all $v\mid N^+$ nonsplit in $K$,}\tag{gen-H}
\end{equation}
where $\varepsilon_v(BC_K(\pi_f))$ is the local root number at $v$ (normalised as in \cite[p.\,710]{hsieh}).

The action of complex conjugation yields an eigenspace decomposition
\[
\Gamma_K\cong\Gamma^+\times\Gamma^-,
\]
where $\Gamma^+$ (resp. $\Gamma^-$) is identified with the Galois group of the cyclotomic $\bZ_p$-extension $K_\infty^+/K$ (resp. the anticyclotomic $\bZ_p$-extension $K_\infty^-/K$). 

The following is a refinement and generalisation of the anticyclotomic $p$-adic $L$-function introduced by Bertolini--Darmon--Prasanna \cite{bdp1}.

\begin{thm}\label{thm:BDP-Lp}
There exists a unique ``square-root'' $p$-adic $L$-function 
\[
\mathscr{L}_{\overline{v}}^{\rm BDP}(f/K)\in\Lambda_{\scO^{\rm ur}}(\Gamma^-)
\]
such that for every  character $\xi$ of $\Gamma^-$ crystalline at both $v$ and $\overline{v}$ corresponding to a Hecke character of $K$ of infinity type $(-j,j)$ for some $j\geq k/2$ with $j\equiv 0\pmod{p-1}$ we have
\begin{align*}
\mathscr{L}_{\overline{v}}^{\rm BDP}(f/K)^2(\xi)&=\biggl(\frac{\Omega_p}{\Omega_\infty}\biggr)^{4j}\cdot(k/2+j-1)!(j-k/2)!\cdot\biggl(\frac{2\pi}{\sqrt{D_K}}\biggr)^{2j-1}\\
&\quad\times(1-a_p\xi(v)p^{-k/2}+\xi(v)^2p^{-1})^2\cdot L(f/K,\xi,k/2),
\end{align*}
where $(\Omega_\infty,\Omega_p)=(2\pi i\cdot\Omega_K,\Omega_p)\in\bC^\times\times\cO_{\bC_p}^\times$ are CM periods as in \cite[\S{2.5}]{cas-hsieh1}.
\end{thm}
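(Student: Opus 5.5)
The plan is to build $\mathscr{L}_{\overline{v}}^{\rm BDP}(f/K)$ as a $p$-adic measure on $\Gamma^-$ from the values of $f$ (via its associated $p$-adic modular form) at CM points. We follow Bertolini--Darmon--Prasanna \cite{bdp1}, in the refined and integral form of Hsieh and Castella--Hsieh \cite{cas-hsieh1}, generalised to weight $k$ and to the level $N$ permitted by \eqref{eq:gen-H}.

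The first step is the construction. Since $p=v\overline{v}$ splits, the CM points on the Igusa tower attached to fractional ideals of orders of $p$-power conductor in $K$ are ordinary. We set $f^\flat:=\theta^{-1}$-depletion, i.e.\ the $p$-depleted form $f^\flat=\sum_{p\nmid n}a_nq^n$, whose Serre--Katz twists $\theta^{j-k/2}f^\flat$ make sense for all $j$. Following \cite[\S{3}]{cas-hsieh1}, we define a measure on $\Gamma^-$ as a toric period: for anticyclotomic characters $\xi$, sum $\xi^{-1}(\mathfrak a)\cdot(\theta^{j-k/2}f^\flat)(x(\mathfrak a))$ over ${\rm Pic}$ of the orders of conductor $p^n$, with the choice of test vector at primes dividing $N^+$ dictated by \eqref{eq:gen-H}.

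The integrality statement, that the measure lands in $\Lambda_{\scO^{\rm ur}}(\Gamma^-)$, follows from the $q$-expansion principle and the integrality of $f$ at the chosen trivialisation of the CM abelian varieties. This trivialisation is the source of $\Omega_p$, and $p$-adic continuity in $j$ comes from Katz's $p$-adic interpolation of $\theta$. Uniqueness is immediate: the characters $\xi$ in the statement (infinity type $(-j,j)$, $j\geq k/2$, $j\equiv 0\pmod{p-1}$, crystalline at $v,\overline{v}$) are Zariski dense in ${\rm Spec}\,\Lambda(\Gamma^-)$. Hence a measure is determined by its values at them, up to the sign ambiguity of a square root, which is fixed by the construction.

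The second step is the interpolation formula. At such $\xi$, the algebraicity of $\theta^{j-k/2}f^\flat$ at CM points (Shimura, Katz) identifies the $p$-adic value with $(\Omega_p/\Omega_\infty)^{2j}$ times the archimedean toric period of $\delta_k^{j-k/2}f$ against $\xi$. $p$-depletion produces the Euler-like factor $(1-a_p\xi(v)p^{-k/2}+\xi(v)^2p^{-1})$. Squaring, we then invoke Waldspurger's formula in the explicit form of Hsieh \cite{hsieh} (see also \cite[Thm.~3.?]{cas-hsieh1}) to express the square of the toric period as $L(f/K,\xi,k/2)$ times the Gamma factors $(k/2+j-1)!(j-k/2)!$, the power $(2\pi/\sqrt{D_K})^{2j-1}$, and local constants. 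Hypothesis \eqref{eq:gen-H} (with $N^-=1$ and root numbers $+1$ at ramified primes) ensures the relevant quaternion algebra is ${\rm M}_2(\bQ)$ and the local toric integrals are nonzero, with the explicit value $1$ for the chosen test vectors.

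The main obstacle will be this last normalisation: checking that the local constants at primes dividing $N^+$, at ramified primes, and at $\infty$ combine exactly into the displayed factors, with no stray $p$-adic units or congruence numbers, when one passes from the weight-$2$, $N$ coprime to $D_K$ setting of \cite{cas-hsieh1} to general $k$ and squarefree $N$. I would handle this by matching with Hsieh's explicit local computations prime by prime, and by treating the archimedean factor via the known weight-$k$ computation in \cite{bdp1}.
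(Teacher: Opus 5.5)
Your outline is correct and takes essentially the paper's route: the paper gives no independent argument, but invokes the Castella--Hsieh construction \cite[\S{3}]{cas-hsieh1} in the normalisation of \cite[Thm.~2.1.3, Rem.~2.1.4]{cas-TNC}, namely CM values of $\theta$-twists of the $p$-depleted form, integrality from the Serre--Tate/$q$-expansion principle, and interpolation via Hsieh's explicit Waldspurger formula, which is what you reconstruct. Two minor remarks: as you note, the interpolation formula only determines $\mathscr{L}_{\overline{v}}^{\rm BDP}(f/K)$ up to sign; and \cite{cas-hsieh1} already treats arbitrary even weight $k$ (it is not a weight-$2$ setting), so the weight aspect of the normalisation check you flag as the main obstacle is already done there.
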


\begin{proof}
This follows from the results of \cite[\S{3}]{cas-hsieh1} %(refining the construction of $p$-adic $L$-functions in \cite{bdp1}) 
as in Theorem~2.1.3 and Remark~2.1.4 of \cite{cas-TNC} (whose notations we have largely adopted).
\end{proof}

We also need to recall the Katz $p$-adic $L$-function from \cite{Katz49,de_shalit}. With notations as in \cite[Thm.~2.2.1]{cas-TNC}, this is an element $\mathscr{L}_{\overline{v}}^{\rm Katz}\in\Lambda_{\scO^{\rm ur}}(\Gamma_K)$ such that for every character $\xi$ of $\Gamma_K$ crystalline at both $v$ and $\overline{v}$ corresponding to a Hecke character of infinity type $(b,a)$ with $a<0$ and $b\geq 1$ satisfies
\begin{equation}\label{eq:interp-katz}
\mathscr{L}^{\rm Katz}_{\overline{v}}(\xi)=\biggr(\frac{\Omega_p}{\Omega_\infty}\biggr)^{a-b}\cdot(a-1)!\cdot\biggl(\frac{\sqrt{D_K}}{2\pi}\biggr)^{b}\cdot(1-\xi^{-1}(\overline{v})p^{-1})(1-\xi(v)\bigr)\cdot L(\xi,0).
\end{equation}
Moreover, as shown in \cite[Thm.~II.6.4]{de_shalit} we have the functional equation
\begin{equation}\label{eq:Katz-equation}
\mathscr{L}_{\overline{v}}^{\rm Katz}(\xi)=\mathscr{L}_{\overline{v}}^{\rm Katz}(\xi^{-1}\mathbf{N}^{-1}),
\end{equation}
where the equality is up to a $p$-adic unit. 

\begin{defn}\label{def:L-Gr}
Denote by $\mathscr{L}_{\overline{v}}^{{\rm Katz},-}$ the image of $\mathscr{L}_{\overline{v}}^{\rm Katz}$ under map $\Lambda_{\scO^{\rm ur}}(\Gamma_K)\rightarrow\Lambda_{\scO^{\rm ur}}(\Gamma^-)$ induced by sending $\gamma\in\Gamma_K$ to the image of $\gamma^{1-c}$ in $\Gamma^-$, and for every character $\eta:\Delta\rightarrow\bZ_p^\times$ put
\[
\mathcal{L}_{1,2}^{\rm (II)}(f/K)^\eta:=\frac{h_K}{w_K}\cdot\mathscr{L}_{\overline{v}}^{{\rm Katz},-}\cdot\frac{\eta_{\mathbf{h}_v}}{c\cdot\omega_{\mathbf{h}_v}}\cdot\frac{[\varphi(\omega_f),\omega_f]_{\rm dR}}{\delta_{k-1}}\cdot L_{1,2}^{\rm (II)}(\mathbf{h}_v,f)^\eta,
\]
where $h_K$ is the class number of $K$ and $w_K=\#(\cO_K^\times)/2$.%/\{\pm{1}\})$.
\end{defn}

\begin{prop}\label{prop:L12-CLW}
The $p$-adic $L$-function $\varpi^{-s}\cdot\mathcal{L}_{1,2}^{\rm (II)}(f/K)^\eta$ is integral, i.e. lives in $\Lambda_{\scO^{\rm ur}}(\Gamma_K)$.
\end{prop}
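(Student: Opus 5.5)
We need to show that $\varpi^{-s}\mathcal{L}^{\rm (II)}_{1,2}(f/K)^\eta$ lies in $\Lambda_{\scO^{\rm ur}}(\Gamma_K)$. By Definition~\ref{def:L-12} and \eqref{eq:BF-cusp-int}, $\varpi^{-s}L^{\rm (II)}_{1,2}(\mathbf{h}_v,f)^\eta$ is $T_v^{-1}$ times the value of the integral Coleman map $e_\eta\widetilde{\mathscr{C}}_{1,\overline{v}}$ on the restriction of $\mathcal{BF}^{2,\mathbf{h}_v,\eta}_{\rm cusp}$. The plan is to handle the three remaining factors in Definition~\ref{def:L-Gr} separately and check that each possible denominator is cancelled: the period ratio $\eta_{\mathbf{h}_v}/(c\cdot\omega_{\mathbf{h}_v})$, the quotient $[\varphi(\omega_f),\omega_f]_{\rm dR}/\delta_{k-1}$, and the factor $\frac{h_K}{w_K}\mathscr{L}^{{\rm Katz},-}_{\overline{v}}$.

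\textbf{Step 1: the Coleman-map factor.} Proposition~\ref{prop:image-Col-2var} gives that $e_\eta\widetilde{\mathscr{C}}_{1,\overline v}$ takes values in $\xi_{\eta,1}\Lambda_{\scO}(\Gamma_K)$. By Remark~\ref{rem:xi}, $\xi_{\eta,1}=1$, so this is just $\Lambda_{\scO}(\Gamma_K)$.

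\textbf{Step 2: removing $\delta_{k-1}$.} We need $\delta_{k-1}$ to divide $\varpi^{-s}L^{\rm (II)}_{1,2}$, up to units. I would use Corollary~\ref{cor:BF-pm} together with Remark~\ref{rem:L12-L}. These express $\widetilde L^{\rm (II)}_{1,2}$ as $\delta_{k-1}$ times $e_\eta\mathscr{L}_{V,\beta,\overline v}(\cdots)/\prod_j\ell_{-j}$.

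The point is that the unbounded regulator value on the class lying in $\mathscr{F}^{+,-}$ vanishes at the characters $\langle\varepsilon\rangle^{j}\psi$ with $1\le j\le k-1$ cut out by $\ell_{-j}$. Alternatively, this follows from Theorem~\ref{thm:ERL-II}, whose interpolation formula visibly carries the factor $\phi_\chi(\delta_{k-1})$. Since the factors of $\delta_{k-1}$ are distinct linear polynomials in the cyclotomic variable, divisibility in $\Lambda$ follows from vanishing at their zeros together with the Weierstrass preparation theorem.

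The factor $[\varphi(\omega_f),\omega_f]_{\rm dR}$ is a $p$-adic unit. Indeed, by Lemma~\ref{lem:good-int-basis} we have $(\omega_f,\varphi(\omega_f))=(v_1,v_2)$, an $\scO$-basis of a strongly divisible lattice, and the de Rham pairing is perfect on it (Fontaine--Laffaille).

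\textbf{Step 3: the Hida-theoretic periods and the Katz factor (the main obstacle).} The quotient $\eta_{\mathbf{h}_v}/(c\cdot\omega_{\mathbf{h}_v})$ is the inverse of the congruence ideal of the CM family $\mathbf{h}_v$, which introduces denominators. By Hida--Tilouine and Mazur--Tilouine (the anticyclotomic main conjecture of Rubin), the congruence module of the CM component is generated, up to units in $\Lambda_{\scO^{\rm ur}}$, by $\frac{h_K}{w_K}\mathscr{L}^{{\rm Katz},-}_{\overline v}$, after transport to $\Gamma^v$ via $\gamma\mapsto\gamma^{1-c}$ and using the functional equation \eqref{eq:Katz-equation}.

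So I would show that $\frac{h_K}{w_K}\mathscr{L}^{{\rm Katz},-}_{\overline v}\cdot\eta_{\mathbf{h}_v}/(c\,\omega_{\mathbf{h}_v})\in\Lambda_{\scO^{\rm ur}}(\Gamma^v)$, possibly up to powers of $T_v$, following \cite[Prop.~8.3]{wanIMC} and \cite[Lem.~4.36]{fouquet-wan} as used in \cite{BSTW}. Any residual power of $T_v$ would then be absorbed by the $T_v$-divisibility established in Corollary~\ref{cor:div-Tv}, which was already used in Step 1.

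The delicate points are twofold. First, the identification of Ohta's pairing between $\omega_{\mathbf{h}_v}$ and $\eta_{\mathbf{h}_v}$ with the congruence number must hold integrally and not only after inverting $p$. Second, the Gorenstein property of the localized CM Hecke algebra is needed; I expect this is where \cite{BSTW}'s lattice analysis (Theorem~\ref{thm:Ind}) enters.

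Multiplying the three integral pieces gives the claim.
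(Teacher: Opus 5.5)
There is a genuine gap in Step~1, and it is exactly the content of the proposition. The classes $\mathcal{BF}^{j,\mathbf{h}_v,\eta}_{\rm cusp}$ from Theorem~\ref{thm:BF-factor} only live in Galois cohomology tensored with $L$. The integer $s$ in \eqref{eq:BF-cusp-int} was chosen arbitrarily to clear their uncontrolled denominators, so only $\mathcal{BF}^{j,\mathbf{h}_v,\eta}_{\rm cusp,int}=\varpi^s\cdot\mathcal{BF}^{j,\mathbf{h}_v,\eta}_{\rm cusp}$ is integral. Proposition~\ref{prop:image-Col-2var} concerns integral classes. For the element you actually need, $e_\eta\widetilde{\mathscr{C}}_{1,\overline{v}}({\rm res}_{\overline{v}}(\mathcal{BF}^{2,\mathbf{h}_v,\eta}_{\rm cusp}))$, it therefore only gives membership in $\varpi^{-s}\Lambda_{\scO}(\Gamma_K)$, not in $\Lambda_{\scO}(\Gamma_K)$.

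Your Steps~2 and~3 address the denominators $\delta_{k-1}$ and $\eta_{\mathbf{h}_v}/(c\cdot\omega_{\mathbf{h}_v})$. Step~3 is in line with how the paper later uses $(H_v)=(\frac{h_K}{w_K}\mathscr{L}^{{\rm Katz},-}_{\overline{v}})$ and $(H_v)\subset(c\cdot\omega_{\mathbf{h}_v}/\eta_{\mathbf{h}_v})$ in Corollary~\ref{cor:BF-IMC}. But nothing in your argument removes $\varpi^{-s}$, and no local, factor-by-factor argument can. The claim is that an $s$-independent quantity built from a non-integral Euler-system class is integral, and that needs integral information from an independent source. Separately, the $T_v$-divisibility of Corollary~\ref{cor:div-Tv} was already used to define $L^{\rm (II)}_{1,2}(\mathbf{h}_v,f)^\eta$, so it cannot absorb further powers of $T_v$ in Step~3.

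The missing idea is a comparison with the $p$-adic $L$-function $\mathcal{L}_{\overline{v}}(f/K)^\eta\in\Lambda_{\scO^{\rm ur}}(\Gamma_K)$ of \cite{CLW}. Its integrality comes from its automorphic construction, not from Beilinson--Flach classes. The paper rewrites the interpolation formula of Theorem~\ref{thm:ERL-II} using:
\begin{itemize}
\item Hida's adjoint $L$-value formula and the class number formula;
\item the interpolation property \eqref{eq:interp-katz} of $\mathscr{L}^{\rm Katz}_{\overline{v}}$ and its functional equation \eqref{eq:Katz-equation}.
\end{itemize}
This is done exactly as in the proof of Proposition~\ref{prop:Gr-BDP}. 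Matching the result with the interpolation property of $\mathcal{L}_{\overline{v}}(f/K)^\eta$ on a Zariski-dense set of characters gives $\varpi^{-s}\cdot\mathcal{L}^{\rm (II)}_{1,2}(f/K)^\eta=\mathcal{L}_{\overline{v}}(f/K)^\eta$ up to a unit. This one comparison disposes of $\varpi^{-s}$, $\delta_{k-1}$ and the congruence factor together.

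Your Step~2 would not stand on its own either. The factor $\phi_\chi(\delta_{k-1})$ in Theorem~\ref{thm:ERL-II} only records the normalisation of Remark~\ref{rem:L12-L}. The formula contains $(n+1-k)!$, so it is only meaningful in the critical range $n\geq k-1$. It therefore says nothing at the zeros of $\delta_{k-1}$, which sit at $\chi_2=\langle\varepsilon\rangle^n$ with $n\leq k-2$. Your alternative claim, that the regulator of a class in $\mathscr{F}^{+,-}$ vanishes at all these characters, is asserted rather than proved.
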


\begin{proof}
This follows from a direct comparison with the interpolation property of the $p$-adic $L$-function $\mathcal{L}_{\overline{v}}(f/K)^\eta\in\Lambda_{\scO^{\rm ur}}(\Gamma_K)$ in \cite[Prop.~8.2.2]{CLW}, yielding the equality
\begin{equation}\label{eq:L12-CLW}
\varpi^{-s}\cdot\mathcal{L}_{1,2}^{\rm (II)}(f/K)^\eta=\mathcal{L}_{\overline{v}}(f/K)^\eta \nonumber
\end{equation}
up to a unit. The details are virtually the same as in  Proposition~\ref{prop:Gr-BDP} below. \end{proof}

\begin{prop}\label{prop:Gr-BDP}
Let $I^+$ be the kernel of the natural projection $\Lambda_{\scO^{\rm ur}}(\Gamma_K)\rightarrow\Lambda_{\scO^{\rm ur}}(\Gamma^-)$. Then
\[
\bigl(\varpi^{-s}\cdot{\rm Tw}_{k/2-1}(\mathcal{L}_{1,2}^{\rm (II)}(f/K)^{\omega^{1-k/2}})\;{\rm mod}\,I^+\bigr)=\bigl(\mathscr{L}_{\overline{v}}^{\rm BDP}(f/K)^2\bigr)
\]
as ideals in $\Lambda_{\scO^{\rm ur}}(\Gamma^-)$, where ${\rm Tw}_{k/2-1}:\Lambda_{\scO}(\Gamma_K)\rightarrow\Lambda_{\scO}(\Gamma_K)$ denotes the $\scO$-linear isomorphism given by $\gamma\mapsto\langle\varepsilon(\gamma)\rangle^{k/2-1}\gamma$ for $\gamma\in\Gamma^+$ and the identity on $\Gamma^-$.
\end{prop}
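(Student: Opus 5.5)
The plan is to compare interpolation properties on a Zariski-dense set of anticyclotomic characters. Both sides lie in $\Lambda_{\scO^{\rm ur}}(\Gamma^-)$, and $\mathscr{L}_{\overline{v}}^{\rm BDP}(f/K)$ is nonzero and determined by its values, so it suffices to prove that for a dense set of characters $\xi$ of $\Gamma^-$ we have
\[
\phi_\xi\bigl(\varpi^{-s}\,{\rm Tw}_{k/2-1}(\mathcal{L}_{1,2}^{\rm (II)}(f/K)^{\omega^{1-k/2}})\bigr)=u\cdot\mathscr{L}_{\overline{v}}^{\rm BDP}(f/K)^2(\xi)
\]
with $u$ a fixed $p$-adic unit (or a unit of $\Lambda_{\scO^{\rm ur}}(\Gamma^-)$). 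Equality of ideals then follows from density.

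\textbf{Step 1: choose the test characters.} Restricting a character $\chi=(\chi_1,\chi_2)\in\Xi^{\rm (II)}$ to the anticyclotomic line means that, after the twist ${\rm Tw}_{k/2-1}$ and the choice $\eta=\omega^{1-k/2}$, the cyclotomic variable is placed at the center. Concretely, I take $t=0$ and $n=k/2-1+(\text{shift})$ so that $n+1$ is the central point $k/2$ relative to the Hecke character attached to $\chi_1$, with $m=2j$. I must check two things: that such points are contained in $\Xi^{\rm (II)}$ (which needs $n<m$ and $m\equiv0 \bmod p-1$, consistent with $j\ge k/2$ and $j\equiv 0\bmod p-1$ in Theorem~\ref{thm:BDP-Lp}), and that they are dense in ${\rm Spec}\,\Lambda(\Gamma^-)$.

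\textbf{Step 2: evaluate the left side.} At these points I apply Theorem~\ref{thm:ERL-II}. The Rankin--Selberg value $L(\mathbf{h}_{v,\chi_1}^\circ, f\otimes\eta\omega^n,n+1)$ factors as $L(f/K,\xi,k/2)$, using $\mathbf{h}_{v,\chi_1}^\circ=\theta_{\psi}$ and $V_g\cong{\rm Ind}_K^{\bQ}\hat\psi$. The correction factor $\phi_\chi(c\,\omega_{\mathbf{h}_v}/\eta_{\mathbf{h}_v})$ and the terms $[\varphi(\omega_f),\omega_f]_{\rm dR}$ and $\phi_\chi(\delta_{k-1})$ are cancelled by the normalisation in Definition~\ref{def:L-Gr}. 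The remaining Petersson norm $\langle\theta_\psi,\theta_\psi\rangle_{D_K}$ is traded against the Katz factor $\mathscr{L}_{\overline{v}}^{{\rm Katz},-}$. The tool here is the classical formula expressing the Petersson norm of a CM form as $\frac{h_K}{w_K}\cdot L(\psi^{1-c}\mathbf{N}^{-1},1)$ times elementary factors, together with the interpolation formula \eqref{eq:interp-katz} and the functional equation \eqref{eq:Katz-equation}. This is also why the factor $h_K/w_K$ is included in Definition~\ref{def:L-Gr}.

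\textbf{Step 3: compare Euler factors and periods.} The factor $\mathcal{E}_{p,\eta}(\mathbf{h}_v,f,\chi)$ at the chosen points should reduce to $(1-a_p\xi(v)p^{-k/2}+\xi(v)^2p^{-1})^2$. The factors $\mathcal{E}_p(\mathbf{h}_v)\mathcal{E}^*_p(\mathbf{h}_v)$ cancel against the Euler factors $(1-\xi^{-1}(\overline{v})p^{-1})(1-\xi(v))$ in Katz's formula. Finally, the powers of $\pi$, the factorials, $\sqrt{D_K}$ and the CM periods $(\Omega_p/\Omega_\infty)^{4j}$ must match up to $p$-adic units.

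\textbf{Main obstacle.} The hardest part is the $p$-integral period comparison. One has to show that $\eta_{\mathbf{h}_v}/(c\,\omega_{\mathbf{h}_v})$, multiplied by the Katz $p$-adic $L$-function, interpolates the ratio of $\langle\theta_\psi,\theta_\psi\rangle$ to CM periods exactly, up to a unit. Showing that this ratio is nonzero everywhere, rather than only interpolating correctly, is delicate. The approach I would try first is to identify $\eta_{\mathbf{h}_v}/(c\,\omega_{\mathbf{h}_v})$ with the congruence ideal of the CM family $\mathbf{h}_v$, via Hida's congruence module argument together with the lattice description in Theorem~\ref{thm:Ind} from \cite{BSTW}. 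By the anticyclotomic main conjecture for $K$ (Rubin, Hida--Tilouine), that congruence ideal is generated by $\frac{h_K}{w_K}\mathscr{L}_{\overline{v}}^{{\rm Katz},-}$. This gives the comparison at the level of ideals, which is all the statement requires.
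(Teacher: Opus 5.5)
Steps~1--3 are essentially the paper's proof. It takes $\eta=\omega^{1-k/2}$, $m=2j$, $n=k/2+j-1$ and $\zeta=1$ in Theorem~\ref{thm:ERL-II}, matches $\mathcal{E}_{p,\eta}$ with $(1-a_p\xi(v)p^{-k/2}+\xi(v)^2p^{-1})^2$, converts the Petersson norm into $\frac{h_K}{w_K}L(\xi^2,1)$ via Hida's adjoint $L$-value formula and the class number formula, and then converts that into a Katz value via \eqref{eq:interp-katz} and \eqref{eq:Katz-equation}.

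Your ``main obstacle'' is not one, however. The factor $\eta_{\mathbf{h}_v}/(c\,\omega_{\mathbf{h}_v})$ in Definition~\ref{def:L-Gr} cancels identically against $\phi_\chi(c\,\omega_{\mathbf{h}_v}/\eta_{\mathbf{h}_v})$ in Theorem~\ref{thm:ERL-II}, as you yourself note in Step~2, so no congruence-module or main-conjecture input is needed. The route you sketch would also not give an equality of ideals, since the paper only has the divisibility $(H_v)\subset(c\,\omega_{\mathbf{h}_v}/\eta_{\mathbf{h}_v})$, which it uses later in Corollary~\ref{cor:BF-IMC}.
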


\begin{proof}
This follows similarly as in \cite[Prop.~2.4.5]{CGS}, but we provide the details for the convenience of the reader. For $\eta=\omega^{1-k/2}$, $\chi=(\chi_1,\chi_2)\in\Xi^{\rm (II)}$ with $m=2j$ for $j\geq k/2$ with $j\equiv 0\pmod{p-1}$, $n=k/2+j-1$, and $\zeta=1$, the interpolation in Theorem~\ref{thm:ERL-II} becomes
\begin{equation}\label{eq:esp-12}
\begin{aligned}
\phi_\chi({\rm Tw}_{k/2-1}(L_{1,2}^{\rm (II)}(\mathbf{h}_v,f)^{\omega^{1-k/2}}))&=\frac{1}{\rm  (denom)}\cdot\biggl(1-\frac{p^{j-k/2}\alpha}{\chi_1(\overline{v})}\biggr)^2\biggl(1-\frac{p^{j-k/2}\beta}{\chi_1(\overline{v})}\biggr)^2\\
&\quad\times(k/2+j-1)!(j-k/2)!\cdot L(\mathbf{h}_{v,\chi_1}^\circ,f,k/2+j),
\end{aligned}
\end{equation}
where
\[
{\rm(denom)}=\pi^{2j+1}\cdot(-i)^{2j+1+k}\cdot 2^{4j+1}\cdot\biggl(1-\frac{p^{2j-1}}{\chi_1(\overline{v})^2}\biggr)\biggl(1-\frac{p^{2j}}{\chi_1(\overline{v})^2}\biggr)\cdot\langle\mathbf{h}_{v,\chi_1}^\circ,\mathbf{h}_{v,\chi_1}^\circ\rangle.
\]
Composed with the projection $\Gamma_K\twoheadrightarrow\Gamma^v$, the character $\chi_1\varepsilon^j$ corresponds to a Hecke character $\xi$ of infinity type $(-j,j)$ with
\begin{equation}\label{eq:euler-xi}
\biggl(1-\frac{p^{2j-1}}{\chi_1(\overline{v})^2}\biggr)\biggl(1-\frac{p^{2j}}{\chi_1(\overline{v})^2}\biggr)=\bigl(1-\xi(v)^2p^{-1}\bigr)\bigl(1-\xi(v)^2\bigr).
\end{equation}
In terms of $\xi$, Hida's adjoint $L$-value formula (see \cite[Thm.~7.1]{HT-ENS}) combined with Dirichlet's class number formula give the equality up to a $p$-adic unit
\[
\langle\mathbf{h}_{v,\chi_1}^\circ,\mathbf{h}_{v,\chi_1}^\circ\rangle\,\sim_p\,(2j)!\cdot\frac{1}{2^{4j-1}\cdot\pi^{2j+1}}\cdot\frac{h_K}{w_K}\cdot L(\xi^2,1),
\]
%where $\sim_p$ denotes equality up to a $p$-adic unit, and
where we used the fact that $\xi/\xi^\tau=\xi^2$. 
%, since $\xi$ is anticyclotomic. 
Thus together with  \eqref{eq:interp-katz} (expressing $L(\xi^2,1)$ in terms of the value of $\mathscr{L}_{\overline{v}}^{\rm Katz}$ at $\xi^2\mathbf{N}^{-1}$, of infinity type $(1-2j,1+2j)$) and \eqref{eq:euler-xi} we obtain
\begin{equation}\label{eq:denom}
\begin{aligned}
{\rm (denom)}&=\bigl(1-\xi(v)^2p^{-1}\bigr)\bigl(1-\xi(v)^2\bigr)\cdot\langle\mathbf{h}_{v,\chi_1}^\circ,\mathbf{h}_{v,\chi_1}^\circ\rangle\cdot\pi^{2j+1}\cdot(-i)^{2j+1-k}\cdot 2^{4j+1}\\
%&=\bigl(1-\xi(v)^2p^{-1}\bigr)\bigl(1-\xi(v)^2\bigr)\cdot(2j)!\cdot 2^2\cdot(-i)^{2j+1-2r}\cdot L(\xi^2,1)\\
&=\frac{\frac{h_K}{w_K}\cdot\mathscr{L}_{\overline{v}}^{\rm Katz}(\xi^2\mathbf{N}^{-1})\cdot (-i)^{2j+1-k}\cdot 2^4}{\bigl(\frac{\Omega_p}{\Omega_\infty}\bigr)^{4j}\cdot\bigl(\frac{2\pi}{\sqrt{D_K}}\bigr)^{2j-1}}.
\end{aligned}
\end{equation}
Taking the product of \eqref{eq:esp-12} and \eqref{eq:denom}, using the relation
\[
\biggl(1-\frac{p^{j-k/2}\alpha}{\chi_1(\overline{v})}\biggr)^2\biggl(1-\frac{p^{j-k/2}\beta}{\chi_1(\overline{v})}\biggr)^2=\bigl(1-a_p\xi(v)p^{-k/2}+\xi(v)^2p^{-1}\bigr)^2
\]
and the functional equation \eqref{eq:Katz-equation}, the interpolation property in Theorem~\ref{thm:BDP-Lp} yields the result.
\end{proof}

\subsection{Cyclotomic descent}\label{subsubsec:restr-cyc}

%\subsubsection{$p$-adic $L$-functions}

\begin{defn}\label{def:normalize-I}
For $\xi\in\{\alpha,\beta\}$, $i\in\{1,2\}$, and a character $\eta:\Delta\rightarrow\bZ_p^\times$, put
\[
\mathcal{L}^{\rm (I)}_{\xi,\xi}(f/K)^\eta:=c_f\cdot L_{\xi,\xi}^{\rm (I)}(f,\mathbf{h}_v)^\eta,\quad\quad
\mathcal{L}_{i,i}^{\rm (I)}(f/K)^\eta:=c_f\cdot L^{\rm (I)}_{i,i}(f,\mathbf{h}_v)^\eta,
\]
where $c_f$ is the congruence number of $f$.
\end{defn}

\begin{rem}
Note that the specialisation of $\mathbf{h}_{v}$ at the trivial character is the ordinary $p$-stabilisation of the weight one Eisenstein series 
\[
\mathbf{h}_{v,1}^\circ=E_1(1,\epsilon_K).
\]
Thus from Theorem~\ref{thm:ERL-I} we see that %for $\xi\in\{\alpha,\beta\}$ 
the $L$-values interpolated by $\phi_\chi(\mathcal{L}_{\xi,\xi}^{\rm (I)}(f/K)^\eta)$ for $\chi=(1,\chi_2)\in\Xi^{\rm (I)}$ with $\chi_2=\psi_\zeta\langle\varepsilon\rangle^j$ (for $0\leq j\leq k-2$ and $\zeta\in\mu_{p^\infty}$) are given by
\begin{equation}\label{eq:can-period}
\frac{L(f\otimes\omega^j\eta\psi_\zeta^{-1},j+1)\cdot L(f\otimes\epsilon_K\omega^j\eta\psi_\zeta^{-1},j+1)}{\pi^{2j-k+2}\cdot(-i)^{k-1}\cdot 2^{2j-k+1}\cdot\Omega_f^{\rm can}},
\end{equation}
where 
\[
\Omega_f^{\rm can}:=\frac{(4\pi)^k\cdot\langle f,f\rangle_N}{c_f}
\]
is Hida's canonical period (see e.g. \cite[\S{6}]{ChHs1}).
\end{rem}

In the following, we shall often make the identification $\Gamma^+\cong\Gamma$.

\begin{prop}\label{prop:L-cyc}
Let $I^-$ be the kernel of the natural projection $\Lambda_L(\Gamma_K)\twoheadrightarrow\Lambda_{L}(\Gamma)$. Then for every $i\in\{1,2\}$ and  $\eta:\Delta\rightarrow\bZ_p^\times$ we have
\[
\bigl(\varpi^{-s}\cdot\mathcal{L}_{i,i}^{\rm (I)}(f/K)^\eta\;{\rm mod}\,I^-\bigr)=\bigl(L_{i}(f)^\eta\cdot L_{i}(f\otimes\epsilon_K)^\eta\bigr)
\]
as ideals in $\Lambda_{\scO}(\Gamma)$.
%, where $r_f\in\scO$ is as in \eqref{eq:comp-omega}.
\end{prop}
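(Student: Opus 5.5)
We first reduce to the unbounded $p$-adic $L$-functions, where interpolation formulas are available, and then transfer back to the signed side through the logarithm matrix. Restrict the first identity of Corollary~\ref{cor:dec-2var-signed} modulo $I^-$. Under $\Gamma_K\twoheadrightarrow\Gamma^+\cong\Gamma$, the composite $\Gamma^v\hookrightarrow\Gamma_K\twoheadrightarrow\Gamma$ is an isomorphism. Hence $M_{{\rm log},v}$ reduces to $M_{\rm log}$ itself, and we obtain
\[
\varpi^s\cdot\bigl(\widetilde{L}^{\rm (I)}_{\xi,\xi'}(f,\mathbf{h}_v)^\eta\bigr)_{\xi,\xi'}\equiv Q_{\alpha,\beta}^{-1}M_{\rm log}\cdot\bigl(\widetilde{L}^{\rm (I)}_{i,j}(f,\mathbf{h}_v)^\eta\bigr)_{i,j}\cdot(Q_{\alpha,\beta}^{-1}M_{\rm log})^{\rm tr}\pmod{I^-}.
\]
The factor $T_v$ is common to everything by Remark~\ref{rem:unb-tilde}. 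We will divide it out, using that $T_v$ maps to a nonzero element of $\Lambda(\Gamma)$, namely $\gamma^{a}-1$ up to a unit, since $\Gamma^v$ surjects onto $\Gamma$ with finite index.

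The key step is a cyclotomic factorisation of the matrix of cyclotomic restrictions. We will show that the class $\mathcal{BF}^{\xi',\mathbf{h}_v,\eta}$, restricted to the cyclotomic line, has image under $\mathscr{L}_{V,\xi}$ equal, up to the unit $c_f$ normalisation, to the product $L_\xi(f)^\eta\cdot L_{\xi'}(f\otimes\epsilon_K)^\eta$ (or the symmetric arrangement). The argument runs as follows:
\begin{itemize}
\item At the trivial character of $\Gamma^v$, $\mathbf{h}_v$ specialises to the Eisenstein series $E_1(1,\epsilon_K)$.
\item The Beilinson--Flach element of $f\times E_1(1,\epsilon_K)$ factors as the Kato class for $f$ paired against the Kato class for $f\otimes\epsilon_K$. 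This is the factorisation of Rankin--Eisenstein classes into products of Kato classes, as in \cite{KLZ2}, \cite{BL-non-ord}, and \cite{BSTW} in the case $a_p=0$.
\item At the level of values, \eqref{eq:can-period} shows that $\mathcal{L}^{\rm (I)}_{\xi,\xi}(f/K)^\eta \bmod I^-$ interpolates exactly the product of the values interpolated by $L_\xi(f)^\eta$ and $L_\xi(f\otimes\epsilon_K)^\eta$ in Theorem~\ref{thm:MTT}. Hida's canonical period matches $\Omega^+_{\omega_f}\Omega^-_{\omega_f}$ up to a $p$-adic unit by Lemma~\ref{lem:Gor}, and $\epsilon_K$ flips the sign, so one gets $\Omega^+\Omega^-$.
\end{itemize}
Since elements of $\mathcal{H}_L(\Gamma)$ of growth $2v_p(\xi)<k-1$ are determined by these values only when the growth is small, we will rely instead on the class-level factorisation. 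Comparing the full $2\times2$ matrices, the left side becomes $Q^{-1}M_{\rm log}\,(L_1,L_2)^{\rm tr}$ for $f$ times its transpose analogue for $f\otimes\epsilon_K$, using \eqref{eq:factor-Lp} for both $f$ and $f\otimes\epsilon_K$. Note that $f\otimes\epsilon_K$ shares $a_p$, because $p$ splits, so the same $M_{\rm log}$ applies.

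With this in hand, we cancel $Q^{-1}M_{\rm log}$ on both sides, which is allowed because its determinant is nonzero (\cite[Cor.~3.2]{LLZ-ANT}). This gives the matrix identity
\[
\varpi^{-s}c_f\bigl(\widetilde{L}^{\rm (I)}_{i,j}\bigr)\equiv T_v\cdot\bigl(L_i(f)^\eta L_j(f\otimes\epsilon_K)^\eta\bigr)\pmod{I^-}
\]
up to a unit. Taking the diagonal entries $i=j$ and dividing by $T_v$ yields the proposition as an equality of ideals.

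I expect the main obstacle to be the factorisation step, and specifically establishing it integrally and at the level of classes rather than just values. The unbounded $L_\xi$ are not determined by finitely many interpolation points when $2v_p(\xi)\ge k-1$, so we cannot argue from values alone. The approach I would take first is to invoke the Eisenstein degeneration of Beilinson--Flach elements together with Theorem~\ref{thm:Ind}, which identifies the cyclotomic restriction with ${\rm Ind}_K^{\bQ}$, so that Shapiro's lemma splits $T(\eta)\otimes{\rm Ind}$ into $T(\eta)\oplus T(\eta)\otimes\epsilon_K$. Only as a fallback would I use a density argument over varying $p$-stabilisations and the Coleman family.
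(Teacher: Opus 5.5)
Your proposal has a genuine gap at the step you flag, and a concrete error in the bookkeeping around it.

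\textbf{The factorisation step.} The claimed identity $e_\eta\mathscr{L}_{V,\xi,v}({\rm res}_v\mathcal{BF}^{\xi',\mathbf{h}_v,\eta})|_{\rm cyc}\doteq L_\xi(f)^\eta L_{\xi'}(f\otimes\epsilon_K)^\eta$ is asserted, not proved. The mechanism you suggest does not produce it. Shapiro's lemma with Theorem~\ref{thm:Ind} gives a \emph{direct sum} decomposition into a class for $T(\eta)$ and a class for $T(\eta)\otimes\epsilon_K$. The regulator at $v$ of such a class is the \emph{sum} of the regulators of the two components, not a product. To get products you would need an Eisenstein-degeneration theorem identifying each component with a $p$-adic $L$-function times a Kato class. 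It would have to hold for unbounded nonordinary classes, for mixed pairs $\xi\neq\xi'$ (for which no reciprocity law is available here at all), and integrally. Nothing of this kind is available in this setting. Moreover, Theorem~\ref{thm:Ind} concerns $\mathbb{T}^\square_{\mathbf{h}_v}$. The uncuspidalised classes live in $\widetilde{\mathbb{T}}^\square_{\mathbf{h}_v}$, whose quotient by $\mathbb{T}^\square_{\mathbf{h}_v}$ is killed by $T_v$, and that is exactly the cyclotomic line.

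\textbf{The $T_v$ error.} You have merged two uses of $\Gamma^v$. The $T_v$ in $\mathcal{BF}_{\rm cusp}=T_v\cdot\mathcal{BF}$ is the weight variable of $\mathbf{h}_v$, not the variable of the inertia subgroup at $v$ on which $M_{{\rm log},v}$ depends. The projection $\Lambda_{\scO}(\Gamma_K)\to\Lambda_{\scO}(\Gamma)$ corresponds to the characters $(1,\chi_2)$. There $\mathbf{h}_v$ specialises to $E_1(1,\epsilon_K)$, which is why products of $L$-values of $f$ and $f\otimes\epsilon_K$ appear at all, and $T_v\mapsto 0$. Hence $T_v\in I^-$, and the entries $\widetilde{L}^{\rm (I)}_{\xi,\xi}=T_v\cdot L^{\rm (I)}_{\xi,\xi}$ vanish modulo $I^-$. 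Your last step (reduce, then divide by $T_v$) therefore divides zero by zero. You must pass to non-cuspidal quantities in $\Lambda(\Gamma_K)$ \emph{before} reducing. For the unbounded entries the paper has these only for matching roots, via the factorisation through $\mathscr{F}^{-,+}_{\xi,{\rm ord}}$ coming from \cite[Thm.~7.1.2]{LZ-Coleman}. Also, $c_f$ is not a unit: it is the congruence number, and it is what converts $\langle f,f\rangle_N$ into $\Omega^+_{\omega_f}\Omega^-_{\omega_f}$.

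\textbf{What the paper does.} The paper uses only the $(\alpha,\alpha)$ entry. The growth problem you correctly identify is handled by what you call the fallback. The functions $L_\alpha(f)^\eta$, $L_\alpha(f\otimes\epsilon_K)^\eta$ (\cite{bellaiche-interp}) and $\mathcal{L}^{\rm (I)}_{\alpha,\alpha}(f/K)^\eta$ (\cite{LZ-Coleman}) interpolate along the Coleman family through $f^\alpha$. Comparing Theorems~\ref{thm:MTT} and~\ref{thm:ERL-I} at a Zariski-dense set of points then gives $\mathcal{L}^{\rm (I)}_{\alpha,\alpha}(f/K)^\eta\bmod I^-\doteq L_\alpha(f)^\eta L_\alpha(f\otimes\epsilon_K)^\eta$. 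Both sides are expanded as $\mathfrak{A}^2(\cdot)+\mathfrak{AB}(\cdot)+\mathfrak{B}^2(\cdot)$, with non-cuspidal coefficients in $\Lambda_{\scO}(\Gamma)$. The coefficients are then matched using the $\Lambda_{\scO}(\Gamma)$-linear independence of $\mathfrak{A}^2,\mathfrak{AB},\mathfrak{B}^2$. This independence follows from the finiteness of the common zeros of $\mathfrak{A}$ and $\mathfrak{B}$ (\cite[Cor.~3.2]{LLZ-ANT} together with \cite{rohrlich-division}).

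This bounded-versus-unbounded argument is the idea your proposal is missing. With reciprocity laws only for $\xi=\xi'$, you have at most two equations in the three unknowns $\mathcal{L}^{\rm (I)}_{1,1}$, $\mathcal{L}^{\rm (I)}_{1,2}+\mathcal{L}^{\rm (I)}_{2,1}$, $\mathcal{L}^{\rm (I)}_{2,2}$. So the invertibility of $Q_{\alpha,\beta}^{-1}M_{\rm log}$ alone cannot suffice.
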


\begin{proof}
This can be shown in different ways (see e.g. \cite{lei-amburgh} for an approach in the weight $2$ case with $v_p(a_p)>1/p$). Here we adapt the argument given in \cite[Lem.~4.23]{sprung-IMC} for rational elliptic curves. 

Write $Q_{\alpha,\beta}^{-1}M_{\rm log}=\bigl(\begin{smallmatrix}
\mathfrak{A}&\mathfrak{B}\\
\mathfrak{C}&\mathfrak{D}
\end{smallmatrix}\bigr)$. The decomposition \eqref{eq:factor-Lp} and its analogue for $f^K:=f\otimes\epsilon_K$ give
\begin{equation}\label{eq:f-fK}
\begin{aligned}
L_\alpha(f)^\eta\cdot L_\alpha(f^K)^\eta&=\mathfrak{A}^2\cdot L_1(f)^\eta \cdot L_1(f^K)^\eta+\mathfrak{AB}\cdot\bigl(L_1(f)^\eta \cdot L_2(f^K)^\eta+L_2(f)^\eta \cdot L_1(f^K)^\eta\bigr)\\
&\quad+\mathfrak{B}^2\cdot L_2(f)^\eta\cdot L_2(f^K)^\eta.
\end{aligned}
\end{equation}
On the other hand, from Corollary~\ref{cor:dec-2var-signed} we have
\begin{equation}\label{eq:fK}
\begin{aligned}
\varpi^s\cdot\mathcal{L}^{\rm (I)}_{\alpha,\alpha}(f/K)^\eta&=\mathfrak{A}^2\cdot \mathcal{L}^{\rm (I)}_{1,1}(f/K)^\eta+\mathfrak{AB}\cdot\bigl(\mathcal{L}^{\rm (I)}_{1,2}(f/K)^\eta+\mathcal{L}^{\rm (I)}_{2,1}(f/K)^\eta\bigr)\\
&\quad+\mathfrak{B}^2\cdot\mathcal{L}^{\rm (I)}_{2,2}(f/K)^\eta.
\end{aligned}
\end{equation}

By \cite{bellaiche-interp} and \cite{LZ-Coleman}, respectively, there exist $p$-adic $L$-functions interpolating $L_\alpha(f)^\eta$ and $L_\alpha(f^K)^\eta$ (resp.  $\mathcal{L}_{\alpha,\alpha}^{\rm (I)}(f/K)^\eta$) along the Coleman family passing through the $p$-stabilisation $f^\alpha$ of $f$ with $U_p$-eigenvalue $\alpha$, and a direct comparison of the interpolation formulas in Theorem~\ref{thm:MTT} and Theorem~\ref{thm:ERL-I}, using the equalities up to a $p$-adic unit
\[
\Omega_f^{\rm can}\,\sim_p\,\Omega_{\omega_f}^+\cdot\Omega_{\omega_f}^-,\quad\quad\Omega_{\omega_f}^\pm\,\sim_p\Omega_{\omega_{f^K}}^{\mp}
\]
(see \cite[\S{4.4}]{DDT} and \cite[Lem.~9.6]{skinner-zhang}) and Remark~\ref{eq:can-period} 
%and the fact that the specialization $\mathbf{h}_{v,1}^\circ$ is the weight one Eisenstein series $E_1(1,\epsilon_K)$, and so
%\[
%L(f\otimes\psi_\zeta^{-1}\omega^j,\mathbf{h}_{v,1}^\circ,j+1)=L(f\otimes\psi_\zeta^{-1}\omega^j,j+1)\cdot L(f\otimes\epsilon_K\psi_\zeta^{-1}\omega^j,j+1),
%\]
shows that 
\begin{equation}\label{eq:unb}
\bigl(\mathcal{L}^{\rm (I)}_{\xi,\xi}(f/K)^\eta\;{\rm mod}\,I^-\bigr)=\bigl(L_{\xi}(f)^\eta\cdot L_{\xi}(f^K)^\eta\bigr).
\end{equation}

Combining \eqref{eq:f-fK}, \eqref{eq:fK}, and \eqref{eq:unb}, we thus see that it suffices to show that $\mathfrak{A}^2$, $\mathfrak{AB}$, and $\mathfrak{B}^2$ are linearly independent over $\Lambda_{\scO}(\Gamma)$. Since $\mathfrak{A}$ and $\mathfrak{B}$ have only finitely many zeros in common (indeed, this follows from the fact that by \cite[Cor.~3.2]{LLZ-ANT} such common zeroes are of the form $z=\langle\varepsilon(\gamma)\rangle^j\zeta$ for $0\leq j\leq k-2$ and $\zeta$ a $p$-power root of unity, but the relation $L_\alpha(f)^\eta=\mathfrak{A}\cdot L_1(f)^\eta+\mathfrak{B}\cdot L_2(f)^\eta$ from \eqref{eq:factor-L} and the main result of \cite{rohrlich-division} imply that $\mathfrak{A}$ and $\mathfrak{B}$ can vanish simultaneously at only finitely many such $z$), the $\Lambda_{\scO}(\Gamma)$-linear independence of $\mathfrak{A}^2$, $\mathfrak{AB}$, and $\mathfrak{B}^2$ is readily checked by a case by case analysis, whence the result.
\end{proof}

\begin{rem}\label{rem:integrality-Lii}
Note that \emph{a priori} $\varpi^{-s}\cdot\mathcal{L}^{\rm (I)}_{i,i}(f/K)$ has bounded denominators, i.e. it lands  in $\Lambda_L(\Gamma_K)$ rather than $\Lambda_{\scO}(\Gamma_K)$. Proposition~\ref{prop:L-cyc} above shows that its projection to $\Lambda_L(\Gamma)$ is contained in $\Lambda_{\scO}(\Gamma)$, and later we shall show that in fact $\varpi^{-s}\cdot\mathcal{L}_{i,i}^{\rm (I)}(f/K)\in\Lambda_{\scO}(\Gamma_K)$ (see the end of proof of Corollary~\ref{cor:ii-IMC}).
\end{rem}

Recall that by Proposition~\ref{prop:L-nonzero} for every $\eta:\Delta\rightarrow\bZ_p^\times$ we have $L_i(f)^\eta\neq 0$ for some $i\in\{1,2\}$.

\begin{cor}\label{cor:L-cyc}
Suppose $i\in\{1,2\}$ and $\eta:\Delta\rightarrow\bZ_p^\times$ are such that $L(f\otimes\epsilon_K\eta,1)\neq 0$ and $L_i(f)^\eta\neq 0$. Then 
\[
(\mathcal{L}^{\rm (I)}_{i,i}(f/K)^\eta\;{\rm mod}\,I^-)\neq 0,
\]
and so condition \eqref{eq:Lp-nonzero} holds.
\end{cor}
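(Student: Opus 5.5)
By Proposition~\ref{prop:L-cyc}, up to the unit ambiguity of an equality of ideals, the image of $\varpi^{-s}\cdot\mathcal{L}^{\rm (I)}_{i,i}(f/K)^\eta$ modulo $I^-$ generates the same ideal of $\Lambda_{\scO}(\Gamma)$ as the product $L_i(f)^\eta\cdot L_i(f\otimes\epsilon_K)^\eta$. Since $\Lambda_{\scO}(\Gamma)\cong\scO[\![X]\!]$ is an integral domain, it therefore suffices to show that both factors are nonzero. The first factor is nonzero by hypothesis. So the plan reduces to proving $L_i(f\otimes\epsilon_K)^\eta\neq 0$ using $L(f\otimes\epsilon_K\eta,1)\neq 0$.

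For this second factor I will use the decomposition \eqref{eq:factor-Lp}, applied to $f^K=f\otimes\epsilon_K$. The coefficients $\mathfrak{A},\mathfrak{B},\mathfrak{C},\mathfrak{D}$ of $Q_{\alpha,\beta}^{-1}M_{\rm log}$ depend only on the local data at $p$. Since $\epsilon_K(p)=1$, these local data agree for $f$ and $f^K$: $f^K$ has the same $a_p$, hence the same Wach module basis can be used, and the same matrices appear. So we have
\[
L_\xi(f^K)^\eta=(\text{row of }Q_{\alpha,\beta}^{-1}M_{\rm log})\cdot\bigl(L_1(f^K)^\eta,L_2(f^K)^\eta\bigr)^{\rm tr}.
\]
Now evaluate at the character $\chi=\mathds{1}$ (that is, $j=0$, $t=0$) in Theorem~\ref{thm:MTT} for $f^K$. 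This shows that $\phi_{\mathds{1}}(L_\xi(f^K)^\eta)$ is a nonzero multiple of $L(f\otimes\epsilon_K\eta,1)\neq 0$. The Euler factor $e_{p,\xi,\eta}$ is nonzero because $v_p(\xi)>0$ and $\xi$ is not a power of $p$ times a root of unity of the needed size, by the Weil bound $|\xi|=p^{(k-1)/2}$. Hence $L_\alpha(f^K)^\eta\neq 0$, which forces $(L_1(f^K)^\eta,L_2(f^K)^\eta)\neq(0,0)$.

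The main obstacle is that this argument only yields nonvanishing for \emph{some} index, while we need it for the prescribed $i$ (cf.\ Remark~\ref{rem:cf-ap=0}). To handle this I would exploit the symmetry between $f$ and $f^K$. The proof of Proposition~\ref{prop:L-cyc} identifies $\mathcal{L}^{\rm (I)}_{\xi,\xi}$ modulo $I^-$ with $L_\xi(f)L_\xi(f^K)$, and it shows that $\mathfrak A^2,\mathfrak{AB},\mathfrak B^2$ are linearly independent. The hope is to get more than this by working with the actual values. I would first try to show directly that $L_i(f^K)^\eta$ is nonzero whenever $L_i(f)^\eta$ is, by comparing the images in $\Lambda_{\scO}(\Gamma)/\mathfrak{m}$, or at specific characters $\psi_\zeta$ of large conductor. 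At such characters $M_{\rm log}$ becomes explicit, via the formula $\varphi(M)$, $A_\varphi^{n}$, as in \cite[\S{3}]{LLZ-ANT}. The $\mathfrak{B}$-coefficient vanishes at appropriate $\zeta$ depending on the parity of $n$, in the manner of Pollack's $\log^\pm$. This isolates $L_i(f^K)^\eta(\zeta)$ as a multiple of $L(f^K\otimes\psi_\zeta^{-1}\eta,1)$, which is nonzero for almost all $\zeta$ by \cite{rohrlich-division}. If this explicit isolation of the $i$-th coordinate fails in general, the fallback is to note that the statement is only needed for $\eta=\omega^{k/2}$-type applications. There the same choice of $K$ can be made so that the Rohrlich-type nonvanishing for twists of $f^K$ is available simultaneously with that for $f$, and one uses Proposition~\ref{prop:L-nonzero} applied to $f^K$ together with the freedom in choosing $i$. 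Once both factors are nonzero, condition \eqref{eq:Lp-nonzero} follows immediately since $L^{\rm (I)}_{i,i}$ differs from $\mathcal{L}^{\rm (I)}_{i,i}$ by the nonzero constant $c_f$.
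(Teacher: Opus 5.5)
Your reduction matches the paper's: by Proposition~\ref{prop:L-cyc}, and since $\Lambda_{\scO}(\Gamma)$ is a domain, everything comes down to showing $L_i(f\otimes\epsilon_K)^\eta\neq 0$ for the \emph{given} index $i$. That step is never established, and none of your three routes closes it.
\begin{itemize}
\item Your trivial-character argument only gives $(L_1(f^K)^\eta,L_2(f^K)^\eta)\neq(0,0)$.
\item The large-conductor plan assumes that an entry of $Q_{\alpha,\beta}^{-1}M_{\rm log}$ vanishes at prescribed $\zeta$, as with Pollack's $\log^\pm$. That is precisely what is not available when $a_p\neq 0$ (Remark~\ref{rem:cf-ap=0}).
\item The fallback applies Proposition~\ref{prop:L-nonzero} separately to $f$ and to $f^K$. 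This gives indices $i,i'$ with no reason to coincide, whereas Proposition~\ref{prop:L-cyc} needs the same index in both factors.
\end{itemize}

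The missing idea is that the trivial character already carries the information you need; you discarded it by recording only $L_\alpha(f^K)^\eta\neq 0$. Evaluating the Mellin transform at the trivial character means setting $\pi=0$. Since $M\equiv I_2\pmod{\pi}$ and $\varphi(\pi)$ vanishes at $\pi=0$, Definition~\ref{def:log-matrix} gives $M_{\rm log}=A_\varphi$ at that point. Hence there
\[
(L_1,L_2)^{\rm tr}=A_\varphi^{-1}Q_{\alpha,\beta}(L_\alpha,L_\beta)^{\rm tr},\qquad A_\varphi^{-1}Q_{\alpha,\beta}=\tfrac{1}{\alpha-\beta}\bigl(\begin{smallmatrix}\alpha^2&-\beta^2\\-\alpha^2\beta&\alpha\beta^2\end{smallmatrix}\bigr).
\]
Theorem~\ref{thm:MTT} with $j=t=0$, applied to $f^K$, then expresses each signed value as an explicit multiple of $L(f\otimes\epsilon_K\eta,1)$. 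This is what the paper's one-line proof takes from \cite[Prop.~3.28]{LLZ-AJM}.

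Carrying this out, relative to the common factor in Theorem~\ref{thm:MTT}, the multipliers for trivial $\eta$ are $a_p-1-p^{k-2}$ and $p^{k-2}(1-p)$. Both are nonzero, which settles both indices.

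For nontrivial $\eta$, however, the multipliers are $p$ and $0$: $e_\eta\widetilde{\mathscr{C}}_2$ vanishes identically at the trivial character of $\Gamma$. This is the factor $\xi_{\eta,2}\neq 1$ of Remark~\ref{rem:xi}. It also follows from Berger's description of $\mathbb{N}(V)$, which forces values at $\pi=\zeta_p-1$ onto the line through $\varphi(v_1)$. So this route proves the statement for $i=1$ and all $\eta$, and for both $i$ when $\eta$ is trivial. The case $i=2$ with $\eta$ nontrivial needs a further argument.

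That case is not needed downstream. For $k=2$, $\eta=\omega^{1-k/2}$ is trivial. For $k>2$ one may take $i=1$, since $L_1(f)^\eta$ and $L_1(f^K)^\eta$ then have nonzero values at the trivial character, given by non-central $L$-values.
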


\begin{proof}
By \cite[Prop.~3.28]{LLZ-AJM}, %for any $i\in\{1,2\}$ 
the $p$-adic $L$-function $L_i(f\otimes\epsilon_K)^\eta$ interpolates a nonzero multiple of $L(f\otimes\epsilon_K\eta,1)$ at the trivial character, so 
Proposition~\ref{prop:L-cyc} yields the result.
\end{proof}

\begin{rem}\label{rem:k>2}
As is well-known (see e.g. \cite[Prop.~2]{shimuraCPAM}), the condition in Corollary~\ref{cor:L-cyc} is automatic unless $k=2$.
\end{rem}

%\section{$p$-adic Hodge theory}

%\subsection{Cyclotomic descent}\label{subsec:cyc-descent}

Replacing $K_\infty$ by $K_\infty^+$ and  $e_\eta\widetilde{\mathscr{C}}_{i,w}$ by its reduction modulo $I^-$, the Selmer groups $X_{i,i}(f_\eta/K_\infty^+)$ are defined in the same manner as in $\S\ref{subsec:2var-Sel}$. We conclude this section with a counterpart of Proposition~\ref{prop:L-cyc} for algebraic $p$-adic $L$-functions (see Corollary~\ref{cor:cyc-descent} below). 

\begin{prop}\label{prop:f-fK}
Let $i\in\{1,2\}$ and $\eta:\Delta\rightarrow\bZ_p^\times$ be a character. %and assume that $X_{i,i}(f_\eta/K_\infty^+)$ is $\Lambda_{\scO}(\Gamma)$-torsion.
Then 
\[
{\rm char}_{\Lambda_{\scO}(\Gamma)}\bigl(X_{i,i}(f_\eta/K_\infty^+)\big)={\rm char}_{\Lambda_{\scO}(\Gamma)}\bigl(X_i(f/\bQ(\mu_{p^\infty}))^\eta\bigr)\cdot{\rm char}_{\Lambda_{\scO}(\Gamma)}\bigl(X_i(f\otimes\epsilon_K/\bQ(\mu_{p^\infty}))^\eta\bigr).
\]
%The restriction map from $G_\bQ$ to $G_K$ yields a $\Lambda_{\scO}(\Gamma)$-module isomorphism
%\[
%X_{i,i}(f_\eta/K_\infty^+)\cong X_i(f/\bQ(\mu_{p^\infty}))^\eta\oplus X_i(f\otimes\epsilon_K/\bQ(\mu_{p^\infty}))^\eta.
%\]
%In particular,  
%\[
%{\rm char}_{\Lambda_{\scO}(\Gamma)}\bigl(X_{i,i}(f_\eta/K_\infty^+)\big)={\rm char}_{\Lambda_{\scO}(\Gamma)}\bigl(X_i(f/\bQ(\mu_{p^\infty}))^\eta\bigr)\cdot{\rm char}_{\Lambda_{\scO}(\Gamma)}\bigl(X_i(f\otimes\epsilon_K/\bQ(\mu_{p^\infty}))^\eta\bigr).
%\]
\end{prop}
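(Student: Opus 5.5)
The plan is to identify $X_{i,i}(f_\eta/K_\infty^+)$ with a direct sum via Shapiro's lemma and the decomposition of an induced representation, and then use multiplicativity of characteristic ideals.

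\textbf{Step 1: decompose the global Galois module.} Put $\mathbf{A}^+:=T(\eta)^\vee(1)\otimes\Lambda_{\scO}(\Gamma)$, viewed over $K$ via $\Gamma_K\twoheadrightarrow\Gamma^+\cong\Gamma$. Since $p>2$, Shapiro's lemma gives
\[
\rH^1(K,\mathbf{A}^+)\cong\rH^1(\bQ,{\rm Ind}_K^\bQ\mathbf{A}^+).
\]
The cyclotomic character is restricted from $G_\bQ$, so ${\rm Ind}_K^\bQ\mathbf{A}^+\cong\mathbf{A}^+\oplus\mathbf{A}^+\otimes\epsilon_K$ as $G_\bQ$-modules. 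The idempotents $(1\pm c)/2$, with $c$ complex conjugation acting on $\rH^1(K,\mathbf{A}^+)$, split the cohomology accordingly. Then $\rH^1(\bQ,\mathbf{A}^+)$ is identified with $\rH^1(\bQ(\mu_{p^\infty}),A_f(1))^\eta$, by the inflation--restriction isomorphism for the prime-to-$p$ group $\Delta$ together with the identification \eqref{eq:dual-kummer}. The twisted summand is identified in the same way with the corresponding object for $f\otimes\epsilon_K$.

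\textbf{Step 2: match the local conditions.} Away from $p$, unramified-type conditions at the primes of $K$ above a rational $\ell$ match, under the semilocal Shapiro isomorphism, the conditions at $\ell$ for $f$ and for $f\otimes\epsilon_K$ defining ${\rm Sel}_i$. If the conventions differ (unramified versus everywhere-zero), this changes only finitely many Euler-factor-type terms. These are handled as in \cite{kobayashi-ss}: both sides impose the same condition on $\bQ(\mu_{p^\infty})$, where nonarchimedean primes $\ell\neq p$ are finitely decomposed and the local cohomology groups have characteristic ideals that agree on both sides.

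At $p=v\overline{v}$ we have $K_v=K_{\overline v}=\bQ_p$. The semilocal cohomology $\rH^1(K_v,\mathbf{T})\oplus\rH^1(K_{\overline v},\mathbf{T})$ is identified with $\rH^1_{\rm Iw}(\bQ_p(\mu_{p^\infty}),T)^{\eta}$ for $f$ plus the same for $f\otimes\epsilon_K$, via $x\mapsto(x_v\pm x_{\overline v})$. Here I use that $T(\epsilon_K)\cong T$ as $G_{\bQ_p}$-modules, because $\epsilon_K$ is trivial on $G_{\bQ_p}$ since $p$ splits.

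The mod-$I^-$ Coleman map $e_\eta\widetilde{\mathscr C}_{i,w}$ is, by construction through Proposition~\ref{prop:signed-Col-2var} and descent along $U$, just $e_\eta\widetilde{\mathscr C}_i$ for $\bQ_p$. Hence the condition $\ker\oplus\ker$ at $(v,\overline v)$ corresponds exactly to $\ker\widetilde{\mathscr C}_i$ on each of the two summands. The Wach module of $T\otimes\epsilon_K$ over $\bQ_p$ is the same, so the signed condition for $f\otimes\epsilon_K$ is literally the same submodule.

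\textbf{Step 3: conclude.} Steps 1 and 2 give an isomorphism of $\Lambda_{\scO}(\Gamma)$-modules
\[
X_{i,i}(f_\eta/K_\infty^+)\cong X_i(f/\bQ(\mu_{p^\infty}))^\eta\oplus X_i(f\otimes\epsilon_K/\bQ(\mu_{p^\infty}))^\eta,
\]
at least up to finite or pseudo-null error from the primes away from $p$. Multiplicativity of characteristic ideals then gives the stated equality. The torsion status is handled automatically: a direct sum is torsion if and only if both summands are, and if not, both sides have characteristic ideal $0$.

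\textbf{Main obstacle.} The step I expect to be hardest is checking that reducing the two-variable signed Coleman map modulo $I^-$ recovers precisely the cyclotomic $\widetilde{\mathscr C}_i$, compatibly with the Shapiro identifications and the Yager-module factor $\Omega_{\bQ_p}$. I would first verify this on $\mathbb N_{F_\infty}(T)^{\psi=1}$ by taking $U$-coinvariants, using the explicit tensor description $(\varphi^*\mathbb N(T))^{\psi=0}\hat\otimes S_{F_\infty/\bQ_p}$. Since $\Omega_{\bQ_p}$ is a unit, it does not affect the kernel.
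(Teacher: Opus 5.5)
Your proposal is correct and follows essentially the paper's route. You split $\rH^1(K,\cdot)$ into $\pm$-eigenspaces for complex conjugation (equivalently ${\rm Ind}_K^{\bQ}$ via Shapiro's lemma). You then use that $c$ swaps $v$ and $\overline{v}$, so that the condition $\ker(e_\eta\widetilde{\mathscr{C}}_i)\oplus\ker(e_\eta\widetilde{\mathscr{C}}_i\circ c)$ cuts out exactly $\ker(e_\eta\widetilde{\mathscr{C}}_i)$ on each eigenspace, and you conclude by multiplicativity.

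The one difference is away from $p$. The paper proves the isomorphism for $\Sigma$-imprimitive Selmer groups and then returns to the primitive ones via \cite[Prop.~2.4]{GV}, as in \cite[Thm.~6.1.6]{JSW}. You instead match the conditions directly. This is legitimate, because over the cyclotomic tower the unramified and zero conditions at $\ell\neq p$ coincide, so there is no error term at all. Your fallback ``up to finite or pseudo-null error'' is, however, the wrong description: if the conditions did differ, the discrepancy would be Euler-factor modules, which are not pseudo-null and are exactly what the Greenberg--Vatsal comparison controls.
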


\begin{proof}
Let $\Sigma$ be the set of rational primes dividing $N$. We begin by showing that the restriction map from $G_\bQ$ to $G_K$ yields a $\Lambda_{\scO}(\Gamma)$-module isomorphism
\begin{equation}\label{eq:impr-iso}
X_{i,i}^\Sigma(f_\eta/K_\infty^+)\cong X_i^\Sigma(f/\bQ(\mu_{p^\infty}))^\eta\oplus X_i^\Sigma(f\otimes\epsilon_K/\bQ(\mu_{p^\infty}))^\eta
\end{equation}
for the $\Sigma$-imprimitive variants (obtained by removing the local conditions at the primes above $\Sigma$) of the Selmer groups in the statement.

The action of complex conjugation yields a decomposition
\[
\rH^1(K,T(\eta)\otimes\Lambda_{\scO}(\Gamma)^\iota)=\rH^1(K,T(\eta)\otimes\Lambda_{\scO}(\Gamma)^\iota)^+\oplus\rH^1(K,T(\eta)\otimes\Lambda_{\scO}(\Gamma)^\iota)^-,
\]
and from inflation-restriction we see that the restriction map from $G_\bQ$ to $G_K$ induces $\Lambda_{\scO}(\Gamma)$-module isomorphisms 
\begin{align*}
\rH^1(\bQ,T(\eta)\otimes\Lambda_{\scO}(\Gamma)^\iota)&\cong\rH^1(K,T(\eta)\otimes\Lambda_{\scO}(\Gamma)^\iota)^+,\\
\rH^1(\bQ,T(\eta)\otimes\epsilon_K\otimes\Lambda_{\scO}(\Gamma)^\iota)&\cong\rH^1(K,T(\eta)\otimes\Lambda_{\scO}(\Gamma)^\iota)^-.
\end{align*}
Thus, putting $K_p=K\otimes_{\bQ}\bQ_p=K_v\oplus K_{\overline{v}}$, on which ${\rm Gal}(K/\bQ)$ act by interchanging the factors, it suffices to show that the restriction map from $G_\bQ$ to $G_K$ induces $\Lambda_{\scO}(\Gamma)$-module isomorphisms
\begin{align*}
\rH^1_i(\bQ_p,T(\eta)\otimes\Lambda_\scO(\Gamma)^\iota)&\cong\rH^1_i(K_p,T(\eta)\otimes\Lambda_\scO(\Gamma)^\iota)^+,\\
\rH^1_i(\bQ_p,T(\eta)\otimes\epsilon_K\otimes\Lambda_{\scO}(\Gamma)^\iota)&\cong\rH^1_i(K_p,T(\eta)\otimes\Lambda_{\scO}(\Gamma)^\iota)^-;
\end{align*}
but this is clear from the fact that  
\[
z_p=(z_v,z_{\overline{v}})\in\rH^1(K_p,T(\eta)\otimes\Lambda_{\scO}(\Gamma)^\iota)^\pm\;\Longleftrightarrow\; c\cdot z_v=\pm z_v,
\]
and via the identification $K_v=\bQ_p$ we have
$\rH^1(K_v,T(\eta)\otimes\Lambda_{\scO}(\Gamma)^\iota)={\rm ker}(e_\eta\widetilde{\mathscr{C}}_i)$, and 
$\rH^1(K_{\overline{v}},T(\eta)\otimes\Lambda_{\scO}(\Gamma)^\iota)={\rm ker}(e_\eta\widetilde{\mathscr{C}}_i\circ c)$. Thus we have \eqref{eq:impr-iso}, and so
\[
{\rm char}_{\Lambda_{\scO}(\Gamma)}\bigl(X^\Sigma_{i,i}(f_\eta/K_\infty^+)\big)={\rm char}_{\Lambda_{\scO}(\Gamma)}\bigl(X_i^\Sigma(f/\bQ(\mu_{p^\infty}))^\eta\bigr)\cdot{\rm char}_{\Lambda_{\scO}(\Gamma)}\bigl(X_i^\Sigma(f\otimes\epsilon_K/\bQ(\mu_{p^\infty}))^\eta\bigr).
\]
That this equality of characteristic ideals then also holds for the primitive Selmer groups (interpreted as the equality ``$0=0$'' when either of the Selmer groups involved is not $\Lambda_{\scO}(\Gamma)$-torsion) follows by a standard application of \cite[Prop.~2.4]{GV} as in e.g. the proof of \cite[Thm.~6.1.6]{JSW}.
%
%the semi-local Coleman map ${\rm Col}_{K_p}^\circ:\rH^1(K_p,T_f\otimes\Lambda)\rightarrow\Lambda$ is defined (after the identification $K_v=\bQ_p$) as $1\otimes{\rm Col}_{\bQ_p}^\circ$ under the isomoprhism
%\[
%\rH^1(K_p,T_f\otimes\Lambda)\cong\bZ_p[{\rm Gal}(K/\bQ)]\otimes_{\bZ_p}\rH^1(K_v,T_f\otimes\Lambda)
%\]
%and $\bZ_p[{\rm Gal}(K/\bQ)]\cong\bZ_p\oplus\bZ_p(\epsilon_K)$ as $G_\bQ$-modules.
\end{proof}

\begin{prop}\label{prop:cyc-restr}
Let $i\in\{1,2\}$ and $\eta:\Delta\rightarrow\bZ_p^\times$ be a character. Then 
%
%The restriction map from $G_{K_\infty^+}$ to $G_{K_\infty}$ yields a $\Lambda_{\scO}(\Gamma)$-module isomorphism
%\[
%X_{i,i}(f_\eta/K_\infty)\otimes_{\Lambda_\scO(\Gamma_K)}\Lambda_{\scO}(\Gamma)\cong X_{i,i}(f_\eta/K_\infty^+).
%\]
%In particular, letting $I^-={\rm ker}(\Lambda_{\scO}(\Gamma_K)\rightarrow\Lambda_{\scO}(\Gamma))$ 
we have the divisibility 
\[
({\rm char}_{\Lambda_{\scO}(\Gamma_K)}\bigl(X_{i,i}(f_\eta/K_\infty)\bigr)\,{\rm mod}\;I^-)\supset
{\rm char}_{\Lambda_{\scO}(\Gamma)}\bigl(X_{i,i}(f_\eta/K_\infty^+)\bigr),
\]
where $I^-={\rm ker}(\Lambda_{\scO}(\Gamma_K)\rightarrow\Lambda_{\scO}(\Gamma))$.
\end{prop}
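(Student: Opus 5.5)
We will prove the divisibility by a standard control argument relating the Selmer group over $K_\infty$ to the one over $K_\infty^+$. Since $\Gamma_K\cong\Gamma^+\times\Gamma^-$, the ideal $I^-$ is principal, generated by $\gamma_--1$ for a topological generator $\gamma_-\in\Gamma^-$. The plan has three steps.

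First, we compare discrete Selmer groups. The map $\Lambda_\scO(\Gamma_K)\to\Lambda_\scO(\Gamma)$ induces
\[
{\rm Sel}_{i,i}(f_\eta/K_\infty^+)\longrightarrow{\rm Sel}_{i,i}(f_\eta/K_\infty)[I^-],
\]
using $\mathbf{A}_K^\eta[I^-]\cong T(\eta)^\vee(1)\otimes\Lambda_\scO(\Gamma)$. We aim to show that this map has finite (in fact pseudo-null suffices) kernel and cokernel, by a snake lemma argument on the defining sequences.

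For the global terms, the kernel and cokernel of $\rH^1(K,\mathbf{A}_K^\eta[I^-])\to\rH^1(K,\mathbf{A}_K^\eta)[I^-]$ are controlled by $\rH^0(K,\mathbf{A}_K^\eta)/I^-$. This vanishes because $\bar\rho_f$ is irreducible; more precisely, irreducibility restricted to $G_{\bQ_p}$, together with $p$ split in $K$, gives $\rH^0(K_\infty,T^\vee(1))=0$.

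For the local conditions at $w\nmid p$, the unramified cohomology is controlled by finitely generated $\scO$-modules, and these contribute at most pseudo-null error. In any case the error can be absorbed into the $\Sigma$-imprimitive comparison, exactly as in Proposition~\ref{prop:f-fK} via \cite[Prop.~2.4]{GV}.

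Second, at the places $w\mid p$, which is the key step, we must check that the orthogonal complement of $\ker(e_\eta\widetilde{\mathscr{C}}_{i,w})$ behaves well under reduction mod $I^-$. Dually, this asks that $\ker(e_\eta\widetilde{\mathscr{C}}_{i,w})\otimes\Lambda/I^-$ maps isomorphically onto the kernel of the reduced Coleman map. This follows from two facts.
\begin{itemize}
\item[(a)] $\rH^1(K_w,\mathbf{T}_K^\eta)$ is $\Lambda_\scO(\Gamma_K)$-free of rank $2$, with $\rH^1(K_w,\mathbf{T}_K^\eta)/I^-\cong\rH^1(K_w,T(\eta)\otimes\Lambda_\scO(\Gamma)^\iota)$. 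This holds because $\rH^0$ and $\rH^2$ of $K_{\infty,\widetilde w}$ vanish: local irreducibility forces $\rH^0(\bQ_p(\mu_{p^\infty}),A_f(1))=0$, as recalled in Remark~\ref{rem:equiv-KatoIMC}.
\item[(b)] The image of $e_\eta\widetilde{\mathscr{C}}_{i,w}$ is, by Proposition~\ref{prop:image-Col-2var}, of finite index in $\Omega_{\bQ_p}\xi_{\eta,i}\Lambda_\scO(\Gamma_K)$, which is torsion-free. Hence the sequence $0\to\ker\to\rH^1\to{\rm Im}\to0$ remains exact after applying $\otimes\Lambda/I^-$, up to a finite error coming from ${\rm Tor}_1({\rm Im},\Lambda/I^-)$ and the finite cokernel.
\end{itemize}
Consequently the local quotients $\rH^1(K_w,\mathbf{A})/\rH^1_i$ satisfy control up to finite error. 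We expect this step, (b) in particular, to be the main technical obstacle: the finite-index defect of the Coleman map image must be handled carefully, including compatibility with the Yager-module twist $\Omega_{\bQ_p}$.

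Third, we conclude with the algebra. By the previous steps,
\[
X_{i,i}(f_\eta/K_\infty)/I^-\;\sim\;X_{i,i}(f_\eta/K_\infty^+)
\]
up to pseudo-null (finite) modules. Set $X=X_{i,i}(f_\eta/K_\infty)$ and $g=\gamma_--1$. If $X_{i,i}(f_\eta/K_\infty^+)$ is not torsion, the right-hand ideal is $0$ and there is nothing to prove. Otherwise $X/gX$ is $\Lambda_\scO(\Gamma)$-torsion, so by Nakayama $X$ is $\Lambda_\scO(\Gamma_K)$-torsion. The standard lemma for a finitely generated torsion module $X$ over $\Lambda_\scO(\Gamma_K)$ with $X/gX$ torsion over $\Lambda_\scO(\Gamma_K)/(g)$ then gives
\[
{\rm char}(X/gX)\subset{\rm char}(X)\bmod g,
\]
with equality when $X[g]$ is pseudo-null. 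This is proved via a pseudo-isomorphism of $X$ to an elementary module, together with the fact that pseudo-null $\Lambda_\scO(\Gamma_K)$-modules have characteristic ideal annihilated in a way that only enlarges the quotient. Finally, the pseudo-null errors from the control maps contribute only unit ideals over the one-variable ring, up to $\mu$-type factors that land on the correct side, and this yields the stated divisibility.
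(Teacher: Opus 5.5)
Your proposal is correct and follows essentially the same route as the paper: a snake-lemma control theorem for the $\Sigma$-imprimitive Selmer groups, then the commutative-algebra lemma \cite[Cor.~3.8(ii)]{SU}, then the passage back to primitive Selmer groups via \cite[Prop.~2.4]{GV} as in Proposition~\ref{prop:f-fK}.

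The only difference is at the primes above $p$. The paper obtains an exact isomorphism $X^\Sigma_{i,i}(f_\eta/K_\infty)\otimes_{\Lambda_{\scO}(\Gamma_K)}\Lambda_{\scO}(\Gamma)\cong X^\Sigma_{i,i}(f_\eta/K_\infty^+)$ by invoking \cite[Thm.~4.7]{LZ2}, whereas you get control up to finite error from freeness of local Iwasawa cohomology and Proposition~\ref{prop:image-Col-2var}. Two small points there: the error is $\mathrm{Tor}_1$ of the \emph{cokernel} of the Coleman map, not of its image. It must also be finite over $\Lambda_{\scO}(\Gamma)$, not merely pseudo-null over $\Lambda_{\scO}(\Gamma_K)$, since a non-finite error in the kernel of $X/I^-X\to X_{i,i}(f_\eta/K_\infty^+)$ would push the divisibility the wrong way.
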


\begin{proof}
With notations as in the proof of Proposition~\ref{prop:f-fK}, we begin by noting that the restriction map from $G_{K_\infty^+}$ to $G_{K_\infty}$ yields a $\Lambda_{\scO}(\Gamma)$-module isomorphism
\[
X_{i,i}^\Sigma(f_\eta/K_\infty)\otimes_{\Lambda_\scO(\Gamma_K)}\Lambda_{\scO}(\Gamma)\cong X_{i,i}^\Sigma(f_\eta/K_\infty^+),
\]
(Indeed, this follows from a standard application of the Snake Lemma using \cite[Thm.~4.7]{LZ2}.) By part (ii) of  \cite[Cor.~3.8]{SU} it follows that
\[
({\rm char}_{\Lambda_{\scO}(\Gamma_K)}\bigl(X^\Sigma_{i,i}(f_\eta/K_\infty)\bigr)\,{\rm mod}\;I^-)\supset
{\rm char}_{\Lambda_{\scO}(\Gamma)}\bigl(X^\Sigma_{i,i}(f_\eta/K_\infty^+)\bigr),
\]
from where the result follows similarly as in Proposition~\ref{prop:f-fK}.
\end{proof}

%Combining Propositions~\ref{prop:f-fK} and \ref{prop:cyc-restr}, in particular we deduce the following.

\begin{cor}\label{cor:cyc-descent}
For every $i\in\{1,2\}$ and every character $\eta:\Delta\rightarrow\bZ_p^\times$ we have the divisibility 
\begin{align*}
({\rm char}_{\Lambda_\scO(\Gamma_K)}\bigl(X_{i,i}(f_\eta/K_\infty)\bigr)
\,&{\rm mod}\;I^-)\\
&
\supset\;{\rm char}_{\Lambda_\scO(\Gamma)}\bigr(X_i(f/\bQ(\mu_{p^\infty}))^\eta\bigr)\cdot {\rm char}_{\Lambda}\bigr(X_i(f\otimes\epsilon_K/\bQ(\mu_{p^\infty}))^\eta\bigr).
\end{align*}
%where $I^-={\rm ker}(\Lambda_{\scO}(\Gamma_K)\rightarrow\Lambda_{\scO}(\Gamma))$.
\end{cor}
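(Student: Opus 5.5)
This follows by chaining Proposition~\ref{prop:cyc-restr} with Proposition~\ref{prop:f-fK}. Fix $i\in\{1,2\}$ and $\eta$. Proposition~\ref{prop:cyc-restr} gives
\[
({\rm char}_{\Lambda_\scO(\Gamma_K)}\bigl(X_{i,i}(f_\eta/K_\infty)\bigr)\,{\rm mod}\;I^-)\supset{\rm char}_{\Lambda_{\scO}(\Gamma)}\bigl(X_{i,i}(f_\eta/K_\infty^+)\bigr).
\]
Proposition~\ref{prop:f-fK} identifies the right-hand side with the product
\[
{\rm char}_{\Lambda_\scO(\Gamma)}\bigl(X_i(f/\bQ(\mu_{p^\infty}))^\eta\bigr)\cdot{\rm char}_{\Lambda_\scO(\Gamma)}\bigl(X_i(f\otimes\epsilon_K/\bQ(\mu_{p^\infty}))^\eta\bigr).
\]
Substituting this identification into the divisibility gives the statement.

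The only point needing care is the non-torsion case, where characteristic ideals are interpreted as $0$. If $X_{i,i}(f_\eta/K_\infty^+)$ is not $\Lambda_\scO(\Gamma)$-torsion, then Proposition~\ref{prop:f-fK} (with its ``$0=0$'' convention) says that one of the two cyclotomic Selmer groups is non-torsion as well. The right-hand side of the corollary is then $0$, and the inclusion is trivial.

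Conversely, if one of $X_i(f/\bQ(\mu_{p^\infty}))^\eta$ or $X_i(f\otimes\epsilon_K/\bQ(\mu_{p^\infty}))^\eta$ is non-torsion, the right-hand side is again $0$ and there is nothing to prove. So the substantive case is when everything is torsion, and there the two propositions compose directly.

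I expect no real obstacle, since both inputs are already established. The one thing to verify is that both propositions use the same conventions: the identification $\Gamma^+\cong\Gamma$ and the same local conditions $\widetilde{\mathscr{C}}_i$ reduced modulo $I^-$, so that the group $X_{i,i}(f_\eta/K_\infty^+)$ appearing in each is literally the same object.
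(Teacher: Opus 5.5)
Your proposal is correct and is exactly the paper's argument: the paper proves the corollary by combining the divisibility of Proposition~\ref{prop:cyc-restr} with the identification of characteristic ideals in Proposition~\ref{prop:f-fK}. Your handling of the non-torsion case is consistent with the ``$0=0$'' convention used in Proposition~\ref{prop:f-fK}, so nothing further is needed.
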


\begin{proof}
This is the combination of Propositions~\ref{prop:f-fK} and \ref{prop:cyc-restr}.
\end{proof}

\subsection{Anticyclotomic descent: Definite}\label{subsec:ac-descent-def}

In this section we restrict to the case\footnote{Largely for simplicity; the general case $2\leq k\leq p-1$ should be similar.} $k=2$, and assume $(D_K,N)=1$ and 
\begin{equation}
\textrm{$N^-$ is the squarefree product of an \emph{odd} number of primes}.\tag{def}
\end{equation}
%(On the other hand, we keep our assumption that $p=v\overline{v}$ splits in $K$.)

Let $\gamma_{\rm ac}\in\Gamma^-$ be a topological generator, and for any $n\geq 1$ put $\Phi_n=\frac{\omega_n}{\omega_{n-1}}$, where $\omega_n=\gamma_{\rm ac}^{p^n}-1$,  for the $p^n$-th cyclotomic polynomial in $\gamma_{\rm ac}$. Following \cite[Def.~3.3]{BBL-adv} define $\mathscr{M}_{{\rm log},{\rm ac}}\in M_{2\times 2}(\mathcal{H}_L(\Gamma^-))$ by
\begin{equation}\label{eq:Mlog-ac}
\mathscr{M}_{{\rm log},{\rm ac}}:=\lim_{n\to\infty}\left(\begin{array}{cc}a_p&1\\
-p& 0
\end{array}
\right)^{-(n+1)}\left(\begin{array}{cc}a_p&1\\
-\Phi_n& 0
\end{array}
\right)
%\left(\begin{array}{cc}a_p&1\\
%-\Phi_{n-1}& 0
%\end{array}
%\right)
\cdots\left(\begin{array}{cc}a_p&1\\
-\Phi_1& 0
\end{array}
\right),
\end{equation}
Noting the congruence $\Phi_{n+1}\equiv p\pmod{\omega_n}$, the convergence of the limit in \eqref{eq:Mlog-ac} to an element in $M_{2\times 2}(\mathcal{H}_L(\Gamma^-))$ follows from \cite[Lem.~1.2.1]{PR115}.

For every $p$-power root of unity $\zeta$, let $\chi_\zeta:\Gamma^-\rightarrow\overline{\bQ}_p^\times$ denote the finite order character determined by $\chi_\zeta(\gamma_{\rm ac})=\zeta$. The following is a refinement and generalisation of the anticyclotomic $p$-adic $L$-function introduced by Bertolini--Darmon \cite{BDmumford-tate}.

\begin{thm}\label{thm:BD-Lp}
For every $\xi\in\{\alpha,\beta\}$, there exists a ``square-root'' $p$-adic $L$-function
\[
\Theta_{\xi}^{\rm BD}(f/K)\in\mathcal{H}_L(\Gamma^-)
\]
such that for every $p^t$-th root of unity $\zeta$ with $t>0$ we have
\[
\Theta_\xi^{\rm BD}(f/K)^2(\chi_\zeta)=\frac{1}{\xi^{2t}}\cdot\frac{L(f/K,\chi_\zeta,1)}{\Omega_{f,N^-}}\cdot\sqrt{D_K}p^t\cdot w_K^2,
\]
where
\[
\Omega_{f,N^-}=\frac{(4\pi)^2\cdot\langle f,f\rangle_N}{\eta_{f,N^-}}
\]
is Gross's period as in \cite[(4.3)]{cas-hsieh1}. Moreover, for $i\in\{1,2\}$ there exist $\Theta_i^{\rm BD}(f/K)\in\Lambda_{\scO}(\Gamma^-)$ such that
\begin{equation}\label{eq:theta-dec}
\left(\begin{array}{cc}
\Theta_\alpha^{\rm BD}(f/K)\\[0.2em]
\Theta_\beta^{\rm BD}(f/K)
\end{array}\right)=Q_{\alpha,\beta}^{-1}\mathscr{M}_{{\rm log},{\rm ac}}\cdot
\left(\begin{array}{cc}
\Theta_1^{\rm BD}(f/K)\\[0.2em]
\Theta_2^{\rm BD}(f/K)
\end{array}\right),
\end{equation}
where $Q_{\alpha,\beta}=\frac{1}{\alpha-\beta}\left(\begin{smallmatrix}\alpha & -\beta\\ -p&p\end{smallmatrix}\right)$.
\end{thm}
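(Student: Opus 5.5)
We construct $\Theta^{\rm BD}_\xi(f/K)$ from the Bertolini--Darmon theta elements on the definite quaternion algebra $B$ of discriminant $N^-$ (using (def) and $(D_K,N)=1$), and then obtain the signed decomposition by imitating Pollack's and Sprung's splitting of unbounded measures, adapted to the anticyclotomic tower.

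\emph{Step 1: theta elements.} Fix an optimal embedding of an order of conductor $p^n$ in $K$ into an Eichler order of level $N^+$ in $B$. The Jacquet--Langlands transfer $\phi_f$ of $f$ is a function on the finite Gross curve, normalised to be $\scO$-primitive. Summing $\phi_f$ over the Gross points $x_n(\sigma)$ of conductor $p^n$, for $\sigma\in{\rm Gal}(K_{p^n}/K)$, gives elements
\[
\theta_n=\sum_\sigma\phi_f(x_n(\sigma))\,\sigma\in\scO[\Gamma^-_n]
\]
(after projecting the ring class groups to $\Gamma^-_n=\Gamma^-/\Gamma^{-,p^n}$ and dividing by the prime-to-$p$ torsion, which is harmless since $p>2$). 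The Hecke action of $T_p$ on Gross points of conductors $p^{n-1},p^n,p^{n+1}$ gives the three-term distribution relation
\[
\pi_{n+1,n}(\theta_{n+1})=a_p\theta_n-\nu_{n,n-1}(\theta_{n-1}),
\]
with the usual modification at $n=0,1$ involving the Euler factor at $v,\overline{v}$, which splits since $p$ splits.

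\emph{Step 2: the unbounded functions $\Theta^{\rm BD}_\xi$.} Put $\theta_{n,\xi}=\xi^{-n}(\theta_n-\xi^{-1}\nu_{n,n-1}\theta_{n-1})$. The relation of Step 1 makes $(\theta_{n,\xi})_n$ norm-compatible up to the factor $\xi^{-1}$ with $v_p(\xi)\le 1/2<1$, so Vi\v{s}ik/Amice--Vélu give a unique tempered distribution $\Theta^{\rm BD}_\xi\in\mathcal{H}_L(\Gamma^-)$ of order $v_p(\xi)$. The interpolation formula follows from the Gross--Zhang--Hsieh special value formula, in the explicit form of \cite[Thm.~1.2]{cas-hsieh1}/Chida--Hsieh. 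This formula gives
\[
\chi_\zeta(\theta_n)^2=\frac{L(f/K,\chi_\zeta,1)}{\Omega_{f,N^-}}\sqrt{D_K}\,p^n w_K^2
\]
for primitive $\chi_\zeta$ of conductor $p^n$. For $t>0$ the correction term $\nu\theta_{n-1}$ vanishes at $\chi_\zeta$, which yields the factor $\xi^{-2t}$.

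\emph{Step 3: signed decomposition.} Write $A=\left(\begin{smallmatrix}a_p&1\\-p&0\end{smallmatrix}\right)$, $P_n=\left(\begin{smallmatrix}a_p&1\\-\Phi_n&0\end{smallmatrix}\right)$ and $\Theta_n=\left(\begin{smallmatrix}\theta_n\\ -\nu\theta_{n-1}\end{smallmatrix}\right)$. In this notation the distribution relation of Step 1 reads $\Theta_{n+1}\equiv P_{n}^{\rm tr}$-type recursion modulo $\omega_n$. Following \cite{BBL-adv} (Sprung's method), we define $\Theta^{(n)}_{1},\Theta^{(n)}_2\in\Lambda_{\scO}(\Gamma^-_n)$ by solving
\[
\Theta_n=P_n\cdots P_1\cdot\bigl(\Theta^{(n)}_1,\Theta^{(n)}_2\bigr)^{\rm tr}.
\]
This requires divisibility: the relevant entries of $\Theta_n$ must lie in the ideal generated by the determinants $\prod\Phi_m$ modulo $\omega_n$. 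That divisibility comes from the vanishing of $\theta_{n}$ at characters of smaller conductor, which is exactly the three-term relation again, checked by induction on $n$. One then shows that $\Theta^{(n)}_i$ are compatible under projection, so they define $\Theta_i^{\rm BD}\in\Lambda_\scO(\Gamma^-)$.

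Passing to the limit using $\Phi_{n+1}\equiv p\pmod{\omega_n}$, and diagonalising $A$ via $Q_{\alpha,\beta}$ (eigenvalues $\alpha,\beta$), gives \eqref{eq:theta-dec}. Concretely, the $\xi$-eigen-coordinate of $A^{-(n+1)}\Theta_n$ equals $\theta_{n,\xi}$ up to the constant matrix $Q_{\alpha,\beta}$.

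\emph{Main obstacle.} The main obstacle is the integrality and divisibility in Step 3 for general $a_p\neq0$, where the matrices do not diagonalise over $\Lambda$. I would handle it as in Sprung's $\flat/\sharp$ construction, proving by induction that the adjugate of $P_n\cdots P_1$ applied to $\Theta_n$ is divisible by $\prod_{m\le n}\Phi_m$ modulo $\omega_n$. Checking the low-conductor Euler factor normalisation at $n=0$ is also delicate.
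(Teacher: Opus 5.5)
Your route is the paper's: Chida--Hsieh theta elements with their three-term relation, Perrin-Riou's Lemma~1.2.1 applied to $\xi^{-n}(\theta_n-\xi^{-1}\nu\theta_{n-1})$, the Chida--Hsieh special value formula (the $\nu\theta_{t-1}$ term dies at $\chi_\zeta$), and the Burungale--B\"uy\"ukboduk--Lei signed splitting, which the paper simply quotes from \cite[Thm.~3.4, Thm.~3.9]{BBL-adv}. Where you try to reprove that last input, however, your Step~3 does not work as written.

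\textbf{The gap in Step~3.} Put $M_n=P_n\cdots P_1$, and recall that $\nu_{n-1,n}$ is multiplication by $\Phi_n$.

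\emph{Non-uniqueness.} Since $\det M_n=\prod_{m\le n}\Phi_m=\omega_n/\omega_0$ is a zero-divisor modulo $\omega_n$, the congruence $M_nX\equiv\Theta_n \pmod{\omega_n}$ does not determine $X$ modulo $\omega_n$. One can add $\omega_0\,{\rm adj}(M_n)Z$, which is in general nonzero mod $\omega_n$; dividing by $\omega_n/\omega_0$ ``modulo $\omega_n$'' only recovers $X$ mod $\omega_0$. So there are no well-defined $\Theta_i^{(n)}\in\Lambda_{\scO}(\Gamma_n^-)$.

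\emph{Natural choices are not compatible.} Take lifts $\widetilde\theta_n$, set $\widetilde\Theta_n=(\widetilde\theta_n,-\Phi_n\widetilde\theta_{n-1})^{\rm tr}$ and $X^{(n)}={\rm adj}(M_n)\widetilde\Theta_n/(\omega_n/\omega_0)$. Then $X^{(n+1)}-X^{(n)}=\omega_0c_n\,{\rm adj}(M_n)(0,1)^{\rm tr}$, where $\omega_nc_n=\widetilde\theta_{n+1}-a_p\widetilde\theta_n+\Phi_n\widetilde\theta_{n-1}$, and this is not zero mod $\omega_n$ in general.

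\emph{What is missing.} You need a selection argument, e.g.\ compactness. The sets $S_n=\{X\in\Lambda_{\scO}(\Gamma^-)^2: M_nX\equiv\widetilde\Theta_n \bmod \omega_n\}$ are closed, and $S_{n+1}\subset S_n$. Indeed, if $P_{n+1}(u,v)^{\rm tr}\equiv\widetilde\Theta_{n+1}\bmod\omega_{n+1}$, the second entry forces $u\equiv\widetilde\theta_n\bmod\omega_n$. Then the first entry and the three-term relation force $v\equiv-\Phi_n\widetilde\theta_{n-1}\bmod\omega_n$. Any $X\in\bigcap_nS_n$ then gives $(\Theta_1^{\rm BD},\Theta_2^{\rm BD})$.

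\emph{Nonemptiness.} Nonemptiness of $S_n$ is your adjugate divisibility, but your justification is wrong: $\theta_n$ does not vanish at characters of smaller conductor. The correct mechanism is as follows. Let $e_1=(1,0)^{\rm tr}$ and let $\chi$ have exact conductor $p^m$ with $1\le m\le n$. Then $\chi(\nu_{m-1,m}\theta_{m-1})=0$, so $\chi(\Theta_m)\in\overline{\bQ}_p\,e_1$. The three-term relation gives $\chi(\Theta_n)=A^{n-m}\chi(\Theta_m)$. Now $\chi(P_j)=A$ for $j>m$, $\chi(P_m)$ has image $\overline{\bQ}_p\,e_1$, and $\chi(P_j)$ is invertible for $j<m$. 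Hence $\chi(\Theta_n)$ lies in the line $\chi(M_n)(\overline{\bQ}_p^{\,2})$, which is killed by $\chi({\rm adj}\,M_n)$. Thus ${\rm adj}(M_n)\widetilde\Theta_n$ vanishes at all the simple roots of the distinguished polynomial $\omega_n/\omega_0$, so it is divisible by it in $\Lambda_{\scO}(\Gamma^-)$.

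\textbf{Two smaller points.}

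First, the bound $v_p(\xi)\le 1/2$ is false when $0<v_p(a_p)<1/2$, since one root then has valuation $1-v_p(a_p)$. What Perrin-Riou's lemma needs is $0<v_p(\xi)<1$, which holds because $\alpha\beta=p$ and neither root is a unit.

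Second, ``up to the constant matrix $Q_{\alpha,\beta}$'' hides a non-unit factor. The $\xi$-row of $Q_{\alpha,\beta}^{-1}$ is $(1,\xi^{-1})$, with $(1,\xi^{-1})A=\xi(1,\xi^{-1})$. Also $\chi(\mathscr{M}_{{\rm log},{\rm ac}})=A^{-(n+1)}\chi(M_n)$ for $\chi$ factoring through $\Gamma_n^-$. So the $\xi$-entry of $Q_{\alpha,\beta}^{-1}\mathscr{M}_{{\rm log},{\rm ac}}(\Theta_1^{\rm BD},\Theta_2^{\rm BD})^{\rm tr}$ projects to $\xi^{-(n+1)}(\theta_n-\xi^{-1}\nu\theta_{n-1})$, i.e.\ $\xi^{-1}$ times your $\theta_{n,\xi}$. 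This mismatch is already latent between the two normalisations in the statement. You have to fix this constant explicitly before invoking uniqueness of order-$<1$ distributions with prescribed values at finite-order characters to identify the two sides of the decomposition.
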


\begin{proof}
For each $n\ge 1$ let $\Gamma_n^-={\rm Gal}(K_n^-/K)$ denote the Galois group of the unique subfield $K_n^-\subset K_\infty^-$ with $[K_n^-:K]=p^n$. From \cite{ChHs1} we have elements $\theta_{n}^{\rm BD}(f/K)\in\scO[\Gamma_n^-]$ satisfying the three-term norm-relation
\begin{equation}\label{eq:3-term}
\pi_{n+1,n}(\theta_{n+1}^{\rm BD}(f/K))=a_p\cdot\theta_n^{\rm BD}(f/K)-\nu_{n-1,n}(\theta_{n-1}^{\rm BD}(f/K)),
\end{equation}
where $\pi_{n+1,n}:\scO[\Gamma_{n+1}^-]\rightarrow\scO[\Gamma_n^-]$ is the natural projection and $\nu_{n-1,n}:\scO[\Gamma_{n-1}^-]\rightarrow\scO[\Gamma_n^-]$ is given by $\sigma\mapsto\sum_{\tau}\tau$ with $\tau$ running over all the elements of $\Gamma_n^-$ with $\tau\vert_{K_{n-1}^-}=\sigma$. Setting
\[
\theta_{\xi,n}^{\rm BD}(f/K):=\theta_n^{\rm BD}(f/K)-\frac{1}{\xi}\cdot\nu_{n-1,n}(\theta_{n-1}^{\rm BD}(f/K)),
\]
the compatibility $\pi_{n+1,n}(\theta_{n+1,\xi}^{\rm BD}(f/K))=\xi\cdot\theta_{n,\xi}^{\rm BD}(f/K)$ follows readily from \eqref{eq:3-term}. Thus by \cite[Lem.~1.2.1]{PR115} there exist elements $\Theta_\xi^{\rm BD}(f/K)\in\mathcal{H}_L(\Gamma^-)$ projecting to $\xi^{-n}\cdot\theta_{\xi,n}^{\rm BD}(f/K)$ under $\mathcal{H}_L(\Gamma^-)\rightarrow L[\Gamma_n^-]$ for all $n\geq 1$ and interpolating the special values $L(f/K,\chi_\zeta,1)$ as in the statement by virtue of \cite[Prop.~4.3]{ChHs1}. 

On the other hand, as shown in \cite[Thm.~3.4]{BBL-adv},  from \eqref{eq:3-term} we obtain elements $\Theta_i^{\rm BD}(f/K)\in\Lambda_{\scO}(\Gamma^-)$, $i\in\{1,2\}$, satisfying
\begin{equation}\label{eq:3-term-bounded}
\left(\begin{array}{cc}a_p&1\\
-\Phi_n& 0
\end{array}
\right)
\cdots\left(\begin{array}{cc}a_p&1\\
-\Phi_1& 0
\end{array}
\right)\left(\begin{array}{cc}
\Theta_1^{\rm BD}(f/K)\\[0.2em]
\Theta_2^{\rm BD}(f/K)
\end{array}\right)\equiv\left(\begin{array}{cc}
\theta_n^{\rm BD}(f/K)\\[0.2em]
-\nu_{n-1,n}(\theta_{n-1}^{\rm BD}(f/K))
\end{array}\right)\pmod{\omega_n}
\end{equation}
for all $n\geq 1$, and the relation \eqref{eq:theta-dec} then follows as in the proof of Theorem~3.9 in \emph{op.\,cit.} (where we note that there is a typo in the first displayed equation).
\end{proof}

\begin{prop}\label{prop:ac-descent-def}
Let $i\in\{1,2\}$ and $I^+$ be the kernel of the natural projection $\Lambda_{L}(\Gamma_K)\rightarrow\Lambda_{L}(\Gamma^-)$. Then
\[
\bigl(\varpi^{-s}\cdot\mathcal{L}^{\rm (I)}_{i,i}(f/K)^\eta\;{\rm mod}\,I^+\bigr)=\Bigl(\Theta_i^{\rm BD}(f/K)^2\cdot\frac{c_f}{\eta_{f,N^-}}\Bigr)
\]
as ideals in $\Lambda_{\scO}(\Gamma^-)$, where $\eta$ is the trivial character of $\Delta$.
\end{prop}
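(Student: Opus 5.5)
Following the pattern of the proof of Proposition~\ref{prop:L-cyc}, I would first prove the analogue of the statement for the unbounded functions attached to $\xi\in\{\alpha,\beta\}$, and then transfer it to the signed functions by comparing the two decompositions. Concretely, the first goal is the equality
\[
\bigl(\mathcal{L}^{\rm (I)}_{\xi,\xi}(f/K)\;{\rm mod}\,I^+\bigr)=\Theta_\xi^{\rm BD}(f/K)^2\cdot\frac{c_f}{\eta_{f,N^-}}
\]
in $\mathcal{H}_L(\Gamma^-)$, up to a unit. Here $\eta$ is trivial and $k=2$, so $\Xi^{\rm (I)}$ only contains characters with $j=0$. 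To obtain interpolation at anticyclotomic characters $\chi_\zeta$ I would not use Theorem~\ref{thm:ERL-I} directly, since its range consists of $(1,\chi_2)$. Instead I would go back to its proof. There $L_p^{\rm geo}(f^\xi,\mathbf{h}_v,1+\mathbf{j})$ agrees with Urban's $p$-adic Rankin $L$-function, whose interpolation also covers specialisations of $\mathbf{h}_v$ at finite order characters of $\Gamma^v$, i.e. weight one CM forms $\theta_{\chi}$. Restricting to $\Gamma^-$ and using $L(f\otimes\theta_{\chi_\zeta},1)=L(f/K,\chi_\zeta,1)$, I get for $t>0$ a value of the form $\xi^{-2t}\cdot(\text{Gauss sum/$p^t$ factor})\cdot L(f/K,\chi_\zeta,1)/\langle f,f\rangle_N$. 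Multiplying by $c_f$ converts $\langle f,f\rangle_N$ into $\Omega_f^{\rm can}$. Comparing with Theorem~\ref{thm:BD-Lp} through $\Omega_{f,N^-}=\Omega_f^{\rm can}\cdot c_f/\eta_{f,N^-}$ then gives matching values at all $\chi_\zeta$ with $t>0$. Since both sides have growth of order $2v_p(\xi)<1$ in the anticyclotomic variable and agree at infinitely many finite order characters, \cite[Lem.~1.2.1]{PR115}-type uniqueness forces the equality. I expect the bookkeeping of constants (the factors $\sqrt{D_K}$, $w_K$, the Gauss sums, and the interpolation at the diagonal points) to be routine.

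Next I would reduce Corollary~\ref{cor:dec-2var-signed} modulo $I^+$. The anticyclotomic restriction of $M_{{\rm log},v}$ should be identified with $\mathscr{M}_{{\rm log},{\rm ac}}$, since both arise from the same Frobenius matrix $A_\varphi$ with $\Phi_n$ interpolating the cyclotomic polynomials. This identification is what I expect to be the main obstacle. The Wach-module construction of $M_{\rm log}$ is not explicit, whereas \eqref{eq:Mlog-ac} is a product formula. I would handle it by comparing the two through their values modulo $\omega_n$, in the spirit of \cite{LLZ-ANT} for $k=2$. If they only agree up to right multiplication by a matrix in ${\rm GL}_2(\Lambda_{\scO}(\Gamma^-))$, I would absorb that matrix into the signed functions; this changes the $\Theta_i^{\rm BD}$ only by units in the diagonal case.

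With the identification in hand, reducing mod $I^+$ gives
\[
\varpi^s\mathcal{L}^{\rm (I)}_{\alpha,\alpha}=\mathfrak{A}^2\mathcal{L}_{1,1}+\mathfrak{AB}(\mathcal{L}_{1,2}+\mathcal{L}_{2,1})+\mathfrak{B}^2\mathcal{L}_{2,2},
\]
and similarly for $\beta$ with $\mathfrak{C},\mathfrak{D}$. Squaring \eqref{eq:theta-dec} gives the parallel identity with $\Theta_1^2$, $\Theta_1\Theta_2$, $\Theta_2^2$. I would then apply the $\Lambda$-linear independence of $\mathfrak{A}^2$, $\mathfrak{AB}$, $\mathfrak{B}^2$, proved now in the anticyclotomic variable via Rohrlich-type nonvanishing of $L(f/K,\chi_\zeta,1)$, exactly as in Proposition~\ref{prop:L-cyc}. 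This lets me match coefficients, giving $\varpi^{-s}\mathcal{L}^{\rm (I)}_{i,i}\equiv\Theta_i^{\rm BD}(f/K)^2 c_f/\eta_{f,N^-}$ mod $I^+$, which is the claimed equality of ideals.
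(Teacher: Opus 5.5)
Your overall architecture (unbounded comparison for $\xi$, reduction of Corollary~\ref{cor:dec-2var-signed} and \eqref{eq:theta-dec} modulo $I^+$, then linear independence of $\mathfrak{A}^2,\mathfrak{AB},\mathfrak{B}^2$) is the paper's. However, your proof of the unbounded comparison does not go through.

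The assertion that both sides have order $2v_p(\xi)<1$ is false. Since $v_p(\alpha)+v_p(\beta)=k-1=1$, one root always has $2v_p(\xi)\geq 1$. If $v_p(a_p)\geq 1/2$ (e.g.\ $a_p=0$), both roots have slope $1/2$, so $\Theta_\xi^{\rm BD}(f/K)^2$ and $\mathcal{L}^{\rm (I)}_{\xi,\xi}(f/K)\bmod I^+$ have order $1$. An element of $\mathcal{H}_L(\Gamma^-)$ of order $1$ is not determined by its values at finite order characters: $\log_p(\gamma_{\rm ac})$ times any bounded element vanishes at all of them. At weight $2$ nothing else is available. The region-(I) range forces $\mathbf{h}_v$ to have weight one and $j=0$, and Theorem~\ref{thm:BD-Lp} likewise only sees finite order $\chi_\zeta$. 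Since only one root is needed afterwards, you could use the smaller-slope root when $v_p(a_p)<1/2$, but not otherwise.

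This is exactly why the paper interpolates both $\mathcal{L}^{\rm (I)}_{\alpha,\alpha}(f/K)^\eta$ (by \cite{LZ-Coleman}) and $\Theta_\alpha^{\rm BD}(f/K)$ (as in \cite{cas-longo}) along the Coleman family through $f^\alpha$. The slope stays $v_p(\alpha)$, while at classical points of large weight $k'$ the anticyclotomic interpolation ranges contain characters of infinity type $(-j,j)$ with $0<|j|<k'/2$. Here the relevant ranges are those of Theorem~\ref{thm:ERL-I} as extended in \cite{loeffler-note}, and the weight-$k'$ results of \cite{ChHs1}. These characters are enough to pin the functions down, and the equality at $f^\alpha$ then follows by Zariski density and specialisation. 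That step is the missing idea in your argument.

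Two smaller points. First, your fallback for the matrix identification does not work as stated. Suppose the reduction of $Q_{\alpha,\beta}^{-1}M_{{\rm log},v}$ modulo $I^+$ is $Q_{\alpha,\beta}^{-1}\mathscr{M}_{{\rm log},{\rm ac}}R$ with $R\in{\rm GL}_2(\Lambda_{\scO}(\Gamma^-))$. Then matching coefficients identifies $\varpi^{-s}\mathcal{L}^{\rm (I)}_{i,i}(f/K)^\eta\bmod I^+$ with a unit times $c_f/\eta_{f,N^-}$ times the square of the $i$-th entry of $R^{-1}(\Theta_1^{\rm BD},\Theta_2^{\rm BD})^{\rm tr}$. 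This is a unit multiple of $\Theta_i^{\rm BD}(f/K)^2$ only when $R$ is diagonal. The paper's proof simply uses the same entries $\mathfrak{A},\mathfrak{B}$ of $Q_{\alpha,\beta}^{-1}\mathscr{M}_{{\rm log},{\rm ac}}$ in both expansions, i.e.\ it takes the identification as given. You are right to flag it, but absorbing a general $R$ is not a fix.

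Second, in the anticyclotomic variable the nonvanishing input for the linear independence is \cite[Thm.~D]{ChHs1}, not Rohrlich's theorem.
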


\begin{proof}
This can be shown similarly as in the proof of Proposition~\ref{prop:L-cyc}. Writing $Q_{\alpha,\beta}^{-1}\mathscr{M}_{{\rm log},{\rm ac}}=\bigl(\begin{smallmatrix}
\mathfrak{A}&\mathfrak{B}\\
\mathfrak{C}&\mathfrak{D}
\end{smallmatrix}\bigr)$,  
from  \eqref{eq:theta-dec} we have
\begin{equation}\label{eq:theta^2}
\Theta_\alpha^{\rm BD}(f/K)^2=\mathfrak{A}^2\cdot \Theta_1^{\rm BD}(f/K)^2+2\cdot\mathfrak{AB}\cdot\Theta_1^{\rm BD}(f/K)\cdot\Theta_2^{\rm BD}(f/K)+\mathfrak{B}^2\cdot\Theta_2^{\rm BD}(f/K)^2.
\end{equation}
On the other hand, from Corollary~\ref{cor:dec-2var-signed} we have
\begin{equation}\label{eq:fK-ac}
\varpi^s\cdot\mathcal{L}^{\rm (I)}_{\alpha,\alpha}(f/K)^\eta=\mathfrak{A}^2\cdot \mathcal{L}^{\rm (I)}_{1,1}(f/K)^\eta+\mathfrak{AB}\cdot\bigl(\mathcal{L}^{\rm (I)}_{1,2}(f/K)^\eta+\mathcal{L}^{\rm (I)}_{2,1}(f/K)^\eta\bigr)+\mathfrak{B}^2\cdot\mathcal{L}^{\rm (I)}_{2,2}(f/K)^\eta.
\end{equation}
As noted in the proof of Proposition~\ref{prop:L-cyc}, by \cite{LZ-Coleman} $\mathcal{L}^{\rm (I)}_{\alpha,\alpha}(f/K)^\eta$ can be interpolated along the Coleman family passing through the $p$-stabilisation $f^\alpha$ of $f$ with $U_p$-eigenvalue $\alpha$, and similarly as in \cite{cas-longo}\footnote{Where the $p$-ordinary case is considered, but whose extension to Coleman families can be easily obtained using the techniques in e.g. \cite{BDI}.}, the same holds for $\Theta_\alpha^{\rm BD}(f/K)$. A direct comparison of the interpolation formulas of Theorem~\ref{thm:ERL-I} (as extended in \cite{loeffler-note}) and Theorem~\ref{thm:BD-Lp} then shows that 
\begin{equation}\label{eq:unb-ac}
\bigl(\mathcal{L}^{\rm (I)}_{\xi,\xi}(f/K)^\eta\;{\rm mod}\,I^+\bigr)=\Bigl(\Theta_{\xi}^{\rm BD}(f/K)^2\cdot\frac{c_f}{\eta_{f,N^-}}\Bigr).
\end{equation}

Combining \eqref{eq:theta^2}, \eqref{eq:fK-ac}, and \eqref{eq:unb-ac}, we are reduced to showing that $\mathfrak{A}^2$, $\mathfrak{AB}$, and $\mathfrak{B}^2$ are linearly independent over $\Lambda_{\scO}(\Gamma^-)$; and this follows by the same argument as in the proof of Proposition~\ref{prop:cyc-restr}, replacing the appeal to \cite{rohrlich-division} by an appeal to the nonvanishing result of \cite[Thm.~D]{ChHs1}. %(for a suitable choice of $\ell$). 
\end{proof}

We conclude this section with the following result allowing us to deduce the vanishing of Iwasawa's $\mu$-invariant for $\mathcal{L}_{i,i}^{\rm (I)}(f/K)^\eta$ under suitable ramification hypotheses on $\bar{\rho}_f$ (see Corollary~\ref{cor:ii-IMC}).

\begin{thm}\label{thm:mu=0-definite}
%Suppose $\bar{\rho}_f$ is ramified at all primes $\ell\vert N^-$. Then 
For every $i\in\{1,2\}$ we have
\[
\mu(\Theta_i^{\rm BD}(f/K))=0.
\]
\end{thm}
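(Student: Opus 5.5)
We reduce the vanishing of $\mu(\Theta_i^{\rm BD}(f/K))$ to the vanishing of the $\mu$-invariant of the finite-layer theta elements $\theta_n^{\rm BD}(f/K)$. By \cite[Thm.~C/D]{ChHs1} (the anticyclotomic $\mu=0$ results in the definite setting, which require $\bar\rho_f$ irreducible and suitably ramified at the primes dividing $N^-$), these elements are nonzero modulo the maximal ideal $\mathfrak{m}_\scO$ for $n\gg0$. More precisely, there exist a finite-order character $\chi$ of $\Gamma_n^-$ and a test vector for which the reduction of $\chi(\theta_n^{\rm BD}(f/K))$ in $\kappa_L\otimes\overline{\mathbf{F}}_p$ is nonzero. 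Concretely, $\theta_n^{\rm BD}(f/K)$ is built from a $\bar\rho_f$-eigenform on the definite quaternion algebra evaluated on Gross points of conductor $p^n$. The $\mu=0$ statement says that the mod $\mathfrak{m}$ reduction of this form is nonconstant on Gross points, by Ratner/Vatsal equidistribution, and that the resulting element of $\kappa_L[\Gamma_n^-]$ is nonzero.

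Next I would transfer this to $\Theta_1^{\rm BD}$ and $\Theta_2^{\rm BD}$ using the congruences \eqref{eq:3-term-bounded}. Reduce \eqref{eq:3-term-bounded} modulo $\mathfrak{m}$. Since $a_p\in\mathfrak{m}$ (nonordinarity) and $\Phi_j\equiv\omega_{j}/\omega_{j-1}\equiv(\gamma_{\rm ac}-1)^{p^{j}-p^{j-1}}\pmod{p}$, the matrix product
\[
P_n:=\left(\begin{array}{cc}a_p&1\\-\Phi_n&0\end{array}\right)\cdots\left(\begin{array}{cc}a_p&1\\-\Phi_1&0\end{array}\right)
\]
reduces mod $\mathfrak{m}$ to an antidiagonal or diagonal matrix, according to the parity of $n$, with entries that are explicit products of powers of $X=\gamma_{\rm ac}-1$. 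Hence, in $\kappa_L[\![X]\!]/(X^{p^n})$, the vector $(\bar\theta_n,-\bar\nu_{n-1,n}(\theta_{n-1}))$ equals $\bar P_n\cdot(\bar\Theta_1,\bar\Theta_2)^{\rm tr}$. In particular, $\bar\theta_n$ for $n$ of one parity is a monomial multiple of $\bar\Theta_1$, and for the other parity a monomial multiple of $\bar\Theta_2$, modulo $X^{p^n}$. Suppose $\bar\Theta_i=0$. Then $\bar\theta_n\equiv0\bmod X^{p^n}$ for all $n$ of the corresponding parity, hence $\bar\theta_n=0$ in $\kappa_L[\Gamma_n^-]$. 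This contradicts the nonvanishing from the first step, provided that nonvanishing holds for infinitely many $n$ of each parity.

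The main obstacle is this last parity point, together with the need to control the degrees of the monomials. We must check that the monomial factor has $X$-degree strictly less than $p^n$, so that nonvanishing of $\bar\theta_n$ in $\kappa_L[X]/(X^{p^n})$ really forces nonvanishing of the reduction of $\bar\Theta_i$. The degree is $\sum (p^j-p^{j-1})$ over alternate $j\le n$, which is less than $p^n$. One must also confirm that \cite{ChHs1} gives $\bar\theta_n\neq0$ for every sufficiently large $n$, not just along a subsequence. If needed, the three-term relation \eqref{eq:3-term} can be used to pass between consecutive layers: modulo $\mathfrak{m}$ it reads $\bar\pi_{n+1,n}(\bar\theta_{n+1})=-\bar\nu_{n-1,n}(\bar\theta_{n-1})$, which shows that $\bar\theta_{n-1}\neq0$ implies $\bar\theta_{n+1}\neq0$. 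Since $\bar\nu_{n-1,n}$ is multiplication by $X^{p^n-p^{n-1}}$ composed with a lift, hence injective, nonvanishing propagates within each parity class. It then suffices to know $\bar\theta_n\neq0$ for two consecutive $n$, which the equidistribution argument supplies.
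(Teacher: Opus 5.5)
Your proposal is correct and is essentially the paper's proof: reducing \eqref{eq:3-term-bounded} modulo $\varpi$ using $a_p\equiv 0$ gives the diagonal/antidiagonal shape \eqref{eq:cong-Cn}, so $\bar\theta_n$ is a monomial multiple of $\bar\Theta_1$ or $\bar\Theta_2$ according to the parity of $n$, and the nonvanishing of $\theta_n^{\rm BD}(f/K)$ modulo $\varpi$ for all $n\gg0$ (from the proof of \cite[Thm.~5.7]{ChHs1}) forces $\bar\Theta_i\neq 0$. Your degree check is not needed, since $\bar\Theta_i=0$ already gives $\bar\theta_n=0$ whatever the degree of the monomial, and the propagation via \eqref{eq:3-term} is valid but superfluous; you should, however, drop the ``more precisely'' paraphrase in terms of $\chi(\theta_n^{\rm BD}(f/K))$: characters of the $p$-group $\Gamma_n^-$ are trivial modulo the maximal ideal, so that condition only detects the augmentation, whereas what you need (and actually use) is $\theta_n^{\rm BD}(f/K)\notin\varpi\,\scO[\Gamma_n^-]$.
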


\begin{proof}
Since $a_n\equiv 0\pmod{\varpi}$, for each $n\geq 1$ we have the congruence
%For each $n\geq 1$, %put $\omega_n=\gamma^{p^n}-1$, 
%let $C_n\in M_{2\times 2}(L[\Gamma])$ denote the matrix with entries given by polynomials in $(\gamma-1)$ of degree $<p^n$ satisfying
%\[
%C_n\equiv\mathfrak{M}^{-1}\bigl((1+\pi)\varphi^n(P_0)\cdots\varphi(P_0)\bigr)\;({\rm mod}\;{\gamma^{p^n}-1}),
%\]
%where $P_0$ is as in Lemma~\ref{lem:Wach-basis}, and let $C_{n,{\rm ac}}$ denote the image of $C_n$ under the map $\gamma\mapsto\gamma^-$. By the same calculation as in \cite[Prop.~3.9]{gajek-leonard-lei}, from the congruence $M_{\rm log}\equiv A_\varphi^{n+1}C_n\;({\rm mod}\;\gamma^{p^n}-1)$ of \cite[Lem.~3.7]{LLZ-Sha} (with notations as in Lemma~\ref{lem:good-int-basis}) and the definitions we obtain
%\begin{equation}\label{eq:3-term-bounded}
%\left(\begin{array}{cc}
%\Theta_n^{\rm BD}(f/K)\\[0.2em]
%-\nu_{n-1,n}(\Theta_{n-1}^{\rm BD}(f/K))
%\end{array}\right)
%\equiv C_{n,{\rm ac}}\cdot
%\left(\begin{array}{cc}
%\Theta_1^{\rm BD}(f/K)\\[0.2em]
%\Theta_2^{\rm BD}(f/K)
%\end{array}\right)\;({\rm mod}\;{\gamma^{p^n}-1}).
%\end{equation}
%Moreover, as shown in \cite[Prop.~3.8]{gajek-leonard-lei} for all $n\geq 1$ we have the congruences
\begin{equation}\label{eq:cong-Cn}
\left(\begin{array}{cc}a_p&1\\
-\Phi_n& 0
\end{array}
\right)
\cdots\left(\begin{array}{cc}a_p&1\\
-\Phi_1& 0
\end{array}\right)
\equiv
\begin{cases}
\left(\begin{array}{cc}0&(-1)^{\frac{n-1}{2}}\Phi_n^+\\
(-1)^{\frac{n-1}{2}}\Phi_n^-& 0
\end{array}
\right)\;({\rm mod}\,\varpi)&\textrm{if $n$ is odd},\\[0.2em]
\left(\begin{array}{cc}(-1)^{\frac{n}{2}}\Phi_n^-&0\\
0& (-1)^{\frac{n}{2}}\Phi_n^+
\end{array}\right)\;({\rm mod}\,\varpi)&\textrm{if $n$ is even},
\end{cases}
\end{equation}
where $\Phi_n^\pm$ are appropriate products of the $\Phi_i$ for $1\leq i\leq n$ of the corresponding parity (cf. \cite[Prop.~3.8]{gajek-leonard-lei}). %We can now conclude similarly as in the proof of \cite[Thm.~2.5]{pollack-weston}: 
Since the proof of \cite[Thm.~5.7]{ChHs1}\footnote{Note that the absolute irreducibility hypothesis of \emph{loc.\,cit.} holds in our case by \cite[Thm.~2.6]{Edi} and our running assumption that $p$ splits in $K$.} shows that $\mu(\theta_n^{\rm BD}(f/K))=0$ for $n\gg 0$, combining \eqref{eq:3-term-bounded} and \eqref{eq:cong-Cn} the result follows.
\end{proof}

%\newpage

\section{Proof of the main results}

In this section we piece everything together to conclude the proof of our main result toward Kato's main Conjecture~\ref{introconj:kato}.

\subsection{Signed main conjectures of Lei--Loeffler--Zerbes}\label{subsec:reduction-signed}

%Recall that for a cuspidal eigenform $f$, we let $\pi_f=\otimes_v\pi_{f,v}$ denote the cuspidal automorphic representation of ${\rm GL}_2(\mathbb{A})$ it generates.

By Propositions~\ref{prop:L-nonzero} and \ref{prop:equiv-kato} (see also Remark~\ref{rem:equiv-KatoIMC}), the proof of Theorem~\ref{thmintro:kato} is reduced to the following result. 
%(Note that here we are considering the representation $T=T_f(k-1)$, while Theorem~\ref{thmintro:kato} is stated in terms of $T_f$, hence the switch between $\omega^{k/2}$ and $\omega^{1-k/2}$.)

\begin{thm}\label{thm:signed-Q}
Let $f\in S_k(\Gamma_0(N))$ be a newform and $p>2$ be a prime  of good nonordinary reduction. %for $f$. 
Suppose
\begin{itemize}
%\item[(i)] $f$ has good nonordinary reduction at $p$;
\item $2\leq k< p$; 
\item $N$ is squarefree; 
%\item if $N=\ell$ is prime, then $\pi_{f,\ell}\cong\sigma_\ell\otimes\xi_\ell$; 
\item $f$ is \emph{$p$-regular};
\item either $\begin{cases}
\textrm{$k=2$}, or \\ %and $a_p=0$}, or\\
\textrm{Condition~\eqref{eq:rtn} holds}.
\end{cases}$
\end{itemize}
Let $\eta=\omega^{1-k/2}$ and $i\in\{1,2\}$ be such that $L_i(f)^\eta\neq 0$. Then $X_i(f/\bQ(\mu_{p^\infty}))^\eta$ is $\Lambda_{\scO}(\Gamma)$-torsion, with 
\begin{equation}\label{eq:signed-IMC}
(\xi_{\eta,i})\cdot{\rm char}_{\Lambda_{\scO}(\Gamma)}\bigl(X_i(f/\bQ(\mu_{p^\infty}))^\eta\bigr)=\bigl(L_{i}(f)^\eta\bigr)\nonumber
\end{equation}
as ideals in $\Lambda_{\scO}(\Gamma)$.
\end{thm}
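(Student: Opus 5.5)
The plan is to prove the two opposite divisibilities separately. The divisibility $(L_i(f)^\eta)\subset(\xi_{\eta,i})\cdot{\rm char}(X_i)$ (with torsionness) comes from Kato's Euler system. For the reverse divisibility we pass to an auxiliary imaginary quadratic field $K$ and use the Beilinson--Flach classes of \S\ref{sec:BF} together with the lower bound of \cite{CLW}.

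\textbf{Upper bound.} Since $\bar\rho_f$ is irreducible and $L_i(f)^\eta\neq0$, Kato's theorem \cite[Thm.~12.5]{Kato295} gives that $X_{\rm str}(f/\bQ(\mu_{p^\infty}))^\eta$ is torsion with ${\rm char}(X_{\rm str}^\eta)\supset{\rm char}\bigl({\rm H}^1_{\rm Iw}(\bQ(\mu_{p^\infty}),T)^\eta/(\mathbf{z}_{f,k-1}^{\rm int})^\eta\bigr)$. This is condition (ii) of Proposition~\ref{prop:equiv-kato} with the opposite inclusion, so that proposition yields torsionness of $X_i^\eta$ and $(L_i(f)^\eta)\subset(\xi_{\eta,i}){\rm char}(X_i^\eta)$. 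It therefore remains to prove $(\xi_{\eta,i}){\rm char}(X_i^\eta)\subset(L_i(f)^\eta)$.

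\textbf{Choice of $K$.} Choose $K$ with $p$ split and (gen-H) holding. Require also $L(f\otimes\epsilon_K\eta,1)\neq0$, which is possible by nonvanishing of quadratic twists; for $k>2$ it is automatic. For such $K$, Corollary~\ref{cor:L-cyc} gives \eqref{eq:Lp-nonzero} for our $i$. Theorem~\ref{thm:BSTW} then places $\mathcal{BF}^{i,\mathbf{h}_v,\eta}_{\rm int}$ in $\rH^1(\cO_K[1/p],\mathbf{T}_K^\eta)$. Corollary~\ref{cor:BF-pm} gives $\widetilde{\mathscr C}_{i,\overline v}({\rm res}_{\overline v}\mathcal{BF}^{i})=0$, so the class lies in $\rH^1_{{\rm rel},i}$. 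Its image under $\widetilde{\mathscr C}_{i,v}$ is $L^{(\rm I)}_{i,i}$, and its image under $\widetilde{\mathscr C}_{j,\overline v}$ is (up to $T_v$) $L^{(\rm II)}_{j,i}$.

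\textbf{Global duality over $K_\infty$.} Apply Poitou--Tate to this class, comparing the Selmer groups $X_{i,i}$, $X_{i,j}$ and $X_{{\rm rel},i}$ over $K_\infty$. The lower bound of \cite{CLW} for $X_{{\rm str},{\rm rel}}$-type (Greenberg) Selmer groups is phrased through $\mathcal{L}_{\overline v}(f/K)^\eta$, and Proposition~\ref{prop:L12-CLW} identifies this with $\varpi^{-s}\mathcal{L}^{(\rm II)}_{1,2}(f/K)^\eta$. Together with Proposition~\ref{prop:image-Col-2var}, this gives \[{\rm char}(X_{i,i}(f_\eta/K_\infty))\subset(\varpi^{-s}\mathcal{L}^{(\rm I)}_{i,i}(f/K)^\eta),\] possibly up to powers of $\varpi$ and factors of $\xi_{\eta,i}$ and $T_v$. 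These factors must be tracked carefully: the $h_K$ and Katz factors in Definition~\ref{def:L-Gr} cancel against the comparison of the relaxed and strict conditions at $\overline v$ via the CM Hida family.

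\textbf{Removing the error factors (main obstacle).} The $\mu$/$\varpi$-power ambiguity is the step I expect to be hardest. For $k\ge2$ under (rtn), take $K$ indefinite. Project to $\Gamma^-$ and compare with $\mathscr L^{\rm BDP}$ via Proposition~\ref{prop:Gr-BDP}; the $\mu=0$ results of \cite{ChHs1,hsieh} for BDP (using $\ell\Vert N$ with $\bar\rho_f$ ramified) show the relevant ideals have $\mu=0$, so the error can only be a unit. For $k=2$ without (rtn), use instead a definite $K$ and Proposition~\ref{prop:ac-descent-def} together with Theorem~\ref{thm:mu=0-definite}. In both cases $T_v$ is harmless because the cyclotomic specialization $T_v\mapsto0$ is never involved.

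\textbf{Cyclotomic descent.} Reduce modulo $I^-$. Corollary~\ref{cor:cyc-descent} and Proposition~\ref{prop:L-cyc} give \[{\rm char}(X_i(f)^\eta)\,{\rm char}(X_i(f\otimes\epsilon_K)^\eta)\,(\xi_{\eta,i})^2\subset\bigl(L_i(f)^\eta L_i(f\otimes\epsilon_K)^\eta\bigr).\] Kato's upper bound, applied to both $f$ and $f\otimes\epsilon_K$, gives each factor's opposite divisibility. The nonvanishing $L_i(f\otimes\epsilon_K)^\eta\neq0$ (Corollary~\ref{cor:L-cyc}) then forces equality in each factor, which proves \eqref{eq:signed-IMC}.
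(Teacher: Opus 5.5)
Your overall architecture matches the paper's. You use Kato's bound via Proposition~\ref{prop:equiv-kato}, an auxiliary $K$, and the signed classes $\mathcal{BF}^{i,\mathbf{h}_v,\eta}_{\rm int}\in\rH^1_{{\rm rel},i}(K,\mathbf{T}_K^\eta)$. You transfer \cite{CLW} through the reciprocity laws at $\overline{v}$ and then $v$, and finish with cyclotomic descent and the Skinner--Urban squeeze. The trouble is in the step you flag as the main obstacle.

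Your claim that a $T_v$-factor is harmless because ``the cyclotomic specialization $T_v\mapsto0$ is never involved'' is false. Under the identification $\Gamma_K\cong\Gamma^v\times\Gamma$ that defines $T_v$, reduction modulo $I^-$ is exactly $T_v\mapsto 0$. These are the characters $(1,\chi_2)$, where $\mathbf{h}_v$ specialises to $E_1(1,\epsilon_K)$, which is why $\mathcal{L}^{\rm (I)}_{i,i}(f/K)^\eta\bmod I^-$ factors as $L_i(f)^\eta L_i(f\otimes\epsilon_K)^\eta$. So any $T_v$-ambiguity in your divisibility over $K_\infty$ destroys precisely your final cyclotomic descent, for instance by turning it into $0\subset(\cdots)$. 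The paper ensures no $T_v$ appears at all. Under \eqref{eq:Lp-nonzero}, Theorem~\ref{thm:BSTW} shows the un-multiplied class $\mathcal{BF}^{l,\mathbf{h}_v,\eta}_{\rm int}$ is already cuspidal. Corollary~\ref{cor:div-Tv} then identifies $L^{\rm (II)}_{1,2}=\widetilde{L}^{\rm (II)}_{1,2}/T_v$ with the $\overline{v}$-Coleman image of that class, so Corollaries~\ref{cor:BF-IMC} and~\ref{cor:ii-IMC} contain no $T_v$. Separately, the Katz/$h_K$ factor in Definition~\ref{def:L-Gr} is not cancelled by a relaxed-versus-strict comparison. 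It is absorbed because $(H_v)=(\frac{h_K}{w_K}\mathscr{L}_{\overline{v}}^{{\rm Katz},-})\subset(c\,\omega_{\mathbf{h}_v}/\eta_{\mathbf{h}_v})$, which makes the extra factor integral.

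The removal of the remaining ambiguity is also incomplete. Without hypothesis (v) of Theorem~\ref{thm:CLW}, the divisibilities hold only up to all height-one primes pulled back from $\Lambda_{\scO}(\Gamma)$, not just powers of $\varpi$.
\begin{itemize}
\item In the indefinite case, the prime $\ell$ from \eqref{eq:rtn} must be taken ramified in $K$; this is what supplies hypotheses (iii) and (v) of Theorem~\ref{thm:CLW}. Condition \eqref{eq:gen-H} alone allows every prime of $N$ to split, and then Theorem~\ref{thm:CLW} does not apply.
\item In the definite case, $\mu(\Theta_i^{\rm BD}(f/K))=0$ is not enough. First, $\varpi^{-s}\mathcal{L}^{\rm (I)}_{l,l}(f/K)^\eta$ a priori has denominators (Remark~\ref{rem:integrality-Lii}). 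The paper proves it integral by contradiction: a denominator would, after cyclotomic descent, give a strict divisibility contradicting Kato's bound for $f$ and $f\otimes\epsilon_K$.
\item Second, in that case the factor $c_f/\eta_{f,N^-}$ in Proposition~\ref{prop:ac-descent-def} must be shown to be a unit. This uses Pollack--Weston's Tamagawa formula together with $\bar\rho_f$ being ramified at the inert prime $\ell$, which is chosen via Ribet.
\end{itemize}
Both the integrality argument and your final squeeze need Kato's divisibility in $\Lambda_{\scO}(\Gamma)$, not merely after inverting $p$. \cite[Thm.~12.5]{Kato295} alone does not give this. It requires the extra input in Theorem~\ref{thm:katodiv-signed}, namely Skinner's argument using a prime $\ell\Vert N$ at which $\bar\rho_f$ ramifies.
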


The rest of this section is devoted to the proof of Theorem~\ref{subsec:reduction-signed}.

%\begin{thm}\label{thm:signed-Q}
%Let $f\in S_k(\Gamma_0(N))$ be a newform of weight $k$ and squarefree conductor $N$, and let $p$ be an odd prime of good non-ordinary reduction for $f$. Let $i\in\{1,2\}$ be such that $L_{i}(f)\neq 0$, and assume that:
%\begin{itemize}
%\item $2\leq k\leq p$; 
%\item either $N$ is divisible by at least two primes, or condition \eqref{eq:mult-indef} holds; and
%\item condition \eqref{eq:mult-indef}; and
%\item %the residual representation 
%$\bar\rho_f\vert_{G_{\bQ_p}}$ is absolutely irreducible. 
%\end{itemize}
%Then $X_i(f)$ is $\Lambda$-torsion, with
%\begin{equation}\label{eq:signed-IMC}
%(\xi_{\mathds{1},i})\cdot{\rm char}_\Lambda\bigl(X_i(f)\bigr)=\bigl(L_{i}(f)\bigr)\nonumber
%\end{equation}
%as ideals in $\Lambda$.
%\otimes\bQ_p$. If in addition  $\bar{\rho}_f$ is ramified at some prime $\ell\Vert N$, then equality \eqref{eq:signed-IMC} holds in $\Lambda$. 
%and hence the Conjecture~\ref{conj:signed} holds true.
%\end{thm}

\subsection{Kato's divisibility}

As shown in \cite{LLZ-AJM,LLZ-ANT}, a divisibility towards Conjecture~\ref{conj:signed} follows from Kato's work.

\begin{thm}%[Kato, Lei--Loeffler--Zerbes]
\label{thm:katodiv-signed}
Let $f\in S_k(\Gamma_0(N))$ be a newform %of weight $k\geq 2$ 
and $p>2$ be a prime of good nonordinary reduction for $f$. Let $\eta=\omega^{1-k/2}$ and $i\in\{1,2\}$ be such that $L_i(f)^\eta\neq 0$. Then $X_i(f/\bQ(\mu_{p^\infty}))^\eta$ is $\Lambda_{\scO}(\Gamma)$-torsion, and  
\begin{equation}\label{eq:div-signed-IMC}
(\xi_{\eta,i})\cdot{\rm char}_{\Lambda_{\scO}(\Gamma)}\bigl(X_i(f/\bQ(\mu_{p^\infty}))^\eta\bigr)\supset\bigl(L_{i}(f)^\eta\bigr)
\end{equation}
as ideals in $\Lambda_{\scO}(\Gamma)\otimes\bQ_p$. If in addition
%\begin{equation}\label{eq:ram}
%\textrm{$\bar\rho_f$ is ramified at some prime $\ell\Vert N$},\tag{ram}
%\end{equation}
\begin{itemize}
\item[(i)] $\bar\rho_f$ is irreducible; and
\item[(ii)] $\bar\rho_f$ is ramified at some prime $\ell\Vert N$, 
\end{itemize}
then the divisibility \eqref{eq:div-signed-IMC} holds in $\Lambda_{\scO}(\Gamma)$.
\end{thm}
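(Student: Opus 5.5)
The plan is to deduce the signed divisibility from Kato's Euler system divisibility in Conjecture~\ref{conj:kato}, using the equivalence of Proposition~\ref{prop:equiv-kato}.

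\emph{Step 1 (Kato's bound for the strict Selmer group).} By \cite[Thm.~12.4]{Kato295}, since $\mathbf{z}_{f,k-1}^{\rm int}$ has nonzero image under the $\eta$-projection (indeed $L_i(f)^\eta\neq 0$ is its image under $e_\eta\widetilde{\mathscr{C}}_i\circ{\rm res}_p$), the module $\rH^2_{\rm Iw}(\bQ(\mu_{p^\infty}),T)^\eta$ is $\Lambda_{\scO}(\Gamma)$-torsion. Moreover
\[
{\rm char}\bigl(\rH^2_{\rm Iw}(\bQ(\mu_{p^\infty}),T)^\eta\bigr)\supset{\rm char}\bigl(\rH^1_{\rm Iw}(\bQ(\mu_{p^\infty}),T)^\eta/(\mathbf{z}_{f,k-1}^{\rm int})^\eta\bigr)
\]
holds in $\Lambda_{\scO}(\Gamma)\otimes\bQ_p$. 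This comes from the Euler system machinery applied to Kato's classes twisted by $\eta$. By Remark~\ref{rem:equiv-KatoIMC}, $X_{\rm str}(f/\bQ(\mu_{p^\infty}))\cong\rH^2_{\rm Iw}(\bQ(\mu_{p^\infty}),T)$, so this is exactly the divisibility opposite to (ii) of Proposition~\ref{prop:equiv-kato}.

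\emph{Step 2 (transfer).} We run the exact sequence \eqref{eq:PT-1}, with the injectivity argument, in the $\eta$-component. Its first term is torsion-free of rank one, injecting because $L_i(f)^\eta\neq0$. The second term, ${\rm Image}(e_\eta\widetilde{\mathscr{C}}_i)/(L_i(f)^\eta)$, is torsion with characteristic ideal $(L_i(f)^\eta)/(\xi_{\eta,i})$, up to the finite index in Proposition~\ref{prop:image-Col}. Torsionness of $X_i^\eta$ then follows from torsionness of $X_{\rm str}^\eta$. Multiplicativity of characteristic ideals in the four-term sequence gives
\[
{\rm char}(X_i^\eta)\cdot{\rm char}(\rH^1/\mathbf{z})={\rm char}(X_{\rm str}^\eta)\cdot(L_i(f)^\eta)/(\xi_{\eta,i}),
\]
and together with Step 1 this yields \eqref{eq:div-signed-IMC} rationally. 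This is the ``opposite divisibility'' clause of Proposition~\ref{prop:equiv-kato}.

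\emph{Step 3 (integrality).} Under (i) and (ii), Kato's divisibility holds integrally in $\Lambda_{\scO}(\Gamma)$, without inverting $p$. Hypothesis (i) gives $\rH^1_{\rm Iw}$ free of rank one and the full Euler system argument. Hypothesis (ii), ramification at some $\ell\Vert N$, supplies an element $\tau\in G_{\bQ(\mu_{p^\infty})}$ with $T/(\tau-1)T$ free of rank one. This is the ``big image'' input needed in Kato's Euler system argument, cf. the refinements of Kato's theorem via Kolyvagin systems/Rubin's axioms with a transvection, to remove the $\mu$-ambiguity. Rerunning Step 2 integrally then gives the statement in $\Lambda_{\scO}(\Gamma)$.

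The main obstacle is Step 3. Kato's theorem as stated only gives divisibility after inverting $p$ unless image conditions hold. One must verify that (i) and (ii), together with $\eta=\omega^{1-k/2}$ and the nonordinary local condition, which forces $\rH^0(\bQ_p(\mu_{p^\infty}),A_f(1))=0$, meet the hypotheses of the integral Euler system bound, for instance \cite[Thm.~12.5(4)]{Kato295} combined with the existence of a transvection in the image of $\bar\rho_f$.
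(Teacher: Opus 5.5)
Your proposal is correct and follows the paper's proof: Kato's Euler-system divisibility for $X_{\rm str}$ (rationally in general, integrally under (i)--(ii)) is transferred to the signed Selmer group via the four-term sequence in Proposition~\ref{prop:equiv-kato}. The paper simply cites \cite[\S{2.5}]{skinner-mult} for the integral refinement; its content is the argument you sketch, with $\tau$ taken in the inertia group at $\ell\Vert N$ as a transvection. One citation detail: the divisibility itself is \cite[Thm.~12.5]{Kato295}, not Thm.~12.4.
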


\begin{proof}
By \cite[Thm.~12.5]{Kato295}, $X_{\rm str}(f/\bQ(\mu_{p^\infty}))^\eta$ is $\Lambda_{\scO}(\Gamma)$-torsion, %\mathbf{z}_f\in{\rm H}_{\rm Iw}^1(\bQ(\mu_{p^\infty}),V_f)$ is contained in ${\rm H}^1_{\rm Iw}(\bQ(\mu_{p^\infty},T_f)$, and we have the divisibility 
with
\begin{equation}\label{eq:div-Kato-IMC}
{\rm char}_{\Lambda_{\scO}(\Gamma)}\bigl(X_{\rm str}(f/\bQ(\mu_{p^\infty}))^\eta\bigr)\supset{\rm char}_{\Lambda_{\scO}(\Gamma)}\biggl(\frac{{\rm H}_{\rm Iw}^1(\bQ(\mu_{p^\infty}),T)^\eta}{(\mathbf{z}_{f,k-1}^{\rm int})^\eta}\biggr)
\end{equation}
in $\Lambda_{\scO}(\Gamma)\otimes\bQ_p$. That the divisibility \eqref{eq:div-Kato-IMC} holds in $\Lambda_{\scO}(\Gamma)$ under the additional hypotheses (i)-(ii) is explained in  \cite[\S{2.5}]{skinner-mult}. %that $\bar\rho_f$  ramified at some prime $\ell\Vert N$ and absolutely irreducible. 
Together with Proposition~\ref{prop:equiv-kato}, this yields the result.
\end{proof}

\subsection{Eisenstein congruence divisibility over $K$}

Ultimately, a converse to Kato's divisibility will be deduced from the following  main result from our previous work \cite{CLW}. 

Recall the equality (up to a unit) of $p$-adic $L$-functions
\begin{equation}\label{eq:L12-CLW}
\varpi^{-s}\cdot\mathcal{L}_{1,2}^{(\rm II)}(f/K)^\eta=\mathcal{L}_{\overline{v}}(f/K)^\eta
\end{equation}
from Proposition~\ref{prop:L12-CLW}. Here and throughout the following, we put 
\[
\eta=\omega^{1-k/2}
\]
for the ease of notation. 
%Recall that 
%\[
%\pi_f=\otimes_{\ell\leq\infty}\pi_{f,\ell}
%\]
%enotes the cuspidal automorphic representation of ${\rm GL}_2(\mathbb{A})$ generated by $f$, and 
%For a prime $v$ of a quadratic imaginary field $K$, let $\varepsilon_v(\frac{1}{2},BC_K(\pi_f))$ be the local root number at $v$ 
%of the base-change of $\pi_f$ to ${\rm GL}_2(\mathbb{A}_K)$.

\begin{thm}\label{thm:CLW}
Let $f\in S_k(\Gamma_0(N))$ be a newform of squarefree level $N$ and weight $k\geq 2$, and let $p$ be an odd prime of good nonordinary reduction for $f$. Let $K$ be an imaginary quadratic field satisfying:
\begin{itemize}
\item[(i)] $p$ splits in $K$;
\item[(ii)] if $N$ is odd, then $2$ splits in $K$;
\item[(iii)] some prime factor of $N$ is nonsplit in $K$;
\item[(iv)] $\bar{\rho}_f\vert_{G_{K}}$ is %absolutely
irreducible.
\end{itemize}
Then 
\begin{equation}\label{eq:CLW}
{\rm char}_{\Lambda_{\scO}(\Gamma_K)}\bigl(X_{{\rm str},{\rm rel}}^{}(f_\eta/K_\infty)\bigr)\Lambda_{\scO^{\rm ur}}(\Gamma_K)\subset\bigl(\mathcal{L}_{\overline{v}}(f/K)^\eta\bigr)
\end{equation}
in $\Lambda_{\scO^{\rm ur}}(\Gamma_K)\otimes_{\Lambda_{\scO}(\Gamma_K)}{\rm Frac}(\Lambda_{\scO}(\Gamma))$. If in addition 
\begin{itemize}
\item[(v)] $\varepsilon_v(BC_K(\pi_f))=+1$ for every prime $v$ factor of $N$ nonsplit in $K$, 
%either 
%$\begin{cases}
%\textrm{$\varepsilon_v(\frac{1}{2},BC_K(\pi_f))=+1$ for every prime $v$ factor of $N$ nonsplit in $K$}; or\\[0.3em]
%\textrm{\eqref{eq:gen-H} holds with $N^-\neq 1$,}
%\end{cases}$
%\item[(v)] $\varepsilon_v(\frac{1}{2},BC_K(\pi_f))=+1$ for every prime $v$ factor of $N$ nonsplit in $K$; or
%\item[(v)'] \eqref{eq:gen-H} holds with $N^-\neq 1$,
\end{itemize}
%$N$ is divisible by at least two primes, or $N=\ell$ is a prime ramified in $K$ and $\varepsilon_\(\frac{1}{2},BC(\pi_\ell))=+1$,
%for every prime $\ell\Vert N$ non-split in $K$, we have $\ell$ ramified in $K$ and $\pi_\ell\cong\sigma_\ell\otimes\xi_\ell$,
%\end{itemize} 
then the divisibility \eqref{eq:CLW} holds in $\Lambda_{\scO^{\rm ur}}(\Gamma_K)$.
\end{thm}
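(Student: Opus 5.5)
This is the main result of the earlier work \cite{CLW}, so the plan is to deduce it from the results proved there rather than reprove the Eisenstein congruence argument. Our task reduces to matching the objects appearing here with those in \emph{op.\,cit.}

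First, identify the Selmer group $X_{{\rm str},{\rm rel}}(f_\eta/K_\infty)$ of $\S\ref{subsec:2var-Sel}$ with the Greenberg-type Selmer group studied in \cite{CLW}. Its local conditions are strict at $v$ and relaxed at $\overline{v}$, and it is formed for the twist $T(\eta)=T_f(k-1)\otimes\omega^{1-k/2}$ over the $\bZ_p^2$-extension. The twist by $\eta\,\varepsilon^{k-1}$ is a central twist, so it is absorbed by a character of $\Gamma_K$ times a $\Delta$-twist. This is where the restriction to $\eta=\omega^{1-k/2}$ enters.

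Second, recall that $\mathcal{L}_{\overline{v}}(f/K)^\eta\in\Lambda_{\scO^{\rm ur}}(\Gamma_K)$ is the $p$-adic Rankin--Selberg $L$-function of \cite[Prop.~8.2.2]{CLW}. The lower-bound divisibility in \cite{CLW} is proved via Eisenstein congruences on ${\rm GU}(3,1)$, using Hida theory for semi-ordinary forms. Hypotheses (i)--(iv) are exactly the running assumptions there:
\begin{itemize}
\item (i) is needed for the semi-ordinary family;
\item (ii) is a technical condition at $2$ for the unitary group setup;
\item (iii) guarantees a definite unitary group and the needed nonvanishing of local theta/Fourier--Jacobi coefficients;
\item (iv) ensures that the Eisenstein ideal is not contained in the residually reducible locus.
\end{itemize}
Under these assumptions \cite{CLW} gives \eqref{eq:CLW} after inverting elements of $\Lambda_{\scO}(\Gamma)$. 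This accounts for the possible $\mu$-type ambiguity coming from the cyclotomic-direction content, which is controlled only up to height-one primes pulled back from $\Lambda_{\scO}(\Gamma)$.

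Third, the integral statement under (v). The root number hypothesis lets one choose the auxiliary local data so that the relevant anticyclotomic ($p$-adic BDP-type) Fourier--Jacobi coefficient has $\mu=0$, via the results of Hsieh and Chida--Hsieh. This rules out height-one primes of the form $(\varpi)$ or those pulled back from the cyclotomic direction dividing the Eisenstein congruence. The divisibility therefore holds in $\Lambda_{\scO^{\rm ur}}(\Gamma_K)$ itself.

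The main obstacle is the bookkeeping in the first step. One must check that the normalisations of the Selmer group (twists, $\Lambda^\iota$ conventions, local conditions at primes dividing $N$) and of the $p$-adic $L$-function agree with those of \cite{CLW} up to units. In particular, one must confirm that the periods and the factor $\eta_{\mathbf{h}_v}/(c\,\omega_{\mathbf{h}_v})$ used in Definition~\ref{def:L-Gr} introduce no extra non-unit. This last point is the content of Proposition~\ref{prop:L12-CLW}, which we would invoke.
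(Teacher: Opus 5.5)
Your two-step structure matches the paper's. You cite the ${\rm GU}(3,1)$ result for the divisibility up to primes meeting $\Lambda_{\scO}(\Gamma)$, then use an anticyclotomic $\mu=0$ result under (v). However, the first step covers only part of the statement, and the second rests on the wrong mechanism.

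On the first step: \cite[Thm.~8.21]{CLW} is proved only in weight $2$. For $k>2$ the paper relies on the modifications explained in \cite[\S{4}]{wan-nonord} (cf. \cite[\S{7}]{fouquet-wan}), so citing \cite{CLW} alone does not give \eqref{eq:CLW} for all $k\geq 2$. Also, no normalisation check against Definition~\ref{def:L-Gr} is needed at this point. The $\mathcal{L}_{\overline{v}}(f/K)^\eta$ in \eqref{eq:CLW} is by definition the $p$-adic $L$-function of \cite{CLW}. Proposition~\ref{prop:L12-CLW} is indeed needed, but in the second step.

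The real gap is the integrality under (v). You attribute it to a $\mu=0$ property of some Fourier--Jacobi coefficient in the Eisenstein congruence argument. You do not say which coefficient, nor why the root number condition (v) would control it. Also, \cite{ChHs1} is the definite-case input, which the paper uses only for (v)$'$ of Corollary~\ref{cor:ii-IMC}, not here.

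What is actually needed is that no height-one prime $P$ with $P\cap\Lambda_{\scO}(\Gamma)\neq 0$ divides the $p$-adic $L$-function itself. At such $P$ one then has ${\rm ord}_P\bigl({\rm char}\,X_{{\rm str},{\rm rel}}(f_\eta/K_\infty)\bigr)\geq 0={\rm ord}_P\bigl(\mathcal{L}_{\overline{v}}(f/K)^\eta\bigr)$ trivially. At every other $P$ the inequality is exactly the localised divisibility.

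The paper obtains this by combining Proposition~\ref{prop:L12-CLW} with Proposition~\ref{prop:Gr-BDP}:
\begin{itemize}
\item the image of ${\rm Tw}_{k/2-1}(\mathcal{L}_{\overline{v}}(f/K)^\eta)$ modulo $I^+$ generates $\bigl(\mathscr{L}_{\overline{v}}^{\rm BDP}(f/K)^2\bigr)$;
\item $\mu(\mathscr{L}_{\overline{v}}^{\rm BDP}(f/K))=0$ by \cite[Thm.~B]{hsieh}, and this is where (v) enters;
\item ${\rm Tw}_{k/2-1}$ preserves $\Lambda_{\scO}(\Gamma)$, and any $P$ as above maps to $0$ or into $(\varpi)$ under the projection to $\Lambda_{\scO^{\rm ur}}(\Gamma^-)$.
\end{itemize}
Hence no such $P$ divides $\mathcal{L}_{\overline{v}}(f/K)^\eta$. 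The missing idea in your proposal is this identification of the anticyclotomic restriction of $\mathcal{L}_{\overline{v}}(f/K)^\eta$ with the square of the BDP $p$-adic $L$-function.
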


\begin{proof}
The divisibility \eqref{eq:CLW} is proved in \cite[Thm.~8.21]{CLW} in weight $2$; the required modifications of the argument to extend the result to higher weights are explained in \cite[\S{4}]{wan-nonord} (cf. \cite[\S{7}]{fouquet-wan}).

Under the additional hypothesis (v) we have  $\mu(\mathscr{L}_{\overline{v}}^{\rm BDP}(f/K))=0$  by \cite[Thm.~B]{hsieh}. %and \cite[Thm.~B]{burungale-II}. 
By Proposition~\ref{prop:Gr-BDP} and \eqref{eq:L12-CLW}, this implies that $\mathcal{L}_{\overline{v}}(f/K)^\eta$ is not divisible by any height one prime of $\Lambda_{\scO}(\Gamma_K)$ from the inverse image of the projection $\Lambda_{\scO}(\Gamma_K)\rightarrow\Lambda_{\scO}(\Gamma)$, whence the result.
\end{proof}

%\begin{rem}
%The integral divisibility in Theorem~\ref{thm:CLW} will allow us to establish Theorem~\ref{thm:signed-Q} in \emph{Case~(1)} and \emph{Case~(2b)} above, i.e. except the case when $f$ has prime level, say $N=\ell$, with $\pi_{f,\ell}\cong\sigma_\ell$. However, we can also include this case building on the results in the next subsection and the $\mu=0$ results of \cite{vatsal,ChHs1} 
%a third case under which the ambiguity by cyclotomic height one primes in \eqref{eq:CLW} can be removed 
%(see Corollary~\ref{cor:CLW}). 
%\end{rem}

\subsection{Transferring divisibilities}\label{subsec:transfer}

%In this section, %building on the explicit reciprocity laws of the preceding sections, 
We next deduce from the lower bound divisibility in  Theorem~\ref{thm:CLW} a proof of corresponding divisibilities in related Iwasawa main conjectures; the main result is Corollary~\ref{cor:ii-IMC}. With the explicit reciprocity laws of the preceding sections at hand, the required arguments go along similar lines as those in e.g. \cite[Prop.~3.2.1]{CGS}; however, since several additional complications arise in our setting, we provide the details.

%Let $\lambda_f$ and $\xi_{\mathds{1},i}$ be as in Theorem~\ref{thm:MTT} and Proposition~\ref{prop:image-Col}, respectively.

\begin{cor}\label{cor:BF-IMC}
In the setting of and under hypotheses {\rm (i)}--{\rm (iv)} of Theorem~\ref{thm:CLW}, if $l\in\{1,2\}$ is such that  condition \eqref{eq:Lp-nonzero} holds, then  
%%in Theorem~\ref{thm:BSTW} holds (so in particular $\mathcal{L}_{i,i}(f/K)\neq 0$ by Proposition~\ref{prop:L-cyc}).
%$X_{{\rm str},l}(f/K)$ is $\Lambda_{\scO}(\Gamma_K)$-torsion, $\rH^1_{{\rm rel},l}(K,T\hat\otimes\Lambda_K)$ has $\Lambda_{\scO}(\Gamma_K)$-rank one, and
\begin{equation}\label{eq:div-BF}
(\varpi^s\cdot\xi_{\eta,l})\cdot{\rm char}_{\Lambda_{\scO}(\Gamma_K)}\bigl(X_{{\rm str},l}(f_{\eta}/K_\infty)\bigr)\subset
{\rm char}_{\Lambda_{\scO}(\Gamma_K)}\biggl(\frac{\rH^1_{{\rm rel},l}(K,\mathbf{T}_K^\eta)}{(\mathcal{BF}_{\rm int}^{l,\mathbf{h}_v,\eta})}\biggr)
\end{equation}
as ideals in $\Lambda_{\scO^{}}(\Gamma_K)\otimes_{\Lambda_{\scO}(\Gamma_K)}{\rm Frac}(\Lambda_{\scO}(\Gamma))$. If in addition  hypothesis {\rm (v)} in Theorem~\ref{thm:CLW} holds, then the divisibility \eqref{eq:div-BF} holds in $\Lambda_{\scO^{}}(\Gamma_K)$.
\end{cor}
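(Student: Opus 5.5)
We transfer the divisibility \eqref{eq:CLW} of Theorem~\ref{thm:CLW} via Poitou--Tate duality, using the explicit reciprocity law at $\overline{v}$ (Theorem~\ref{thm:ERL-II}, Corollary~\ref{cor:div-Tv}) to turn the Coleman-map image of $\mathcal{BF}_{\rm int}^{l,\mathbf{h}_v,\eta}$ at $\overline{v}$ into $\mathcal{L}_{\overline{v}}(f/K)^\eta$. Let $l'$ be the element of $\{1,2\}$ different from $l$.

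\textbf{Step 1: the Coleman-map image of the class.} By Theorem~\ref{thm:BSTW}, the class $\mathcal{BF}_{\rm int}^{l,\mathbf{h}_v,\eta}$ lies in $\rH^1(K,\mathbf{T}_K^\eta)$. By Corollary~\ref{cor:BF-pm}, together with the identity $T_v\cdot e_\eta\widetilde{\mathscr{C}}_{l',\overline{v}}({\rm res}_{\overline{v}}\mathcal{BF}^{l}_{\rm int})=\widetilde{L}^{\rm (II)}_{l',l}$ from the proof of Corollary~\ref{cor:div-Tv}, we get $e_\eta\widetilde{\mathscr{C}}_{l,\overline{v}}({\rm res}_{\overline{v}}\mathcal{BF}^{l}_{\rm int})=0$. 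Hence $\mathcal{BF}_{\rm int}^{l,\mathbf{h}_v,\eta}\in\rH^1_{{\rm rel},l}(K,\mathbf{T}_K^\eta)$, and moreover
\[
e_\eta\widetilde{\mathscr{C}}_{l',\overline{v}}({\rm res}_{\overline{v}}\mathcal{BF}^{l}_{\rm int})=\pm L^{\rm (II)}_{1,2}(\mathbf{h}_v,f)^\eta.
\]
By Definition~\ref{def:L-12}, Definition~\ref{def:L-Gr} and Proposition~\ref{prop:L12-CLW}, this equals, up to units, $\varpi^{s}\,\delta_{k-1}\,\mathcal{L}_{\overline{v}}(f/K)^\eta$ divided by $\tfrac{h_K}{w_K}\mathscr{L}^{{\rm Katz},-}_{\overline{v}}\cdot\tfrac{\eta_{\mathbf{h}_v}}{c\,\omega_{\mathbf{h}_v}}\cdot[\varphi(\omega_f),\omega_f]_{\rm dR}$.

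The correction factors need care. The product $\tfrac{h_K}{w_K}\mathscr{L}^{{\rm Katz},-}_{\overline{v}}\cdot\eta_{\mathbf{h}_v}/(c\,\omega_{\mathbf{h}_v})$ is a congruence-module comparison for the CM family, and I expect it to be a unit, or at least supported only on primes pulled back from $\Lambda(\Gamma)$, as in \cite{BSTW}. Similarly, $\delta_{k-1}$ and the $\xi_{\eta,\cdot}$ factors are cyclotomic elements, so they are harmless in ${\rm Frac}(\Lambda_{\scO}(\Gamma))$. Making this precise is the \textbf{main obstacle}. I would first try to match the interpolation formula of Theorem~\ref{thm:ERL-II} directly against \cite[Prop.~8.2.2]{CLW}, exactly as in the proof of Proposition~\ref{prop:L12-CLW}.

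\textbf{Step 2: global duality.} Consider the Poitou--Tate exact sequence
\[
0\to\rH^1_{{\rm rel},l}(K,\mathbf{T}_K^\eta)/(\mathcal{BF})\to\frac{\rH^1_{l'\text{-}{\rm rel}}(K_{\overline{v}},\mathbf{T}_K^\eta)}{\rH^1_{l}(K_{\overline{v}})}\Big/({\rm res}_{\overline{v}}\mathcal{BF})\to X_{{\rm str},l}\to X_{{\rm str},{\rm str}}\to0,
\]
and similarly the sequence for the pair $({\rm str},{\rm rel})$ versus $({\rm str},{\rm str})$ with quotient $\rH^1(K_{\overline{v}})$. Injectivity on the left uses that ${\rm res}_{\overline{v}}\mathcal{BF}\neq0$, which follows from Corollary~\ref{cor:h-unb-nonzero}, together with the fact that $\rH^1_{{\rm rel},l}$ has rank one. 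The map $e_\eta\widetilde{\mathscr{C}}_{l',\overline{v}}$ identifies $\rH^1(K_{\overline{v}})/\ker$ with an ideal of finite index in $\Omega\,\xi_{\eta,l'}\Lambda$ by Proposition~\ref{prop:image-Col-2var}.

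Comparing the two sequences, and using that the local quotient $\rH^1(K_{\overline{v}})/\rH^1_{l'}$ differs from the image of $\widetilde{\mathscr{C}}_{l}$ by an ideal of finite index in $\xi_{\eta,l}$, multiplicativity of characteristic ideals gives
\[
{\rm char}(X_{{\rm str},l})\cdot\xi_{\eta,l}\cdot(\mathcal{L}_{\overline{v}})\,\varpi^s \subset\text{(up to cyclotomic factors)}\;{\rm char}(X_{{\rm str},{\rm rel}})\cdot{\rm char}\bigl(\rH^1_{{\rm rel},l}/(\mathcal{BF})\bigr).
\]
Combining this with \eqref{eq:CLW}, and cancelling ${\rm char}(X_{{\rm str},{\rm rel}})$, which is nonzero since $\mathcal{L}_{\overline{v}}\neq0$, yields \eqref{eq:div-BF} in ${\rm Frac}(\Lambda_{\scO}(\Gamma))$. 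We also need to descend from $\scO^{\rm ur}$ to $\scO$, which is done by faithful flatness.

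\textbf{Step 3: the integral statement.} Under hypothesis (v), Theorem~\ref{thm:CLW} holds integrally. The remaining cyclotomic factors ($\delta_{k-1}$ versus $\xi_{\eta,l}\xi_{\eta,l'}$, and the Katz/congruence term) must then be shown to be units or to cancel exactly. For this I would use Remark~\ref{rem:xi} together with the $\mu=0$ argument of Theorem~\ref{thm:CLW} (via Proposition~\ref{prop:Gr-BDP}) to exclude height-one primes pulled back from $\Lambda(\Gamma)$.
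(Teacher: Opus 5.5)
Your overall architecture (explicit reciprocity law at $\overline{v}$, then Poitou--Tate, then \eqref{eq:CLW}) is the paper's. However, the step you label the ``main obstacle'' is a genuine missing ingredient, and you have misdiagnosed it. Put $u:=\frac{h_K}{w_K}\mathscr{L}^{{\rm Katz},-}_{\overline{v}}\cdot\frac{\eta_{\mathbf{h}_v}}{c\,\omega_{\mathbf{h}_v}}$. Then $L^{\rm (II)}_{1,2}(\mathbf{h}_v,f)^\eta$ is, up to units, $\varpi^s\delta_{k-1}\mathcal{L}_{\overline{v}}(f/K)^\eta/u$. The divisibility \eqref{eq:CLW} yields $(\delta_{k-1})\cdot{\rm char}(X_{{\rm str},{\rm rel}}(f_\eta/K_\infty))\subset(\varpi^{-s}L^{\rm (II)}_{1,2}(\mathbf{h}_v,f)^\eta)$ only if $u$ is integral; if $u$ had a denominator, the inclusion could not be deduced. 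This factor is a function of the CM-family variable $\Gamma^v$, not of the cyclotomic variable. So it is not inverted when you localise $\Lambda_{\scO}(\Gamma_K)$ at nonzero elements of $\Lambda_{\scO}(\Gamma)$. Nor can matching interpolation formulas as in Proposition~\ref{prop:L12-CLW} prove integrality of a normalising factor that is already built into $\mathcal{L}^{\rm (II)}_{1,2}(f/K)^\eta$. The paper gets integrality from congruence modules, in three steps:
\emph{(a)} $\eta_{\mathbf{h}_v}$ has denominators bounded by the congruence power series $H_v$ \cite[Prop.~10.1.1]{KLZ1};
\emph{(b)} $(H_v)=(\frac{h_K}{w_K}\mathscr{L}^{{\rm Katz},-}_{\overline{v}})$ by \cite[Thm.~4.20]{BSTW};
\emph{(c)} $(H_v)\subset(c\,\omega_{\mathbf{h}_v}/\eta_{\mathbf{h}_v})$ by \cite[Lem.~4.5]{wan-ss}.

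The local analysis at $\overline{v}$ is the second gap. Your displayed sequence quotients by $\rH^1_l(K_{\overline{v}})$, which contains ${\rm res}_{\overline{v}}(\mathcal{BF}^{l,\mathbf{h}_v,\eta}_{\rm int})$, so it is not meaningful. The relevant sequence is $0\to\rH^1_{{\rm rel},l}(K,\mathbf{T}_K^\eta)\to\rH^1_l(K_{\overline{v}})^\eta\to X_{{\rm str},{\rm rel}}\to X_{{\rm str},l}\to0$. Its left exactness, and the injectivity of $e_\eta\widetilde{\mathscr{C}}_{l',\overline{v}}$ on $\rH^1_l(K_{\overline{v}})^\eta$, both come from $\rH^1_1(K_{\overline{v}})^\eta\cap\rH^1_2(K_{\overline{v}})^\eta=0$. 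That in turn follows from injectivity of the two-variable regulator \cite[Prop.~4.11]{LZ2} together with \eqref{eq:factor-L-2var-w}.

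More seriously, $e_\eta\widetilde{\mathscr{C}}_{l',\overline{v}}(\rH^1_l(K_{\overline{v}})^\eta)$ is not of finite index in the image of $e_\eta\widetilde{\mathscr{C}}_{l',\overline{v}}$. The cokernel of $\rH^1_l\hookrightarrow\rH^1/\rH^1_{l'}$ is $\rH^1/(\rH^1_l+\rH^1_{l'})$, whose characteristic ideal is $(\varpi^n\delta_{k-1}/\xi_{\eta,2})$ by \cite[Prop.~2.17]{BL-non-ord}. Combine this with Proposition~\ref{prop:image-Col-2var}, Theorem~\ref{thm:ERL-II} and $\xi_{\eta,1}=1$ (Remark~\ref{rem:xi}). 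You get $(\varpi^n\delta_{k-1})\cdot{\rm char}\bigl(\rH^1_l(K_{\overline{v}})^\eta/({\rm res}_{\overline{v}}\mathcal{BF}^{l,\mathbf{h}_v,\eta}_{\rm int})\bigr)=(L^{\rm (II)}_{1,2}(\mathbf{h}_v,f)^\eta)\cdot\xi_{\eta,l}$. This $\delta_{k-1}$ cancels exactly the one in Definition~\ref{def:L-Gr}, leaving precisely the factor $\xi_{\eta,l}$ in the statement.

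That exact cancellation is what makes the integral statement under (v) work, and your Step~3 cannot substitute for it. The $\mu=0$ input of \cite{hsieh} is already used up in the integral form of Theorem~\ref{thm:CLW}. It also says nothing about cyclotomic factors such as $\delta_{k-1}$ or $\xi_{\eta,2}$ in a comparison of Selmer characteristic ideals.
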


\begin{proof}
For $i,j\in\{1,2\}$ put $\rH^1_i(K_{\overline{v}})^\eta=\rH^1_i(K_{\overline{v}},\mathbf{T}_K^\eta)$ for the ease of notation, 
let $\rH^1_{{\rm rel},i\cap j}(K,\mathbf{T}_K^\eta)$ denote the submodule of $\rH^1_{{\rm rel},j}(K,\mathbf{T}_K^\eta)$ consisting of classes $c$ satisfying ${\rm res}_{\overline{v}}(c)\in\rH^1_i(K_{\overline{v}})^\eta\cap\rH^1_j(K_{\overline{v}})^\eta$, and let $X_{{\rm str},i+j}(f_\eta/K_\infty)$ be the Pontryagin dual of the Selmer group dual to $\rH^1_{{\rm rel},i\cap j}(K,\mathbf{T}_K^\eta)$ in the sense of $\S\ref{subsec:2var-Sel}$. Then Poitou--Tate duality gives rise to the exact sequence
\begin{equation}\label{eq:PT:i-int-j}
\begin{aligned}
0\longrightarrow\rH^1_{{\rm rel},i\cap j}(K,\mathbf{T}_K^\eta)\longrightarrow&\rH^1_{{\rm rel},j}(K,\mathbf{T}_K^\eta)\xrightarrow{{\rm res}_{\overline{v}}}\frac{\rH^1_j(K_{\overline{v}})^\eta}{\rH^1_i(K_{\overline{v}})^\eta\cap\rH^1_j(K_{\overline{v}})^\eta}\\
&\longrightarrow X_{{\rm str},i+j}(f_\eta/K_\infty)\longrightarrow X_{{\rm str},j}(f_\eta/K_\infty)\longrightarrow 0.
\end{aligned}
\end{equation}
For the proof of the stated divisibility, it suffices to consider the case where $X_{{\rm str},l}(f/K_\infty)^\eta$ is $\Lambda_\scO(\Gamma_K)$-torsion (otherwise the result is obvious).  Letting $l'$ denote the element in $\{1,2\}$ different from $l$, and taking $(i,j)=(l',l)$ in \eqref{eq:PT:i-int-j}, the nonvanishing of $L_{l',l}^{\rm (II)}(\mathbf{h}_v,f)^\eta$ (see Corollary~\ref{cor:h-unb-nonzero}) and Theorem~\ref{thm:ERL-II} implies that ${\rm res}_{\overline{v}}$ has $\Lambda_{\scO}(\Gamma_K)$-torsion cokernel. Moreover, by \cite[Prop.~4.11]{LZ2} %the two-variable Perrin-Riou regulator
the map $e_\eta\mathscr{L}_{V,\overline{v}}$ is injective, and so by the decomposition \eqref{eq:factor-L-2var-w} so is $e_\eta\widetilde{\mathscr{C}}_1\oplus e_\eta\widetilde{\mathscr{C}}_2$. Thus 
\[
\rH^1_1(K_{\overline{v}})^\eta\cap\rH^1_2(K_{\overline{v}})^\eta=0,
\] 
which implies $\rH^1_{{\rm rel},l'\cap l}(K,\mathbf{T}_K^\eta)=\rH^1_{{\rm rel},{\rm str}}(K,\mathbf{T}_K^\eta)=0$, and so \eqref{eq:PT:i-int-j} reduces to
\begin{equation}\label{eq:PT:ll'}
0\longrightarrow\rH^1_{{\rm rel},l}(K,\mathbf{T}_K^\eta)\xrightarrow{{\rm res}_{\overline{v}}}\rH^1_l(K_{\overline{v}})^\eta\longrightarrow X_{{\rm str},{\rm rel}}(f_\eta/K_\infty)\longrightarrow X_{{\rm str},l}(f_\eta/K_\infty)\longrightarrow 0.
\end{equation}
By the construction in \cite[Prop.~10.1.1]{KLZ1} the differential $\eta_{\mathbf{h}_v}$ has denominators controlled by the %congruence ideal $I_{\mathbf{h}_v}\subset{\rm Frac}(\Lambda_{\bZ_p}(\Gamma^v))$
congruence power series $H_v\in{\rm Frac}(\Lambda_{\bZ_p}(\Gamma^v))$ associated to $\mathbf{h}_v$ as in \cite[\S{4.3}]{BSTW}. By Theorem~4.20 in \emph{loc.\,cit.}, we have
\[
(H_v)=\Bigl(\frac{h_K}{w_K}\cdot\mathscr{L}_{\overline{v}}^{{\rm Katz},-}\Bigr)
\]
as ideals in $\Lambda_{\scO^{\rm ur}}(\Gamma^v)$. 
%As shown in \cite[Prop.~8.3]{wanIMC}, it follows from the work of Hida--Tilouine \cite{HT-117} and Rubin's proof of the Iwasawa main conjecture for $K$ \cite{rubin-IMC} that we have 
%\begin{equation}\label{eq:div-HT-rubin}
%\frac{h_K}{w_K}\cdot\mathscr{L}_{\overline{v}}^{{\rm Katz},-}\subset\frac{c\cdot\omega_{\mathbf{h}_v}}{\eta_{\mathbf{h}_v}}.
%\end{equation}
Thus together with the argument in \cite[Lem.~4.5]{wan-ss} showing the divisibility $(H_v)\subset(\frac{c\cdot\omega_{\mathbf{h}_v}}{\eta_{\mathbf{h}_v}})$, we deduce that Theorem~\ref{thm:CLW} (see also Definition~\ref{def:L-Gr} and \eqref{eq:L12-CLW}) implies the divisibility
\begin{equation}\label{eq:CLW+HT}
(\delta_{k-1})\cdot{\rm char}_{\Lambda_{\scO^{}}(\Gamma_K)}\bigl(X_{{\rm str},{\rm rel}}^{}(f_\eta/K_\infty)\bigr)\subset\bigl(\varpi^{-s}\cdot L_{1,2}^{\rm (II)}(\mathbf{h}_v,f)^\eta\bigr)
\end{equation}
in $\Lambda_{\scO^{}}(\Gamma_K)\otimes_{\Lambda_{\scO}(\Gamma_K)}{\rm Frac}(\Lambda_{\scO}(\Gamma))$, and even in $\Lambda_{\scO^{}}(\Gamma_K)$ if hypothesis (v) of Theorem~\ref{thm:CLW} holds.
On the other hand, consider the tautological exact sequence
\begin{equation}\label{eq:tauto-ses}
0\longrightarrow\rH^1_l(K_{\overline{v}})^\eta\cong\frac{\rH^1_l(K_{\overline{v}})^\eta+\rH^1_{l'}(K_{\overline{v}})^\eta}{\rH^1_{l'}(K_{\overline{v}})^\eta}\longrightarrow\frac{\rH^1(K_{\overline{v}})^\eta}{\rH^1_{l'}(K_{\overline{v}})^\eta}\longrightarrow\frac{\rH^1(K_{\overline{v}})^\eta}{\rH^1_l(K_{\overline{v}})^\eta+\rH^1_{l'}(K_{\overline{v}})^\eta}\longrightarrow 0,
\end{equation}
using that ${\rm H}^1_l(K_{\overline{v}})^\eta\cap{\rm H}^1_{l'}(K_{\overline{v}})^\eta=0$, as shown as above, for the left isomorphism. By \cite[Prop.~2.17]{BL-non-ord} and the discussion following \emph{loc.\,cit.}, the right-most term of \eqref{eq:tauto-ses} is $\Lambda_\scO(\Gamma_K)$-torsion, with characteristic ideal generated by $\varpi^n\cdot\delta_{k-1}/\xi_{\eta,2}$ for some integer $n$. %up to powers of $\varpi$. 
Thus from Proposition~\ref{prop:image-Col-2var} and Theorem~\ref{thm:ERL-II} we find 
\begin{equation}\label{eq:ERLii}
(\varpi^n\cdot\delta_{k-1})\cdot{\rm char}_{\Lambda_\scO(\Gamma_K)}\biggl(\frac{\rH^1_l(K_{\overline{v}})^\eta}{({\rm res}_{\overline{v}}(\mathcal{BF}^{l,\mathbf{h}_v,\eta}_{\rm int}))}\biggr)=(L_{1,2}(\mathbf{h}_v,f)^\eta)\cdot\frac{\xi_{\eta,2}}{\xi_{\eta,l'}}
\end{equation}
in $\Lambda_{\scO}(\Gamma_K)$,  
%\otimes_{\scO}L$, and even in $\Lambda_{\scO}(\Gamma_K)$ under hypothesis (v) of Theorem~\ref{thm:CLW}, since in that case from Proposition~\ref{prop:Gr-BDP} and \cite[Thm.~B]{hsieh} 
%we can remove the ambiguity by powers of $\varpi$ coming from \cite[Prop.~2.17]{BL-non-ord}.  
and we note that since $\xi_{\eta,1}=1$ for all $\eta$ (see Remark~\ref{rem:xi}) we have the equality  $\xi_{\eta,2}/\xi_{\eta,l'}=\xi_{\eta,l}$. Thus from \eqref{eq:ERLii} we see that \eqref{eq:CLW+HT} amounts to the divisibility
\[
(\varpi^s\cdot\xi_{\eta,l})\cdot{\rm char}_{\Lambda_{\scO}(\Gamma_K)}\bigl(X_{{\rm str},{\rm rel}}(f_{\eta}/K_\infty)\bigr)\subset
(\varpi^n)\cdot{\rm char}_{\Lambda_\scO(\Gamma_K)}\biggl(\frac{\rH^1_l(K_{\overline{v}})^\eta}{({\rm res}_{\overline{v}}(\mathcal{BF}^{l,\mathbf{h}_v,\eta}_{\rm int}))}\biggr),
\]
which together with \eqref{eq:PT:ll'} and multiplicativity of characteristic ideals yields the result.
%\[
%(\varpi^s\cdot\xi_{\eta,l})\cdot{\rm char}_{\Lambda_{\scO}(\Gamma_K)}\bigl(X_{{\rm str},l}(f_{\eta}/K_\infty)\bigr)\subset
%(\varpi^n)\cdot{\rm char}_{\Lambda_{\scO}(\Gamma_K)}\biggl(\frac{\rH^1_{{\rm rel},l}(K,\mathbf{T}_K^\eta)}{(\mathcal{BF}_{\rm int}^{l,\mathbf{h}_v,\eta})}\biggr)
%\]
%whence the result.
%
%Since arguing as in \cite[Lem.~2.1]{PR-exp} we see that $[\varphi(\omega_f),\omega_f]_{\rm dR}$ is a $p$-adic unit, combining \eqref{eq:PT:ll'}, \eqref{eq:CLW+HT}, and \eqref{eq:ERLii} yields the result. %using that by \cite[Cor.~6.10.6]{LLZ} the constant $\alpha[\varphi(v_1),v_1]/r_f$ is divisible by $c_f$.
\end{proof}

\begin{cor}\label{cor:ii-IMC}
In the setting of and under hypotheses {\rm (i)}--{\rm (iv)} of Theorem~\ref{thm:CLW}, if $l\in\{1,2\}$ is such that  condition \eqref{eq:Lp-nonzero} holds, then  
%$X_{l,l}(f/K)$ is  $\Lambda_{\scO}(\Gamma_K)$-torsion, and
\begin{equation}\label{eq:div-ii}
(\xi_{\eta,l})^2\cdot{\rm char}_{\Lambda_{\scO}(\Gamma_K)}\bigl(X_{l,l}(f_\eta/K_\infty)\bigr)\subset\bigl(\varpi^{-s}\cdot\mathcal{L}_{l,l}^{\rm (I)}(f/K)^\eta\bigr)\nonumber
\end{equation}
as ideals in  $\Lambda_{\scO}(\Gamma_K)\otimes_{\Lambda_{\scO}(\Gamma_K)}{\rm Frac}(\Lambda_{\scO}(\Gamma))$. Moreover, if either:
\begin{itemize}
\item[(v)] hypothesis {\rm (v)} in Theorem~\ref{thm:CLW} holds, or
\item[(v)'] $k=2$ and $\bar{\rho}_f$ is ramified at every prime $\ell\vert N^-$, 
\end{itemize}
then the divisibility \eqref{eq:div-ii} holds in $\Lambda_{\scO^{}}(\Gamma_K)$.
\end{cor}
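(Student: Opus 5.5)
We deduce the divisibility from Corollary~\ref{cor:BF-IMC} by a second Poitou--Tate argument, this time at $v$. The explicit reciprocity law of Theorem~\ref{thm:ERL-I} links ${\rm res}_v(\mathcal{BF}^{l,\mathbf{h}_v,\eta}_{\rm int})$ to $\mathcal{L}^{\rm (I)}_{l,l}(f/K)^\eta$.

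Start from the class $\mathcal{BF}^{l,\mathbf{h}_v,\eta}_{\rm int}\in\rH^1_{{\rm rel},l}(K,\mathbf{T}_K^\eta)$. Its image in $\rH^1(\cO_K[1/p],\mathbf{T}_K^\eta)$ is provided by Theorem~\ref{thm:BSTW}, and its membership in the $l$-condition at $\overline{v}$ by Corollary~\ref{cor:BF-pm}. Poitou--Tate duality, imposing the $l$-condition at $v$, gives
\[
0\to\rH^1_{l,l}(K,\mathbf{T}_K^\eta)\to\rH^1_{{\rm rel},l}(K,\mathbf{T}_K^\eta)\xrightarrow{e_\eta\widetilde{\mathscr{C}}_{l,v}\circ{\rm res}_v}\Lambda_\scO(\Gamma_K)\to X_{l,l}(f_\eta/K_\infty)\to X_{{\rm str},l}(f_\eta/K_\infty)\to 0 .
\]
Here we identify $\rH^1(K_v)^\eta/\rH^1_l(K_v)^\eta$ with ${\rm Image}(e_\eta\widetilde{\mathscr{C}}_{l,v})$. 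By Proposition~\ref{prop:image-Col-2var}, this image is contained in $\Omega_{\bQ_p}\xi_{\eta,l}\Lambda_{\scO}(\Gamma_K)$ with finite index.

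Hypothesis \eqref{eq:Lp-nonzero} together with the equality $e_\eta\widetilde{\mathscr{C}}_{l,v}({\rm res}_v(\mathcal{BF}^{l,\mathbf{h}_v,\eta}_{\rm int}))=\varpi^s L^{\rm (I)}_{l,l}(f,\mathbf{h}_v)^\eta$ shows that the middle map is nonzero. Since $\rH^1_{{\rm rel},l}$ has rank one (it is torsion-free of rank one, by \eqref{eq:PT:ll'} and the torsionness established in the proof of Corollary~\ref{cor:BF-IMC}), the map is injective. Hence $\rH^1_{l,l}(K,\mathbf{T}_K^\eta)=0$. Quotienting by the class and using multiplicativity of characteristic ideals gives
\[
{\rm char}\,X_{l,l}={\rm char}\,X_{{\rm str},l}\cdot\Bigl(\tfrac{c_f^{-1}\varpi^s\mathcal{L}^{\rm (I)}_{l,l}}{\xi_{\eta,l}}\Bigr)\cdot{\rm char}\Bigl(\tfrac{\rH^1_{{\rm rel},l}}{(\mathcal{BF}^{l,\mathbf{h}_v,\eta}_{\rm int})}\Bigr)^{-1},
\]
up to the finite-index and unit ambiguities. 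We then insert Corollary~\ref{cor:BF-IMC}, which bounds ${\rm char}X_{{\rm str},l}$ by $(\varpi^s\xi_{\eta,l})^{-1}{\rm char}(\rH^1_{{\rm rel},l}/(\mathcal{BF}))$. The characteristic ideal of $\rH^1_{{\rm rel},l}/(\mathcal{BF})$ cancels, and we obtain $\xi_{\eta,l}^2\,{\rm char}X_{l,l}\subset(\varpi^{-s}\mathcal{L}^{\rm (I)}_{l,l})$ away from cyclotomic height-one primes. We must keep careful track of the $\varpi$-powers and of the factor $c_f$ in Definition~\ref{def:normalize-I}. The factor $c_f$ should be absorbed by the $p$-integral normalisation of the Beilinson--Flach classes relative to Ohta's differential and the canonical period. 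This bookkeeping, together with the resulting integrality of $\varpi^{-s}\mathcal{L}^{\rm (I)}_{l,l}$, is where I expect the most friction.

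For the integral statement, case (v) follows directly from the integral form of Corollary~\ref{cor:BF-IMC}. The same argument also shows $\varpi^{-s}\mathcal{L}^{\rm (I)}_{l,l}\in\Lambda_{\scO}(\Gamma_K)$, since a characteristic ideal is integral.

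Case (v)$'$ is the main obstacle, because Theorem~\ref{thm:CLW} only controls $\mathcal{L}_{\overline v}$ away from $\varpi$ and from cyclotomic primes. Here the plan is to show that $\varpi^{-s}\mathcal{L}^{\rm (I)}_{l,l}(f/K)^\eta$ is not divisible by any height-one prime pulled back from $\Lambda_\scO(\Gamma)$. It suffices to see that its reduction mod $I^+$ is nonzero with $\mu=0$. This follows from Proposition~\ref{prop:ac-descent-def}, which identifies that reduction with $\Theta_l^{\rm BD}(f/K)^2\,c_f/\eta_{f,N^-}$, combined with Theorem~\ref{thm:mu=0-definite}. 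Here we choose $K$ satisfying (def), and we need $c_f/\eta_{f,N^-}$ to be a unit; this is where the ramification of $\bar\rho_f$ at $\ell\mid N^-$ enters, through Ribet--Takahashi level-lowering. Since the divisibility already holds in the localisation at ${\rm Frac}(\Lambda_\scO(\Gamma))$, a right-hand side free of such primes upgrades it to an integral divisibility.
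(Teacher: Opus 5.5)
Your argument for the divisibility up to cyclotomic primes, and for case (v), is the paper's argument. You use the Poitou--Tate sequence at $v$, show that $\rH^1_{l,l}(K,\mathbf{T}_K^\eta)$ vanishes, quotient by $\mathcal{BF}^{l,\mathbf{h}_v,\eta}_{\rm int}$, and cancel against Corollary~\ref{cor:BF-IMC}. On the bookkeeping you worry about: the paper simply takes the characteristic ideal of the local quotient to be $(\mathcal{L}^{\rm (I)}_{l,l}(f/K)^\eta)/(\xi_{\eta,l})$, via Theorem~\ref{thm:ERL-I} and Proposition~\ref{prop:image-Col-2var}. The factor $\varpi^{-s}$ then comes only from Corollary~\ref{cor:BF-IMC}.

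The gap is in case (v)$'$. Put $x:=\varpi^{-s}\cdot\mathcal{L}^{\rm (I)}_{l,l}(f/K)^\eta$. Knowing that $x \bmod I^+$ is nonzero with $\mu=0$ rules out cyclotomic height-one prime divisors of $x$ only if $x\in\Lambda_{\scO}(\Gamma_K)$. A priori $x$ only lies in $\Lambda_L(\Gamma_K)$ (Remark~\ref{rem:integrality-Lii}), and then the inference fails. For instance, with $\gamma^+$ a topological generator of $\Gamma^+$, the element $(\gamma^+-1-\varpi)/\varpi$ reduces to $-1$ modulo $I^+$. Yet it is divisible by the cyclotomic prime $(\gamma^+-1-\varpi)$.

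Your justification of integrality does not hold. An inclusion $(\xi_{\eta,l})^2\,{\rm char}(X_{l,l}(f_\eta/K_\infty))\subset(x)$ with integral left-hand side does not make $x$ integral; compare $(1)\subset(\varpi^{-1})$. In any case you extract it from the integral form of Corollary~\ref{cor:BF-IMC}, which is available only under (v), not under (v)$'$.

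The paper closes this with a separate step, \eqref{eq:Lii-integral}, which uses the opposite divisibility. Suppose $x$ had a $\varpi$-denominator. After multiplying by suitable elements supported at cyclotomic primes, descend the divisibility along $I^-$ using Proposition~\ref{prop:L-cyc}, Proposition~\ref{prop:cyc-restr} and Corollary~\ref{cor:cyc-descent}. This would give a strict inclusion $(\xi_{\eta,l})^2\,{\rm char}(X_l(f/\bQ(\mu_{p^\infty}))^\eta)\,{\rm char}(X_l(f\otimes\epsilon_K/\bQ(\mu_{p^\infty}))^\eta)\subsetneq(L_l(f)^\eta\cdot L_l(f\otimes\epsilon_K)^\eta)$. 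That contradicts Kato's divisibility (Theorem~\ref{thm:katodiv-signed}) applied to $f$ and $f\otimes\epsilon_K$.

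Only once $x$ is known to be integral does your $\mu=0$ input remove the remaining cyclotomic ambiguity. That input is Proposition~\ref{prop:ac-descent-def}, Theorem~\ref{thm:mu=0-definite}, and the Pollack--Weston formula for ${\rm ord}_\varpi(c_f/\eta_{f,N^-})$, which plays the role of your level-lowering step. So the missing idea is to feed Kato's upper bound back in to control the denominators of $\varpi^{-s}\cdot\mathcal{L}^{\rm (I)}_{l,l}(f/K)^\eta$.
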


\begin{proof}
With similar abbreviated notations as in the proof of Corollary~\ref{cor:BF-IMC},
Poitou--Tate duality gives rise to the exact sequence
\begin{equation}\label{eq:PT-res-v}
\begin{aligned}
0\rightarrow\rH^1_{l,l}(K,\mathbf{T}_K^\eta)\longrightarrow&\rH^1_{{\rm rel},l}(K,\mathbf{T}_K^\eta)\xrightarrow{{\rm res}_v}\frac{\rH^1(K_v)^\eta}{\rH^1_l(K_v)^\eta}\\
\quad&\longrightarrow X_{l,l}(f_\eta/K_\infty)\longrightarrow X_{{\rm str},l}(f_\eta/K_\infty)\longrightarrow 0.
\end{aligned}
\end{equation}

As before, it suffices to consider the case where $X_{l,l}(f_\eta/K_\infty)$ is $\Lambda_{\scO}(\Gamma_K)$-torsion. %(otherwise, the desired divisibility is obvious). 
Then $X_{{\rm str},l}(f_\eta/K_\infty)$ is also $\Lambda_{\scO}(\Gamma_K)$-torsion, and from \eqref{eq:PT:i-int-j} it follows that $\rH^1_{{\rm rel},l}(K,\mathbf{T}_K^\eta)$ has $\Lambda_{\scO}(\Gamma_K)$-rank one. By Theorem~\ref{thm:ERL-I} and the nonvanishing of $\mathcal{L}_{l,l}^{\rm (I)}(f/K)^\eta$ (by our assumption on $l$), it follows that $\rH^1_{l,l}(K,\mathbf{T}_K^\eta)=0$, and so after quotienting out by the image of $\mathcal{BF}_{\rm int}^{l,\mathbf{h}_v,\eta}$ the exact sequence \eqref{eq:PT-res-v} reduces to
\begin{equation}\label{eq:PT-rev-v-bis}
0\longrightarrow\frac{\rH^1_{{\rm rel},l}(K,\mathbf{T}_K^\eta)}{(\mathcal{BF}_{\rm int}^{l,\mathbf{h}_v,\eta})}\longrightarrow\frac{\rH^1(K_v)^\eta/\rH^1_l(K_v)^\eta}{({\rm res}_v(\mathcal{BF}_{\rm int}^{l,\mathbf{h}_v,\eta}))}\longrightarrow X_{l,l}(f_\eta/K_\infty)\longrightarrow X_{{\rm str},l}(f_\eta/K_\infty)\longrightarrow 0.
\end{equation}
Since by Proposition~\ref{prop:image-Col-2var} and Theorem~\ref{thm:ERL-I} we have the equality
\[
{\rm char}_{\Lambda_{\scO}(\Gamma_K)}\biggl(\frac{\rH^1(K_v)^\eta/\rH^1_l(K_v)^\eta}{({\rm res}_v(\mathcal{BF}_{\rm int}^{l,\mathbf{h}_v,\eta}))}\biggr)%={\rm char}_{\Lambda_K}\biggl(\frac{{\rm Image}(\mathscr{C}_{v,l})}{(\mathcal{L}_{l,l}(f/K))}\biggr)
=\frac{(\mathcal{L}^{\rm (I)}_{l,l}(f/K)^\eta)}{(\xi_{\eta,l})},
\]
taking characteristic ideals in \eqref{eq:PT-rev-v-bis}, the divisibility 
\begin{equation}\label{eq:div-ii}
(\xi_{\eta,l})^2\cdot{\rm char}_{\Lambda_{\scO}(\Gamma_K)}\bigl(X_{l,l}(f_\eta/K_\infty)\bigr)\subset\bigl(\varpi^{-s}\cdot\mathcal{L}_{l,l}^{\rm (I)}(f/K)^\eta\bigr)
\end{equation}
as ideals in  $\Lambda_{\scO}(\Gamma_K)\otimes_{\Lambda_{\scO}(\Gamma_K)}{\rm Frac}(\Lambda_{\scO}(\Gamma))$, and even $\Lambda_{\scO}(\Gamma_K)$ under the additional hypothesis (v) of Theorem~\ref{thm:CLW}, follows from Corollary~\ref{cor:BF-IMC}. 

Finally, it remains to show that that ambiguity by primes from $\Lambda_{\scO}(\Gamma)$ is \eqref{eq:div-ii} can also be removed under the additional hypothesis (v)' in the statement. For this, we first argue the inclusion
\begin{equation}\label{eq:Lii-integral}
\varpi^{-s}\cdot\mathcal{L}_{l,l}^{\rm (I)}(f/K)^\eta\in\Lambda_{\scO}(\Gamma_K).
\end{equation}
We proceed by contradiction. Suppose we had $\varpi^{-s}\cdot\mathcal{L}_{l,l}^{\rm (I)}(f/K)^\eta\in\varpi^{-t}\Lambda_{\scO}(\Gamma_K)\smallsetminus\Lambda_{\scO}(\Gamma_K)$ for some $t>0$. Then after multiplying $\varpi^{-s}\cdot\mathcal{L}_{l,l}^{\rm (I)}(f/K)^\eta$ by an appropriate product of elements of $\Lambda_{\scO}(\Gamma_K)$ supported at height one primes from $\Lambda_{\scO}(\Gamma)$, using Proposition~\ref{prop:cyc-restr} and Corollary~\ref{cor:cyc-descent} we can deduce from the divisibility \eqref{eq:div-ii} a proof of the \emph{strict} divisibility
\[
(\xi_{\eta,l})^2\cdot{\rm char}_{\Lambda}\bigl(X_l(f/\bQ(\mu_{p^\infty}))^\eta\bigr)\cdot{\rm char}_{\Lambda}\bigl(X_{l}(f\otimes\epsilon_K/\bQ(\mu_{p^\infty}))^\eta\bigr)\subsetneq\bigl(L_l(f)^\eta\cdot L_i(f\otimes\epsilon_K)^\eta\bigr),
\]
but this contradicts the divisibility obtained from  
Theorem~\ref{thm:katodiv-signed} applied to $f$ and $f\otimes\epsilon_K$, and so \eqref{eq:Lii-integral} holds. Now, by the formula for ${\rm ord}_\varpi(c_f/\eta_{f,N^-})$ in terms of Tamagawa numbers in \cite[Thm.~6.8]{pollack-weston} (see also \cite[Cor.~6.7]{kim-ota}), from Proposition~\ref{prop:ac-descent-def} and Theorem~\ref{thm:mu=0-definite} we see that if $k=2$ (in which case $\eta$ is the trivial character of $\Delta$) 
and $\bar{\rho}_f$ is ramified at every prime $\ell\vert N^-$ then 
\begin{equation}\label{eq:mu-Lii}
\mu(\varpi^{-s}\cdot
\mathcal{L}_{l,l}^{\rm (I)}(f/K)^\eta\;{\rm mod}\,I^+)=0,\nonumber
\end{equation}
and hence %that the ambiguity by height one primes of $\Lambda_{\scO}(\Gamma_K)$ that are pullback from height one primes of $\Lambda_{\scO}(\Gamma)$ in 
together with \eqref{eq:Lii-integral} we conclude that the divisibility \eqref{eq:div-ii} holds in $\Lambda_{\scO}(\Gamma_K)$.
%
%can be removed under the additional hypothesis (v) (resp. (v)') in the statement follows from \eqref{eq:mu-Lii} (resp.  the last claim in Corollary~\ref{cor:BF-IMC}).
\end{proof}

%\subsection{Eisenstein congruence divisibility over $K$, bis}

%\begin{cor}\label{cor:CLW}
%Let $f\in S_k(\Gamma_0(N))$ be a newform of squarefree level $N$ and weight $k\geq 2$, and let $p$ be an odd prime of good nonordinary reduction for $f$. Let $K$ be an imaginary quadratic field satisfying:
%\begin{itemize}
%\item[(i)] $p$ splits in $K$;
%\item[(ii)] if $N$ is odd, then $2$ splits in $K$;
%\item[(iii)] some prime factor of $N$ is nonsplit in $K$;
%\item[(iv)] $\bar{\rho}_f\vert_{G_{K}}$ is irreducible.
%\end{itemize}
%Then 
%\begin{equation}\label{eq:CLW-bis}
%{\rm char}_{\Lambda_{\scO}(\Gamma_K)}\bigl(X_{{\rm str},{\rm rel}}^{}(f/K_\infty)\bigr)\Lambda_{\scO^{\rm ur}}(\Gamma_K)\subset\bigl(\mathcal{L}_{\overline{v}}^{}(f/K)\bigr)
%\end{equation}
%in $\Lambda_{\scO^{\rm ur}}(\Gamma_K)$. 
%\end{cor}

%\begin{proof}
%Arguing similarly as in Proposition~\ref{prop:L-cyc}, we can relate the image of ${\rm Tw}_{k/2-1}(\mathcal{L}_{i,i}(f/K)^{\omega^{1-k/2}})$ under the natural projection $\Lambda_{\scO}(\Gamma_K)\rightarrow\Lambda_{\scO}(\Gamma^-)$ to the square of a signed theta element $\Theta_i(f/K)$ deduced from \cite{ChHs1}. The resulting refinement of Theorem~\ref{thm:CLW} then follows from the $\mu=0$ result of [\emph{op.\,cit.}, Thm.~C] similarly as in \cite[\S{2.4}]{pollack-weston}.
%\end{proof}

\subsection{Proof of Theorem~\ref{thmintro:kato}} 

We can now complete the proof building on Theorems~\ref{thm:katodiv-signed} and \ref{thm:CLW}.

By Ribet's level-lowering result \cite{ribet-eps}, there exists a prime $\ell\Vert N$ (recall that $N$ is assumed to be squarefree) at which $\bar{\rho}_f$ is ramified. If $k=2$, we fix such a prime $\ell$; while if $k>2$ we let $\ell\Vert N$ be a prime as in Condition~\eqref{eq:rtn}. Choose an imaginary quadratic field $K$ satisfying: 
\begin{itemize}
\item[(a)] $\ell$ is inert (resp. ramified) in $K$ if $k=2$ (resp. $k>2$);
\item[(b)] every prime dividing $N/\ell$ splits in $K$;
%\item[(b)] \eqref{eq:gen-H} holds;
%\item[(b)] every prime dividing $N/\ell$ splits in $K$;
\item[(c)] if $N$ is odd, then $2$ splits in $K$;
%\item[(c)] every prime dividing $N/\ell$ splits in $K$;
\item[(d)] $p$ splits in $K$;
\item[(e)] $L(f\otimes\epsilon_K\eta^{-1},1)\neq 0$, where $\eta=\omega^{1-k/2}$.
\end{itemize}
%If $N=\ell$ is prime, instead of (a) and (b) we require that $\ell$ be ramified (resp. inert) in $K$ in \emph{Case~(2b)} (resp. \emph{(2a)}). In all cases, 
The existence of infinitely many $K$ satisfying these properties is ensured by \cite[Thm.~B]{FH} (in the case $k=2$ where the nonvanishing $L(f\otimes\epsilon_K\eta^{-1},1)$ is non-trivial; for $k>2$ condition (e) follows from \cite[Prop.~2]{shimuraCPAM}). 

Put $\Lambda_K=\Lambda_{\scO}(\Gamma_K)$, $\Lambda_K^{\rm ur}=\Lambda_{\scO^{\rm ur}}(\Gamma_K)$, and $\Lambda=\Lambda_\scO(\Gamma)$ for the ease of notation. %With this choice of $K$, 
%by Theorem~\ref{thm:CLW} 
%we have the  divisibility 
%\[
%\bigl(\mathcal{L}_{\overline{v}}(f/K)^\eta\bigr)\supset
%{\rm char}_{\Lambda_K}\bigl(X_{{\rm str},{\rm rel}}(f_\eta/K_\infty)\bigr)\Lambda^{\rm ur}_K,
%\]
%in $\Lambda_K^{\rm ur}$. (Indeed, conditions (i)--(iii) in Theorem~\ref{thm:CLW} are embedded in the choice of $K$, while (iv) follows from  the irreducibility of $\bar{\rho}_f\vert_{G_{\bQ_p}}$ and the fact that $p$ splits in $K$; and (v) from Condition~\eqref{eq:rtn} and 
%the discussion in \cite[p.\,336]{skinner}.) 
By Corollary~\ref{cor:L-cyc}, the nonvanishing of $L_i(f)^\eta$ and condition (e) imply that $\mathcal{L}_{i,i}^{\rm (I)}(f/K)^{\eta}\neq 0$, and hence by Corollary~\ref{cor:ii-IMC} we have the divisibility 
\[
\bigl(\varpi^{-s}\cdot\mathcal{L}^{\rm (I)}_{i,i}(f/K)^\eta\bigr)\supset(\xi_{\eta,i})^2\cdot{\rm char}_{\Lambda_K}\bigl(X_{i,i}(f_\eta/K_\infty)\bigr)
\]
in $\Lambda_K$. (Note that hypotheses (i)--(iii) from Theorem~\ref{thm:CLW} are embedded in the choice of $K$, while (iv) follows from  the irreducibility of $\bar{\rho}_f\vert_{G_{\bQ_p}}$ and the fact that $p$ splits in $K$; and the additional hypothesis (v) (resp. (v)') in Corollary~\ref{cor:ii-IMC} follows from the above  choice of $K$ (resp. Condition~\eqref{eq:rtn} and 
the discussion in \cite[p.\,336]{skinner}).) Together with Proposition~\ref{prop:L-cyc} and Corollary~\ref{cor:cyc-descent}, it follows that
\begin{equation}\label{eq:combine}
\begin{aligned}
\bigl(L_i(f)^\eta\cdot L_i(f\otimes\epsilon_K)^\eta\bigr)&=
\bigl(\varpi^{-s}\cdot\mathcal{L}_{i,i}^{\rm (I)}(f/K)^\eta\;{\rm mod}\,I^-\bigr)\\
&\supset(\xi_{\eta,i})^2\cdot{\rm char}_\Lambda\bigl(X_{i,i}(f_\eta/K_\infty)\;{\rm mod}\,I^-\bigr)\\
&\supset
(\xi_{\eta,i})^2\cdot{\rm char}_{\Lambda}\bigl(X_i(f/\bQ(\mu_{p^\infty}))^\eta\bigr)\cdot{\rm char}_{\Lambda}\bigl(X_{i}(f\otimes\epsilon_K/\bQ(\mu_{p^\infty}))^\eta\bigr).
\end{aligned}
\end{equation}
Now we can conclude similarly as in \cite[Thm.~3.29]{SU}: From Theorem~\ref{thm:katodiv-signed} we have the divisibilities
\begin{align*}
(\xi_{\eta,i})\cdot{\rm char}_\Lambda\bigl(X_i(f/\bQ(\mu_{p^\infty}))^\eta\bigr)&\supset\bigl(L_{i}(f)^\eta\bigr),\\%\quad\quad
(\xi_{\eta,i})\cdot{\rm char}_\Lambda\bigl(X_i(f\otimes\epsilon_K/\bQ(\mu_{p^\infty}))^\eta\bigr)&\supset\bigl(L_{i}(f\otimes\epsilon_K)^\eta\bigr)
\end{align*}
in $\Lambda$. If the first of these inclusions was not an equality, they would give
\[
(\xi_{\eta,i})^2\cdot{\rm char}_{\Lambda}\bigl(X_i(f/\bQ(\mu_{p^\infty}))^\eta\bigr)\cdot{\rm char}_{\Lambda}\bigl(X_{i}(f\otimes\epsilon_K/\bQ(\mu_{p^\infty}))^\eta\bigr)\supsetneq\bigl(L_i(f)^\eta\cdot L_i(f\otimes\epsilon_K)^\eta\bigr),
\]
contradicting \eqref{eq:combine}. Thus we deduce that equality holds, 
%$(\xi_{\mathds{1},i})\cdot{\rm char}_\Lambda\bigl(X_i(f)\bigr)=\bigl(L_{i}(f)\bigr)$, 
concluding the proof of Theorem~\ref{thm:signed-Q}, and hence of 
%Theorem~\ref{thm:kato} and 
Theorem~\ref{thmintro:kato} in the Introduction.

%\begin{rem}\label{rem:ram}
%It should be possible to prove a version of Theorem~\ref{thm:kato} with condition \eqref{eq:mult-indef} replaced by the weaker assumption that
%\begin{equation}\label{eq:mult-def}
%\textrm{either $a_p(f)=0$ or there exists a prime $\ell\Vert N$}.\tag{mult'}
%\end{equation}
%Indeed, in the case $a_p(f)=0$ one can proceed as in \cite{BSTW}, while for $a_p(f)\neq 0$ one can proceed similarly after showing that $\mu(\mathscr{L}_p^{\circ}(f/L)^-)=0$, for an auxiliary imaginary quadratic field $L$ chosen as in \cite[\S{10.3}]{BSTW}, building on 
%the $\mu=0$ result of \cite{ChHs1} and an extension of \cite{CL} to Coleman families. Note however, that our proof of Theorem~\ref{thm:kato} will rely on the choice of an auxiliary imaginary quadratic field different from that in \cite[\S{10.3}]{BSTW} (in fact, in our proof the non-vanishing results of \cite{hsieh} will replace those of  \cite{vatsal} and generalizations \cite{pollack-weston,ChHs1}). 
%\end{rem}

%\subsection{Proof of Corollaries~\ref{corintro:BK} and 
%\ref{corintro:BSD}}

%\subsection{Application to the Tamagawa Number Conjecture} 

%In this section we deduce applications of our main result to the $p$-part of the Tamgawa Number Conjecture \cite{BF-doc-math} and the $p$-part of the Birch--Swinnerton-Dyer formula  for non-ordinary primes $p$ in analytic rank zero.

%\subsubsection{On the Tamagawa Number Conjecture}
\subsection{Proof of Theorem~\ref{thmintro:TNC} and Corollary~\ref{corintro:BSD-E}}

%\begin{thm}\label{thm:pTNC}
%Let $0\leq j\leq k-2$, and if $j=k/2-1$ suppose $L(f,k/2)\neq 0$. Then %${\rm Sel}(\bQ,A)$ is finite, with
%[
%\left[\mathscr{O}\colon\frac{L(f,1+j)}{(2\pi i)^j\Omega}\right]=\#{\rm Sel}(\bQ,A)\cdot\prod_{\ell\mid N}c_\ell(f).
%\]
%In other words, the $p$-part of the Tamagawa Number Conjecture for $f$ holds.
%\end{thm}

%\begin{proof}
\subsubsection{$p$-part of the Tamagawa Number Conjecture}

For the proof of Theorem~\ref{thmintro:TNC}, we essentially combine Theorem~\ref{thmintro:kato} with some of the calculations in \cite[\S{14}]{Kato295}, using a reformulation of Kato's main conjecture in terms of determinants of arithmetic complexes. Put 
\[
T=T_f(k/2),\quad A=T_f^\vee(1-k/2),\quad \Lambda=\Lambda_{\scO}(\Gamma),\quad\mathbf{T}=T\otimes_{\scO}\Lambda^\iota
\]
for the ease of notation. Let $S$ be the set of primes  dividing $Np$, and let $G_{\bQ,S}={\rm Gal}(\bQ^S/\bQ)$ denote the Galois group of the maximal extension of $\bQ$ unramified outside $S$. 

Let $\mathfrak{Sel}(\bQ,T)$ be the subgroup of $\rH^1(G_{\bQ,S},T)$ consisting of classes  crystalline at $p$ and unramified at the primes $\ell\vert N$, and let $\mathfrak{Sel}(\bQ,A)$ denote the Selmer group group dual to $\mathfrak{Sel}(\bQ,T)$. Poitou--Tate duality then gives rise to the exact sequence
\begin{equation}\label{eq:PT-9}
\begin{aligned}
0\rightarrow\mathfrak{Sel}(\bQ,T)\longrightarrow\rH^1(G_{\bQ,S},T)\longrightarrow\frac{\rH^1(\bQ_p,T)}{\rH^1_f(\bQ_p,T)}&\oplus\bigoplus_{\ell\mid N}\frac{\rH^1(\bQ_\ell,T)}{\rH^1_{\rm unr}(\bQ_\ell,T)}\longrightarrow\mathfrak{Sel}(\bQ,A)^\vee\\
&\quad\longrightarrow\rH^2(G_{\bQ,S},T)\longrightarrow\bigoplus_{\ell\mid Np}\rH^2(\bQ_\ell,T)\longrightarrow 0.
\end{aligned}
\end{equation}
It is easy to see that ${\rm Sel}(\bQ,A)$ is contained in $\mathfrak{Sel}(\bQ,A)$ with finite index (see \eqref{eq:def-Sel} below); since the former is finite by \cite[Thm.~14.2]{Kato295} and $\rH^1(G_{\bQ,S},T)$ is torsion-free by the irreducibility of $\bar{\rho}_f$, it follows that $\mathfrak{Sel}(\bQ,T)=0$. 

For a prime $\ell\nmid p$, let $P_\ell(A,\ell^{-s})$, where $P_\ell(A,X)=(1-\alpha_\ell X)(1-\beta_\ell X)$ (with one of both of $\alpha_\ell, \beta_\ell$ possibly zero), be the Euler factor of $L(f,s)$ at $\ell$, and put
\begin{equation}\label{eq:zeta-def}
\mathbf{z}_{f}^{\{N\}}:=\mathbf{z}_{f,k/2}^{\rm int}\cdot\prod_{\ell\mid N}\mathcal{P}_\ell\in\rH^1(G_{\bQ,S},\mathbf{T}),
%\quad\textrm{
%where\; $\mathcal{P}_\ell:=(1-\alpha_\ell\ell^{-1}\gamma_\ell)(1-\beta_\ell\ell^{-1}\gamma_\ell)\in\Lambda$,}
\end{equation}
where $\mathbf{z}^{\rm int}_{k/2}$ denotes the image of $\mathbf{z}_{f,k-1}^{{\rm int}}$ under the composite map 
\begin{align*}
\rH^1_{\rm Iw}(\bQ(\mu_{p^\infty}),T_f(k-1))
\xrightarrow{\otimes(\zeta_{p^n})_{n\geq 0}^{\otimes(1-k/2)}}&\rH^1_{\rm Iw}(\bQ(\mu_{p^\infty}),T_f(k/2))\\
\longrightarrow
&\rH^1_{\rm Iw}(\bQ(\mu_{p^\infty}),T_f(k/2))^\Delta\cong\rH^1(G_{\bQ,S},\mathbf{T}),
\end{align*}
and $\mathcal{P}_\ell:=(1-\alpha_\ell\ell^{-k/2}\gamma_\ell)(1-\beta_\ell\ell^{-k/2}\gamma_\ell)\in\Lambda$ with $\gamma_\ell\in\Gamma$ a Frobenius element at $\ell$. 

For the primes $\ell\vert N$ (hence coprime to $p$) we have 
\[
\rH^1_f(\bQ_\ell,T)=\rH^1(\bQ_\ell,T)_{\rm tors}=\rH^1(\bQ_\ell,T),
\]
and therefore quotienting out in \eqref{eq:PT-9} by the image of $\mathbf{z}_{f,0}^{\{N\}}$ we obtain
\begin{equation}\label{eq:PT-multiply}
\#\mathfrak{Sel}(\bQ,A)=\frac{\#\rH^2(G_{\bQ,S},T)}{\left[\rH^1(G_{\bQ,S},T):\mathbf{z}_{f,0}^{\{N\}}\right]}\cdot\frac{\left[\rH^1_{/f}(\bQ_p,T):\mathbf{z}_{f,0}^{\{N\}}\right]}{\#\rH^2(\bQ_p,T)}\cdot\prod_{\ell\mid N}\frac{\left[\rH^1_f(\bQ_\ell,T):\rH^1_{\rm unr}(\bQ_\ell,T)\right]}{\#\rH^2(\bQ_\ell,T)},
\end{equation}
where $\rH^1_{/f}(\bQ_p,T)$ denotes the quotient $\rH^1(\bQ_p,T)$ by $\rH^1_f(\bQ_p,T)$. Note that Poitou--Tate duality also gives
\begin{equation}\label{eq:def-Sel}
0\longrightarrow{\rm Sel}(\bQ,A)\longrightarrow\mathfrak{Sel}(\bQ,A)\longrightarrow\bigoplus_{\ell\mid N}\rH^1_{\rm unr}(\bQ_\ell,A)\longrightarrow{\rm Sel}(\bQ,T)^\vee,\nonumber
\end{equation}
where we used that $\rH^1_{\rm unr}(\bQ_\ell,A)=0$ for all primes $\ell\vert N$. 

By \cite[Thm.~14.2]{Kato295}, the nonvanishing of $L(f,k/2)$ implies $\#{\rm Sel}(\bQ,A)<\infty$, and so ${\rm Sel}(\bQ,T)=0$. Thus 
\begin{equation}\label{eq:frakS-to-S}
\#\mathfrak{Sel}(\bQ,A)=\#{\rm Sel}(\bQ,A)\cdot\prod_{\ell\mid N}\#\rH^1_{\rm unr}(\bQ_\ell,A).
\end{equation}
As explained in \cite[\S{2.2}]{c-sano}, as a consequence of \cite[Thm.~12.4]{Kato295} there is a canonical isomorphism
\begin{equation}\label{eq:Leop}
Q(\Lambda)\otimes_{\Lambda}{\rm det}_{\Lambda}^{-1}\mathbf{R}\Gamma(G_{\bQ,S},\mathbf{T})\cong Q(\Lambda)\otimes_{\Lambda}\rH^1(G_{\bQ,S},\mathbf{T}),
\end{equation}
where $Q(\Lambda)$ denotes the field of fractions of $\Lambda$.  Moreover, as explained in \cite[Prop.~2.2.3]{c-sano} and the references therein  (see esp. \S{2.3.2} and Proposition~3.10 in \cite{kataoka-sano}), Conjecture~\ref{conj:kato} for $\eta=\omega^{1-k/2}$ is then equivalent\footnote{For $k>2$, using the twisting \cite[Lem.~VI.1.2]{Rubin-ES} with $\rho=\langle\varepsilon\rangle^{1-k/2}:\Gamma\rightarrow\bZ_p^\times$} to the following: The inverse image %$\mathfrak{z}_{\bQ_\infty}$ 
of $\mathbf{z}_{f}^{\{N\}}$ under the canonical isomorphism \eqref{eq:Leop} is a $\Lambda$-basis 
\begin{equation}\label{eq:zeta-basis}
\mathfrak{z}_{\bQ_\infty}\in{\rm det}_{\Lambda}^{-1}\mathbf{R}\Gamma(G_{\bQ,S},\mathbf{T}).
\end{equation}

Thus letting $\nu:\Lambda\rightarrow\scO$ be the augmentation map, by Theorem~\ref{thmintro:kato} the image of $\mathfrak{z}_{\bQ}$ of \eqref{eq:zeta-basis} under the isomorphism
\[
{\rm det}_{\Lambda}^{-1}\mathbf{R}\Gamma(G_{\bQ,S},\mathbf{T})\otimes_{\Lambda,\nu}\scO\cong{\rm det}_{\scO}^{-1}\mathbf{R}\Gamma(G_{\bQ,S},T)
\]
from \cite[Prop.~1.6.5(3)]{fukaya-kato-AMS} gives an $\scO$-basis $\mathfrak{z}_{\bQ}\in{\rm det}_{\scO}^{-1}\mathbf{R}\Gamma(G_{\bQ,S},T)$; equivalently, the image $\mathbf{z}_{f,0}^{\{N\}}$ of \eqref{eq:zeta-def} under the natural projection $\rH^1(G_{\bQ,S},\mathbf{T})\rightarrow\rH^1(G_{\bQ,S},T)$ satisfies
\begin{equation}\label{eq:TNC}
\#\rH^2(G_{\bQ,S},T)=\left[\rH^1(G_{\bQ,S},T)\colon\mathbf{z}_{f,0}^{\{N\}}\right].
\end{equation}
(cf. \cite[Prop.~2.6]{BKS-kato-I}). 

On the other hand, the same computation as in \cite[Prop.~14.21]{Kato295} (cf. \eqref{eq:kato-ERL}) 
%(using Kato's explicit reciprocity law \eqref{eq:kato-ERL}) 
gives
\begin{equation}\label{eq:kato-FL}
\frac{\left[\rH^1_{/f}(\bQ_p,T):\mathbf{z}_0^{\{N\}}\right]}{\#\rH^2(\bQ_p,T)}=\left[\scO\colon\frac{L_{\{N\}}(f,k/2)}{(-2\pi i)^{k/2}\cdot\Omega_{\omega_f}^\pm}\right]={\rm Eul}_N(f,k/2)\cdot\left[\scO\colon\frac{L(f,k/2)}{(-2\pi i)^{k/2}\cdot\Omega_{\omega_f}^\pm}\right],
\end{equation}
where $\pm=(-1)^{k/2-1}$. Hence combining \eqref{eq:PT-multiply}, \eqref{eq:frakS-to-S}, \eqref{eq:TNC}, \eqref{eq:kato-FL} and noting the equalities
\[
%\prod_{\ell\mid N}
\#\rH^2(\bQ_\ell,T)=%\prod_{\ell\mid N}
\#\rH^0(\bQ_\ell,A)={\rm Eul}_\ell(f,k/2)\cdot%\prod_{\ell\mid N}
\left[\rH^1_f(\bQ_\ell,T)\colon\rH^1_{\rm unr}(\bQ_\ell,T)\right]
\]
for every prime $\ell\vert N$, this concludes the proof of Theorem~\ref{thmintro:TNC}.
%\end{proof}

\subsubsection{$p$-part of the Birch--Swinnerton-Dyer formula}

After a period comparison, Corollary~\ref{corintro:BSD-E} is just a special case of Theorem~\ref{thmintro:TNC}. More precisely, letting $f\in S_2(\Gamma_0(N))$ be the newform associated to $E$ by modularity \cite{BCDT}, the $p$-regularity hypothesis is known to hold by \cite{coleman-edixhoven} and by Theorem~\ref{thmintro:TNC} the nonvanishing of $L(E,1)=L(f,1)$ gives $\#E(\bQ)<\infty$, 
\[
\#{\rm Sel}(\bQ,T_f^\vee)=\#{\rm Sel}_{p^\infty}(E/\bQ)=\#\Sha(E/\bQ)[p^\infty]<\infty,
\]
and
\[
\left[\bZ_p\colon\frac{L(E,1)}{(-2\pi i)\cdot\Omega_{\omega_f}^\pm}\right]=\#\Sha(E/\bQ)[p^\infty]\cdot\prod_{\ell\mid N}c_\ell^{(p)}(E),
\]
where $c_\ell^{(p)}(E)$ denotes the $p$-part of the Tamagawa number of $E$ at $\ell$ (which agrees with ${\rm Tam}_\ell(T_f(1))=\#\rH^1_f(\bQ_\ell,T_f(1))$ by e.g. the discussion in \cite[p.\,74]{greenberg-cetraro}). It remains to show that the period $(-2\pi i)\cdot\Omega_{\omega_f}^+$ agrees up to a $p$-adic unit with the positive N\'{e}ron period %$\Omega_E\in\mathbb{R}_{>0}$, 
$\Omega_E$ of $E$. As shown in \cite[Prop.~3.1]{GV}, we have the equality up to a $p$-adic unit 
\[
(-2\pi i)\cdot\Omega_{\omega_f}^+\,\sim_p\,\Omega_{E^{\rm opt}},
\]
where $E^{\rm opt}/\bQ$ is the optimal curve in the sense of \cite[Prop.~1.4]{stevens-invmath} in the $\bQ$-isogeny class associated to $f$ (note that the $p$-ordinarity assumption in \cite[Prop.~3.1]{GV} is not needed for the argument, but only for the definition of $\delta_f^\pm$ in \emph{op.\,cit.} which for our application can be taken to  $\delta_{\cO}^\pm$ from $\S\ref{subsec:integral-normalisation}$). %; see also the discussion in \cite[\S{3.5}]{vatsal-integral}). 
Since the irreducibility of $\bar{\rho}_f\cong E[p]$ implies that $\Omega_{E^{\rm opt}}$ is a $p$-adic unit multiple of $\Omega_E$, this concludes the proof of Corollary~\ref{corintro:BSD-E}.

\bibliographystyle{amsalpha}
\bibliography{IMC-zeta-refs}

\providecommand{\bysame}{\leavevmode\hbox to3em{\hrulefill}\thinspace}
\providecommand{\MR}{\relax\ifhmode\unskip\space\fi MR }
% \MRhref is called by the amsart/book/proc definition of \MR.
\providecommand{\MRhref}[2]{%
  \href{http://www.ams.org/mathscinet-getitem?mr=#1}{#2}
}
\providecommand{\href}[2]{#2}
\begin{thebibliography}{BSTW24}

\bibitem[AI21]{AI-triple}
Fabrizio Andreatta and Adrian Iovita, \emph{Triple product {$p$}-adic
  {$L$}-functions associated to finite slope {$p$}-adic families of modular
  forms}, Duke Math. J. \textbf{170} (2021), no.~9, 1989--2083.

\bibitem[BBL24]{BBL-adv}
Ashay Burungale, K\^az\i~m{} B\"uy\"ukboduk, and Antonio Lei,
  \emph{Anticyclotomic {I}wasawa theory of abelian varieties of {$\rm
  GL_2$}-type at non-ordinary primes}, Adv. Math. \textbf{439} (2024), Paper
  No. 109465, 63.

\bibitem[BC08]{berger-colmez-familles}
Laurent Berger and Pierre Colmez, \emph{Familles de repr\'esentations de de
  {R}ham et monodromie {$p$}-adique}, no. 319, 2008, Repr\'esentations
  $p$-adiques de groupes $p$-adiques. I. Repr\'esentations galoisiennes et
  $(\phi,\Gamma)$-modules, pp.~303--337.

\bibitem[BCDT01]{BCDT}
Christophe Breuil, Brian Conrad, Fred Diamond, and Richard Taylor, \emph{On the
  modularity of elliptic curves over {$\bold Q$}: wild 3-adic exercises}, J.
  Amer. Math. Soc. \textbf{14} (2001), no.~4, 843--939 (electronic).

\bibitem[BCGS26]{BCGS}
Ashay Burungale, Francesc Castella, Giada Grossi, and Christopher Skinner,
  \emph{Non-vanishing of {K}olyvagin systems and {I}wasawa theory}, Camb. J.
  Math. \textbf{14} (2026), no.~2, 285--348.

\bibitem[BCS25]{BCS}
Ashay Burungale, Francesc Castella, and Christopher Skinner, \emph{Base change
  and {I}wasawa main conjectures for {${\rm GL}_2$}}, Int. Math. Res. Not. IMRN
  (2025), no.~8, Paper No. rnaf082, 15.

\bibitem[BD96]{BDmumford-tate}
M.~Bertolini and H.~Darmon, \emph{Heegner points on {M}umford-{T}ate curves},
  Invent. Math. \textbf{126} (1996), no.~3, 413--456.

\bibitem[BDI10]{BDI}
Massimo Bertolini, Henri Darmon, and Adrian Iovita, \emph{Families of
  automorphic forms on definite quaternion algebras and {T}eitelbaum's
  conjecture}, Ast\'erisque (2010), no.~331, 29--64.

\bibitem[BDP13]{bdp1}
Massimo Bertolini, Henri Darmon, and Kartik Prasanna, \emph{Generalized
  {H}eegner cycles and {$p$}-adic {R}ankin {$L$}-series}, Duke Math. J.
  \textbf{162} (2013), no.~6, 1033--1148, With an appendix by Brian Conrad.

\bibitem[BDP22]{betina-dimitrov-pozzi}
Adel Betina, Mladen Dimitrov, and Alice Pozzi, \emph{On the failure of
  {G}orensteinness at weight 1 {E}isenstein points of the eigencurve}, Amer. J.
  Math. \textbf{144} (2022), no.~1, 227--265.

\bibitem[Bei84]{beilinson-higher}
A.~A. Beilinson, \emph{Higher regulators and values of {$L$}-functions},
  Current problems in mathematics, {V}ol. 24, Itogi Nauki i Tekhniki, Akad.
  Nauk SSSR, Vsesoyuz. Inst. Nauchn. i Tekhn. Inform., Moscow, 1984,
  pp.~181--238.

\bibitem[Bel12]{bellaiche-interp}
Jo\"el Bella\"iche, \emph{Critical {$p$}-adic {$L$}-functions}, Invent. Math.
  \textbf{189} (2012), no.~1, 1--60.

\bibitem[Ber03]{berger-explicit}
Laurent Berger, \emph{Bloch and {K}ato's exponential map: three explicit
  formulas}, Doc. Math. (2003), no.~Extra Vol., 99--129, Kazuya Kato's fiftieth
  birthday.

\bibitem[Ber04]{berger-limit}
\bysame, \emph{Limites de repr\'esentations cristallines}, Compos. Math.
  \textbf{140} (2004), no.~6, 1473--1498.

\bibitem[BF01]{BF-doc-math}
D.~Burns and M.~Flach, \emph{Tamagawa numbers for motives with
  (non-commutative) coefficients}, Doc. Math. \textbf{6} (2001), 501--570.

\bibitem[BK90]{BK}
Spencer Bloch and Kazuya Kato, \emph{{$L$}-functions and {T}amagawa numbers of
  motives}, The {G}rothendieck {F}estschrift, {V}ol.\ {I}, Progr. Math.,
  vol.~86, Birkh\"auser Boston, Boston, MA, 1990, pp.~333--400.

\bibitem[BKS24]{BKS-kato-I}
David Burns, Masato Kurihara, and Takamichi Sano, \emph{On derivatives of
  {K}ato's {E}uler system for elliptic curves}, J. Math. Soc. Japan \textbf{76}
  (2024), no.~3, 855--919.

\bibitem[BL21]{BL-non-ord}
K{\^a}z{\i}m B\"{u}y\"{u}kboduk and Antonio Lei, \emph{Iwasawa theory of
  elliptic modular forms over imaginary quadratic fields at non-ordinary
  primes}, Int. Math. Res. Not. IMRN (2021), no.~14, 10654--10730.

\bibitem[BLLV19]{BLLV}
K{\^a}z{\i}m B\"uy\"ukboduk, Antonio Lei, David Loeffler, and Guhan Venkat,
  \emph{Iwasawa theory for {R}ankin-{S}elberg products of {$p$}-nonordinary
  eigenforms}, Algebra Number Theory \textbf{13} (2019), no.~4, 901--941.

\bibitem[BLZ04]{BLZ}
Laurent Berger, Hanfeng Li, and Hui~June Zhu, \emph{Construction of some
  families of 2-dimensional crystalline representations}, Math. Ann.
  \textbf{329} (2004), no.~2, 365--377.

\bibitem[BSTW24]{BSTW}
Ashay Burungale, Christopher Skinner, Ye~Tian, and Xin Wan, \emph{Zeta elements
  for elliptic curves and applications}, preprint,
  \href{https://arxiv.org/abs/2409.01350v2}{arXiv:2409.01350v2}.

\bibitem[Cas25]{cas-TNC}
Francesc Castella, \emph{Tamagawa number conjecture for {CM} modular forms and
  {R}ankin-{S}elberg convolutions}, Proc. Lond. Math. Soc. (3) \textbf{131}
  (2025), no.~4, Paper No. e70089, 50.

\bibitem[CC99]{CC}
Fr\'ed\'eric Cherbonnier and Pierre Colmez, \emph{Th\'eorie d'{I}wasawa des
  repr\'esentations {$p$}-adiques d'un corps local}, J. Amer. Math. Soc.
  \textbf{12} (1999), no.~1, 241--268.

\bibitem[C{\c{C}}SS18]{CCSS}
Francesc Castella, Mirela {\c{C}}iperiani, Christopher Skinner, and Florian
  Sprung, \emph{On the {I}wasawa main conjectures for modular forms at
  non-ordinary primes}, preprint,
  \href{https://arxiv.org/abs/1804.10993}{arXiv:1804.10993}.

\bibitem[CE98]{coleman-edixhoven}
Robert~F. Coleman and Bas Edixhoven, \emph{On the semi-simplicity of the
  {$U_p$}-operator on modular forms}, Math. Ann. \textbf{310} (1998), no.~1,
  119--127.

\bibitem[CGS25]{CGS}
Francesc Castella, Giada Grossi, and Christopher Skinner, \emph{Mazur's main
  conjecture at {E}isenstein primes}, Math. Ann. \textbf{393} (2025), no.~2,
  2451--2506.

\bibitem[CH18a]{cas-hsieh1}
Francesc Castella and Ming-Lun Hsieh, \emph{Heegner cycles and {$p$}-adic
  {$L$}-functions}, Math. Ann. \textbf{370} (2018), no.~1-2, 567--628.

\bibitem[CH18b]{ChHs1}
Masataka Chida and Ming-Lun Hsieh, \emph{Special values of anticyclotomic
  {$L$}-functions for modular forms}, J. Reine Angew. Math. \textbf{741}
  (2018), 87--131.

\bibitem[CL16]{cas-longo}
Francesc Castella and Matteo Longo, \emph{Big {H}eegner points and special
  values of {$L$}-series}, Ann. Math. Qu\'e. \textbf{40} (2016), no.~2,
  303--324.

\bibitem[CLW22]{CLW}
Francesc Castella, Zheng Liu, and Xin Wan, \emph{Iwasawa-{G}reenberg main
  conjecture for nonordinary modular forms and {E}isenstein congruences on
  {GU}(3,1)}, Forum Math. Sigma \textbf{10} (2022), Paper No. e110, 90.

\bibitem[CS26]{c-sano}
Francesc Castella and Takamichi Sano, \emph{On refined nonvanishing conjectures
  by {K}urihara and {K}olyvagin}, preprint,
  \href{https://arxiv.org/abs/2601.14504}{arXiv:2601.14504}.

\bibitem[CW22]{CW-adv}
Francesc Castella and Xin Wan, \emph{The {I}wasawa main conjectures for {$\rm
  GL_2$} and derivatives of {$p$}-adic {$L$}-functions}, Adv. Math.
  \textbf{400} (2022), Paper No. 108266, 45.

\bibitem[DDT94]{DDT}
Henri Darmon, Fred Diamond, and Richard Taylor, \emph{Fermat's last theorem},
  Current developments in mathematics, 1995 ({C}ambridge, {MA}), Int. Press,
  Cambridge, MA, 1994, pp.~1--154.

\bibitem[Del71]{deligne-ell-adic}
Pierre Deligne, \emph{Formes modulaires et repr\'esentations {$l$}-adiques},
  S\'eminaire {B}ourbaki. {V}ol. 1968/69: {E}xpos\'es 347--363, Lecture Notes
  in Math., vol. 175, Springer, Berlin, 1971, pp.~Exp. No. 355, 139--172.

\bibitem[DR25]{daronche}
Enrico Da~Ronche, \emph{Kolyvagin's conjecture for modular forms at
  non-ordinary primes}, preprint,
  \href{https://arxiv.org/abs/2503.09955}{arXiv:2503.09955}.

\bibitem[dS87]{de_shalit}
Ehud de~Shalit, \emph{Iwasawa theory of elliptic curves with complex
  multiplication}, Perspectives in Mathematics, vol.~3, Academic Press, Inc.,
  Boston, MA, 1987.

\bibitem[Edi92]{Edi}
Bas Edixhoven, \emph{The weight in {S}erre's conjectures on modular forms},
  Invent. Math. \textbf{109} (1992), no.~3, 563--594.

\bibitem[FH95]{FH}
Solomon Friedberg and Jeffrey Hoffstein, \emph{Nonvanishing theorems for
  automorphic {$L$}-functions on {${\rm GL}(2)$}}, Ann. of Math. (2)
  \textbf{142} (1995), no.~2, 385--423.

\bibitem[FJ95]{faltings-jordan}
Gerd Faltings and Bruce~W. Jordan, \emph{Crystalline cohomology and {${\rm
  GL}(2,{\bf Q})$}}, Israel J. Math. \textbf{90} (1995), no.~1-3, 1--66.

\bibitem[FK06]{fukaya-kato-AMS}
Takako Fukaya and Kazuya Kato, \emph{A formulation of conjectures on {$p$}-adic
  zeta functions in noncommutative {I}wasawa theory}, Proceedings of the {S}t.
  {P}etersburg {M}athematical {S}ociety. {V}ol. {XII}, Amer. Math. Soc. Transl.
  Ser. 2, vol. 219, Amer. Math. Soc., Providence, RI, 2006, pp.~1--85.

\bibitem[FL82]{FL}
Jean-Marc Fontaine and Guy Laffaille, \emph{Construction de repr\'esentations
  {$p$}-adiques}, Ann. Sci. \'Ecole Norm. Sup. (4) \textbf{15} (1982), no.~4,
  547--608.

\bibitem[FW21]{fouquet-wan}
Olivier Fouquet and Xin Wan, \emph{The {I}wasawa {M}ain {C}onjecture for
  universal families of modular motives}, preprint,
  \href{https://arxiv.org/abs/2107.13726}{arXiv:2107.13726}.

\bibitem[GLL26]{gajek-leonard-lei}
Rylan Gajek-Leonard and Antonio Lei, \emph{Mazur–{T}ate elements of
  non-ordinary modular forms with {S}erre weight larger than two}, Advances in
  Mathematics \textbf{502} (2026), 111141.

\bibitem[GPRJ25]{GPRJ}
Andrew Graham, Vincent Pilloni, and Joaqu\'in Rodrigues~Jacinto,
  \emph{{$p$}-adic interpolation of {G}auss-{M}anin connections on nearly
  overconvergent modular forms and {$p$}-adic {$L$}-functions}, Compos. Math.
  \textbf{161} (2025), no.~9, 2380--2441.

\bibitem[Gre89]{greenberg-iwasawa}
Ralph Greenberg, \emph{Iwasawa theory for {$p$}-adic representations},
  Algebraic number theory, Adv. Stud. Pure Math., vol.~17, Academic Press,
  Boston, MA, 1989, pp.~97--137.

\bibitem[Gre99]{greenberg-cetraro}
\bysame, \emph{Iwasawa theory for elliptic curves}, Arithmetic theory of
  elliptic curves ({C}etraro, 1997), Lecture Notes in Math., vol. 1716,
  Springer, Berlin, 1999, pp.~51--144.

\bibitem[GV00]{GV}
Ralph Greenberg and Vinayak Vatsal, \emph{On the {I}wasawa invariants of
  elliptic curves}, Invent. Math. \textbf{142} (2000), no.~1, 17--63.

\bibitem[Hsi14]{hsieh}
Ming-Lun Hsieh, \emph{Special values of anticyclotomic {R}ankin-{S}elberg
  {$L$}-functions}, Doc. Math. \textbf{19} (2014), 709--767.

\bibitem[HT93]{HT-ENS}
H.~Hida and J.~Tilouine, \emph{Anti-cyclotomic {K}atz {$p$}-adic
  {$L$}-functions and congruence modules}, Ann. Sci. \'Ecole Norm. Sup. (4)
  \textbf{26} (1993), no.~2, 189--259.

\bibitem[IS24]{sprung-IMC}
Florian Ito~Sprung, \emph{On {I}wasawa main conjectures for elliptic curves at
  supersingular primes: beyond the case {$a_p = 0$}}, Adv. Math. \textbf{449}
  (2024), Paper No. 109741, 47.

\bibitem[JSW17]{JSW}
Dimitar Jetchev, Christopher Skinner, and Xin Wan, \emph{The {B}irch and
  {S}winnerton-{D}yer formula for elliptic curves of analytic rank one}, Camb.
  J. Math. \textbf{5} (2017), no.~3, 369--434.

\bibitem[Kat78]{Katz49}
Nicholas~M. Katz, \emph{{$p$}-adic {$L$}-functions for {CM} fields}, Invent.
  Math. \textbf{49} (1978), no.~3, 199--297.

\bibitem[Kat93]{kato-kodai}
Kazuya Kato, \emph{Iwasawa theory and {$p$}-adic {H}odge theory}, Kodai Math.
  J. \textbf{16} (1993), no.~1, 1--31.

\bibitem[Kat04]{Kato295}
\bysame, \emph{{$p$}-adic {H}odge theory and values of zeta functions of
  modular forms}, Ast\'erisque (2004), no.~295, ix, 117--290, Cohomologies
  $p$-adiques et applications arithm{\'e}tiques. III.

\bibitem[Kim25]{kim-refined-GL2}
Chan-Ho Kim, \emph{The refined {T}amagawa {N}umber {C}onjectures of {${\rm
  GL}_2$}}, preprint,
  \href{https://arxiv.org/abs/2505.09121}{arXiv:2505.09121}, with an appendix
  in collaboration with Robert Pollack.

\bibitem[Kim26]{kim-AJM}
\bysame, \emph{The structure of {S}elmer groups and the {I}wasawa main
  conjecture for elliptic curves}, Amer. J. Math. \textbf{148} (2026), no.~1,
  79--129.

\bibitem[Kis03]{kisin-FM}
Mark Kisin, \emph{Overconvergent modular forms and the {F}ontaine-{M}azur
  conjecture}, Invent. Math. \textbf{153} (2003), no.~2, 373--454.

\bibitem[KLZ17]{KLZ2}
Guido Kings, David Loeffler, and Sarah~Livia Zerbes, \emph{Rankin-{E}isenstein
  classes and explicit reciprocity laws}, Camb. J. Math. \textbf{5} (2017),
  no.~1, 1--122.

\bibitem[KLZ20]{KLZ1}
\bysame, \emph{Rankin-{E}isenstein classes for modular forms}, Amer. J. Math.
  \textbf{142} (2020), no.~1, 79--138.

\bibitem[KO23]{kim-ota}
Chan-Ho Kim and Kazuto Ota, \emph{On the quantitative variation of congruence
  ideals and integral periods of modular forms}, Res. Math. Sci. \textbf{10}
  (2023), no.~2, Paper No. 22, 34.

\bibitem[Kob03]{kobayashi-ss}
Shin-ichi Kobayashi, \emph{Iwasawa theory for elliptic curves at supersingular
  primes}, Invent. Math. \textbf{152} (2003), no.~1, 1--36.

\bibitem[Kol91]{kolyvagin-Sel}
V.~A. Kolyvagin, \emph{On the structure of {S}elmer groups}, Math. Ann.
  \textbf{291} (1991), no.~2, 253--259.

\bibitem[KS24]{kataoka-sano}
Takenori Kataoka and Takamichi Sano, \emph{On {E}uler systems for motives and
  {H}eegner points}, J. Assoc. Math. Res. \textbf{2} (2024), no.~2, 154--208.

\bibitem[Kur02]{kurihara-invmath}
Masato Kurihara, \emph{On the {T}ate {S}hafarevich groups over cyclotomic
  fields of an elliptic curve with supersingular reduction. {I}}, Invent. Math.
  \textbf{149} (2002), no.~1, 195--224.

\bibitem[Kur14]{kurihara-TATA}
\bysame, \emph{The structure of {S}elmer groups of elliptic curves and modular
  symbols}, Iwasawa theory 2012, Contrib. Math. Comput. Sci., vol.~7, Springer,
  Heidelberg, 2014, pp.~317--356.

\bibitem[Lei11]{Lei-PhD}
Antonio Lei, \emph{Iwasawa theory for modular forms at supersingular primes},
  Compos. Math. \textbf{147} (2011), no.~3, 803--838.

\bibitem[Lei25]{lei-amburgh}
\bysame, \emph{Artin formalism for {$p$}-adic {$L$}-functions of weight-two
  modular forms at non-ordinary primes}, Abh. Math. Semin. Univ. Hambg.
  \textbf{95} (2025), no.~2, 149--160.

\bibitem[Liu15]{liu-triangulation}
R.~Liu, \emph{Triangulation of refined families}, Comment. Math. Helv.
  \textbf{90} (2015), no.~4, 831--904.

\bibitem[LLZ10]{LLZ-AJM}
Antonio Lei, David Loeffler, and Sarah~Livia Zerbes, \emph{Wach modules and
  {I}wasawa theory for modular forms}, Asian J. Math. \textbf{14} (2010),
  no.~4, 475--528.

\bibitem[LLZ11]{LLZ-ANT}
\bysame, \emph{Coleman maps and the {$p$}-adic regulator}, Algebra Number
  Theory \textbf{5} (2011), no.~8, 1095--1131.

\bibitem[LLZ14]{LLZ}
\bysame, \emph{Euler systems for {R}ankin-{S}elberg convolutions of modular
  forms}, Ann. of Math. (2) \textbf{180} (2014), no.~2, 653--771.

\bibitem[LLZ15]{LLZ-K}
\bysame, \emph{Euler systems for modular forms over imaginary quadratic
  fields}, Compos. Math. \textbf{151} (2015), no.~9, 1585--1625.

\bibitem[LLZ17]{LLZ-Sha}
\bysame, \emph{On the asymptotic growth of {B}loch-{K}ato-{S}hafarevich-{T}ate
  groups of modular forms over cyclotomic extensions}, Canad. J. Math.
  \textbf{69} (2017), no.~4, 826--850.

\bibitem[Loe18]{loeffler-note}
David Loeffler, \emph{A note on {$p$}-adic {R}ankin-{S}elberg {$L$}-functions},
  Canad. Math. Bull. \textbf{61} (2018), no.~3, 608--621.

\bibitem[LZ14]{LZ2}
David Loeffler and Sarah~Livia Zerbes, \emph{Iwasawa theory and {$p$}-adic
  {$L$}-functions over {$\Bbb{Z}_p^2$}-extensions}, Int. J. Number Theory
  \textbf{10} (2014), no.~8, 2045--2095.

\bibitem[LZ16]{LZ-Coleman}
\bysame, \emph{Rankin-{E}isenstein classes in {C}oleman families}, Res. Math.
  Sci. \textbf{3} (2016), Paper No. 29, 53.

\bibitem[Mak26]{maksoud}
Alexandre Maksoud, \emph{A canonical generator for congruence ideals of {H}ida
  families}, Res. Math. Sci. \textbf{13} (2026), no.~1, Paper No. 18, 30.

\bibitem[MTT86]{mtt}
B.~Mazur, J.~Tate, and J.~Teitelbaum, \emph{On {$p$}-adic analogues of the
  conjectures of {B}irch and {S}winnerton-{D}yer}, Invent. Math. \textbf{84}
  (1986), no.~1, 1--48.

\bibitem[Oht00]{OhtaII}
Masami Ohta, \emph{Ordinary {$p$}-adic \'etale cohomology groups attached to
  towers of elliptic modular curves. {II}}, Math. Ann. \textbf{318} (2000),
  no.~3, 557--583.

\bibitem[Pol03]{pollack}
Robert Pollack, \emph{On the {$p$}-adic {$L$}-function of a modular form at a
  supersingular prime}, Duke Math. J. \textbf{118} (2003), no.~3, 523--558.

\bibitem[PR94]{PR115}
Bernadette Perrin-Riou, \emph{Th\'eorie d'{I}wasawa des repr\'esentations
  {$p$}-adiques sur un corps local}, Invent. Math. \textbf{115} (1994), no.~1,
  81--161, With an appendix by Jean-Marc Fontaine.

\bibitem[PW11]{pollack-weston}
Robert Pollack and Tom Weston, \emph{On anticyclotomic {$\mu$}-invariants of
  modular forms}, Compos. Math. \textbf{147} (2011), no.~5, 1353--1381.

\bibitem[Rib90]{ribet-eps}
K.~A. Ribet, \emph{On modular representations of {${\rm Gal}(\overline{\bf
  Q}/{\bf Q})$} arising from modular forms}, Invent. Math. \textbf{100} (1990),
  no.~2, 431--476.

\bibitem[Roh88]{rohrlich-division}
David~E. Rohrlich, \emph{{$L$}-functions and division towers}, Math. Ann.
  \textbf{281} (1988), no.~4, 611--632.

\bibitem[Rub00]{Rubin-ES}
Karl Rubin, \emph{Euler systems}, Annals of Mathematics Studies, vol. 147,
  Princeton University Press, Princeton, NJ, 2000, Hermann Weyl Lectures. The
  Institute for Advanced Study.

\bibitem[Shi76]{shimuraCPAM}
Goro Shimura, \emph{The special values of the zeta functions associated with
  cusp forms}, Comm. Pure Appl. Math. \textbf{29} (1976), no.~6, 783--804.

\bibitem[Ski16]{skinner-mult}
Christopher Skinner, \emph{Multiplicative reduction and the cyclotomic main
  conjecture for {${\rm GL}_2$}}, Pacific J. Math. \textbf{283} (2016), no.~1,
  171--200.

\bibitem[Ski20]{skinner}
\bysame, \emph{A converse to a theorem of {G}ross, {Z}agier, and {K}olyvagin},
  Ann. of Math. (2) \textbf{191} (2020), no.~2, 329--354.

\bibitem[Spr12]{sprung-JNT}
Florian E.~Ito Sprung, \emph{Iwasawa theory for elliptic curves at
  supersingular primes: a pair of main conjectures}, J. Number Theory
  \textbf{132} (2012), no.~7, 1483--1506.

\bibitem[Ste89]{stevens-invmath}
Glenn Stevens, \emph{Stickelberger elements and modular parametrizations of
  elliptic curves}, Invent. Math. \textbf{98} (1989), no.~1, 75--106.

\bibitem[SU14]{SU}
Christopher Skinner and Eric Urban, \emph{The {I}wasawa {M}ain {C}onjectures
  for {$GL\sb 2$}}, Invent. Math. \textbf{195} (2014), no.~1, 1--277.

\bibitem[Swe21]{sweeting}
Naomi Sweeting, \emph{Kolyvagin's conjecture, bipartite {E}uler systems, and
  higher congruences of modular forms}, preprint,
  \href{https://arxiv.org/abs/2012.11771}{arxiv.org/abs/2012.11771}.

\bibitem[SZ14]{skinner-zhang}
Christopher Skinner and Wei Zhang, \emph{Indivisibility of {H}eegner points in
  the multiplicative case}, preprint,
  \href{https://arxiv.org/abs/1407.1099}{arXiv:1407.1099}.

\bibitem[Urb14]{urban-rankin}
Eric Urban, \emph{Nearly overconvergent modular forms}, Iwasawa theory 2012,
  Contrib. Math. Comput. Sci., vol.~7, Springer, Heidelberg, 2014,
  pp.~401--441.

\bibitem[Wan14a]{wan-nonord-GU31}
Xin Wan, \emph{{I}wasawa {M}ain {C}onjecture for {R}ankin-{S}elberg $p$-adic
  {$L$}-functions: {N}on-{O}rdinary case}, preprint,
  \href{https://arxiv.org/abs/1412.1767v1}{arXiv:1412.1767v1}.

\bibitem[Wan14b]{wan-ss}
\bysame, \emph{{I}wasawa {M}ain {C}onjecture for {S}upersingular {E}lliptic
  {C}urves}, preprint,
  \href{https://arxiv.org/abs/1411.6352v1}{arXiv:1411.6352v1}.

\bibitem[Wan15]{wan-HMF}
\bysame, \emph{The {I}wasawa main conjecture for {H}ilbert modular forms},
  Forum Math. Sigma \textbf{3} (2015), Paper No. e18, 95.

\bibitem[Wan16]{wan-nonord}
\bysame, \emph{Iwasawa {M}ain {C}onjecture for {N}on-{O}rdinary {M}odular
  {Forms}}, preprint,
  \href{https://arxiv.org/abs/1607.07729}{arXiv:1607.07729}.

\bibitem[Zha14]{zhang-Kolyvagin}
Wei Zhang, \emph{Selmer groups and the indivisibility of {H}eegner points},
  Camb. J. Math. \textbf{2} (2014), no.~2, 191--253.

\end{thebibliography}

\end{document}